\newtheorem{theorem}{Theorem}
\newtheorem{axiom}[theorem]{Axiom}
\newtheorem{conjecture}[theorem]{Conjecture}
\newtheorem{corollary}[theorem]{Corollary}
\newtheorem{definition}[theorem]{Definition}
\newtheorem{example}[theorem]{Example}
\newtheorem{exercise}[theorem]{Exercise}
\newtheorem{lemma}[theorem]{Lemma}
\newtheorem{proposition}[theorem]{Proposition}
\newtheorem{remark}[theorem]{Remark}
\newenvironment{proof}[1][Proof]{\noindent\textbf{#1.} }{\ \rule{0.5em}{0.5em}}
\chardef\@x10\chardef\@xv60
\def\tcitime{
\def\@time{%
  \@minute\time\@hour\@minute\divide\@hour\@xv
  \ifnum\@hour<\@x 0\fi\the\@hour:%
  \multiply\@hour\@xv\advance\@minute-\@hour
  \ifnum\@minute<\@x 0\fi\the\@minute
  }}%
\def\x@hyperref#1#2#3{%
   \catcode`\~ = 12
   \catcode`\$ = 12
   \catcode`\_ = 12
   \catcode`\# = 12
   \catcode`\& = 12
   \y@hyperref{#1}{#2}{#3}%
}
\def\y@hyperref#1#2#3#4{%
   #2\ref{#4}#3
   \catcode`\~ = 13
   \catcode`\$ = 3
   \catcode`\_ = 8
   \catcode`\# = 6
   \catcode`\& = 4
}
\def\QCTOpt[#1]#2{%
  \def\QCTOptB{#1}
  \def\QCTOptA{#2}
}
\def\QCTNOpt#1{%
  \def\QCTOptA{#1}
  \let\QCTOptB\empty
}
\def\Qct{%
  \@ifnextchar[{%
    \QCTOpt}{\QCTNOpt}
}
\def\QCBOpt[#1]#2{%
  \def\QCBOptB{#1}%
  \def\QCBOptA{#2}%
}
\def\QCBNOpt#1{%
  \def\QCBOptA{#1}%
  \let\QCBOptB\empty
}
\def\Qcb{%
  \@ifnextchar[{%
    \QCBOpt}{\QCBNOpt}%
}
\def\PrepCapArgs{%
  \ifx\QCBOptA\empty
    \ifx\QCTOptA\empty
      {}%
    \else
      \ifx\QCTOptB\empty
        {\QCTOptA}%
      \else
        [\QCTOptB]{\QCTOptA}%
      \fi
    \fi
  \else
    \ifx\QCBOptA\empty
      {}%
    \else
      \ifx\QCBOptB\empty
        {\QCBOptA}%
      \else
        [\QCBOptB]{\QCBOptA}%
      \fi
    \fi
  \fi
}
\def\GRAPHICSPS#1{%
 \ifcase\GRAPHICSTYPE
   \special{ps: #1}%
 \or
   \special{language "PS", include "#1"}%
 \fi
}%
\def\graffile#1#2#3#4{%
    \bgroup
       \@inlabelfalse
       \leavevmode
       \@ifundefined{bbl@deactivate}{\def~{\string~}}{\activesoff}%
        \raise -#4 \BOXTHEFRAME{%
           \hbox to #2{\raise #3\hbox to #2{\null #1\hfil}}}%
    \egroup
}%
\def\draftbox#1#2#3#4{%
 \leavevmode\raise -#4 \hbox{%
  \frame{\rlap{\protect\tiny #1}\hbox to #2%
   {\vrule height#3 width\z@ depth\z@\hfil}%
  }%
 }%
}%
\let\nographics=\@msidraft
\newif\ifwasdraft
\def\GRAPHIC#1#2#3#4#5{%
   \ifnum\@msidraft=\@ne\draftbox{#2}{#3}{#4}{#5}%
   \else\graffile{#1}{#3}{#4}{#5}%
   \fi
}
\def\addtoLaTeXparams#1{%
    \edef\LaTeXparams{\LaTeXparams #1}}%
\newif\ifBoxFrame \BoxFramefalse
\newif\ifOverFrame \OverFramefalse
\newif\ifUnderFrame \UnderFramefalse
\def\BOXTHEFRAME#1{%
   \hbox{%
      \ifBoxFrame
         \frame{#1}%
      \else
         {#1}%
      \fi
   }%
}
\def\doFRAMEparams#1{\BoxFramefalse\OverFramefalse\UnderFramefalse\readFRAMEparams#1\end}%
\def\readFRAMEparams#1{%
 \ifx#1\end%
  \let\next=\relax
  \else
  \ifx#1i\dispkind=\z@\fi
  \ifx#1d\dispkind=\@ne\fi
  \ifx#1f\dispkind=\tw@\fi
  \ifx#1t\addtoLaTeXparams{t}\fi
  \ifx#1b\addtoLaTeXparams{b}\fi
  \ifx#1p\addtoLaTeXparams{p}\fi
  \ifx#1h\addtoLaTeXparams{h}\fi
  \ifx#1X\BoxFrametrue\fi
  \ifx#1O\OverFrametrue\fi
  \ifx#1U\UnderFrametrue\fi
  \ifx#1w
    \ifnum\@msidraft=1\wasdrafttrue\else\wasdraftfalse\fi
    \@msidraft=\@ne
  \fi
  \let\next=\readFRAMEparams
  \fi
 \next
 }%
\def\IFRAME#1#2#3#4#5#6{%
      \bgroup
      \let\QCTOptA\empty
      \let\QCTOptB\empty
      \let\QCBOptA\empty
      \let\QCBOptB\empty
      #6%
      \parindent=0pt
      \leftskip=0pt
      \rightskip=0pt
      \setbox0=\hbox{\QCBOptA}%
      \@tempdima=#1\relax
      \ifOverFrame
          \typeout{This is not implemented yet}%
          \show\HELP
      \else
         \ifdim\wd0>\@tempdima
            \advance\@tempdima by \@tempdima
            \ifdim\wd0 >\@tempdima
               \setbox1 =\vbox{%
                  \unskip\hbox to \@tempdima{\hfill\GRAPHIC{#5}{#4}{#1}{#2}{#3}\hfill}%
                  \unskip\hbox to \@tempdima{\parbox[b]{\@tempdima}{\QCBOptA}}%
               }%
               \wd1=\@tempdima
            \else
               \textwidth=\wd0
               \setbox1 =\vbox{%
                 \noindent\hbox to \wd0{\hfill\GRAPHIC{#5}{#4}{#1}{#2}{#3}\hfill}\\%
                 \noindent\hbox{\QCBOptA}%
               }%
               \wd1=\wd0
            \fi
         \else
            \ifdim\wd0>0pt
              \hsize=\@tempdima
              \setbox1=\vbox{%
                \unskip\GRAPHIC{#5}{#4}{#1}{#2}{0pt}%
                \break
                \unskip\hbox to \@tempdima{\hfill \QCBOptA\hfill}%
              }%
              \wd1=\@tempdima
           \else
              \hsize=\@tempdima
              \setbox1=\vbox{%
                \unskip\GRAPHIC{#5}{#4}{#1}{#2}{0pt}%
              }%
              \wd1=\@tempdima
           \fi
         \fi
         \@tempdimb=\ht1
         \advance\@tempdimb by -#2
         \advance\@tempdimb by #3
         \leavevmode
         \raise -\@tempdimb \hbox{\box1}%
      \fi
      \egroup%
}%
\def\DFRAME#1#2#3#4#5{%
  \hfil\break
  \bgroup
     \leftskip\@flushglue
     \rightskip\@flushglue
     \parindent\z@
     \parfillskip\z@skip
     \let\QCTOptA\empty
     \let\QCTOptB\empty
     \let\QCBOptA\empty
     \let\QCBOptB\empty
     \vbox\bgroup
        \ifOverFrame
           #5\QCTOptA\par
        \fi
        \GRAPHIC{#4}{#3}{#1}{#2}{\z@}%
        \ifUnderFrame
           \break#5\QCBOptA
        \fi
     \egroup
   \egroup
   \break
}%
\def\FFRAME#1#2#3#4#5#6#7{%
  \@ifundefined{floatstyle}
    {
     \begin{figure}[#1]%
    }
    {
     \ifx#1h
      \begin{figure}[H]%
     \else
      \begin{figure}[#1]%
     \fi
    }
  \let\QCTOptA\empty
  \let\QCTOptB\empty
  \let\QCBOptA\empty
  \let\QCBOptB\empty
  \ifOverFrame
    #4
    \ifx\QCTOptA\empty
    \else
      \ifx\QCTOptB\empty
        \caption{\QCTOptA}%
      \else
        \caption[\QCTOptB]{\QCTOptA}%
      \fi
    \fi
    \ifUnderFrame\else
      \label{#5}%
    \fi
  \else
    \UnderFrametrue%
  \fi
  \begin{center}\GRAPHIC{#7}{#6}{#2}{#3}{\z@}\end{center}%
  \ifUnderFrame
    #4
    \ifx\QCBOptA\empty
      \caption{}%
    \else
      \ifx\QCBOptB\empty
        \caption{\QCBOptA}%
      \else
        \caption[\QCBOptB]{\QCBOptA}%
      \fi
    \fi
    \label{#5}%
  \fi
  \end{figure}%
 }%
\def\makeactives{
  \catcode`\"=\active
  \catcode`\;=\active
  \catcode`\:=\active
  \catcode`\'=\active
  \catcode`\~=\active
}
   \gdef\activesoff{%
      \def"{\string"}%
      \def;{\string;}%
      \def:{\string:}%
      \def'{\string'}%
      \def~{\string~}%
    }
\def\FRAME#1#2#3#4#5#6#7#8{%
 \bgroup
 \ifnum\@msidraft=\@ne
   \wasdrafttrue
 \else
   \wasdraftfalse%
 \fi
 \def\LaTeXparams{}%
 \dispkind=\z@
 \def\LaTeXparams{}%
 \doFRAMEparams{#1}%
 \ifnum\dispkind=\z@\IFRAME{#2}{#3}{#4}{#7}{#8}{#5}\else
  \ifnum\dispkind=\@ne\DFRAME{#2}{#3}{#7}{#8}{#5}\else
   \ifnum\dispkind=\tw@
    \edef\@tempa{\noexpand\FFRAME{\LaTeXparams}}%
    \@tempa{#2}{#3}{#5}{#6}{#7}{#8}%
    \fi
   \fi
  \fi
  \ifwasdraft\@msidraft=1\else\@msidraft=0\fi{}%
  \egroup
 }%
\def\TEXUX#1{"texux"}
\long\def\QQQ#1#2{%
     \long\expandafter\def\csname#1\endcsname{#2}}%
\long\def\QQA#1#2{}%
\def\QTR#1#2{{\csname#1\endcsname {#2}}}%
\def\EXPAND#1[#2]#3{}%
\def\NOEXPAND#1[#2]#3{}%
\def\LaTeXparent#1{}%
\def\ChildStyles#1{}%
\def\ChildDefaults#1{}%
\def\QTagDef#1#2#3{}%
  \providecommand{\UNICODE}[2][]{\protect\rule{.1in}{.1in}}
  \providecommand{\U}[1]{\protect\rule{.1in}{.1in}}
\def\QQfnmark#1{\footnotemark}
 \def\abstract{%
  \if@twocolumn
   \section*{Abstract (Not appropriate in this style!)}%
   \else \small
   \begin{center}{\bf Abstract\vspace{-.5em}\vspace{\z@}}\end{center}%
   \quotation
   \fi
  }%
   \def\registered{\relax\ifmmode{}\r@gistered
                    \else$\m@th\r@gistered$\fi}%
 \def\r@gistered{^{\ooalign
  {\hfil\raise.07ex\hbox{$\scriptstyle\rm\text{R}$}\hfil\crcr
  \mathhexbox20D}}}}{}%
\newdimen\theight
\def\newfmtname{LaTeX2e}
  \DeclareOldFontCommand{\rm}{\normalfont\rmfamily}{\mathrm}
  \DeclareOldFontCommand{\sf}{\normalfont\sffamily}{\mathsf}
  \DeclareOldFontCommand{\tt}{\normalfont\ttfamily}{\mathtt}
  \DeclareOldFontCommand{\bf}{\normalfont\bfseries}{\mathbf}
  \DeclareOldFontCommand{\it}{\normalfont\itshape}{\mathit}
  \DeclareOldFontCommand{\sl}{\normalfont\slshape}{\@nomath\sl}
  \DeclareOldFontCommand{\sc}{\normalfont\scshape}{\@nomath\sc}
\def\alpha{{\Greekmath 010B}}%
\def\beta{{\Greekmath 010C}}%
\def\gamma{{\Greekmath 010D}}%
\def\delta{{\Greekmath 010E}}%
\def\epsilon{{\Greekmath 010F}}%
\def\zeta{{\Greekmath 0110}}%
\def\eta{{\Greekmath 0111}}%
\def\theta{{\Greekmath 0112}}%
\def\iota{{\Greekmath 0113}}%
\def\kappa{{\Greekmath 0114}}%
\def\lambda{{\Greekmath 0115}}%
\def\mu{{\Greekmath 0116}}%
\def\nu{{\Greekmath 0117}}%
\def\xi{{\Greekmath 0118}}%
\def\pi{{\Greekmath 0119}}%
\def\rho{{\Greekmath 011A}}%
\def\sigma{{\Greekmath 011B}}%
\def\tau{{\Greekmath 011C}}%
\def\upsilon{{\Greekmath 011D}}%
\def\phi{{\Greekmath 011E}}%
\def\chi{{\Greekmath 011F}}%
\def\psi{{\Greekmath 0120}}%
\def\omega{{\Greekmath 0121}}%
\def\varepsilon{{\Greekmath 0122}}%
\def\vartheta{{\Greekmath 0123}}%
\def\varpi{{\Greekmath 0124}}%
\def\varrho{{\Greekmath 0125}}%
\def\varsigma{{\Greekmath 0126}}%
\def\varphi{{\Greekmath 0127}}%
\def\nabla{{\Greekmath 0272}}
\def\FindBoldGroup{%
   {\setbox0=\hbox{$\mathbf{x\global\edef\theboldgroup{\the\mathgroup}}$}}%
}
\def\Greekmath#1#2#3#4{%
    \if@compatibility
        \ifnum\mathgroup=\symbold
           \mathchoice{\mbox{\boldmath$\displaystyle\mathchar"#1#2#3#4$}}%
                      {\mbox{\boldmath$\textstyle\mathchar"#1#2#3#4$}}%
                      {\mbox{\boldmath$\scriptstyle\mathchar"#1#2#3#4$}}%
                      {\mbox{\boldmath$\scriptscriptstyle\mathchar"#1#2#3#4$}}%
        \else
           \mathchar"#1#2#3#4%
        \fi
    \else
        \FindBoldGroup
        \ifnum\mathgroup=\theboldgroup 
           \mathchoice{\mbox{\boldmath$\displaystyle\mathchar"#1#2#3#4$}}%
                      {\mbox{\boldmath$\textstyle\mathchar"#1#2#3#4$}}%
                      {\mbox{\boldmath$\scriptstyle\mathchar"#1#2#3#4$}}%
                      {\mbox{\boldmath$\scriptscriptstyle\mathchar"#1#2#3#4$}}%
        \else
           \mathchar"#1#2#3#4%
        \fi
      \fi}
\newif\ifGreekBold  \GreekBoldfalse
\let\SAVEPBF=\pbf
\def\pbf{\GreekBoldtrue\SAVEPBF}%
  \newcounter{equationnumber}
  \def\mathletters{%
     \addtocounter{equation}{1}
     \edef\@currentlabel{\theequation}%
     \setcounter{equationnumber}{\c@equation}
     \setcounter{equation}{0}%
     \edef\theequation{\@currentlabel\noexpand\alph{equation}}%
  }
    \def\BibTeX{{\rm B\kern-.05em{\sc i\kern-.025em b}\kern-.08em
                 T\kern-.1667em\lower.7ex\hbox{E}\kern-.125emX}}}{}%
\def\AmS{{\protect\usefont{OMS}{cmsy}{m}{n}%
                A\kern-.1667em\lower.5ex\hbox{M}\kern-.125emS}}}{}%
\def\@@eqncr{\let\@tempa\relax
    \ifcase\@eqcnt \def\@tempa{& & &}\or \def\@tempa{& &}%
      \else \def\@tempa{&}\fi
     \@tempa
     \if@eqnsw
        \iftag@
           \@taggnum
        \else
           \@eqnnum\stepcounter{equation}%
        \fi
     \fi
     \global\tag@false
     \global\@eqnswtrue
     \global\@eqcnt\z@\cr}
\def\TCItag{\@ifnextchar*{\@TCItagstar}{\@TCItag}}
\def\@TCItag#1{%
    \global\tag@true
    \global\def\@taggnum{(#1)}}
\def\@TCItagstar*#1{%
    \global\tag@true
    \global\def\@taggnum{#1}}
\def\ExitTCILatex{\makeatother }
\let\DOTSI\relax
\def\RIfM@{\relax\ifmmode}%
\def\FN@{\futurelet\next}%
\def\iint{\DOTSI\intno@\tw@\FN@\ints@}%
\def\iiint{\DOTSI\intno@\thr@@\FN@\ints@}%
\def\iiiint{\DOTSI\intno@4 \FN@\ints@}%
\def\idotsint{\DOTSI\intno@\z@\FN@\ints@}%
\def\ints@{\findlimits@\ints@@}%
\newif\iflimtoken@
\newif\iflimits@
\def\findlimits@{\limtoken@true\ifx\next\limits\limits@true
 \else\ifx\next\nolimits\limits@false\else
 \limtoken@false\ifx\ilimits@\nolimits\limits@false\else
 \ifinner\limits@false\else\limits@true\fi\fi\fi\fi}%
\def\multint@{\int\ifnum\intno@=\z@\intdots@                          
 \else\intkern@\fi                                                    
 \ifnum\intno@>\tw@\int\intkern@\fi                                   
 \ifnum\intno@>\thr@@\int\intkern@\fi                                 
 \int}
\def\multintlimits@{\intop\ifnum\intno@=\z@\intdots@\else\intkern@\fi
 \ifnum\intno@>\tw@\intop\intkern@\fi
 \ifnum\intno@>\thr@@\intop\intkern@\fi\intop}%
\def\intic@{%
    \mathchoice{\hskip.5em}{\hskip.4em}{\hskip.4em}{\hskip.4em}}%
\def\negintic@{\mathchoice
 {\hskip-.5em}{\hskip-.4em}{\hskip-.4em}{\hskip-.4em}}%
\def\ints@@{\iflimtoken@                                              
 \def\ints@@@{\iflimits@\negintic@
   \mathop{\intic@\multintlimits@}\limits                             
  \else\multint@\nolimits\fi                                          
  \eat@}
 \else                                                                
 \def\ints@@@{\iflimits@\negintic@
  \mathop{\intic@\multintlimits@}\limits\else
  \multint@\nolimits\fi}\fi\ints@@@}%
\def\intkern@{\mathchoice{\!\!\!}{\!\!}{\!\!}{\!\!}}%
\def\plaincdots@{\mathinner{\cdotp\cdotp\cdotp}}%
\def\intdots@{\mathchoice{\plaincdots@}%
 {{\cdotp}\mkern1.5mu{\cdotp}\mkern1.5mu{\cdotp}}%
 {{\cdotp}\mkern1mu{\cdotp}\mkern1mu{\cdotp}}%
 {{\cdotp}\mkern1mu{\cdotp}\mkern1mu{\cdotp}}}%
\def\RIfM@{\relax\protect\ifmmode}
\def\text{\RIfM@\expandafter\text@\else\expandafter\mbox\fi}
\let\nfss@text\text
\def\text@#1{\mathchoice
   {\textdef@\displaystyle\f@size{#1}}%
   {\textdef@\textstyle\tf@size{\firstchoice@false #1}}%
   {\textdef@\textstyle\sf@size{\firstchoice@false #1}}%
   {\textdef@\textstyle \ssf@size{\firstchoice@false #1}}%
   \glb@settings}
\def\textdef@#1#2#3{\hbox{{%
                    \everymath{#1}%
                    \let\f@size#2\selectfont
                    #3}}}
\newif\iffirstchoice@
\def\Let@{\relax\iffalse{\fi\let\\=\cr\iffalse}\fi}%
\def\vspace@{\def\vspace##1{\crcr\noalign{\vskip##1\relax}}}%
\def\multilimits@{\bgroup\vspace@\Let@
 \baselineskip\fontdimen10 \scriptfont\tw@
 \advance\baselineskip\fontdimen12 \scriptfont\tw@
 \lineskip\thr@@\fontdimen8 \scriptfont\thr@@
 \lineskiplimit\lineskip
 \vbox\bgroup\ialign\bgroup\hfil$\m@th\scriptstyle{##}$\hfil\crcr}%
\def\Sb{_\multilimits@}%
\def\endSb{\crcr\egroup\egroup\egroup}%
\def\Sp{^\multilimits@}%
\newdimen\ex@
\def\rightarrowfill@#1{$#1\m@th\mathord-\mkern-6mu\cleaders
 \hbox{$#1\mkern-2mu\mathord-\mkern-2mu$}\hfill
 \mkern-6mu\mathord\rightarrow$}%
\def\leftarrowfill@#1{$#1\m@th\mathord\leftarrow\mkern-6mu\cleaders
 \hbox{$#1\mkern-2mu\mathord-\mkern-2mu$}\hfill\mkern-6mu\mathord-$}%
\def\leftrightarrowfill@#1{$#1\m@th\mathord\leftarrow
\mkern-6mu\cleaders
 \hbox{$#1\mkern-2mu\mathord-\mkern-2mu$}\hfill
 \mkern-6mu\mathord\rightarrow$}%
\def\overrightarrow{\mathpalette\overrightarrow@}%
\def\overrightarrow@#1#2{\vbox{\ialign{##\crcr\rightarrowfill@#1\crcr
 \noalign{\kern-\ex@\nointerlineskip}$\m@th\hfil#1#2\hfil$\crcr}}}%
\def\overleftarrow{\mathpalette\overleftarrow@}%
\def\overleftarrow@#1#2{\vbox{\ialign{##\crcr\leftarrowfill@#1\crcr
 \noalign{\kern-\ex@\nointerlineskip}$\m@th\hfil#1#2\hfil$\crcr}}}%
\def\overleftrightarrow{\mathpalette\overleftrightarrow@}%
\def\overleftrightarrow@#1#2{\vbox{\ialign{##\crcr
   \leftrightarrowfill@#1\crcr
 \noalign{\kern-\ex@\nointerlineskip}$\m@th\hfil#1#2\hfil$\crcr}}}%
\def\underrightarrow{\mathpalette\underrightarrow@}%
\def\underrightarrow@#1#2{\vtop{\ialign{##\crcr$\m@th\hfil#1#2\hfil
  $\crcr\noalign{\nointerlineskip}\rightarrowfill@#1\crcr}}}%
\def\underleftarrow{\mathpalette\underleftarrow@}%
\def\underleftarrow@#1#2{\vtop{\ialign{##\crcr$\m@th\hfil#1#2\hfil
  $\crcr\noalign{\nointerlineskip}\leftarrowfill@#1\crcr}}}%
\def\underleftrightarrow{\mathpalette\underleftrightarrow@}%
\def\underleftrightarrow@#1#2{\vtop{\ialign{##\crcr$\m@th
  \hfil#1#2\hfil$\crcr
 \noalign{\nointerlineskip}\leftrightarrowfill@#1\crcr}}}%
\def\qopnamewl@#1{\mathop{\operator@font#1}\nlimits@}
\let\nlimits@\displaylimits
\def\setboxz@h{\setbox\z@\hbox}
\def\varlim@#1#2{\mathop{\vtop{\ialign{##\crcr
 \hfil$#1\m@th\operator@font lim$\hfil\crcr
 \noalign{\nointerlineskip}#2#1\crcr
 \noalign{\nointerlineskip\kern-\ex@}\crcr}}}}
 \def\rightarrowfill@#1{\m@th\setboxz@h{$#1-$}\ht\z@\z@
  $#1\copy\z@\mkern-6mu\cleaders
  \hbox{$#1\mkern-2mu\box\z@\mkern-2mu$}\hfill
  \mkern-6mu\mathord\rightarrow$}
\def\leftarrowfill@#1{\m@th\setboxz@h{$#1-$}\ht\z@\z@
  $#1\mathord\leftarrow\mkern-6mu\cleaders
  \hbox{$#1\mkern-2mu\copy\z@\mkern-2mu$}\hfill
  \mkern-6mu\box\z@$}
\def\projlim{\qopnamewl@{proj\,lim}}
\def\injlim{\qopnamewl@{inj\,lim}}
\def\varinjlim{\mathpalette\varlim@\rightarrowfill@}
\def\varprojlim{\mathpalette\varlim@\leftarrowfill@}
\def\varliminf{\mathpalette\varliminf@{}}
\def\varliminf@#1{\mathop{\underline{\vrule\@depth.2\ex@\@width\z@
   \hbox{$#1\m@th\operator@font lim$}}}}
\def\varlimsup{\mathpalette\varlimsup@{}}
\def\varlimsup@#1{\mathop{\overline
  {\hbox{$#1\m@th\operator@font lim$}}}}
\def\align{\@verbatim \frenchspacing\@vobeyspaces \@alignverbatim
You are using the "align" environment in a style in which it is not defined.}
\let\csname endalign*\endcsname =\endtrivlist
\def\alignat{\@verbatim \frenchspacing\@vobeyspaces \@alignatverbatim
You are using the "alignat" environment in a style in which it is not defined.}
\let\csname endalignat*\endcsname =\endtrivlist
\def\xalignat{\@verbatim \frenchspacing\@vobeyspaces \@xalignatverbatim
You are using the "xalignat" environment in a style in which it is not defined.}
\let\csname endxalignat*\endcsname =\endtrivlist
\def\gather{\@verbatim \frenchspacing\@vobeyspaces \@gatherverbatim
You are using the "gather" environment in a style in which it is not defined.}
\let\csname endgather*\endcsname =\endtrivlist
\def\multiline{\@verbatim \frenchspacing\@vobeyspaces \@multilineverbatim
You are using the "multiline" environment in a style in which it is not defined.}
\let\csname endmultiline*\endcsname =\endtrivlist
\def\arrax{\@verbatim \frenchspacing\@vobeyspaces \@arraxverbatim
You are using a type of "array" construct that is only allowed in AmS-LaTeX.}
\def\tabulax{\@verbatim \frenchspacing\@vobeyspaces \@tabulaxverbatim
You are using a type of "tabular" construct that is only allowed in AmS-LaTeX.}
\let\csname endarrax*\endcsname =\endtrivlist
\let\csname endtabulax*\endcsname =\endtrivlist
 \def\endequation{%
     \ifmmode\ifinner 
      \iftag@
        \addtocounter{equation}{-1} 
        $\hfil
           \displaywidth\linewidth\@taggnum\egroup \endtrivlist
        \global\tag@false
        \global\@ignoretrue
      \else
        $\hfil
           \displaywidth\linewidth\@eqnnum\egroup \endtrivlist
        \global\tag@false
        \global\@ignoretrue
      \fi
     \else
      \iftag@
        \addtocounter{equation}{-1} 
        \eqno \hbox{\@taggnum}
        \global\tag@false%
        $$\global\@ignoretrue
      \else
        \eqno \hbox{\@eqnnum}
        $$\global\@ignoretrue
      \fi
     \fi\fi
 }
 \newif\iftag@ \tag@false
 \def\TCItag{\@ifnextchar*{\@TCItagstar}{\@TCItag}}
 \def\@TCItag#1{%
     \global\tag@true
     \global\def\@taggnum{(#1)}}
 \def\@TCItagstar*#1{%
     \global\tag@true
     \global\def\@taggnum{#1}}
     \def\tag{\@ifnextchar*{\@tagstar}{\@tag}}
     \def\@tag#1{%
         \global\tag@true
         \global\def\@taggnum{(#1)}}
     \def\@tagstar*#1{%
         \global\tag@true
         \global\def\@taggnum{#1}}
\begin{document}

\title{Differential\ Equations Driven by Gaussian Signals I}
\author{Peter Friz\thanks{%
Corresponding author. Department of Pure Mathematics and Mathematical
Statistics, University of Cambridge. Email: P.K.Friz@statslab.cam.ac.uk. } \
\ \thanks{%
Partially supported by a Leverhulme Research Fellowship. } \and Nicolas
Victoir}
\maketitle

\begin{abstract}
We consider multi-dimensional Gaussian processes and give a new condition on
the covariance, simple and sharp, for the existence of L\'{e}vy area(s).
Gaussian rough paths are constructed with a variety of weak and strong
approximation results. Together with a new RKHS embedding, we obtain a
powerful\ - yet conceptually simple - framework in which to analysize
differential equations driven by Gaussian signals in the rough paths sense.
\end{abstract}

\section{Introduction}

Let $X$ be a real-valued centered Gaussian process on $\left[ 0,1\right] $
with continuous sample paths and (continuous) covariance $R=R\left(
s,t\right) =\mathbb{E}\left( X_{s}X_{t}\right) $. From Kolmogorov's
criterion, it is clear that H\"{o}lder regularity of $R$ will imply H\"{o}%
lder continuity of sample paths. One can also deduce $p$-variation of sample
paths from $R$. Indeed, the condition%
\begin{equation}
\sup_{D=\left\{ t_{i}\right\} }\sum_{i}\left\vert \mathbb{E[}\left(
X_{t_{i+1}}-X_{t_{i}}\right) ^{2}]\right\vert ^{\rho }<\infty .
\label{CondJain}
\end{equation}%
implies that $X$ has sample paths of finite $p$-variation for $p>2\rho $,
see \cite{jain-monrad-1983} or the survey \cite{dudley-norvaisa-99}. Note
that (\ref{CondJain}) can be written in terms of $R$ and expresses some sort
of "on diagonal $\rho $-variation" regularity of $R$.

The results of this paper put forward the notion of \textit{genuine} $\rho $%
-variation regularity of $R$ as a function on $\left[ 0,1\right] ^{2}$ as
novel and, perhaps, fundamental quantity related to Gaussian processes.
Similar to (\ref{CondJain}), finite $\rho $-variation of $R$, in symbols $%
R\in C^{\rho \text{$-var$}}\left( \left[ 0,1\right] ^{2},\mathbb{R}\right) $%
, can be expressed in terms of the associated Gaussian process and amounts
to say that%
\begin{equation*}
\sup_{D}\sum_{i,j}\left\vert \mathbb{E}\left[ \left(
X_{t_{i+1}}-X_{t_{i}}\right) \left( X_{t_{j+1}}-X_{t_{j}}\right) \right]
\right\vert ^{\rho }<\infty .
\end{equation*}%
The notion of (2D) $\rho $-variation of the covariance leads naturally to%
\begin{equation}
\mathcal{H}\hookrightarrow C^{\rho \text{-var}}\left( \left[ 0,1\right] ,%
\mathbb{R}\right) ,  \label{Hembedding}
\end{equation}%
an embedding of the \textit{Cameron-Martin} or \textit{reproducing kernel
Hilbert space (RKHS)} $\mathcal{H}$ into the space of continuous path with
finite $\rho $-variation. Good examples to have in mind are\ standard
Brownian motion with $\rho =1$ and fractional Brownian motion with Hurst
parameter $H\in (0,1/2]$ for which $\rho =1/\left( 2H\right) $. We then
consider a $d$-dimensional, continuous, centered Gaussian process with
independent components,%
\begin{equation*}
X=\left( X^{1},\ldots ,X^{d}\right) ,
\end{equation*}%
with respective covariances $R_{1},\ldots ,R_{d}\in C^{\rho \text{$-var$}}$
and ask under what conditions there exists an a.s. well-defined lift to a
geometric rough path $\mathbf{X}$ in the sense of T. Lyons. (This amounts,
first and foremost, to define L\'{e}vy's area and higher iterated integrals
of $X,$ and to establish subtle regularity properties.) The answer to this
question is the sufficient (and essentially necessary) condition%
\begin{equation*}
\rho \in \lbrack 1,2)
\end{equation*}%
under which there exists a lift of $X$ to a \textit{Gaussian }geometric $p$%
-rough path $\mathbf{X}$ (short: \textit{Gaussian rough path}) for any $%
p>2\rho $. For fractional Brownian motion this requires $H>1/4$ which is
optimal \cite{coutin-victoir-2005} and our condition is seen to be sharp%
\footnote{%
From \cite{jain-monrad-1983} and \cite{musielak-orlicz-1959} we expect that
logarithmic refinements of this condition are possible but we shall not
pursue this here.}. Recall that geometric $p$-rough paths are (limits of)
paths together with their first $\left[ p\right] $-iterated integrals.
Assuming $\rho <2$ one can (and should) choose $p<4$; when $X$ has
sufficiently smooth sample paths, $\mathbf{X}_{\cdot }\equiv S_{3}\left(
X\right) $ is then simply given by its coordinates in the three
"tensor-levels", $\mathbb{R}^{d},\mathbb{R}^{d}\otimes \mathbb{R}^{d}$and $%
\mathbb{R}^{d}\otimes \mathbb{R}^{d}\otimes \mathbb{R}^{d}$, obtained by
iterated integration%
\begin{equation*}
\mathbf{X}_{\cdot }^{i}=\int_{0}^{\cdot }dX_{r}^{i},\,\,\,\mathbf{X}_{\cdot
}^{i,j}=\int_{0}^{\cdot }\int_{0}^{s}dX_{r}^{i}dX_{s}^{j},\,\,\,\mathbf{X}%
_{\cdot }^{i,j,k}=\int_{0}^{\cdot
}\int_{0}^{t}\int_{0}^{s}dX_{r}^{i}dX_{s}^{j}dX_{t}^{k},\,\,\,
\end{equation*}%
with indices $i,j,k\in \left\{ 1,...,d\right\} $. Our condition $\rho <2$ is
then easy to explain. Assuming $X_{0}=0$ and $i\neq j$, which is enough to
deal with the second tensor level, we have 
\begin{eqnarray*}
\mathbb{E}\left( \left\vert \mathbf{X}_{t}^{i,j}\right\vert ^{2}\right) 
&=&\int_{\left[ 0,t\right] ^{2}}R_{i}\left( u,v\right) \frac{\partial ^{2}}{%
\partial u\partial v}R_{j}\left( u,v\right) dudv \\
&\equiv &\int_{\left[ 0,t\right] ^{2}}R_{i}\left( u,v\right) dR_{j}\left(
u,v\right) .
\end{eqnarray*}%
The integral which appears on the right hand side above is a $2$-dimensional
(short:\ 2D) Young integral. It remains meaningful provided $R_{i},R_{j}$
have finite $\rho _{i}$ resp. $\rho _{j}$-variation with $\rho
_{i}^{-1}+\rho _{j}^{-1}>1$. In particular, if $R_{i},R_{j}$ have both
finite $\rho $-variation this condition reads $\rho <2$. is required. The $%
\rho $-variation condition on the covariance encodes some decorrelation of
the increments and this is the (partial)\ nature of the so-called $\left(
h,p\right) $-long time memory condition that appears in \cite{lyons-qian-02}
resp. Coutin-Qian's condition \cite{coutin-qian-02} which is seen to be more
restrictive than our $\rho $-variation condition.

Let us briefly state our main continuity result for Gaussian rough paths,
taken from section \ref{GaussianRoughPaths}.

\begin{theorem}
Let $X=\left( X^{1},\ldots ,X^{d}\right) ,Y=\left( Y^{1},\ldots
,Y^{d}\right) $ be two continuous, centered jointly Gaussian processes
defined on $\left[ 0,1\right] $ such that $\left( X^{i},Y^{i}\right) $ is
independent of $\left( X^{j},Y^{j}\right) $ when $i\neq j$. Let $\rho \in
\lbrack 1,2)$ and assume the covariance of $\left( X,Y\right) $ is of finite 
$\rho $-variation,%
\begin{equation*}
\left\vert R_{\left( X,Y\right) }\right\vert _{\rho \text{-var};\left[ 0,1%
\right] ^{2}}\leq K<\infty .
\end{equation*}%
Let $p>2\rho $ and $\mathbf{X},\mathbf{Y}$ denote the natural lift of $X,Y$
to a Gaussian rough path. Then there exist positive constants $\theta
=\theta \left( p,\rho \right) $ and $C=C\left( p,\rho ,K\right) $ such that
for all $q\in \lbrack 1,\infty )$,%
\begin{equation*}
\left\vert d_{p\text{-var}}\left( \mathbf{X},\mathbf{Y}\right) \right\vert
_{L^{q}}\leq C\sqrt{q}\left\vert R_{X-Y}\right\vert _{\infty ;\left[ 0,1%
\right] ^{2}}^{\theta }.
\end{equation*}
\end{theorem}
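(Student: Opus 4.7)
The plan is to work level by level in the rough-path signatures and combine four ingredients: (i) explicit covariance formulas expressing $\mathbb{E}[|\mathbf{X}^{(k)}_{s,t}-\mathbf{Y}^{(k)}_{s,t}|^{2}]$ as sums (and, at level three, products) of 2D Young integrals against entries of the joint covariance $R_{(X,Y)}$; (ii) the 2D Young estimate under the standing $\rho$-variation hypothesis on $R_{(X,Y)}$; (iii) an interpolation between ``small in sup'' (from $|R_{X-Y}|_{\infty }$) and ``bounded in $\rho$-variation'' (from the joint bound $K$); and (iv) Gaussian hypercontractivity on Wiener chaoses combined with a Kolmogorov/Garsia--Rodemich--Rumsey type criterion converting pointwise moment bounds into $p$-variation distance. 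Only levels $k\in \{1,2,3\}$ need be treated, since $p<4$.

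The second level is the heart of the argument. Using the algebraic decomposition
\begin{equation*}
\mathbf{X}^{i,j}_{s,t}-\mathbf{Y}^{i,j}_{s,t}=\int_{s}^{t}(X^{i}_{r}-X^{i}_{s})\,d(X^{j}-Y^{j})_{r}+\int_{s}^{t}\bigl((X^{i}_{r}-Y^{i}_{r})-(X^{i}_{s}-Y^{i}_{s})\bigr)\,dY^{j}_{r}
\end{equation*}
and squaring produces, for $i\neq j$ (by cross-component independence of $(X^{i},Y^{i})$ and $(X^{j},Y^{j})$), four cross-terms which are 2D Young integrals of the form $\int_{[s,t]^{2}}K_{1}(u,v)\,dK_{2}(u,v)$; crucially, in each such term one of $K_{1},K_{2}$ is a rectangular increment of the difference covariance $R_{X-Y}$. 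The naive 2D Young estimate alone gives a bound in terms of the $\rho$-variation control $\omega$ of $R_{(X,Y)}$, with no factor of $|R_{X-Y}|_{\infty }$; to produce one I would invoke the elementary log-convexity bound, valid on any rectangle $I\subset [0,1]^{2}$,
\begin{equation*}
|R_{X-Y}|_{\rho^{\prime }\text{-var};I}^{\rho^{\prime }}\leq (4|R_{X-Y}|_{\infty })^{\rho^{\prime }-\rho }\,|R_{X-Y}|_{\rho \text{-var};I}^{\rho }
\end{equation*}
for some $\rho^{\prime }>\rho $. Because $\rho <2$, we may choose $\rho^{\prime }>\rho $ with $1/\rho +1/\rho^{\prime }>1$, so the corresponding 2D Young integral is still well defined; substituting gives
\begin{equation*}
\bigl|\mathbf{X}^{i,j}_{s,t}-\mathbf{Y}^{i,j}_{s,t}\bigr|_{L^{2}}\leq C\,|R_{X-Y}|_{\infty }^{\theta }\,\omega (s,t)^{1/p}
\end{equation*}
for some $\theta =\theta (p,\rho )>0$. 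The diagonal case $i=j$ is first-chaos and simpler; level three is analogous but the $L^{2}$-identity becomes a sum of \emph{products} of 2D Young integrals (reflecting the third-chaos structure of triple iterated integrals), and the same interpolation gives an analogous estimate with exponent $\omega (s,t)^{3/(2p)}$.

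With these pointwise $L^{2}$-bounds in hand, Gaussian hypercontractivity -- each $\mathbf{X}^{(k)}_{s,t}-\mathbf{Y}^{(k)}_{s,t}$ lies in the direct sum of the first $k\leq 3$ Wiener chaoses -- upgrades them to $L^{q}$-bounds at the cost of a polynomial factor in $q$. A Kolmogorov/Garsia--Rodemich--Rumsey type criterion (the rough-path version already used earlier in the paper to construct $\mathbf{X}$ and $\mathbf{Y}$) then converts pointwise increment moments into a bound on $|d_{p\text{-var}}(\mathbf{X},\mathbf{Y})|_{L^{q}}$; the polynomial-in-$q$ losses are absorbed into the prefactor $C$, leaving the $\sqrt{q}$ Gaussian-tail scaling stated. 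The hypothesis $p>2\rho $ is used precisely to make the Kolmogorov exponent $k/(2\rho )$ exceed one for $k=[p]\leq 3$.

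I expect the main obstacle to be step (iii): the 2D Young integral is not transparently bilinear in its integrands, so at level three in particular, writing $\mathbb{E}[(\mathbf{X}^{(3)}_{s,t}-\mathbf{Y}^{(3)}_{s,t})^{2}]$ as a sum of 2D Young products each carrying an explicit factor of $R_{X-Y}$ requires a careful telescoping of triple iterated integrals and correspondingly delicate bookkeeping. Once this is done, the log-convexity interpolation is routine, and the strict inequality $\rho <2$ is what keeps the Young condition $1/\rho +1/\rho^{\prime }>1$ satisfied throughout and fixes the admissible range of the exponent $\theta $.
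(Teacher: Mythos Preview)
Your proposal is essentially the paper's own argument: the same telescoping at level two, the same interpolation $|R_{X-Y}|_{\rho'\text{-var}}\le |R_{X-Y}|_\infty^{1-\rho/\rho'}|R_{X-Y}|_{\rho\text{-var}}^{\rho/\rho'}$ with $\rho'\in(\rho,2)$, the same 2D Young estimates, and the same hypercontractivity/GRR conclusion (the paper carries this out first for piecewise-linear approximations and then passes to the limit, and organises the algebra via $\pi_n(\ln\mathbf{X}_{s,t})$ rather than the signature itself, but these are packaging differences). Two small slips to fix: the intermediate increment exponent should be $\omega([s,t]^2)^{n/(2\rho')}$, not $\omega(s,t)^{n/(2p)}$; and the $\sqrt{q}$ is not obtained by ``absorbing'' the $q^{n/2}$ chaos factors but by the homogeneity of the Carnot--Carath\'eodory norm, since $|\pi_n(\cdot)|^{1/n}$ scales like $q^{1/2}$ for each $n$.
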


The natural lift to a Gaussian rough path is easily explained along the
above estimates: take a continuous, centered $d$-dimensional process $Z$
with independent components and finite $\rho \in \lbrack 1,2)$-covarianc and
consider its piecewise linear approximations $Z^{n}$. Applying the above
estimate to $\mathbf{X}=S_{3}\left( Z^{n}\right) ,\mathbf{Y}=S_{3}\left(
Z^{m}\right) $, identifies $S_{3}\left( Z^{n}\right) $ as Cauchy sequence
and we call the limit natural lift of $Z$. In conjunction with the \textit{%
universal limit theorem \cite{lyons-qian-02}}, i.e. the continuous
dependence of solutions to (rough) differential equations of the driving
signal $\mathbf{X}$ w.r.t. $d_{p\text{-var}}$, the above theorem contains a
collection of powerful limit theorems which cover, for instance, piecewise
linear and mollifier approximations to Stratonovich SDEs as special case. As
further consequence, weak convergence results are obtained. For instance,
differential equations driven by fractional Brownian Motion with Hurst
parameter $H\rightarrow 1/2$ converge to the corresponding Stratonovich SDEs.

We further note that a large deviation principle holds in the present
generality; thanks to the Cameron-Martin embedding (\ref{Hembedding}) this
follows immediately from the author's previous work \cite%
{friz-victoir-05-GaussLDP}. In the companion paper\ \cite{friz-victoir-2007}
it is shown that, in the same generality, approximations based on the $L^{2}$%
- or \textit{Karhunen-Loeve expansion}%
\begin{equation}
X^{i}\left( t,\omega \right) =\sum_{k\in \mathbb{N}}Z_{k}^{i}\left( \omega
\right) h^{i,k}\left( t\right)   \label{KHexpansion}
\end{equation}%
converge in rough path topology to our natural lift $\mathbf{X}$. (As
corollary, the support of $\mathbf{X}$ is identified as closure of $%
S_{3}\left( \mathcal{H}\right) $ in suitable rough path topology.) The
embedding (\ref{Hembedding}) is absolutely crucial for these purposes: given 
$\rho <2$ it tells us that elements in $\mathcal{H}$ (and in particular,
Karhunen-Loeve approximations which are finite sums of form (\ref%
{KHexpansion})) admit canoncially defined second and third iterated
integrals.

The lift of certain Gaussian processes including fractional Brownian Motion
with Hurst parameter $H>1/4$ is due to Coutin-Qian, \cite{coutin-qian-02}. A
large deviation principle for the lift of fractional Brownian Motion was
obtained in \cite{millet-sanz-2006}, for the Coutin-Qian class in \cite%
{friz-victoir-05-GaussLDP}. Support statements for lifted fractional
Brownian Motion for $H>\frac{1}{3}$ are proved in \cite{friz-victoir-04-Note}%
, \cite{feyel-pradelle-2006}; a Karhunen-Loeve type approximations for
fractional Brownian Motion is studied in \cite{millet-2005}.

The interest in our results goes beyond the unification and optimal
extension of the above-cited articles. It identifies a general framework of
differential equations driven by Gaussian signal surprisingly well-suited
for further (Gaussian) analysis: the embedding (\ref{Hembedding}) combined
with basic facts of Young integrals shows that, at least for $\rho <3/2$,
translations in $\mathcal{H}$-directions are well enough controlled to
exploit the isoperimetric inequality for abstract Wiener spaces;
applications towards optimal regularity/integrability statements for
stochastic area will be discussed in \cite{friz-oberhauser-2007}. Relatedly,
solutions to (rough) differential equations driven by Gaussian signals are $%
\mathcal{H}$-differentiable which allows to establish density results using
Malliavin calculus, to be discussed in \cite{cass-friz-victoir-2007}. 

\subsection{Notations\label{secNotations}}

Let $\left( E,d\right) $ be a metric space and $x\in C\left( \left[ 0,1%
\right] ,E\right) $. It then makes sense to speak of $\alpha $-H\"{o}lder-
and $p$-variation "norms" defined as%
\begin{equation*}
\left\Vert x\right\Vert _{\alpha -H\ddot{o}l}=\sup_{0\leq s<t\leq 1}\frac{%
d\left( x_{s},x_{t}\right) }{\left\vert t-s\right\vert ^{\alpha }}%
,\,\left\Vert x\right\Vert _{p-var}=\sup_{D=\left( t_{i}\right) }\left(
\sum_{i}d\left( x_{t_{i}},x_{t_{i+1}}\right) ^{p}\right) ^{1/p}\,\,.
\end{equation*}%
It also makes sense to speak of a $d_{\infty }$-distance of two such paths,%
\begin{equation*}
d_{\infty }\left( x,y\right) =\sup_{0\leq t\leq 1}d\left( x_{t},y_{t}\right)
.
\end{equation*}%
Given a positive integer $N$ the truncated tensor algebra of degree $N$ is
given by the direct sum 
\begin{equation*}
T^{N}\left( \mathbb{R}^{d}\right) =\mathbb{R}\oplus \mathbb{R}^{d}\oplus
...\oplus \left( \mathbb{R}^{d}\right) ^{\otimes N}.
\end{equation*}%
With tensor product $\otimes $, vector addition and usual scalar
multiplication, $T^{N}\left( \mathbb{R}^{d}\right) =\left( T^{N}\left( 
\mathbb{R}^{d}\right) ,\otimes ,+,.\right) $ is an algebra. Functions such
as $\exp $,$\ln :$ $T^{N}\left( \mathbb{R}^{d}\right) \rightarrow
T^{N}\left( \mathbb{R}^{d}\right) $ are defined immediately by their
power-series. Let $\pi _{i}$ denote the canonical projection from $%
T^{N}\left( \mathbb{R}^{d}\right) $ onto $\left( \mathbb{R}^{d}\right)
^{\otimes i}$. Let $p\in \lbrack 1,2)$ and $x\in $ $C^{p\text{-var}}\left( %
\left[ 0,1\right] ,\mathbb{R}^{d}\right) $, the space of continuous $\mathbb{%
R}^{d}$-valued paths of bounded $q$-variation. We define $\mathbf{x}\equiv
S_{N}(x):[0,1]\rightarrow T^{N}\left( \mathbb{R}^{d}\right) $ via iterated
(Young) integration, 
\begin{equation*}
\mathbf{x}_{t}\equiv
S_{N}(x)_{t}=1+\sum_{i=1}^{N}\int_{0<s_{1}<...<s_{i}<t}dx_{s_{1}}\otimes
...\otimes dx_{s_{i}}
\end{equation*}%
noting that $\mathbf{x}_{0}=1+0+...+0=\exp \left( 0\right) \equiv e$, the
neutral element for $\otimes $, and that $\mathbf{x}_{t}$ really takes
values in 
\begin{equation*}
G^{N}\left( \mathbb{R}^{d}\right) =\left\{ g\in T^{N}\left( \mathbb{R}%
^{d}\right) :\exists x\in C^{1\text{-var}}\left( \left[ 0,1\right] ,\mathbb{R%
}^{d}\right) :\text{ }g=S_{N}(x)_{1}\text{ }\right\} ,
\end{equation*}%
a submanifold of $T^{N}\left( \mathbb{R}^{d}\right) $ and, in fact, a Lie
group with product $\otimes $, called the free step-$N$ nilpotent group with 
$d$ generators. Because $\pi _{1}\left[ \mathbf{x}_{t}\right] =x_{t}-x_{0}$
we say that $\mathbf{x}=S_{N}(x)$ is the canonical lift of $x$. There is a
canonical notion of increments,$\,\mathbf{x}_{s,t}:=\mathbf{x}%
_{s}^{-1}\otimes \mathbf{x}_{t}.$The dilation operator $\delta :\mathbb{R}%
\times G^{N}\left( \mathbb{R}^{d}\right) \rightarrow G^{N}\left( \mathbb{R}%
^{d}\right) $ is defined by 
\begin{equation*}
\pi _{i}\left( \delta _{\lambda }(g)\right) =\lambda ^{i}\pi
_{i}(g),\,\,\,i=0,...,N
\end{equation*}%
and a continuous norm on $G^{N}\left( \mathbb{R}^{d}\right) $, homogenous
with respect to $\delta $, the \textit{Carnot-Caratheodory norm}, is given 
\begin{equation*}
\left\Vert g\right\Vert =\inf \left\{ \text{length}(x):x\in C^{1\text{-var}%
}\left( \left[ 0,1\right] ,\mathbb{R}^{d}\right) ,S_{N}(x)_{1}=g\right\} .
\end{equation*}%
By equivalence of continuous, homogenous norms there exists a constant $%
K_{N} $ such that%
\begin{equation*}
\frac{1}{K_{N}}\max_{i=1,...,N}|\pi _{i}(g)|^{1/i}\leq \left\Vert
g\right\Vert \leq K_{N}\max_{i=1,...,N}|\pi _{i}(g)|^{1/i}.
\end{equation*}%
The norm $\left\Vert \cdot \right\Vert $ induces a (left-invariant) metric
on $G^{N}\left( \mathbb{R}^{d}\right) $ known as \textit{Carnot-Caratheodory
metric}, $d(g,h):=\left\Vert g^{-1}\otimes h\right\Vert .$ Now let $x,y\in $ 
$C_{0}\left( [0,1],G^{N}\left( \mathbb{R}^{d}\right) \right) $, the space of
continuous $G^{N}\left( \mathbb{R}^{d}\right) $-valued paths started at the
neutral element $\exp \left( 0\right) =e$. We define $\alpha $-H\"{o}lder-
and $p$-variation distances 
\begin{eqnarray*}
d_{\alpha -H\ddot{o}l}\left( \mathbf{x,y}\right) &=&\sup_{0\leq s<t\leq 1}%
\frac{d\left( \mathbf{x}_{s,t},\mathbf{y}_{s,t}\right) }{\left\vert
t-s\right\vert ^{\alpha }},\,\,\,\,\, \\
d_{p-var}\left( \mathbf{x,y}\right) &=&\sup_{D=\left( t_{i}\right) }\left(
\sum_{i}d\left( \mathbf{x}_{t_{i},t_{i+1}},\mathbf{y}_{t_{i},t_{i+1}}\right)
^{p}\right) ^{1/p}
\end{eqnarray*}%
and also the "$0$-H\"{o}lder" distance, locally $1/N$-H\"{o}lder equivalent
to $d_{\infty }\left( \mathbf{x,y}\right) $,%
\begin{equation*}
d_{0}\left( \mathbf{x,y}\right) =d_{0-H\ddot{o}l}\left( \mathbf{x,y}\right)
=\sup_{0\leq s<t\leq 1}d\left( \mathbf{x}_{s,t},\mathbf{y}_{s,t}\right) .
\end{equation*}%
Note that $d_{\alpha -H\ddot{o}l}\left( \mathbf{x,}0\right) =\left\Vert 
\mathbf{x}\right\Vert _{\alpha -H\ddot{o}l},\,d_{p-var}\left( \mathbf{x,}%
0\right) =\left\Vert \mathbf{x}\right\Vert _{p-var}$ where $0$ denotes the
constant path $\exp \left( 0\right) $, or in fact, any constant path. The
following path spaces will be useful to us

\begin{enumerate}
\item[(i)] $C_{0}^{p-var}\left( \left[ 0,1\right] ,G^{N}\left( \mathbb{R}%
^{d}\right) \right) $: the set of continuous functions $\mathbf{x}$ from $%
\left[ 0,1\right] $ into $G^{N}\left( \mathbb{R}^{d}\right) $ such that $%
\left\Vert \mathbf{x}\right\Vert _{p-var}<\infty $ and $\mathbf{x}_{0}=\exp
\left( 0\right) $.

\item[(ii)] $C_{0}^{0,p-var}\left( \left[ 0,1\right] ,G^{N}\left( \mathbb{R}%
^{d}\right) \right) $: the $d_{p-var}$-closure of 
\begin{equation*}
\left\{ S_{N}\left( x\right) ,x:\left[ 0,1\right] \rightarrow \mathbb{R}^{d}%
\text{ smooth}\right\} .
\end{equation*}

\item[(iii)] $C_{0}^{1/p-H\ddot{o}l}\left( \left[ 0,1\right] ,G^{N}\left( 
\mathbb{R}^{d}\right) \right) $: the set of continuous functions $\mathbf{x}$
from $\left[ 0,1\right] $ into $G^{n}\left( \mathbb{R}^{d}\right) $ such
that $d_{1/p-H\ddot{o}l}\left( 0,\mathbf{x}\right) <\infty $ and $\mathbf{x}%
_{0}=\exp \left( 0\right) .$

\item[(iv)] $C_{0}^{0,1/p-H\ddot{o}l}\left( \left[ 0,1\right] ,G^{N}\left( 
\mathbb{R}^{d}\right) \right) $: the $d_{1/p-H\ddot{o}l}$-closure of 
\begin{equation*}
\left\{ S_{n}\left( x\right) ,x:\left[ 0,1\right] \rightarrow \mathbb{R}^{d}%
\text{ smooth}\right\} .
\end{equation*}
\end{enumerate}

Recall that a geometric $p$-rough path is an element of $C_{0}^{0,p-var}%
\left( \left[ 0,1\right] ,G^{[p]}\left( \mathbb{R}^{d}\right) \right) ,$ and
a weak geometric rough path is an element of $C_{0}^{p-var}\left( \left[ 0,1%
\right] ,G^{[p]}\left( \mathbb{R}^{d}\right) \right) .$ For a detail study
of these spaces and their properties the reader is referred to \cite%
{friz-victoir-04-Note}.

\section{2D\ Young Integral}

\subsection{On 2D $\protect\rho $-variation}

For a function $f$ from $\left[ 0,1\right] ^{2}$ into a Banach space $\left( 
\mathcal{B},\left\vert .\right\vert \right) $ we will use the notation%
\begin{equation*}
f\left( 
\begin{array}{c}
s \\ 
t%
\end{array}%
,%
\begin{array}{c}
u \\ 
v%
\end{array}%
\right) :=f\left( s,u\right) +f\left( t,v\right) -f\left( s,v\right)
-f\left( t,u\right) \text{.}
\end{equation*}

If $f$ is the 2D distribution function of a signed measure on $\left[ 0,1%
\right] ^{2}$ this is precisely the measure of the rectangle $(s,t]\times
(u,v]$. If $f\left( s,t\right) =\mathbb{E}\left( X_{s}X_{t}\right) \in 
\mathbb{R}$ for some real-valued stochastic process $X,$ then%
\begin{equation*}
f\left( 
\begin{array}{c}
s \\ 
t%
\end{array}%
,%
\begin{array}{c}
u \\ 
v%
\end{array}%
\right) =\mathbb{E}\left( X_{s,t}X_{u,v}\right) .
\end{equation*}%
A similar formula holds when $f\left( s,t\right) =\mathbb{E}\left(
X_{s}\otimes X_{t}\right) \in \mathbb{R}^{d}\otimes \mathbb{R}^{d}$ (which
we equip with its canonical Euclidean structure).

\begin{definition}
Let $f:\left[ 0,1\right] ^{2}\rightarrow $ $\left( \mathcal{B},\left\vert
.\right\vert \right) $. We say that $f$ has finite $\rho $-variation if $%
\left\vert f\right\vert _{\rho -var,\left[ 0,1\right] ^{2}}<\infty ,$ where%
\begin{equation*}
\left\vert f\right\vert _{\rho -var,\left[ s,t\right] \times \left[ u,v%
\right] }=\sup_{\substack{ D=\left( t_{i}\right) \text{ subdivision of }%
\left[ s,t\right]  \\ D^{\prime }=\left( t_{j}^{\prime }\right) \text{
subdivision of }\left[ u,v\right] }}\left( \sum_{i,j}\left\vert f\left( 
\begin{array}{c}
t_{i} \\ 
t_{i+1}%
\end{array}%
,%
\begin{array}{c}
t_{j}^{\prime } \\ 
t_{j+1}^{\prime }%
\end{array}%
\right) \right\vert ^{\rho }\right) ^{1/\rho }.
\end{equation*}
\end{definition}

\begin{definition}
A $2D$ control is a map $\omega $ from $\left( s\leq t,u\leq v\right) $ such
that for all $r\leq s\leq t,$ $u\leq v$,%
\begin{eqnarray*}
\omega \left( \left[ r,s\right] \times \left[ u,v\right] \right) +\omega
\left( \left[ s,t\right] \times \left[ u,v\right] \right) &\leq &\omega
\left( \left[ r,t\right] \times \left[ u,v\right] \right) , \\
\omega \left( \left[ u,v\right] \times \left[ r,s\right] \right) +\omega
\left( \left[ u,v\right] \times \left[ s,t\right] \right) &\leq &\omega
\left( \left[ u,v\right] \times \left[ r,t\right] \right) ,
\end{eqnarray*}%
and such that $\lim_{s\rightarrow t}\omega \left( \left[ s,t\right] \times %
\left[ 0,1\right] \right) =\lim_{s\rightarrow t}\omega \left( \left[ 0,1%
\right] \times \left[ s,t\right] \right) =0.$ Moreover, we will say that the 
$2D$ control $\omega $ is H\"{o}lder-dominated if there exists a constant $C$
such that for all $0\leq s\leq t\leq 1$%
\begin{equation*}
\omega \left( \left[ s,t\right] ^{2}\right) \leq C\left\vert t-s\right\vert
\end{equation*}
\end{definition}

\begin{lemma}
Let $f$ be a $\left( \mathcal{B},\left\vert .\right\vert \right) $-valued
continuous function on $\left[ 0,1\right] ^{2}$.\ Then \newline
(i) If $f$ is of finite $\rho $-variation for some $\rho \geq 1,$%
\begin{equation*}
\left[ s,t\right] \times \left[ u,v\right] \mapsto \left\vert f\right\vert
_{\rho \text{$-var$;}\left[ s,t\right] \times \left[ u,v\right] }^{\rho }
\end{equation*}%
is a 2D\ control.\newline
(i) $f$ is of finite $\rho $-variation on $\left[ 0,1\right] ^{2}$ if and
only if there exists a $2D$ control $\omega $ such that for all $\left[ s,t%
\right] \times \left[ u,v\right] \subset \left[ 0,1\right] ^{2},$%
\begin{equation*}
\left\vert f\left( 
\begin{array}{c}
s \\ 
t%
\end{array}%
,%
\begin{array}{c}
u \\ 
v%
\end{array}%
\right) \right\vert ^{\rho }\leq \omega \left( \left[ s,t\right] \times %
\left[ u,v\right] \right)
\end{equation*}%
and we say that "$\omega $ controls the $\rho $-variation of $f$."
\end{lemma}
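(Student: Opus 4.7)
My plan is to verify (i) by checking the two defining axioms of a 2D control for $\omega(R):=|f|^{\rho}_{\rho\text{-var};R}$ and then to deduce (ii) as a short corollary.

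For the ``only if'' direction of (ii) I simply invoke (i) with $\omega=|f|^{\rho}_{\rho\text{-var};\cdot}$: the trivial one-cell partition of $[s,t]\times[u,v]$ gives $|f(s,u;t,v)|^{\rho}\leq\omega([s,t]\times[u,v])$ by definition. For the ``if'' direction, given any 2D control $\omega$ dominating $|f(\ldots)|^{\rho}$ on every sub-rectangle, I iterate the 2D superadditivity along an arbitrary grid $D\times D'$ to obtain
\[
\sum_{i,j}\bigl|f(t_{i},u_{j};t_{i+1},u_{j+1})\bigr|^{\rho}\leq\sum_{i,j}\omega([t_{i},t_{i+1}]\times[u_{j},u_{j+1}])\leq\omega([0,1]^{2})<\infty,
\]
and take the supremum over grids.

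The substance is in (i). To prove superadditivity of $\omega$ in the first coordinate, fix $r\leq s\leq t$ and $u\leq v$. Given $\varepsilon>0$, I pick pairs $(D_{1},D_{1}')$ and $(D_{2},D_{2}')$ whose grid sums approximate $\omega([r,s]\times[u,v])$ and $\omega([s,t]\times[u,v])$ to within $\varepsilon/2$, and then form the merged subdivision $D:=D_{1}\cup\{s\}\cup D_{2}$ of $[r,t]$ together with the common refinement $D':=D_{1}'\cup D_{2}'$ of $[u,v]$. Because $s\in D$, the grid sum over $D\times D'$ on $[r,t]\times[u,v]$ decomposes exactly as the sum of the two restricted grid sums on $[r,s]\times[u,v]$ and $[s,t]\times[u,v]$, which (via a careful comparison to the original $(D_{i},D_{i}')$ sums) dominates $\omega([r,s]\times[u,v])+\omega([s,t]\times[u,v])-\varepsilon$. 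Letting $\varepsilon\downarrow 0$ yields the desired inequality; the second coordinate follows by symmetry. For the vanishing axiom, I use that $f$ is uniformly continuous on the compact square $[0,1]^{2}$, so $\sup_{u\leq v}|f(s,u;t,v)|\to 0$ as $t-s\to 0$; I then convert this uniform decay into smallness of the grid sum on the thin strip $[s,t]\times[0,1]$ by combining it with the global finite $\rho$-variation of $f$, via an estimate of the form $\sum|f|^{\rho}\leq(\sup|f|)^{\rho-1}\sum|f|$.

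The principal obstacle is the comparison alluded to above: passing from the subdivisions $D_{i}'$ that are nearly optimal for the two halves to the single common refinement $D'$ can, for $\rho>1$, alter individual rectangular contributions non-monotonically, so showing that not too much is lost in this passage is where the real 2D work happens. This is the one step where the hypothesis $\rho\geq 1$ and the continuity of $f$ must interact carefully with the clean first-coordinate splitting at $s$.
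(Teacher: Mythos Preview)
The paper's own proof is the single word ``Straight-forward,'' so there is nothing to compare against at the level of technique; the question is whether your sketch can be completed.

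Your handling of (ii) (both directions, modulo (i)) is fine, and for (i) you have isolated exactly the right difficulty: when you pass from the near-optimal second-coordinate partitions $D_{1}',D_{2}'$ to the common refinement $D'=D_{1}'\cup D_{2}'$, the grid sum on each half can strictly \emph{decrease} for $\rho>1$. This is not a mere technicality to be patched---it is fatal to part (i) as stated. Take $\rho=2$ and the continuous piecewise-bilinear $f$ on $[0,1]^{2}$ determined by $f\equiv 0$ on both coordinate axes and $f(\tfrac12,\tfrac12)=1$, $f(1,\tfrac12)=f(\tfrac12,1)=2$, $f(1,1)=2$. On $[0,\tfrac12]\times[0,1]$ one has $f(x,y)=4xy$ and the optimal grid is the trivial one, giving $|f|_{2\text{-var}}^{2}=4$. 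On $[\tfrac12,1]\times[0,1]$ the optimal $D'$ is $\{0,\tfrac12,1\}$, giving $|f|_{2\text{-var}}^{2}=2$. But on $[0,1]^{2}$ every product grid yields a sum $\le 4$: the two halves demand incompatible $D'$, and refining $\{0,1\}$ to $\{0,\tfrac12,1\}$ on the left half drops its contribution from $4$ to $2$. Hence $4+2>4$ and superadditivity of $|f|_{\rho\text{-var}}^{\rho}$ fails. (For $\rho=1$ your merge does work, since refinement never decreases the $\ell^{1}$ sum; the trouble is genuinely a $\rho>1$ phenomenon.) Part (ii) survives, but the ``only if'' direction then needs a \emph{different} control---which is essentially what the paper's next lemma supplies.

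Separately, your argument for the vanishing axiom is incomplete: the bound $\sum|f|^{\rho}\le(\sup|f|)^{\rho-1}\sum|f|$ leaves you needing to control the $1$-variation $\sum|f|$ on the thin strip, and that does not follow from finite $\rho$-variation alone.
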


\begin{proof}
Straight-forward.
\end{proof}

\begin{remark}
If $f:\left[ 0,T\right] ^{2}\rightarrow $ $\left( \mathcal{B},\left\vert
.\right\vert \right) $ is symmetric (i.e. $f\left( x,y\right) =f\left(
y,x\right) $ for all $x,y$) and of finite $\rho $-variation then $\left[ s,t%
\right] \times \left[ u,v\right] \mapsto \left\vert f\right\vert _{\rho 
\text{$-var$;}\left[ s,t\right] \times \left[ u,v\right] }^{\rho }$ is
symmetric. In fact, one can always work with symmetric controls, it suffices
to replace a given $\omega $ with $\left[ s,t\right] \times \left[ u,v\right]
\mapsto \omega \left( \left[ s,t\right] \times \left[ u,v\right] \right)
+\omega \left( \left[ u,v\right] \times \left[ s,t\right] \right) $.
\end{remark}

\begin{lemma}
\label{1subdivision}A continuous function $f:\left[ 0,1\right]
^{2}\rightarrow $ $\left( \mathcal{B},\left\vert .\right\vert \right) $ is
of finite $\rho $-variation if and only if%
\begin{equation*}
\sup_{D=\left( t_{i}\right) \text{ subdivision of }\left[ 0,1\right] }\left(
\sum_{i,j}\left\vert f\left( 
\begin{array}{c}
t_{i} \\ 
t_{i+1}%
\end{array}%
,%
\begin{array}{c}
t_{j} \\ 
t_{j+1}%
\end{array}%
\right) \right\vert ^{\rho }\right) ^{1/\rho }<\infty .
\end{equation*}%
Moreover, the $\rho $-variation of $f$ is controlled by 
\begin{equation*}
\omega \left( \left[ s,t\right] \times \left[ u,v\right] \right) :=3^{\rho
-1}\sup_{D=\left( t_{i}\right) \text{ subdivision of }\left[ 0,1\right]
}\sum _{\substack{ i,j  \\ \left[ t_{i},t_{i+1}\right] \subset \left[ s,t%
\right]  \\ \left[ t_{j},t_{j+1}\right] \subset \left[ u,v\right] }}%
\left\vert f\left( 
\begin{array}{c}
t_{i} \\ 
t_{i+1}%
\end{array}%
,%
\begin{array}{c}
t_{j} \\ 
t_{j+1}%
\end{array}%
\right) \right\vert ^{\rho }.
\end{equation*}
\end{lemma}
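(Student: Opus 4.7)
The plan is to prove the equivalence together with the explicit formula. The forward implication is immediate: using the same subdivision on both coordinates gives
\[
\sup_{D=(t_i)} \sum_{i,j} |f(\ldots)|^\rho \le |f|_{\rho\text{-var};[0,1]^2}^\rho,
\]
so finite 2D $\rho$-variation forces finite one-subdivision supremum.

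For the converse, set $\omega_0([s,t]\times[u,v]) := \sup_D \sum_{[t_i,t_{i+1}]\subset[s,t],\,[t_j,t_{j+1}]\subset[u,v]} |f(\ldots)|^\rho$ and $\omega := 3^{\rho-1}\omega_0$. By the preceding lemma, it suffices to check that (i) $|f(R)|^\rho \le \omega(R)$ for every $R=[s,t]\times[u,v]$ and (ii) $\omega$ is a 2D control. For (i), I would case-split on the relative order of $\{s,t,u,v\}$. When $[s,t]\cap[u,v]=\emptyset$, $R$ is realized as a single $D\times D$-cell for any $D\supset\{s,t,u,v\}$, so the bound is immediate. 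When one interval is strictly nested in the other, say $s<u<v<t$, splitting the first coordinate at $u$ and $v$ yields three cells $[s,u]\times[u,v]$, $[u,v]^2$, $[v,t]\times[u,v]$, all realized in $D\times D$ for $D\supset\{s,u,v,t\}$; the triangle inequality plus a three-term power-mean inequality then gives $|f(R)|^\rho\le 3^{\rho-1}\omega_0(R)=\omega(R)$. The overlap case (say $s<u<t<v$) uses the analogous three-rectangle decomposition
\[
f([s,t]\times[u,v]) = f([s,u]\times[u,v]) + f([u,t]^2) + f([u,t]\times[t,v]),
\]
bounding each summand on the right individually by $\omega_0(R)$ via a subdivision of $[0,1]$ tailored to that sub-rectangle.

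For (ii), superadditivity is checked by taking near-optimal subdivisions $D_1,D_2$ of $[0,1]$ for two adjacent rectangles $R_1,R_2$ sharing an edge, forming the common refinement $D := D_1\cup D_2$ (which automatically contains the shared endpoint), and splitting the $D\times D$-cells inside $R_1\cup R_2$ into those of each piece; a refinement estimate handles the fact that $D$ may strictly refine $D_1, D_2$ in the perpendicular coordinate. Continuity of $f$ yields the asymptotic degeneracy of $\omega$ on thin rectangles. The main obstacle is the overlap case of (i): the three sub-rectangles above cannot be realized simultaneously as cells of any single $D\times D$ (the requirement ``$u,v$ consecutive in $D$'' for the first piece is incompatible with ``$u,t$ consecutive in $D$'' required for the other two), so each must be controlled by its own optimal subdivision, and the factor $3^{\rho-1}$ in the statement is precisely what absorbs the three-term triangle inequality coming from this decomposition.
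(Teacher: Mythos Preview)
Your treatment of part (i) --- the case-split on the relative position of $[s,t]$ and $[u,v]$ and the three-rectangle decomposition in the overlap and nested cases --- is exactly the paper's approach; the paper writes out the overlap case $s\le u\le t\le v$ and dismisses the remaining orderings with ``the other cases are dealt similarly,'' so on this point you match.

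The gap is in your (ii). Your superadditivity argument takes near-optimal $D_1,D_2$ and passes to the common refinement $D=D_1\cup D_2$, invoking an unspecified ``refinement estimate'' to control the loss in the perpendicular coordinate. For $\rho>1$ no such estimate exists: the map $D\mapsto\sum_{D\times D\text{-cells in }R_k}|f|^\rho$ is \emph{not} monotone under refinement (splitting a cell replaces $|a+b|^\rho$ by $|a|^\rho+|b|^\rho$, which can be strictly smaller), so refining $D_1$ to $D$ may push the $R_1$-sum well below $\omega_0(R_1)-\epsilon$. This is not a mere technicality --- superadditivity of $\omega_0$ can actually fail: with $\rho=2$ and $f(x,y)=g_1(x)h_1(y)+g_2(x)h_2(y)$ where $g_1,g_2$ are (smoothed) unit steps at $1/4,\,3/4$ and $h_1,h_2$ are tent functions on $[0,1]$ peaking at $1/3,\,2/3$, one finds $\omega_0([0,\tfrac12]\times[0,1])=\omega_0([\tfrac12,1]\times[0,1])=2$ while $\omega_0([0,1]^2)=3$. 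The paper's proof simply asserts ``it is easy to check that $\omega$ is a 2D control'' without details, so you are attempting to fill a step the paper leaves open; but the refinement route does not work, and without superadditivity your passage from $3^{\rho-1}\sum_i\omega_0(P_i)$ to $3^{\rho-1}\omega_0(R)=\omega(R)$ in the overlap case is unjustified.
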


\begin{proof}
Assuming that $\omega \left( \left[ 0,1\right] ^{2}\right) $ is finite, it
is easy to check that $\omega $ is a $2D$ control. Then, for any given $%
\left[ s,t\right] $ and $\left[ u,v\right] $ which do not intersect or such
that $\left[ s,t\right] =\left[ u,v\right] ,$%
\begin{equation*}
\left\vert f\left( 
\begin{array}{c}
s \\ 
t%
\end{array}%
,%
\begin{array}{c}
u \\ 
v%
\end{array}%
\right) \right\vert ^{\rho }\leq \omega \left( \left[ s,t\right] \times %
\left[ u,v\right] \right) .
\end{equation*}%
Take now $s\leq u\leq t\leq v,$ then, 
\begin{eqnarray*}
f\left( 
\begin{array}{c}
s \\ 
t%
\end{array}%
,%
\begin{array}{c}
u \\ 
v%
\end{array}%
\right) &=&f\left( 
\begin{array}{c}
s \\ 
u%
\end{array}%
,%
\begin{array}{c}
u \\ 
v%
\end{array}%
\right) +f\left( 
\begin{array}{c}
u \\ 
t%
\end{array}%
,%
\begin{array}{c}
u \\ 
v%
\end{array}%
\right) \\
&=&f\left( 
\begin{array}{c}
s \\ 
u%
\end{array}%
,%
\begin{array}{c}
u \\ 
v%
\end{array}%
\right) +f\left( 
\begin{array}{c}
u \\ 
t%
\end{array}%
,%
\begin{array}{c}
u \\ 
t%
\end{array}%
\right) +f\left( 
\begin{array}{c}
s \\ 
u%
\end{array}%
,%
\begin{array}{c}
t \\ 
v%
\end{array}%
\right) .
\end{eqnarray*}%
Hence, 
\begin{eqnarray*}
\left\vert f\left( 
\begin{array}{c}
s \\ 
t%
\end{array}%
,%
\begin{array}{c}
u \\ 
v%
\end{array}%
\right) \right\vert ^{\rho } &\leq &3^{\rho -1}\left( \omega \left( \left[
s,u\right] \times \left[ u,v\right] \right) +\omega \left( \left[ u,t\right]
^{2}\right) +\omega \left( \left[ s,u\right] \times \left[ t,v\right]
\right) \right) \\
&\leq &3^{\rho -1}\omega \left( \left[ s,t\right] \times \left[ u,v\right]
\right) .
\end{eqnarray*}%
The other cases are dealt similarly, and we find at the end that for all $%
s\leq t,$ $u\leq v,$%
\begin{equation*}
\left\vert f\left( 
\begin{array}{c}
s \\ 
t%
\end{array}%
,%
\begin{array}{c}
u \\ 
v%
\end{array}%
\right) \right\vert ^{\rho }\leq 3^{\rho -1}\left( \omega \left[ s,t\right]
\times \left[ u,v\right] \right) .
\end{equation*}%
That concludes the proof.
\end{proof}

\begin{example}
\label{2Dvariation_of_FunctionTensorSquared}Given two functions $g,h\in
C^{\rho \text{$-var$}}\left( \left[ 0,T\right] ,\mathcal{B}\right) $ we can
define 
\begin{equation*}
\left( g\otimes h\right) \left( s,t\right) :=g\left( s\right) \otimes
h\left( t\right) \in \mathcal{B}\otimes \mathcal{B}
\end{equation*}%
and $g\otimes h$ has finite $2D$ $\rho $-variation. More precisely, 
\begin{equation*}
\left\vert \left( g\otimes h\right) \left( 
\begin{array}{c}
s \\ 
t%
\end{array}%
,%
\begin{array}{c}
u \\ 
v%
\end{array}%
\right) \right\vert ^{\rho }\leq \left\vert g\right\vert _{\rho \text{$-var$;%
}\left[ s,t\right] }^{\rho }\left\vert h\right\vert _{\rho \text{$-var$;}%
\left[ u,v\right] }^{\rho }=:\omega \left( \left[ s,t\right] \times \left[
u,v\right] \right)
\end{equation*}%
and since $\omega \ $is indeed a $2D$ control function (as product of two $%
1D $ control functions!) we see that%
\begin{equation*}
\left\vert g\otimes h\right\vert _{\rho \text{$-var$;}\left[ s,t\right]
\times \left[ u,v\right] }\leq \left\vert g\right\vert _{\rho \text{$-var$;}%
\left[ s,t\right] }\left\vert h\right\vert _{\rho \text{$-var$;}\left[ u,v%
\right] }\text{.}
\end{equation*}
\end{example}

\begin{remark}
\label{Rmk2DcontrolFct}If $\omega =\omega \left( \left[ s,t\right] \times %
\left[ u,v\right] \right) $ is a $2D$ control function, then%
\begin{equation*}
\left( s,t\right) \mapsto \omega \left( \left[ s,t\right] ^{2}\right)
\end{equation*}%
is a $1D$ control function i.e. $\omega \left( \left[ s,t\right] ^{2}\right)
+\omega \left( \left[ t,u\right] ^{2}\right) \leq \omega \left( \left[ s,u%
\right] ^{2}\right) $, and $s,t\rightarrow \omega \left( \left[ s,t\right]
^{2}\right) $ is continuous and zero on the diagonal.
\end{remark}

A function $f:\left[ 0,T\right] ^{2}\rightarrow $ $\left( \mathcal{B}%
,\left\vert .\right\vert \right) $ of finite $q$-variation can also be
considered as path $t\mapsto f\left( t,\cdot \right) $ with values in the
Banach space $C^{q\text{$-var$}}\left( \left[ 0,T\right] ,\mathcal{B}\right) 
$ with $q$-variation (semi-)norm. It is instructive to observe that $%
t\mapsto f\left( t,\cdot \right) $ has finite $q$-variation if and only if $%
f $ has finite 2D $q$-variation. Let us now prove a (simplified) 2D version
of a result of Musielak Semandi \cite{musielak-semadeni-1961} where they
show that (in 1D) the family $C^{p\text{$-var$}}$ depends on $p$
"semi-continuously from above".

\begin{lemma}
\label{LemmaRhoPrimeToRho}For all $s,t,u,v\in \left[ 0,1\right] $ and $%
\left\vert R\right\vert _{\rho ^{\prime }\text{$-var$;}\left[ s,t\right]
\times \left[ u,v\right] }\rightarrow \left\vert R\right\vert _{\rho \text{$%
-var$;}\left[ s,t\right] \times \left[ u,v\right] }$ when $\rho ^{\prime
}\searrow \rho $.
\end{lemma}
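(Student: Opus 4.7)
The plan is to establish two complementary bounds. Write $V(\rho') := |R|_{\rho'\text{-var};[s,t]\times[u,v]}$ and, for any pair of finite subdivisions $D=(t_i)$ of $[s,t]$ and $D'=(t_j')$ of $[u,v]$, set
\begin{equation*}
F_{D,D'}(\rho') := \left(\sum_{i,j}\left|R\left(\begin{array}{c}t_i\\t_{i+1}\end{array},\begin{array}{c}t_j'\\t_{j+1}'\end{array}\right)\right|^{\rho'}\right)^{1/\rho'},
\end{equation*}
so that $V(\rho') = \sup_{D,D'} F_{D,D'}(\rho')$. The two ingredients I need are monotonicity of $V$ in $\rho'$, giving the ``upper'' bound, and lower semicontinuity of $V$, giving the ``lower'' bound.

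First I would observe that for any \emph{fixed} pair of finite subdivisions, $\rho' \mapsto F_{D,D'}(\rho')$ is the $\ell^{\rho'}$-norm of a finite real sequence, hence is continuous in $\rho'\in[1,\infty)$ and non-increasing in $\rho'$ (the standard inclusion $\ell^{\rho}\hookrightarrow\ell^{\rho'}$ for finite sequences when $\rho'\ge\rho$, with norm $\le 1$). Taking the supremum over $D,D'$ preserves monotonicity, so $V$ is non-increasing; in particular $V(\rho')\le V(\rho)$ whenever $\rho'\ge\rho$, which furnishes $\limsup_{\rho'\searrow\rho} V(\rho')\le V(\rho)$.

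For the reverse inequality I would argue by lower semicontinuity. Fix any $M<V(\rho)$ (treating the cases $V(\rho)<\infty$ and $V(\rho)=\infty$ uniformly). By definition of the supremum, there exist finite subdivisions $D,D'$ such that $F_{D,D'}(\rho)>M$. Since $F_{D,D'}$ is continuous at $\rho$, there is $\delta>0$ with $F_{D,D'}(\rho')>M$ for all $\rho'\in[\rho,\rho+\delta)$, and hence $V(\rho')\ge F_{D,D'}(\rho')>M$. Letting $M\uparrow V(\rho)$ yields $\liminf_{\rho'\searrow\rho} V(\rho')\ge V(\rho)$. Combined with the monotonicity bound this gives the claimed convergence.

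The main (and only) subtlety is handling $V(\rho)=\infty$: writing the lower-bound argument in terms of arbitrary $M<V(\rho)$ rather than $V(\rho)-\varepsilon$ disposes of it in one stroke, so no real obstacle remains. Everything else is a direct application of properties of the $\ell^{\rho'}$-norm on finite sequences together with the fact that a supremum of continuous functions is lower semicontinuous.
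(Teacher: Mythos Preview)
Your proof is correct and takes a genuinely different route from the paper's own argument. You argue directly: each $F_{D,D'}(\cdot)$ is the $\ell^{\rho'}$-norm of a fixed finite tuple, hence continuous and non-increasing in $\rho'$; taking the supremum preserves the monotonicity (giving the upper bound) and yields lower semicontinuity (giving the lower bound). This is clean, elementary, and handles $V(\rho)=\infty$ without fuss.

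The paper instead works through the control-function characterisation. It sets $\omega^{1/\rho}:=\lim_{\rho'\searrow\rho}|\cdot|_{\rho'\text{-var}}$ (the limit exists by the same monotonicity you use), observes that each rectangular increment is bounded by $\omega^{1/\rho}$, and then checks that $\omega$ is super-additive; the lemma characterising finite $\rho$-variation via $2D$ controls then forces $|R|_{\rho\text{-var}}^{\rho}\le\omega$. Your argument avoids this machinery entirely and would apply to any variation functional built as a supremum over partitions of $\ell^{\rho}$-type sums, whether or not a control characterisation is available. The paper's route, by contrast, is more in keeping with the control-based framework used throughout the article and implicitly re-establishes that the limiting quantity is itself a control---a fact that is sometimes useful downstream but is not needed for the bare convergence statement.
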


\begin{proof}
Define $\omega \left( s,t\right) ^{1/\rho }=\liminf_{\rho ^{\prime }\searrow
\rho }\left\vert x\right\vert _{\rho ^{\prime }-var,\left[ s,t\right] }.$ As 
$\rho ^{\prime }\mapsto \left\vert x\right\vert _{\rho ^{\prime }-var,\left[
s,t\right] }$ is decreasing, this limit exists in $\left[ 0,\infty \right] ,$
and as $\left\vert x\right\vert _{\rho ^{\prime }-var,\left[ s,t\right]
}\leq \left\vert x\right\vert _{\rho -var,\left[ s,t\right] }<\infty ,$ it
actually exists in $[0,\infty )$ and $\omega \left( s,t\right) \leq
\left\vert x\right\vert _{\rho -var,\left[ s,t\right] }^{\rho }$. For all $%
s,t\in \left[ 0,1\right] ,$ for all $\rho ^{\prime }$%
\begin{equation*}
\left\vert x_{s,t}\right\vert \leq \left\vert x\right\vert _{\rho ^{\prime
}-var,\left[ s,t\right] }.
\end{equation*}%
Taking the limit, we obtain $\left\vert x_{s,t}\right\vert \leq \omega
\left( s,t\right) ^{1/\rho }$ and we now show that that $\omega $ is
super-additive. Take $s\leq t\leq u,$%
\begin{eqnarray*}
\omega \left( s,t\right) +\omega \left( t,u\right) &=&\lim_{\rho ^{\prime
}\rightarrow \rho }\left\vert x\right\vert _{\rho ^{\prime }-var,\left[ s,t%
\right] }^{\rho ^{\prime }}+\lim_{\rho ^{\prime }\rightarrow \rho
}\left\vert x\right\vert _{\rho ^{\prime }-var,\left[ t,u\right] }^{\rho
^{\prime }} \\
&=&\lim_{\rho ^{\prime }\rightarrow \rho }\left( \left\vert x\right\vert
_{\rho ^{\prime }-var,\left[ s,t\right] }^{\rho ^{\prime }}+\left\vert
x\right\vert _{\rho ^{\prime }-var,\left[ t,u\right] }^{\rho ^{\prime
}}\right) \\
&\leq &\lim_{\rho ^{\prime }\rightarrow \rho }\left\vert x\right\vert _{\rho
^{\prime }-var,\left[ s,u\right] }^{\rho ^{\prime }} \\
&\leq &\omega \left( s,u\right) .
\end{eqnarray*}%
It follows that $\left\vert x\right\vert _{\rho \text{$-var$;}\left[ s,t%
\right] }^{\rho }\leq \omega \left( s,t\right) $. But this implies that for
all $s<t$ in $\left[ 0,1\right] $,%
\begin{equation*}
\lim_{\rho ^{\prime }\rightarrow \rho }\left\vert x\right\vert _{\rho
^{\prime }-var,\left[ s,t\right] }=\left\vert x\right\vert _{\rho ^{\prime
}-var,\left[ s,t\right] }.
\end{equation*}
\end{proof}

\subsection{The Integral}

Young integrals extend naturally to higher dimensions, see \cite{young-36}, 
\cite{towghi-2002}. We focus on dimension $2$ and $\mathcal{B}=\mathbb{R},$%
which is what we need in the sequel.

\begin{definition}
Let $f:\left[ s,t\right] ^{2}\rightarrow \mathbb{R},$ $g:\left[ u,v\right]
^{2}\rightarrow \mathbb{R}$ be continuous. Let $D=\left( t_{i}\right) $ be a
dissection of $\left[ s,t\right] ,\,D^{\prime }=\left( t_{j}^{\prime
}\right) $ be a dissection of $\left[ u,v\right] $. If the 2D
Riemann--Stieltjes sum%
\begin{equation*}
\sum_{i,j}f\left( t_{i},t_{j}^{\prime }\right) g\left( 
\begin{array}{c}
t_{i} \\ 
t_{i+1}%
\end{array}%
,%
\begin{array}{c}
t_{j}^{\prime } \\ 
t_{j+1}^{\prime }%
\end{array}%
\right)
\end{equation*}%
converges when $\max \left\{ mesh\left( D\right) ,mesh\left( D^{\prime
}\right) \right\} \rightarrow 0$ we call the limit 2D\ Young-integral and
write $\int_{\left[ s,t\right] \times \left[ u,v\right] }fdg$ or simply $%
\int fdg$ if no confusion arises.
\end{definition}

We leave it to the reader to check that if $g$ is of bounded variation (i.e.
finite $1$-variation) it induces a signed Radon measure, say $\lambda _{g}$,
and%
\begin{equation*}
\int fdg=\int fd\lambda _{g}.
\end{equation*}

\begin{example}
If $g\left( s,t\right) =\int_{0}^{s}\int_{0}^{t}r\left( x,y\right) dxy$
(think of a 2D distribution function!) then%
\begin{equation*}
\int_{\left[ s,t\right] \times \left[ u,v\right] }fdg=\int_{\left[ s,t\right]
\times \left[ u,v\right] }f\left( x,y\right) r\left( x,y\right) dxdy.
\end{equation*}
\end{example}

The following theorem was proved in \cite{towghi-2002}, see also Young's
original paper \cite{young-36} for a (weaker)\ result in the same direction;
we include a proof for the reader's convenience.

\begin{theorem}
Let $f:\left[ 0,T\right] ^{2}\rightarrow \mathbb{R},$ $g:\left[ 0,T\right]
^{2}\rightarrow \mathbb{R}$ two continuous functions of finite $q$-variation
(respectively of finite $p$-variation), with $q^{-1}+p^{-1}>1,$ controlled
by $\omega $. Then the 2D\ Young-integral $\int_{\left[ 0,T\right] ^{2}}fdg$
exists and if $f\left( s,.\right) =f\left( .,u\right) =0$%
\begin{equation*}
\left\vert \int_{\left[ s,t\right] \times \left[ u,v\right] }fdg\right\vert
\leq C_{p,q}\left\vert f\right\vert _{q-var,\left[ s,t\right] \times \left[
u,v\right] }.\left\vert g\right\vert _{p-var,\left[ s,t\right] \times \left[
u,v\right] }.
\end{equation*}
\end{theorem}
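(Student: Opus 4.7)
The plan is to adapt L.~C.~Young's classical one-point-removal argument from one dimension to two. Write a generic 2D Riemann--Stieltjes sum as
\[
S(D,D')\;:=\;\sum_{i,j} f(t_i,t_j')\,g\!\left(\begin{array}{c}t_i\\ t_{i+1}\end{array},\begin{array}{c}t_j'\\ t_{j+1}'\end{array}\right),
\]
and establish both existence of the limit and the stated bound by estimating the change $S(D,D')-S(D\setminus\{t_k\},D')$ when a single inner point $t_k$ is removed, then iterating down to the trivial partition $\{s,t\}\times\{u,v\}$ via a pigeonhole selection of which point to remove.

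For the one-point removal, additivity of the rectangular increment of $g$ in its first argument gives
\[
S(D,D')-S(D\setminus\{t_k\},D')\;=\;\sum_j\bigl[f(t_k,t_j')-f(t_{k-1},t_j')\bigr]\,g\!\left(\begin{array}{c}t_k\\ t_{k+1}\end{array},\begin{array}{c}t_j'\\ t_{j+1}'\end{array}\right).
\]
Set $\tilde f(y):=f(t_k,y)-f(t_{k-1},y)$ and $\tilde g(y):=g\!\left(\begin{array}{c}t_k\\ t_{k+1}\end{array},\begin{array}{c}u\\ y\end{array}\right)$; the right-hand side then equals the 1D Riemann--Stieltjes sum $\sum_j\tilde f(t_j')(\tilde g(t_{j+1}')-\tilde g(t_j'))$, and $\tilde f(u)=0$ thanks to $f(\cdot,u)=0$. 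Lemma~\ref{1subdivision}, together with the 2D controls $\omega_f,\omega_g$ of the $q$- and $p$-variations of $f$ and $g$, bounds the 1D $q$-variation of $\tilde f$ on $[u,v]$ by $\omega_f([t_{k-1},t_k]\times[u,v])^{1/q}$ and the 1D $p$-variation of $\tilde g$ by $\omega_g([t_k,t_{k+1}]\times[u,v])^{1/p}$. Since $\tilde f(u)=0$, the classical 1D Young inequality yields
\[
\bigl|S(D,D')-S(D\setminus\{t_k\},D')\bigr|\;\le\;C_{p,q}\,\omega_f([t_{k-1},t_k]\times[u,v])^{1/q}\,\omega_g([t_k,t_{k+1}]\times[u,v])^{1/p},
\]
and removal of a point of $D'$ is handled symmetrically using $f(s,\cdot)=0$.

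Next, set $\omega:=\omega_f+\omega_g$, a superadditive 2D control dominating both $\omega_f$ and $\omega_g$. For any partition $D$ with $n+1$ points, a standard pigeonhole argument produces some $k\in\{1,\dots,n-1\}$ with $\omega([t_{k-1},t_{k+1}]\times[u,v])\le\tfrac{2}{n-1}\omega([s,t]\times[u,v])$; combined with $a^{1/q}b^{1/p}\le(a+b)^{1/p+1/q}$ this bounds the one-step error by $C_{p,q}(n-1)^{-\theta}\omega([s,t]\times[u,v])^{\theta}$, where $\theta:=\tfrac1p+\tfrac1q>1$. Iterating the removal (first exhausting the interior points of $D$, then of $D'$) and summing the convergent tail $\sum n^{-\theta}$ yields
\[
\bigl|S(D,D')-f(s,u)\,g\!\left(\begin{array}{c}s\\ t\end{array},\begin{array}{c}u\\ v\end{array}\right)\bigr|\;\le\;C_{p,q}\,\omega([s,t]\times[u,v])^{\theta}.
\]
The residual term vanishes by $f(s,\cdot)=0$. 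A rescaling $f\mapsto\lambda f$, $g\mapsto\mu g$ followed by optimization over $\lambda,\mu>0$ converts the $(\omega_f+\omega_g)^{\theta}$ bound into the product form $|f|_{q\text{-var}}\,|g|_{p\text{-var}}$ stated in the theorem. Existence of the 2D Young integral then follows by applying the same one-step estimate to differences of Riemann sums indexed by two nested partitions to establish the Cauchy property as the meshes tend to zero.

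I expect the only genuine technical nuisance to be the two-dimensional bookkeeping under successive point-removals: tracking how the control $\omega$ is partitioned among the surviving sub-rectangles, and ensuring that the iteration terminates correctly in both coordinate directions. Each individual removal, however, reduces cleanly to the classical 1D Young inequality applied to the slice functions $\tilde f,\tilde g$---this is where the boundary vanishing hypothesis $f(s,\cdot)=f(\cdot,u)=0$ is used, both to make $\tilde f(u)=0$ and to kill the final residual term---so the essentially two-dimensional ingredient is combinatorial rather than analytic.
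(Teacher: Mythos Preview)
Your approach is correct and is essentially the paper's: both reduce the 2D one-point-removal step to a 1D Young estimate applied to the slice functions $y\mapsto f(x,y)-f(x',y)$ and $y\mapsto g\!\left(\begin{smallmatrix}x\\x'\end{smallmatrix},\begin{smallmatrix}u\\y\end{smallmatrix}\right)$, then iterate via pigeonhole on a superadditive control. The paper packages this slightly differently by first passing to the 1D Young integral in the second variable (so point-removal is needed in only one direction) and by taking the control $\omega=\omega_f^{1/q}\omega_g^{1/p}$ from the outset---this is itself superadditive by H\"older since $1/p+1/q>1$ and delivers the product bound $|f|_{q\text{-var}}\,|g|_{p\text{-var}}$ directly, making your final rescaling step unnecessary.
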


\begin{proof}
Let $\omega _{f},\omega _{g}$ be controls dominating the $q$-variation of $f$
and $p$-variation of $g$, and let $\omega =\omega _{f}^{1/q}\omega
_{g}^{1/p}.$ Observe that by H\"{o}lder inequality, $\omega $ itself is a
control For a fixed $x,x^{\prime }\in \left[ s,t\right] ,$ define the
functions $f_{x},g_{x}$ by 
\begin{eqnarray*}
f_{x,x^{\prime }}\left( y\right) &=&f\left( x,y\right) -f\left( x^{\prime
},y\right) ,\text{ \ }y\in \left[ u,v\right] , \\
g_{x,x^{\prime }}\left( y\right) &=&f\left( x,y\right) -f\left( x^{\prime
},y\right) ,\text{\ }y\in \left[ u,v\right] .
\end{eqnarray*}%
Observe that $y\rightarrow f_{x,x^{\prime }}\left( y\right) $ (resp. $%
y\rightarrow g_{x,x^{\prime }}\left( y\right) $) is of finite $q$-variation
(resp. $p$-variation) controlled by $\left( y,y^{\prime }\right) \rightarrow
\omega _{f}\left( \left[ x,x^{\prime }\right] \times \left[ y,y^{\prime }%
\right] \right) $ (resp. by $\left( y,y^{\prime }\right) \rightarrow \omega
_{g}\left( \left[ x,x^{\prime }\right] \times \left[ y,y^{\prime }\right]
\right) $). That implies in particular by Young $1D$ estimates that 
\begin{equation*}
\left\vert \int_{u}^{v}f_{x_{1},x_{2}}\left( y\right) dg_{x_{3},x_{4}}\left(
y\right) \right\vert \leq C_{q,p}\omega _{f}\left( \left[ x_{1},x_{2}\right]
\times \left[ u,v\right] \right) ^{1/q}\omega _{g}\left( \left[ x_{3},x_{4}%
\right] \times \left[ u,v\right] \right) ^{1/p}
\end{equation*}

For subdivision $D=\left( t_{i}\right) $ of $\left[ s,t\right] ,$ let $%
I_{u,v}^{D}=\sum_{i}\int_{u}^{v}f_{s,x_{i}}\left( y\right)
dg_{x_{i},x_{i+1}}\left( y\right) .$ Now, let $D\backslash \left\{ i\right\} 
$ the subdivision $D$ with the point $t_{i}$ removed. It is easy to see that 
\begin{eqnarray*}
\left\vert I_{u,v}^{D}-I_{u,v}^{D\backslash \left\{ i\right\} }\right\vert
&=&\left\vert \int_{u}^{v}f_{x_{i-1},x_{i}}\left( y\right)
dg_{x_{i},x_{i+1}}\left( y\right) \right\vert \\
&\leq &C_{q,p}\omega _{f}\left( \left[ x_{i-1},x_{i}\right] \times \left[ u,v%
\right] \right) ^{1/q}\omega _{g}\left( \left[ x_{i},x_{i+1}\right] \times %
\left[ u,v\right] \right) ^{1/p} \\
&\leq &C_{q,p}\omega _{f}\left( \left[ x_{i-1},x_{i+1}\right] \times \left[
u,v\right] \right) ^{1/q}\omega _{g}\left( \left[ x_{i-1},x_{i+1}\right]
\times \left[ u,v\right] \right) ^{1/p} \\
&=&C_{q,p}\omega \left( \left[ x_{i-1},x_{i+1}\right] \times \left[ u,v%
\right] \right)
\end{eqnarray*}%
Choosing the point $i$ such that $\omega \left( \left[ x_{i-1},x_{i+1}\right]
\times \left[ u,v\right] \right) \leq \frac{2}{r-1}\omega \left( \left[ s,t%
\right] \times \left[ u,v\right] \right) ,$ where $r$ is the number of
points in the subdivision $D.$ Working from this point as in the proof of
Young $1D$ estimate, we therefore obtain%
\begin{eqnarray*}
\left\vert I_{u,v}^{D}\right\vert &\leq &C_{q,p}^{2}\omega \left( \left[ s,t%
\right] \times \left[ u,v\right] \right) \\
&=&C_{q,p}^{2}\omega _{f}\left( \left[ s,t\right] \times \left[ u,v\right]
\right) ^{1/q}\omega _{g}\left( \left[ s,t\right] \times \left[ u,v\right]
\right) ^{1/p}.
\end{eqnarray*}%
We finish as in Young 1D proof, \cite{young-36}.
\end{proof}

\section{One Dimensional Gaussian Processes and the $\protect\rho $%
-variation of their Covariance}

\subsection{Examples}

\subsubsection{Brownian Motion}

Standard Brownian motion $B$ on $\left[ 0,1\right] $ has covariance $%
R_{BM}\left( s,t\right) =\min \left( s,t\right) $. By Lemma \ref%
{1subdivision}, or directly from the definition, $R$ has finite $\rho $%
-variation with $\rho =1$, controlled by%
\begin{eqnarray*}
\omega \left( \left[ s,t\right] \times \left[ u,v\right] \right)
&=&\left\vert \left( s,t\right) \cap \left( u,v\right) \right\vert \\
&=&\int_{\left[ s,t\right] \times \left[ u,v\right] }\delta _{x=y}\left(
dxdy\right) ,
\end{eqnarray*}%
where $\delta $ is the Dirac mass. Since $\omega \left( \left[ s,t\right]
^{2}\right) =\left\vert t-s\right\vert ,$ it is H\"{o}lder dominated.

\subsubsection{(Gaussian) Martingales}

We know that a continuous Gaussian martingale $M$ has a deterministic
bracket so that%
\begin{equation*}
M\left( t\right) \overset{D}{=}B_{\left\langle M\right\rangle _{t}}.
\end{equation*}%
In particular,%
\begin{equation*}
R\left( s,t\right) =\min \left\{ \left\langle M\right\rangle
_{s},\left\langle M\right\rangle _{t}\right\} =\left\langle M\right\rangle
_{\min \left( s,t\right) }.
\end{equation*}

But the notion of $\rho $-variation is invariant under time-change and it
follows that $R$ has finite $1$-variation since $R_{BM}$ has finite $1$%
-variation. One should notice that $L^{2}$-martingales (without assuming a
Gaussian structure) have orthogonal increments i.e.%
\begin{equation*}
\mathbb{E}\left( X_{s,t}X_{u,v}\right) =0\text{ if }s<t<u<v
\end{equation*}%
and this alone will take care of the *usually difficult to handle)
off-diagonal part in the variation of the covariance $\left( s,t\right)
\mapsto \mathbb{E}\left( X_{s}X_{t}\right) $.

\subsubsection{Bridges, Ornstein-Uhlenbeck Process}

Gaussian Bridge processes are immediate generalizations of the Brownian
Bridge. Given a real-valued centered Gaussian process $X$ on $\left[ 0,1%
\right] $ with continuous covariance $R$ of finite $\rho $-variation the
corresponding Bridge is defined as%
\begin{equation*}
X_{B}\left( t\right) =X\left( t\right) -tX\left( 1\right)
\end{equation*}%
with covariance $R_{B}$. It is a simple exercise left to the reader to see
that $R_{B}$ has finite $\rho $-variation. Moreover, if $R$ has its $\rho $%
-variation over $\left[ s,t\right] ^{2}$ dominated by a H\"{o}lder control,
then $R_{B}$ has also $\rho $-variation dominated by a H\"{o}lder control.

The usual Ornstein-Uhlenbeck (stationary or started at a fixed point) also
has finite $1$-variation, H\"{o}lder dominated on $\left[ s,t\right] ^{2}$.
This is seen directly from the explicitly known covariance function and also
left to the reader.

\subsubsection{Fractional Brownian Motion}

Finding the precise $\rho $-variation for the covariance of the fractional
Brownian motion is more involved. For Hurst parameter $H>1/2$, fractional
Brownian Motion has H\"{o}lder sample paths with exponent greater than $1/2$
which is, for the purpose of this paper, a trivial case.

\begin{proposition}
\label{PropFBMrhoVariationReg}Let $B^{H}$ be fractional Brownian motion of
Hurst parameters $H\in (0,1/2].$ Then, its covariance is of finite $1/\left(
2H\right) $-variation. Moreover, its $\rho $-variation over $\left[ s,t%
\right] ^{2}$ is bounded by $C_{H}\left\vert t-s\right\vert .$
\end{proposition}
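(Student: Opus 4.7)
The plan is to exploit self-similarity and stationarity of increments of fractional Brownian motion to reduce to a single scale, and then to decompose the sum over a common partition into diagonal, adjacent, and far off-diagonal contributions. Since $R(s,t)=\tfrac{1}{2}(|s|^{2H}+|t|^{2H}-|t-s|^{2H})$, the one-variable terms cancel in every rectangular increment, so $\mathbb{E}[B^H_{s,t}B^H_{u,v}]$ depends only on the pairwise differences of $s,t,u,v$; combined with the $H$-self-similarity $B^H_{c\cdot}\stackrel{D}{=}c^HB^H_\cdot$, this yields the scaling identity $|R|_{\rho\text{-var};[s,t]^2}^{\rho}=(t-s)\,|R|_{\rho\text{-var};[0,1]^2}^{\rho}$ with $\rho=1/(2H)$. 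Hence it suffices to prove $|R|_{\rho\text{-var};[0,1]^2}<\infty$, and by Lemma \ref{1subdivision} I only need a uniform bound for $\sum_{i,j}|R_{ij}|^{1/(2H)}$ over partitions $0=t_0<\cdots<t_N=1$, writing $R_{ij}:=\mathbb{E}[B^H_{t_i,t_{i+1}}B^H_{t_j,t_{j+1}}]$.

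The diagonal contributes exactly $\sum_i\Delta_i=1$. For adjacent pairs, the identity $|R_{i,i+1}|=\tfrac{1}{2}[\Delta_i^{2H}+\Delta_{i+1}^{2H}-(\Delta_i+\Delta_{i+1})^{2H}]\le\tfrac{1}{2}\min(\Delta_i,\Delta_{i+1})^{2H}$, via sub-additivity of $x\mapsto x^{2H}$ for $2H\le 1$, followed by raising to $1/(2H)$ and summing, also gives $O(1)$. The off-diagonal case $j>i+1$ (and by symmetry $i>j+1$) is the heart of the argument; it is trivial for $H=\tfrac{1}{2}$ by orthogonality of Brownian increments, and for $H<\tfrac{1}{2}$ I integrate once the mixed second partial $\partial_s\partial_tR(s,t)=H(2H-1)|t-s|^{2H-2}$ to write
\[
|R_{ij}|=H\int_{t_i}^{t_{i+1}}\bigl[(t_j-x)^{2H-1}-(t_{j+1}-x)^{2H-1}\bigr]\,dx.
\]

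The key step is to combine Minkowski's integral inequality (valid since $\rho=1/(2H)\ge 1$) with the elementary convexity estimate $(a-b)^{\rho}\le a^{\rho}-b^{\rho}$ for $a\ge b\ge 0$, which makes the sum over $j$ pointwise telescoping in $x$:
\[
\sum_{j>i+1}\bigl[(t_j-x)^{(2H-1)\rho}-(t_{j+1}-x)^{(2H-1)\rho}\bigr]\le(t_{i+2}-x)^{(2H-1)\rho}.
\]
Taking the $\rho$-th root and integrating over $x\in[t_i,t_{i+1}]$, with one more use of sub-additivity of $x\mapsto x^{2H}$, yields $\bigl(\sum_{j>i+1}|R_{ij}|^{\rho}\bigr)^{1/\rho}\le\tfrac{1}{2}\Delta_i^{2H}$, so $\sum_{j>i+1}|R_{ij}|^{\rho}\le 2^{-\rho}\Delta_i$, summing to a constant over $i$. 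The main obstacle is precisely this off-diagonal estimate: a naive pointwise bound like $|R_{ij}|\le H(1-2H)\Delta_i\Delta_j(t_j-t_{i+1})^{2H-2}$ followed by direct summation is \emph{not} uniformly bounded over partitions (it blows up on partitions that mix very large and very small intervals), so the Minkowski-plus-telescoping device is the clean trick that leverages both the monotone-difference structure of $(t_j-x)^{2H-1}-(t_{j+1}-x)^{2H-1}$ in $j$ and the preservation of telescoping under $\rho$-th powers via $(a-b)^{\rho}\le a^{\rho}-b^{\rho}$.
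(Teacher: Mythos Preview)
Your proof is correct but follows a genuinely different route from the paper's.

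The paper never splits into diagonal/adjacent/far and never writes $R_{ij}$ as an integral. Instead it exploits directly that for $H\le 1/2$ the off-diagonal covariances $\mathbb{E}[B^H_{t_i,t_{i+1}}B^H_{t_j,t_{j+1}}]$ are all \emph{negative}, so for fixed $i$ one may use $\sum_{j\neq i}|R_{ij}|^{\rho}\le\bigl(\sum_{j\neq i}|R_{ij}|\bigr)^{\rho}=\bigl|\sum_{j\neq i}R_{ij}\bigr|^{\rho}$, and then the inner sum telescopes algebraically via $\sum_j B^H_{t_j,t_{j+1}}=B^H_{s,t}$. This reduces everything to the single estimate $|\mathbb{E}(B^H_{u,v}B^H_{s,t})|\le C_H|v-u|^{2H}$ for $[u,v]\subset[s,t]$, proved by a two-line computation with the explicit covariance. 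Your argument replaces this sign-based algebraic telescoping by an analytic one: the integral representation plus Minkowski plus $(a-b)^{\rho}\le a^{\rho}-b^{\rho}$ makes the sum over $j$ telescope \emph{pointwise in $x$} rather than via cancellation of increments. Both devices ultimately encode the same convexity (your inequality $(a-b)^{\rho}\le a^{\rho}-b^{\rho}$ is the two-term case of the paper's $\sum a_j^{\rho}\le(\sum a_j)^{\rho}$), but the paper's version is shorter and needs no integral representation, while yours is more robust in that it would survive in situations where the off-diagonal sign is not constant but a monotone integral kernel is available. Your preliminary self-similarity reduction to $[0,1]$ is also a nice touch absent from the paper, which works on $[s,t]$ throughout.
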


\begin{proof}
Let $D=\left\{ t_{i}\right\} $ be a dissection of $\left[ s,t\right] $, and
let us look at 
\begin{equation*}
\sum_{i,j}\left\vert \mathbb{E}\left(
B_{t_{i},t_{i+1}}^{H}B_{t_{j,}t_{j+1}}^{H}\right) \right\vert ^{\frac{1}{2H}%
}.
\end{equation*}%
For a fixed $i$ and $i\neq j,$ as $H\leq \frac{1}{2},$ $\mathbb{E}\left(
B_{t_{i},t_{i+1}}^{H}B_{t_{j,}t_{j+1}}^{H}\right) $ is negative, hence, 
\begin{eqnarray*}
&&\sum_{j}\left\vert \mathbb{E}\left(
B_{t_{i},t_{i+1}}^{H}B_{t_{j,}t_{j+1}}^{H}\right) \right\vert ^{\frac{1}{2H}}
\\
&\leq &\sum_{j\neq i}\left\vert \mathbb{E}\left(
B_{t_{i},t_{i+1}}^{H}B_{t_{j,}t_{j+1}}^{H}\right) \right\vert ^{\frac{1}{2H}%
}+\mathbb{E}\left( \left\vert B_{t_{i},t_{i+1}}^{H}\right\vert ^{2}\right) ^{%
\frac{1}{2H}} \\
&\leq &\left\vert \mathbb{E}\left( \sum_{j\neq
i}B_{t_{i},t_{i+1}}^{H}B_{t_{j,}t_{j+1}}^{H}\right) \right\vert ^{\frac{1}{2H%
}}+\mathbb{E}\left( \left\vert B_{t_{i},t_{i+1}}^{H}\right\vert ^{2}\right)
^{\frac{1}{2H}} \\
&\leq &\left( 2^{\frac{1}{2H}-1}\left\vert \mathbb{E}\left(
\sum_{j}B_{t_{i},t_{i+1}}^{H}B_{t_{j,}t_{j+1}}^{H}\right) \right\vert ^{%
\frac{1}{2H}}+2^{\frac{1}{2H}-1}\mathbb{E}\left( \left\vert
B_{t_{i},t_{i+1}}^{H}\right\vert ^{2}\right) ^{\frac{1}{2H}}\right) \\
&&+\mathbb{E}\left( \left\vert B_{t_{i},t_{i+1}}^{H}\right\vert ^{2}\right)
^{\frac{1}{2H}} \\
&\leq &C_{H}\left\vert \mathbb{E}\left(
B_{t_{i},t_{i+1}}^{H}B_{s,t}^{H}\right) \right\vert ^{\frac{1}{2H}}+C_{H}%
\mathbb{E}\left( \left\vert B_{t_{i},t_{i+1}}^{H}\right\vert ^{2}\right) ^{%
\frac{1}{2H}}.
\end{eqnarray*}%
Hence, 
\begin{eqnarray*}
\sum_{i,j}\left\vert \mathbb{E}\left(
B_{t_{i},t_{i+1}}^{H}B_{t_{j,}t_{j+1}}^{H}\right) \right\vert ^{\frac{1}{2H}%
} &\leq &C_{H}\sum_{i}\mathbb{E}\left( \left\vert
B_{t_{i},t_{i+1}}^{H}\right\vert ^{2}\right) ^{\frac{1}{2H}} \\
&&+C_{H}\sum_{i}\left\vert \mathbb{E}\left(
B_{t_{i},t_{i+1}}^{H}B_{s,t}^{H}\right) \right\vert ^{\frac{1}{2H}}.
\end{eqnarray*}%
The first term is equal to $C_{H}\left\vert t-s\right\vert ,$ so we just
need to prove that\footnote{$h(\cdot )=E\left( B_{\cdot
}^{H}B_{s,t}^{H}\right) $ defines a Cameron-Martin path and estimate (\ref%
{ForPfOfRhoVarForFbm}) says that $\left\vert h\right\vert _{1/(2H)\text{-var;%
}\left[ s,t\right] }\leq C\left\vert t-s\right\vert ^{2H}$. It is
instructive to compare this with the Section on Cameron Martin spaces.} 
\begin{equation}
\sum_{i}\left\vert \mathbb{E}\left( B_{t_{i},t_{i+1}}^{H}B_{s,t}^{H}\right)
\right\vert ^{\frac{1}{2H}}\leq C_{H}\left\vert t-s\right\vert .
\label{ForPfOfRhoVarForFbm}
\end{equation}%
To achieve this, it will be enough to prove that for $\left[ u,v\right]
\subset \left[ s,t\right] ,$%
\begin{equation*}
\left\vert \mathbb{E}\left( B_{u,v}^{H}B_{s,t}^{H}\right) \right\vert \leq
C_{H}\left\vert v-u\right\vert ^{2H}.
\end{equation*}%
First recall that as $2H<1,$ if $0<x<y,$ then $\left( x+y\right)
^{2H}-x^{2H}\leq y^{2H}.$ Hence, using this inequality and the triangle
inequality,%
\begin{eqnarray*}
\left\vert \mathbb{E}\left( B_{u,v}^{H}B_{s,t}^{H}\right) \right\vert
&=&c_{H}\left\vert \left( t-v\right) ^{2H}+\left( u-s\right) ^{2H}-\left(
v-s\right) ^{2H}-\left( t-u\right) ^{2H}\right\vert \\
&\leq &c_{H}\left( \left( t-u\right) ^{2H}-\left( t-v\right) ^{2H}\right)
+c_{H}\left( \left( v-s\right) ^{2H}-\left( u-s\right) ^{2H}\right) \\
&\leq &2c_{H}\left( v-u\right) ^{2H}.
\end{eqnarray*}
\end{proof}

\subsubsection{Coutin-Qian condition on the covariance\label{CQcond}}

Coutin and Qian \cite{coutin-qian-02} constructed a rough paths over a class
of Gaussian process. We prove here that when we look at the $\rho $%
-variation of their covariance, they are not very different than fractional
Brownian motion\footnote{%
As remarked in more detail in the introduction, a slight generalization of
this condition appears in \cite{coutin-qian-02} and is applicable to certain
non-Gaussian processes.}.

\begin{definition}
A real-valued Gaussian process $X$ on $\left[ 0,1\right] $ satisfies the
Coutin-Qian conditions if for some $H$%
\begin{eqnarray}
\mathbb{E}\left( \left\vert X_{s,t}\right\vert ^{2}\right) &\leq
&c_{H}\left\vert t-s\right\vert ^{2H},\text{ for all }s<t,  \label{ass1} \\
\left\vert \mathbb{E}\left( X_{s,s+h}X_{t,t+h}\right) \right\vert &\leq
&c_{H}\left\vert t-s\right\vert ^{2H-2}h^{2},\text{ for all }s,t,h\text{
with }h<t-s.  \label{ass2}
\end{eqnarray}
\end{definition}

\begin{lemma}
\label{QCboundedbyfBM}Let $X$ be a Gaussian process on $\left[ 0,1\right] $
that satisfies the Coutin-Qian conditions for some $H>0,$ and let $\omega
_{H}$ the control of the $\frac{1}{2H}$-variation of the covariance of the
fractional Brownian Motion with Hurst parameter $H$. Then, for $s\leq t$ and 
$u\leq v,$ 
\begin{equation*}
\left\vert \mathbb{E}\left( X_{s,t}X_{u,v}\right) \right\vert \leq
C_{H}\omega _{H}\left( \left[ s,t\right] \times \left[ u,v\right] \right)
^{2H}.
\end{equation*}%
In particular, the covariance of $X$ has finite $\frac{1}{2H}$-variation.
\end{lemma}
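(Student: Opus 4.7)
The overall plan is to reduce $|\mathbb{E}(X_{s,t}X_{u,v})|$ to a quantity of essentially the same form as the fBm covariance, then invoke Proposition~\ref{PropFBMrhoVariationReg} which already provides a control $\omega_H$ for the $1/(2H)$-variation of the fBm covariance. Tautologically, control means $|\mathbb{E}(B^H_{s,t}B^H_{u,v})|\le \omega_H([s,t]\times[u,v])^{2H}$, so it suffices to prove $|\mathbb{E}(X_{s,t}X_{u,v})|\le C_H|\mathbb{E}(B^H_{s,t}B^H_{u,v})|$ (plus an on-diagonal correction coming from (\ref{ass1})). I would split into cases according to the relative position of $[s,t]$ and $[u,v]$.

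First consider two \emph{disjoint} intervals, say $s<t\le u<v$. Partition $[s,t]$ and $[u,v]$ into sub-intervals of common length $h\to 0$, expand $\mathbb{E}(X_{s,t}X_{u,v})$ bilinearly, and apply the Coutin--Qian condition (\ref{ass2}) to each term (the applicability hypothesis $h<(u+jh)-(s+ih)$ is free since the sub-intervals are pairwise disjoint). This yields
$$|\mathbb{E}(X_{s,t}X_{u,v})|\le c_H\sum_{i,j}\bigl((u-s)+(j-i)h\bigr)^{2H-2}h^{2}.$$
Passing to the limit $h\to 0$, the right-hand side becomes $c_H\int_{s}^{t}\!\int_{u}^{v}(y-x)^{2H-2}\,dy\,dx$; an elementary antiderivative calculation gives
$$\int_{s}^{t}\!\int_{u}^{v}(y-x)^{2H-2}\,dy\,dx=\frac{(u-t)^{2H}+(v-s)^{2H}-(u-s)^{2H}-(v-t)^{2H}}{2H(2H-1)},$$
which is precisely $\tfrac{1}{H(2H-1)}\mathbb{E}(B^H_{s,t}B^H_{u,v})$. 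Hence, for $H\in(0,1/2)$,
$$|\mathbb{E}(X_{s,t}X_{u,v})|\le\frac{c_H}{H(1-2H)}|\mathbb{E}(B^H_{s,t}B^H_{u,v})|\le C_H\,\omega_H\bigl([s,t]\times[u,v]\bigr)^{2H};$$
the case $H=1/2$ is handled directly since BM increments on disjoint intervals are uncorrelated, and the Coutin--Qian bound collapses analogously in the limit.

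The diagonal case $[s,t]=[u,v]$ follows directly from (\ref{ass1}): $\mathbb{E}(X_{s,t}^{2})\le c_H|t-s|^{2H}$, combined with the trivial lower bound $\omega_H([s,t]^{2})\ge |t-s|$ obtained from the one-point dissection in the definition of the $1/(2H)$-variation of the fBm covariance. The general overlapping case reduces to the previous two by splitting at the ordered points of $\{s,t,u,v\}$ into at most three disjoint (or identical) sub-intervals, expanding $X_{s,t}$ and $X_{u,v}$ bilinearly in the resulting increments, and invoking monotonicity of the $2D$ control $\omega_H$ on sub-rectangles. The main technical obstacle here is the sign/constant bookkeeping in the Riemann sum limit: because $2H-1<0$, one must verify that the Coutin--Qian bound $c_H|t-s|^{2H-2}h^{2}$, the resulting double integral, and the fBm covariance itself all carry compatible signs so that the passage from absolute values to signed expressions only costs an overall $H$-dependent multiplicative constant.

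Finally, the ``in particular'' assertion follows by summing the just-proved estimate over any dissection $D=(t_{i})$ of $[0,1]$ and invoking super\-additivity of the $2D$ control:
$$\sum_{i,j}|\mathbb{E}(X_{t_{i},t_{i+1}}X_{t_{j},t_{j+1}})|^{1/(2H)}\le C_{H}^{1/(2H)}\sum_{i,j}\omega_H\bigl([t_{i},t_{i+1}]\times[t_{j},t_{j+1}]\bigr)\le C_{H}^{1/(2H)}\omega_H([0,1]^{2})<\infty,$$
so that the covariance of $X$ has finite $1/(2H)$-variation, as claimed.
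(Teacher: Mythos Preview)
Your approach is essentially the same as the paper's: reduce to the diagonal case (handled by (\ref{ass1})) and the disjoint case (handled by partitioning into mesh-$h$ sub-intervals, applying (\ref{ass2}) termwise, and recognising the resulting double integral as a constant multiple of the fBm covariance increment). The only cosmetic differences are that the paper invokes Lemma~\ref{1subdivision} to dispatch the overlapping case at the cost of a factor $3^{1/(2H)-1}$, and that it bounds the Riemann sum by a \emph{shifted} integral $\int_{u-h}^{v-h}\int_s^t |y-x|^{2H-2}\,dx\,dy$ (valid for every $h$, then sends $h\to 0$ by continuity of the fBm covariance) rather than asserting Riemann-sum convergence directly---a slightly cleaner manoeuvre when $t=u$ and the integrand is singular on the boundary.
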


\begin{proof}
Working as in lemma \ref{1subdivision}, at the price of a factor $3^{\frac{1%
}{2H}-1},$ we can restrict ourselves to the cases $s=u\leq t=v,$ and $s\leq
t\leq u\leq v.$ The first case it given by assumption (\ref{ass1}), so let
us focus on the second one. Assume first we can write $t-s=nh,$ $v-u=mh.$
and that $u-t>h.$ Then,%
\begin{equation*}
\mathbb{E}\left( X_{s,t}X_{u,v}\right) =\sum_{k=0}^{n-1}\sum_{l=0}^{m-1}%
\mathbb{E}\left( X_{s+kh,s+\left( k+1\right) h}X_{t+lh,t+\left( l+1\right)
h}\right) .
\end{equation*}%
Using the triangle inequality and our assumption,%
\begin{eqnarray*}
\left\vert \mathbb{E}\left( X_{s,t}X_{u,v}\right) \right\vert
&=&\sum_{k=0}^{n-1}\sum_{l=0}^{m-1}\left\vert \mathbb{E}\left(
X_{s+kh,s+\left( k+1\right) h}X_{u+lh,u+\left( l+1\right) h}\right)
\right\vert \\
&\leq &C_{H}\sum_{k=0}^{n-1}\sum_{l=0}^{m-1}\left\vert \left( u+lh\right)
-\left( s+kh\right) \right\vert ^{2H-2}h^{2} \\
&\leq &C_{H}\sum_{k=0}^{n-1}\sum_{l=0}^{m-1}\int_{u+\left( l-1\right)
h}^{u+lh}\int_{s+kh}^{s+\left( k+1\right) h}\left\vert y-x\right\vert
^{2H-2}dxdy \\
&\leq &C_{H}\int_{u-h}^{v-h}\int_{s}^{t}\left\vert y-x\right\vert ^{2H-2}dxdy
\\
&\leq &C_{H}\left\vert \mathbb{E}\left( B_{u-h,v-h}^{H}B_{s,t}^{H}\right)
\right\vert
\end{eqnarray*}%
Letting $h$ tends to $0,$ by continuity, we easily see that 
\begin{equation*}
\left\vert \mathbb{E}\left( X_{s,t}X_{u,v}\right) \right\vert \leq
C_{H}\left\vert \mathbb{E}\left( B_{u,v}^{H}B_{s,t}^{H}\right) \right\vert ,
\end{equation*}%
which implies our statement for $s\leq t\leq u\leq v.$ That concludes the
proof.
\end{proof}

\subsection{Cameron Martin space}

We consider a real-valued centered Gaussian process $X$ on $\left[ 0,1\right]
$ with continuous sample paths and covariance $R$. The associated
Cameron-Martin space $\mathcal{H}\subset C\left( \left[ 0,1\right] \right) $
consists of paths%
\begin{equation*}
t\mapsto h_{t}=\mathbb{E}\left( ZX_{t}\right)
\end{equation*}%
where $Z$ is a $\left\{ \sigma \left( X_{t}\right) ,t\in \left[ 0,1\right]
\right\} $-measurable, Gaussian random variable. If $h^{\prime }={E}\left(
Z^{\prime }X_{\cdot }\right) $ denotes another element in $\mathcal{H}$, the
inner product on$\mathcal{\ H}$ is defined as 
\begin{equation*}
\left\langle h,h^{\prime }\right\rangle _{\mathcal{H}}=\mathbb{E}\left(
ZZ^{\prime }\right) .
\end{equation*}

Regularity of Cameron-Martin paths is not only a natural question in its own
right but will prove crucial in our later sections on support theorem and
large deviations.

\begin{proposition}
\label{CM_pVar_embedding}If $R$ is of finite $\rho $-variation, then $%
\mathcal{H}\subset C^{\rho \text{$-var$}}$. More, precisely, for all $h\in 
\mathcal{H}$%
\begin{equation*}
\left\vert h\right\vert _{\rho \text{$-var$;}\left[ s,t\right] }\leq \sqrt{%
\left\langle h,h\right\rangle _{\mathcal{H}}}\sqrt{R_{\rho \text{$-var$;}%
\left[ s,t\right] ^{2}}}.
\end{equation*}
\end{proposition}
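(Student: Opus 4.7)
The plan is to write $h_{t_i,t_{i+1}} = \mathbb{E}(Z X_{t_i,t_{i+1}})$, where $Z$ is the Gaussian variable associated to $h$ via $h_t = \mathbb{E}(Z X_t)$, and then to control the $\rho$-variation sum via a duality argument that turns the problem into estimating a quadratic form in $R$.

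Fix a dissection $D = (t_i)$ of $[s,t]$. Since $\ell^{\rho}$ is dual to $\ell^{\rho'}$ (with $1/\rho + 1/\rho' = 1$), we have
\begin{equation*}
\left(\sum_i |h_{t_i,t_{i+1}}|^{\rho}\right)^{1/\rho} = \sup_{a}\sum_i a_i\, h_{t_i,t_{i+1}},
\end{equation*}
where the supremum is over real sequences with $\sum_i |a_i|^{\rho'}\leq 1$. For any such $a$, set $Y = \sum_i a_i X_{t_i,t_{i+1}}$; then $\sum_i a_i\, h_{t_i,t_{i+1}} = \mathbb{E}(ZY)$, and Cauchy--Schwarz gives
\begin{equation*}
|\mathbb{E}(ZY)| \leq \sqrt{\mathbb{E}(Z^2)}\,\sqrt{\mathbb{E}(Y^2)} = \sqrt{\langle h,h\rangle_{\mathcal{H}}}\,\sqrt{\mathbb{E}(Y^2)}.
\end{equation*}

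The second factor is the bridge to the $2$D $\rho$-variation of $R$. Expanding,
\begin{equation*}
\mathbb{E}(Y^2) = \sum_{i,j} a_i a_j\, R\!\left(\begin{array}{c} t_i \\ t_{i+1}\end{array},\begin{array}{c} t_j \\ t_{j+1}\end{array}\right).
\end{equation*}
Apply Hölder in the double sum with exponents $\rho',\rho$: the $R$-factor is controlled by $|R|_{\rho\text{-var};[s,t]^2}$ (by definition of 2D $\rho$-variation, restricted to $[s,t]\times[s,t]$), while the $a$-factor equals $\left(\sum_{i,j}|a_i|^{\rho'}|a_j|^{\rho'}\right)^{1/\rho'} = \left(\sum_i |a_i|^{\rho'}\right)^{2/\rho'} \leq 1$. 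Thus $\mathbb{E}(Y^2) \leq |R|_{\rho\text{-var};[s,t]^2}$, and combining with the previous display yields
\begin{equation*}
\left(\sum_i |h_{t_i,t_{i+1}}|^{\rho}\right)^{1/\rho} \leq \sqrt{\langle h,h\rangle_{\mathcal{H}}}\,\sqrt{|R|_{\rho\text{-var};[s,t]^2}}.
\end{equation*}
Taking the supremum over $D$ gives the stated estimate, and in particular $h\in C^{\rho\text{-var}}$.

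There is no serious obstacle: the argument is essentially the $\ell^{\rho}$-duality trick combined with Cauchy--Schwarz in $L^2(\Omega)$ and Hölder in the rectangular-increment sum. The only mildly delicate point is the justification that $h_t = \mathbb{E}(ZX_t)$ extends to all of $\mathcal{H}$ (not just finite linear combinations), but this is standard: the first Wiener chaos is the $L^2$-closure of $\mathrm{span}\{X_t\}$, and both sides of the inequality are continuous in $Z$ under the $L^2$-norm, so density concludes.
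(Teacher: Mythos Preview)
Your proof is correct and follows essentially the same approach as the paper: $\ell^{\rho}$--$\ell^{\rho'}$ duality to linearize the $\rho$-variation sum, Cauchy--Schwarz in $L^{2}(\Omega)$ to separate $Z$ from the linear combination of increments, and H\"older on the double sum to bring in $|R|_{\rho\text{-var};[s,t]^{2}}$. The only cosmetic difference is that you name the linear combination $Y$ explicitly, and you add a closing remark on extending from finite linear combinations to all of $\mathcal{H}$; the paper omits both but the argument is the same.
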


\begin{proof}
Let $h=\mathbb{E}\left( ZX_{.}\right) ,$ and $\left( t_{j}\right) $ a
subdivision of $\left[ s,t\right] .$ We write $\left\vert x\right\vert
_{l^{r}}=\left( \sum_{i}x_{i}^{r}\right) ^{1/r}$ for $r\geq 1.$ Let $\rho
^{\prime }$ be the conjugate of $\rho :$%
\begin{eqnarray*}
&&\left( \sum_{j}\left\vert h_{t_{j},t_{j+1}}\right\vert ^{\rho }\right)
^{1/\rho } \\
&=&\sup_{\beta ,\left\vert \beta \right\vert _{l^{\rho ^{\prime }}}\leq
1}\sum_{j}\beta _{j}h_{t_{j},t_{j+1}}=\sup_{\beta ,\left\vert \beta
\right\vert _{l^{\rho ^{\prime }}}\leq 1}\mathbb{E}\left( Z\sum_{j}\beta
_{j}X_{t_{j},t_{j+1}}\right) \\
&\leq &\sqrt{\mathbb{E}\left( Z^{2}\right) }\sup_{\beta ,\left\vert \beta
\right\vert _{l^{\rho ^{\prime }}}\leq 1}\sqrt{\sum_{j,k}\beta _{j}\beta _{k}%
\mathbb{E}\left( X_{t_{j},t_{j+1}}X_{t_{k},t_{k+1}}\right) } \\
&\leq &\sqrt{\left\langle h,h\right\rangle _{\mathcal{H}}}\sup_{\beta
,\left\vert \beta \right\vert _{l^{\rho ^{\prime }}}\leq 1}\sqrt{\left(
\sum_{j,k}\left\vert \beta _{j}\right\vert ^{\rho ^{\prime }}\left\vert
\beta _{k}\right\vert ^{\rho ^{\prime }}\right) ^{\frac{1}{\rho ^{\prime }}%
}\left( \sum_{j,k}\left\vert \mathbb{E}\left(
X_{t_{j},t_{j+1}}X_{t_{k},t_{k+1}}\right) \right\vert ^{\rho }\right) ^{%
\frac{1}{\rho }}} \\
&\leq &\sqrt{\left\langle h,h\right\rangle _{\mathcal{H}}}\left(
\sum_{j,k}\left\vert \mathbb{E}\left(
X_{t_{j},t_{j+1}}X_{t_{k},t_{k+1}}\right) \right\vert ^{\rho }\right)
^{1/\left( 2\rho \right) } \\
&\leq &\sqrt{\left\langle h,h\right\rangle _{\mathcal{H}}}\sqrt{R_{\rho -var,%
\left[ s,t\right] ^{2}}}.
\end{eqnarray*}%
Optimizing over all subdivision $\left( t_{j}\right) $ of $\left[ s,t\right]
,$ we obtain our result.
\end{proof}

\begin{remark}
Observe that for Brownian motion ($\rho =1$), this is a sharp result.
\end{remark}

\subsection{Piecewise-linear Approximations}

Let $X$ be centered real-valued continuous Gaussian process on $\left[ 0,1%
\right] $ with covariance $R$ assumed to be of finite $\rho $-variation,
dominated by some 2D control function $\omega $. Let $D=\left\{
t_{i}\right\} $ be a dissection of $\left[ 0,1\right] $ and let $X^{D}$
denote the piecewise-linear approximation to $X$ i.e. $X_{t}^{D}=X_{t}$ for $%
t\in D$ and $X^{D}$ is linear between two successive points of $D$. If $%
\left( s,t\right) \times \left( u,v\right) \subset \left(
t_{i},t_{i+1}\right) \times \left( t_{j},t_{j+1}\right) $ then the
covariance of $X^{D}$, denoted by $R^{D}$, is given by

\begin{eqnarray}
R^{D}\left( 
\begin{array}{c}
s \\ 
t%
\end{array}%
,%
\begin{array}{c}
u \\ 
v%
\end{array}%
\right) &=&\mathbb{E}\left( \int_{s}^{t}\dot{X}_{r}^{D}dr\int_{u}^{v}\dot{X}%
_{r}^{D}dr\right)  \label{RDonDatom} \\
&=&\frac{t-s}{t_{i+1}-t_{i}}\times \frac{v-u}{t_{j+1}-t_{j}}R\left( 
\begin{array}{c}
t_{i} \\ 
t_{i+1}%
\end{array}%
,%
\begin{array}{c}
t_{j} \\ 
t_{j+1}%
\end{array}%
\right) .  \notag
\end{eqnarray}%
The aim of this section is to show that the $\rho $-variation of $R^{D}$ is
fully comparable with the $\rho $-variation of $R$. As usual, given $s\in %
\left[ 0,1\right] ,$ we write $s_{D}$ the greatest element of $D$ such that $%
s_{D}\leq s,$ and $s^{D}$ the smallest element of $D$ such that $s<s^{D}.$

\begin{lemma}
(i) For all $u_{1},v_{1}$,$u_{2},v_{2}\in D,$%
\begin{equation*}
\left\vert R^{D}\right\vert _{\rho -var,\left[ u_{1},v_{1}\right] \times %
\left[ u_{2},v_{2}\right] }\leq 9^{1-\frac{1}{\rho }}\left\vert R\right\vert
_{\rho -var,\left[ u_{1},v_{1}\right] \times \left[ u_{2},v_{2}\right] },
\end{equation*}%
\newline
(ii) For all $s,t\in \left[ 0,1\right] $, with $s_{D}\leq s,t\leq s^{D},$
for all $u,v\in D,$ 
\begin{equation*}
\left\vert R^{D}\right\vert _{\rho -var,\left[ s,t\right] \times \left[ u,v%
\right] }\leq 9^{1-\frac{1}{\rho }}\left\vert \frac{t-s}{s^{D}-s_{D}}%
\right\vert \mathbb{E}\left( \left\vert X_{s_{D},s^{D}}\right\vert
^{2}\right) ^{1/2}\left\vert R\right\vert _{\rho -var,\left[ u,v\right]
^{2}}^{1/2},
\end{equation*}%
\newline
(iii) For all $s_{1},t_{1},s_{2},t_{2}\in \left[ 0,1\right] $, with $%
s_{1,D}\leq s_{1},t_{1}\leq s_{1}^{D},$ $s_{2,D}\leq s_{2},t_{2}\leq
s_{2}^{D},$%
\begin{equation*}
\left\vert R^{D}\right\vert _{\rho -var,\left[ s_{1},t_{1}\right] \times %
\left[ s_{2},t_{2}\right] }\leq \left\vert \frac{t_{1}-s_{1}}{%
s_{1}^{D}-s_{1,D}}\right\vert \left\vert \frac{t_{2}-s_{2}}{s_{2}^{D}-s_{2,D}%
}\right\vert \left\vert \mathbb{E}\left(
X_{s_{1,D},s^{1,D}}X_{s_{2,D},s^{2,D}}\right) \right\vert .
\end{equation*}
\end{lemma}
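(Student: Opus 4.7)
My plan is to handle (iii), (i), (ii) in that order; part (ii) invokes (i) once, while (iii) is self-contained.

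For (iii), the bilinearity of $X^D$ inside atoms reduces the problem to a product. Each sub-interval $[s_k,s_{k+1}]$ lies in $[s_{1,D},s_1^D]$ and each $[t_l,t_{l+1}]$ in $[s_{2,D},s_2^D]$, so linear interpolation gives
\[
R^D([s_k,s_{k+1}]\times[t_l,t_{l+1}])=\tfrac{s_{k+1}-s_k}{s_1^D-s_{1,D}}\cdot\tfrac{t_{l+1}-t_l}{s_2^D-s_{2,D}}\cdot\mathbb{E}(X_{s_{1,D},s_1^D}X_{s_{2,D},s_2^D}).
\]
The double Riemann--Stieltjes sum factors, and the power-mean inequality $\sum a_k^\rho\le(\sum a_k)^\rho$ (valid for $a_k\ge 0$, $\rho\ge 1$) applied in each coordinate delivers the bound upon taking a $\rho$-th root.

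For (i), I would introduce a \emph{trisection} on each axis: for each $[s_k,s_{k+1}]$ with $D\cap[s_k,s_{k+1}]\neq\emptyset$ set $a_k=\min(D\cap[s_k,s_{k+1}])$, $b_k=\max(D\cap[s_k,s_{k+1}])$ and split into $[s_k,a_k]\cup[a_k,b_k]\cup[b_k,s_{k+1}]$; otherwise do not split. The crucial feature is that this yields at most three pieces per coarse interval regardless of how many $D$-atoms it spans, because the middle $M$-piece (with $D$-endpoints) is not further cut. The analogous split on $[t_l,t_{l+1}]$ decomposes each $R_{k,l}$ into at most nine sub-rectangles $R_{k,l}^{\alpha,\beta}$. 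Rectangular additivity plus Jensen's inequality yield
\[
|R^D(R_{k,l})|^\rho\le 9^{\rho-1}\sum_{\alpha,\beta}|R^D(R_{k,l}^{\alpha,\beta})|^\rho.
\]
Case by case: on an $M$-side the endpoints lie in $D$, so $X^D=X$ and no scaling arises; on an $L$- or $R$-side, bilinearity of $X^D$ inside the ambient atom yields a ratio factor in $[0,1]$ equal to the partial side's length over the atom's length. Summing over $k,l$ and grouping by the atom in which a partial piece lives, the partial sub-intervals belonging to one atom are disjoint, so their ratios sum to at most one and hence (by power-mean) their $\rho$-th powers sum to at most one. Each of the nine sub-sums therefore collapses to $\sum|R(\cdot)|^\rho$ over a disjoint family of rectangles in $[u_1,v_1]\times[u_2,v_2]$, bounded by $|R|_{\rho\text{-var};[u_1,v_1]\times[u_2,v_2]}^\rho$; careful bookkeeping yields the constant $9^{1-1/\rho}$.

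For (ii), bilinearity of $X^D$ on the single $x$-atom $[s_D,s^D]$ factors the $x$-side exactly as in (iii), producing $\sum_k(\tfrac{s_{k+1}-s_k}{s^D-s_D})^\rho\le(\tfrac{t-s}{s^D-s_D})^\rho$ by power-mean. The residual single-index sum $\sum_l|\mathbb{E}(X_{s_D,s^D}X^D_{t_l,t_{l+1}})|^\rho$ is handled by the duality argument of Proposition \ref{CM_pVar_embedding}: dualize via $\ell^{\rho'}$ test-vectors $\beta$, apply Cauchy--Schwarz in the Gaussian $L^2$ space followed by Hölder on the resulting bilinear form in $\beta$, to obtain
\[
\Bigl(\sum_l|\mathbb{E}(X_{s_D,s^D}X^D_{t_l,t_{l+1}})|^\rho\Bigr)^{1/\rho}\le\sqrt{\mathbb{E}(X_{s_D,s^D}^2)}\cdot\sqrt{|R^D|_{\rho\text{-var};[u,v]^2}}.
\]
Invoking (i) with $u_1=u_2=u$, $v_1=v_2=v\in D$ replaces $|R^D|_{\rho\text{-var};[u,v]^2}$ by a constant multiple of $|R|_{\rho\text{-var};[u,v]^2}$, completing the proof. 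The main obstacle is the combinatorial bookkeeping in (i): one must verify, for each of the nine cross-case sums, that the $L$- and $R$-scaling factors, when regrouped by their ambient atom, correspond to disjoint partial sub-intervals and hence have $\rho$-th-power sum bounded by one. This is where the trisection is essential: a naïve refinement by all $D$-boundaries inside a coarse interval could produce unboundedly many pieces and destroy the Jensen estimate.
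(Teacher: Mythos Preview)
Your argument is the paper's: (iii) by bilinearity and the power-mean inequality (left to the reader in the paper), (i) by the left/middle/right decomposition of each $X^D$-increment plus Jensen, and (ii) by the Cameron--Martin embedding of Proposition~\ref{CM_pVar_embedding} applied to $h_\cdot=\mathbb E(X_{s_D,s^D}X^D_\cdot)$ followed by (i). One point to tighten in (i): bounding ``each of the nine sub-sums'' separately by $|R|_{\rho\text{-var}}^\rho$ would only yield the constant $9$; to reach $9^{1-1/\rho}$ the atom-grouping you describe must be applied across all partial pieces ($L$, $R$, \emph{and} the un-split intervals) at once, so that the combined ratio$^\rho$-weights at each $D$-atom sum to at most one and the target rectangles form a \emph{single} disjoint product family---this is precisely what the paper's collapse from nine to four terms (via $|c_i^L|^\rho+|c_{i-1}^R|^\rho\le 1$ at each subdivision point) encodes.
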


\begin{proof}
For the first point, let $\left( s_{i}\right) $ be a subdivision of $\left[
u_{1},v_{1}\right] $ and $\left( t_{j}\right) $ be a subdivision of $\left[
u_{2},v_{2}\right] .$ By definition of $X^{D},$%
\begin{eqnarray*}
X_{s_{i},s_{i+1}}^{D} &=&\frac{s_{i}^{D}-s_{i}}{s_{i}^{D}-s_{i,D}}%
X_{s_{i,D},s_{i}^{D}}+X_{s_{i}^{D},s_{i+1,D}}+\frac{s_{i+1}-s_{i+1,D}}{%
s_{i+1}^{D}-s_{i+1,D}}X_{s_{i+1,D},s_{i+1}^{D}}, \\
X_{t_{j},t_{j+1}}^{D} &=&\frac{t_{j}^{D}-t_{j}}{t_{j}^{D}-t_{j,D}}%
X_{t_{j,D},t_{j}^{D}}+X_{t_{j}^{D},t_{j+1,D}}+\frac{t_{j+1}-t_{j+1,D}}{%
t_{j+1}^{D}-t_{j+1,D}}X_{t_{j+1,D},t_{j+1}^{D}}.
\end{eqnarray*}%
Therefore, using H\"{o}lder inequality and the fact that $\left\vert \frac{%
s_{i}^{D}-s_{i}}{s_{i}^{D}-s_{i,D}}\right\vert ^{\rho }+\left\vert \frac{%
s_{i}-s_{i,D}}{s_{i}^{D}-s_{i,D}}\right\vert ^{\rho }$ and $\left\vert \frac{%
t_{j}^{D}-t_{j}}{t_{j}^{D}-t_{j,D}}\right\vert ^{\rho }+\left\vert \frac{%
t_{j}-t_{j,D}}{t_{j}^{D}-t_{j,D}}\right\vert ^{\rho }$ are less than $1$, we
see that $9^{1-\rho }\sum_{i,j}\left\vert \mathbb{E}\left(
X_{s_{i},s_{i+1}}^{D}X_{t_{j},t_{j+1}}^{D}\right) \right\vert ^{\rho }$ is
less than or equal to 
\begin{eqnarray*}
&&\sum_{i,j}\left\vert \mathbb{E}\left(
X_{s_{i,D},s_{i}^{D}}X_{t_{j,D},t_{j}^{D}}\right) \right\vert ^{\rho
}+\sum_{i,j}\left\vert \mathbb{E}\left(
X_{s_{i,D},s_{i}^{D}}X_{t_{j}^{D},t_{j+1,D}}\right) \right\vert ^{\rho } \\
&&+\sum_{i,j}\left\vert \mathbb{E}\left(
X_{s_{i}^{D},s_{i+1,D}}X_{t_{j,D},t_{j}^{D}}\right) \right\vert ^{\rho
}+\sum_{i,j}\left\vert \mathbb{E}\left(
X_{s_{i}^{D},s_{i+1,D}}X_{t_{j}^{D},t_{j+1,D}}\right) \right\vert ^{\rho }.
\end{eqnarray*}%
itself bound $\left\vert R\right\vert _{\rho -var,\left[ u_{1},v_{1}\right]
\times \left[ u_{2},v_{2}\right] }^{\rho }.$

The second estimate is a bit more subtle. Take $s,t\in \left[ 0,1\right] $,
with $s_{D}\leq s,t\leq s^{D},$ $u,v\in D,$ $\left( s_{i}\right) $ and $%
\left( t_{j}\right) $ subdivisions of $\left[ s,t\right] $ and $\left[ u,v%
\right] .$ Then, if $h_{t}^{i,D}=\mathbb{E}\left(
X_{s_{i},s_{i+1}}^{D}X_{t}^{D}\right) ,$ we know from Proposition \ref%
{CM_pVar_embedding} that 
\begin{eqnarray*}
\left\vert h^{i,D}\right\vert _{\rho -var,\left[ u,v\right] } &\leq
&\left\vert R^{D}\right\vert _{\rho -var,\left[ u,v\right] }^{1/2}\mathbb{E}%
\left( \left\vert X_{s,t}^{D}\right\vert ^{2}\right) ^{1/2} \\
&\leq &9^{\rho -1}\frac{s_{i+1}-s_{i}}{s^{D}-s_{D}}\left\vert R\right\vert
_{\rho -var,\left[ u,v\right] ^{2}}^{1/2}\mathbb{E}\left( \left\vert
X_{s_{D},s^{D}}\right\vert ^{2}\right) ^{1/2}
\end{eqnarray*}%
Hence, for a fixed $i,$ 
\begin{eqnarray*}
\sum_{j}\left\vert \mathbb{E}\left(
X_{s_{i}s_{i+1}}^{D}X_{t_{j},t_{j+1}}^{D}\right) \right\vert ^{\rho } &\leq
&\left\vert h^{i,D}\right\vert _{\rho \text{$-var$;}\left[ u,v\right]
}^{\rho } \\
&\leq &\left( 9^{\rho -1}\frac{s_{i+1}-s_{i}}{s^{D}-s_{D}}\left\vert
R\right\vert _{\rho \text{$-var$}\left[ u,v\right] ^{2}}^{1/2}\mathbb{E}%
\left( \left\vert X_{s_{D},s^{D}}\right\vert ^{2}\right) ^{1/2}\right)
^{\rho }.
\end{eqnarray*}%
Summing over $i$ and taking the supremum over all subdivision ends the proof
of the second estimate. We leave the easy proof of the third estimate to the
reader.
\end{proof}

\begin{corollary}
\bigskip \label{PropUnifRhoVarforRD}Let $X$ be continuous centered
real-valued continuous Gaussian process on $\left[ 0,1\right] $ with
covariance $R$ assumed to be of finite $\rho $-variation. Then,\newline
(i) for $s,t\in D,$ the $\rho $-variation of $R^{D}$, the covariance of $%
X^{D}$, is bounded by $9^{\rho -1}\left\vert R\right\vert _{\rho \text{$-var$%
;}\left[ s,t\right] ^{2}}.$\newline
(ii) for all $s,t,u,v\in \left[ 0,1\right] $ and $\rho ^{\prime }>\rho $ the 
$\rho ^{\prime }$-variation of $R^{D}$ over $\left[ s,t\right] \times \left[
u,v\right] $ converges to $\left\vert R\right\vert _{\rho ^{\prime }\text{$%
-var$;}\left[ s,t\right] \times \left[ u,v\right] }$ when $\left\vert
D\right\vert \rightarrow 0.$\newline
(iii) if $\left\vert R\right\vert _{\rho \text{$-var$;}\left[ s,t\right]
^{2}}\leq C_{\ref{PropUnifRhoVarforRD}}^{R}\left\vert t-s\right\vert
^{1/\rho },$ then, $\left\vert R^{D}\right\vert _{\rho \text{$-var$;}\left[
s,t\right] ^{2}}\leq 9C_{\ref{PropUnifRhoVarforRD}}^{R}\left\vert
t-s\right\vert ^{1/\rho }.$\newline
The same estimates apply to the covariance of $\left( X,X^{D}\right) .$
\end{corollary}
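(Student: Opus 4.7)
Each of the three estimates is obtained by combining the three cases of the preceding lemma with the monotonicity $|R^{D}|_{\rho\text{-var};[s,t]\times[u,v]}\leq|R^{D}|_{\rho\text{-var};[s',t']\times[u',v']}$ whenever $[s,t]\times[u,v]\subseteq[s',t']\times[u',v']$, together with an elementary interpolation and a sub-additivity estimate coming from Jensen's inequality.

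Part (i) follows by specialising part (i) of the lemma to the diagonal rectangle $[s,t]^{2}$ with $s,t\in D$; the resulting factor $9^{1-1/\rho}$ is dominated by $9^{\rho-1}$ for all $\rho\geq 1$ since $(\rho-1)^{2}\geq 0$ rearranges to $1-1/\rho\leq\rho-1$.

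For part (ii), the plan is to show $|R^{D}-R|_{\rho'\text{-var};[s,t]\times[u,v]}\to 0$ and conclude by the triangle inequality for $\rho'$-variation. The ingredients are: (a) uniform continuity of $R$ on the compact $[0,1]^{2}$, so that the rectangular-increment supremum $\sup_{[a,b]\times[c,d]\subseteq[s,t]\times[u,v]}|(R^{D}-R)(a,b;c,d)|\to 0$; (b) the elementary interpolation $|f|_{\rho'\text{-var};\mathcal{R}}^{\rho'}\leq\bigl(\sup_{\mathcal{R}'\subseteq\mathcal{R}}|f(\mathcal{R}')|\bigr)^{\rho'-\rho}|f|_{\rho\text{-var};\mathcal{R}}^{\rho}$, obtained by writing $|f|^{\rho'}=|f|^{\rho'-\rho}|f|^{\rho}$ inside the sum before summing; and (c) a uniform $\rho$-variation bound on $R^{D}-R$, coming from part (i) applied to $R^{D}$ together with the hypothesis on $R$.

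Part (iii) is split according to the relative size of $|D|$ and $t-s$. In the regime $|D|\leq t-s$ (fine grid), $[s,t]\subseteq[s_{D},t^{D}]$ with $t^{D}-s_{D}\leq(t-s)+2|D|\leq 3(t-s)$; monotonicity followed by part (i) and the H\"older hypothesis give $|R^{D}|_{\rho\text{-var};[s,t]^{2}}\leq 9^{1-1/\rho}C^{R}(t^{D}-s_{D})^{1/\rho}\leq 9^{1-1/\rho}\cdot 3^{1/\rho}C^{R}(t-s)^{1/\rho}=9\cdot 3^{-1/\rho}C^{R}(t-s)^{1/\rho}\leq 9C^{R}(t-s)^{1/\rho}$. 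In the complementary regime $|D|>t-s$ (coarse grid), $[s,t]$ is concentrated around a short grid-aligned core $[s^{D},t_{D}]$ (possibly degenerate) with partial-cell fringes of length at most $t-s$. One decomposes $[s,t]^{2}$ along the grid lines $s^{D}$ and $t_{D}$ into at most nine sub-rectangles and bounds each via the appropriate case of the lemma: corners by part (iii) with the cross-covariance $|\mathbb{E}(X_{s_{D},s^{D}}X_{t_{D},t^{D}})|$ controlled by Cauchy--Schwarz and the H\"older hypothesis, edges by part (ii), and the central grid-aligned piece by part (i). The pieces are reassembled via the Jensen-based sub-additivity $|R^{D}|_{\rho\text{-var};[s,t]^{2}}^{\rho}\leq 9^{\rho-1}\sum_{i,j=1}^{3}|R^{D}|_{\rho\text{-var};B_{ij}}^{\rho}$, and the AM--GM estimate $\sqrt{\alpha\beta}(a/\alpha)(b/\beta)\leq(a+b)/2$ (valid for $a\leq\alpha,\,b\leq\beta$) bounds each cross-piece in terms of $t-s$, yielding the claimed constant $9$ after taking $\rho$-th roots and checking the numerics. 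The extension to the covariance of $(X,X^{D})$ needs no new idea: every entry of the pair-covariance reduces to $R$ or a grid-linear interpolation of $R$, so all estimates repeat verbatim.

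The main technical obstacle is the coarse-grid regime in (iii): tracking the interplay between the mesh, the target length $t-s$, and the adjacent grid-interval lengths requires the AM--GM and Cauchy--Schwarz refinements sketched above; keeping the aggregated constant uniformly bounded by $9$ as $\rho$ varies in $[1,\infty)$ is the subtlety that must be handled with care.
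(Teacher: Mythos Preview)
Your proposal is essentially correct and follows the paper's approach closely. Parts (i) and (ii) match the paper exactly (the paper also argues (ii) ``by interpolation, $R^{D}\to R$ in $\rho'$-variation'').

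For (iii) you introduce a case split on $|D|$ versus $t-s$ that the paper does \emph{not} make. In your fine-grid case you enlarge to the grid-aligned square $[s_{D},t^{D}]^{2}$ via monotonicity and invoke part (i) of the lemma; this is a clean shortcut and delivers the constant $9$ directly. The paper instead performs the nine-block decomposition for \emph{all} $s,t$, bounding each block by the appropriate case of the lemma combined with the elementary estimate
\[
\frac{\ell}{s^{D}-s_{D}}\,\mathbb{E}\bigl(|X_{s_{D},s^{D}}|^{2}\bigr)^{1/2}\;\le\;\ell^{1/(2\rho)}
\qquad\text{for }0\le\ell\le s^{D}-s_{D},
\]
which plays the role of your AM--GM step. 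Your coarse-grid case then reverts to exactly this nine-piece argument, but you correctly insert the Jensen factor $9^{\rho-1}$ in the reassembly step; the paper's phrase ``subadditivity of the $\rho$-variation at the power $\rho$'' glosses over this factor, so you are in fact being more careful here than the original.

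One small caveat: neither your sketch nor the paper's proof rigorously verifies that the aggregated constant in the nine-block case is exactly $9$ rather than a modest multiple (once the Jensen factor and the $9^{1-1/\rho}$ from the lemma are both present, the bookkeeping is not entirely trivial). This does not affect any application in the paper, but if you want the precise constant you should either carry out the numerics in full or simply state ``a constant depending only on $\rho$''.
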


\begin{proof}
The first statement is an easy corollary of the previous lemma. For the
second we note that, by interpolation, $R^{D}\rightarrow R$ in $\rho
^{\prime }$-variation for any $\rho ^{\prime }>\rho $ so that%
\begin{equation*}
\left\vert R^{D}\right\vert _{\rho ^{\prime }\text{$-var$;}\left[ s,t\right]
\times \left[ u,v\right] }\rightarrow \left\vert R\right\vert _{\rho
^{\prime }\text{$-var$;}\left[ s,t\right] \times \left[ u,v\right] }\text{
with }\left\vert D\right\vert \rightarrow 0.
\end{equation*}%
(Note that we do not have $R^{D}\rightarrow R$ in $\rho $-variation in
general but see remark below.) For the third one, without loss of
generalities, we assume that $C_{\ref{PropUnifRhoVarforRD}}^{R}=1.$ Then, by
subadditivity of the $\rho $-variation at the power $\rho ,$%
\begin{eqnarray*}
\left\vert R^{D}\right\vert _{\rho -var,\left[ s,t\right] ^{2}}^{\rho }
&\leq &\left\vert R^{D}\right\vert _{\rho -var,\left[ s,s^{D}\right]
^{2}}^{\rho }+\left\vert R^{D}\right\vert _{\rho -var,\left[ s,s^{D}\right]
\times \left[ s^{D},t_{D}\right] }^{\rho } \\
&&+\left\vert R^{D}\right\vert _{\rho -var,\left[ s,s^{D}\right] \times %
\left[ t_{D},t\right] }^{\rho }+\left\vert R^{D}\right\vert _{\rho -var,%
\left[ s^{D},t_{D}\right] \times \left[ s,s^{D}\right] }^{\rho } \\
&&+\left\vert R^{D}\right\vert _{\rho -var,\left[ s^{D},t_{D}\right] \times %
\left[ s^{D},t_{D}\right] }^{\rho }+\left\vert R^{D}\right\vert _{\rho -var,%
\left[ s^{D},t_{D}\right] \times \left[ t_{D},t\right] }^{\rho } \\
&&+\left\vert R^{D}\right\vert _{\rho -var,\left[ t_{D},t\right] \times %
\left[ s,s^{D}\right] }^{\rho }+\left\vert R^{D}\right\vert _{\rho -var,%
\left[ t_{D},t\right] \times \left[ s^{D},t_{D}\right] }^{\rho } \\
&&+\left\vert R^{D}\right\vert _{\rho -var,\left[ t_{D},t\right] ^{2}}^{\rho
}.
\end{eqnarray*}%
We bound each term using the previous lemma, and using on top estimates of
the type%
\begin{eqnarray*}
\left\vert \frac{t-s}{s^{D}-s_{D}}\right\vert \mathbb{E}\left( \left\vert
X_{s_{D},s^{D}}\right\vert ^{2}\right) ^{1/2} &\leq &\left\vert \frac{t-s}{%
s^{D}-s_{D}}\right\vert \left\vert s^{D}-s_{D}\right\vert ^{1/\left( 2\rho
\right) } \\
&=&\left\vert \frac{t-s}{s^{D}-s_{D}}\right\vert ^{1-1/\rho }\left\vert
t-s\right\vert ^{1/\left( 2\rho \right) } \\
&\leq &\left\vert t-s\right\vert ^{1/\rho }.
\end{eqnarray*}%
We leave the extension to the estimates on the covariation of $\left(
X,X^{D}\right) $ to the reader.
\end{proof}

\section{Multidimensional Gaussian Processes}

As remarked in the introduction, any $\mathbb{R}^{d}$-valued centered
Gaussian process $X=\left( X^{1},...,X^{d}\right) $ with continuous sample
paths gives rise to an abstract Wiener space $\left( E,\mathcal{H},\mathbb{P}%
\right) $ with $E=C\left( \left[ 0,1\right] ,\mathbb{R}^{d}\right) $ and $%
\mathcal{H}\subset C\left( \left[ 0,1\right] ,\mathbb{R}^{d}\right) $. If $%
\mathcal{H}^{i}$ denotes the Cameron-Martin space associated to the one
dimensional Gaussian process $X^{i}$ and all $\left\{
X^{i}:i=1,...,d\right\} $ are independent then $\mathcal{H\cong \oplus }%
_{i=1}^{d}\mathcal{H}^{i}$.

\subsection{Wiener Chaos}

Given an abstract Wiener space, there is a decomposition of $L^{2}\left( 
\mathbb{P}\right) $ known as Wiener-It\^{o} chaos decomposition, see \cite%
{ledoux-1996}, \cite{nualart-1995} or \cite{revuz-yor-1999} for the case of
Wiener measure. Our interest in\ Wiener chaos comes from the following
simple fact.

\begin{proposition}
\label{PropGroupIncrInWIC}Assume the $\mathbb{R}^{d}$-valued continuous
centered Gaussian process $X=\left( X^{1},...,X^{d}\right) $ has sample
paths of finite variation and let $S_{N}\left( X\right) \equiv \mathbf{X}$
denote its natural lift to a process with values in $G^{N}\left( \mathbb{R}%
^{d}\right) \subset T^{N}\left( \mathbb{R}^{d}\right) $.Then, for $n=1,...N$
and any $s,t\in \left[ 0,1\right] $ the random variable $\pi _{n}\left( 
\mathbf{X}_{s,t}\right) $ is an element in the $n^{\text{th}}$ (in general,
not homogenous) Wiener chaos\footnote{%
Stricltly speaking, the $\left( \mathbb{R}^{d}\right) ^{\otimes n}$-valued
chaos.}.
\end{proposition}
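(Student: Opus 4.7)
The plan is to realise each scalar component of $\pi_n(\mathbf{X}_{s,t})$ as an $L^2(\mathbb{P})$-limit of polynomials of degree at most $n$ in the Gaussian family $\{X_t^i : t \in [0,1],\, i = 1, \ldots, d\}$, and then to invoke the $L^2$-closedness of the $n$-th inhomogeneous Wiener chaos $\mathcal{C}_n$, i.e.\ the closed linear span in $L^2(\mathbb{P})$ of polynomials of degree at most $n$ in the above Gaussian family. Since $\pi_n(\mathbf{X}_{s,t})$ takes values in $(\mathbb{R}^d)^{\otimes n}$, it is enough to prove the claim coordinate-wise.

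First I would fix a multi-index $(i_1, \ldots, i_n) \in \{1, \ldots, d\}^n$ and write the corresponding coordinate of $\pi_n(\mathbf{X}_{s,t})$ as the iterated Riemann--Stieltjes integral
\[
I^{(i_1, \ldots, i_n)}_{s,t} = \int_{s < u_1 < \cdots < u_n < t} dX^{i_1}_{u_1} \cdots dX^{i_n}_{u_n},
\]
which is well-defined $\omega$-by-$\omega$ by the bounded-variation hypothesis on the sample paths of $X$. For every dissection $D = (s = t_0 < t_1 < \cdots < t_N = t)$ of $[s,t]$ I would introduce the discrete approximation
\[
J^{D} := \sum_{0 \leq k_1 < k_2 < \cdots < k_n \leq N-1} X^{i_1}_{t_{k_1}, t_{k_1+1}} \, X^{i_2}_{t_{k_2}, t_{k_2+1}} \cdots X^{i_n}_{t_{k_n}, t_{k_n+1}}.
\]
Each summand is a product of $n$ centred jointly Gaussian random variables, hence a polynomial of degree $n$ in $\{X_t^i\}$ and therefore an element of $\mathcal{C}_n$. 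As $\mathcal{C}_n$ is a linear subspace, $J^{D} \in \mathcal{C}_n$ for every dissection $D$.

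The second step is to pass to the limit as the mesh $|D| \to 0$. Pathwise, standard Riemann--Stieltjes theory for bounded-variation integrators gives $J^{D}(\omega) \to I^{(i_1, \ldots, i_n)}_{s,t}(\omega)$ for every $\omega$ in the almost-sure event $\{|X|_{1\text{-var};[0,1]} < \infty\}$. To upgrade this to convergence in $L^2(\mathbb{P})$ I would use the pathwise majorisation
\[
|J^{D}| \leq \prod_{j=1}^{n} |X^{i_j}|_{1\text{-var};[s,t]} \leq |X|_{1\text{-var};[s,t]}^{n},
\]
combined with the exponential integrability of the measurable Gaussian seminorm $|X|_{1\text{-var};[s,t]}$: since it is almost surely finite by assumption, Fernique's theorem (equivalently Borell's inequality for abstract Wiener spaces) yields Gaussian tails and hence arbitrary polynomial moments. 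Dominated convergence then forces $J^{D} \to I^{(i_1,\ldots,i_n)}_{s,t}$ in $L^2(\mathbb{P})$, and the $L^2$-closedness of $\mathcal{C}_n$ delivers $I^{(i_1,\ldots,i_n)}_{s,t} \in \mathcal{C}_n$.

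The main obstacle is the exponential integrability step just used: the finite-variation hypothesis is only postulated almost surely, so one must appeal to a Borell--Sudakov--Tsirelson type deviation inequality to convert a.s.\ finiteness of $|X|_{1\text{-var};[0,1]}$ into finite moments of every order. Everything else is a mechanical discretisation of the iterated Riemann--Stieltjes integral combined with the very definition of $\mathcal{C}_n$ as the $L^2$-closure of polynomials of degree $\leq n$ in the Gaussian family.
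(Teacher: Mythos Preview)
Your argument is correct and follows the same overall strategy as the paper---discretise the iterated integral into polynomials of degree $\le n$ in the Gaussian increments, then pass to the limit---but you work harder than necessary at the limiting step.

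The paper's proof simply observes that the Riemann--Stieltjes sums converge \emph{almost surely} (by the finite-variation hypothesis) and then invokes the fact that the $n^{\text{th}}$ Wiener chaos is closed under convergence \emph{in probability} (a consequence of the equivalence of $L^p$-topologies on a fixed chaos, due to Schreiber/Borell and recalled in the remark following Corollary~\ref{ChaosAndGroup}). No integrability of the approximating sums is needed.

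You instead upgrade a.s.\ convergence to $L^2$-convergence via a dominated-convergence argument, which forces you to find an integrable majorant $|X|_{1\text{-var}}^{n}$ and hence to appeal to Fernique's theorem for the measurable seminorm $|X|_{1\text{-var};[0,1]}$. This is valid (the $1$-variation seminorm is measurable since, by sample-path continuity, the supremum over dissections may be restricted to rational dissections, and the general Fernique/Borell theorem applies to any a.s.\ finite measurable seminorm), but it is an avoidable detour. In effect you trade one nontrivial Gaussian fact (closedness of chaos under convergence in probability) for another (Fernique for measurable seminorms); the paper's choice is the more economical one here, yielding a two-line proof.
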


\begin{proof}
$\pi _{n}\left( \mathbf{X}\right) $ is given by $n$ iterated integrals which
can be written out in terms of (a.s. convergent) Riemann-Stieltjes sums.
Each such Riemann-Stieljes sum is a polynomial of degree at most $n$ and of
variables of form $X_{s,t}$. It now suffices to remark that the $n^{\text{th}%
}$ Wiener chaos contains all such polynomials and is closed under
convergence in probability.
\end{proof}

As a consequence of the hypercontractivity property of the
Ornstein-Uhlenbeck semigroup, $L^{p}$- and $L^{q}$-norms are equivalent on
the $n^{\text{th}}$ Wiener chaos. Usually this is stated for the \textit{%
homogenous} chaos, \cite{ledoux-1996} \cite{nualart-1995}, but the extension
to the $n^{\text{th}}$ (non-homogenous) chaos is not difficult, at least if
we do not worry too much about optimal constants.

\begin{lemma}
\label{LpLqEquivalence}Let $n\in \mathbb{N}$ and $Z$ be a random variable in
the $n^{th}$ Wiener chaos. Assume $1<p<q<\infty $. Then 
\begin{equation*}
\left\vert Z\right\vert _{L^{p}}\leq \left\vert Z\right\vert _{L^{q}}\leq
\left\vert Z\right\vert _{L^{p}}\left( n+1\right) \left( q-1\right)
^{n/2}\max \left( 1,\left( p-1\right) ^{-n}\right) .
\end{equation*}%
In particular, for $q>2$,%
\begin{equation*}
\left\vert Z\right\vert _{L^{2}}\leq \left\vert Z\right\vert _{L^{q}}\leq
\left\vert Z\right\vert _{L^{2}}\left( n+1\right) \left( q-1\right) ^{n/2}
\end{equation*}
\end{lemma}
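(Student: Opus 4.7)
The lower bound $\left\vert Z\right\vert _{L^{p}}\leq \left\vert Z\right\vert _{L^{q}}$ is Jensen's inequality on the probability space. For the upper bound, the plan is to reduce to Nelson's hypercontractive estimate on each homogeneous chaos. I would decompose $Z=\sum_{k=0}^{n}Z_{k}$ with $Z_{k}$ in the $k$-th homogeneous Wiener chaos; this decomposition is $L^{2}$-orthogonal, so in particular $\left\vert Z_{k}\right\vert _{L^{2}}\leq \left\vert Z\right\vert _{L^{2}}$. Nelson's inequality then gives
\begin{equation*}
\left\vert Z_{k}\right\vert _{L^{q}}\leq \left( \frac{q-1}{p-1}\right) ^{k/2}\left\vert Z_{k}\right\vert _{L^{p}},\quad 1<p\leq q<\infty ,
\end{equation*}
and the triangle inequality $\left\vert Z\right\vert _{L^{q}}\leq \sum_{k=0}^{n}\left\vert Z_{k}\right\vert _{L^{q}}$ will eventually contribute the factor $n+1$ once each $k$-power is bounded by the $n$-power.

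The main obstacle is to control $\left\vert Z_{k}\right\vert _{L^{p}}$ by $\left\vert Z\right\vert _{L^{p}}$, since the chaos projection is not an $L^{p}$-contraction for $p\neq 2$. I would split into cases. For $p\geq 2$, Jensen yields $\left\vert Z_{k}\right\vert _{L^{2}}\leq \left\vert Z\right\vert _{L^{2}}\leq \left\vert Z\right\vert _{L^{p}}$, and a second application of Nelson (from $L^{2}$ to $L^{p}$) gives $\left\vert Z_{k}\right\vert _{L^{p}}\leq (p-1)^{k/2}\left\vert Z\right\vert _{L^{p}}$; combined with the Nelson factor above, all powers of $(p-1)$ cancel, and summing over $k$ produces the bound $(n+1)(q-1)^{n/2}\left\vert Z\right\vert _{L^{p}}$, matching the statement since $\max (1,(p-1)^{-n})=1$ in this regime.

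For $1<p<2$ the direct Jensen step fails, and I would instead use the Ornstein--Uhlenbeck semigroup acting diagonally on chaoses, $T_{t}Z_{k}=e^{-kt}Z_{k}$. Choosing $t_{\ast }$ so that $e^{2t_{\ast }}(p-1)=1$ makes $T_{t_{\ast }}:L^{p}\rightarrow L^{2}$ a contraction by Nelson's hypercontractivity theorem, and chaos orthogonality then yields
\begin{equation*}
\sum_{k=0}^{n}(p-1)^{k}\left\vert Z_{k}\right\vert _{L^{2}}^{2}=\left\vert T_{t_{\ast }}Z\right\vert _{L^{2}}^{2}\leq \left\vert Z\right\vert _{L^{p}}^{2},
\end{equation*}
so $\left\vert Z_{k}\right\vert _{L^{p}}\leq \left\vert Z_{k}\right\vert _{L^{2}}\leq (p-1)^{-k/2}\left\vert Z\right\vert _{L^{p}}$. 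Plugged into the Nelson factor this produces $(q-1)^{k/2}(p-1)^{-k}$ per term, and summation over $0\leq k\leq n$ yields the claimed constant $(n+1)(q-1)^{n/2}(p-1)^{-n}$. The two regimes are glued by the $\max (1,(p-1)^{-n})$ in the statement, and the ``in particular'' case $q>2$ is recovered by taking $p=2$, where $(p-1)^{-n}=1$.
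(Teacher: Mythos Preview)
Your proof is correct and follows the same architecture as the paper's: decompose $Z=\sum_{k=0}^n J_kZ$, apply the triangle inequality in $L^q$, use Nelson's hypercontractive estimate on each homogeneous piece, and then control $\left\vert J_kZ\right\vert_{L^p}$ by $\left\vert Z\right\vert_{L^p}$ via the $L^p$-boundedness of the chaos projection $J_k$. The paper simply quotes this last fact (with constants $(p-1)^{k/2}$ for $p\geq 2$ and $(p-1)^{-k/2}$ for $1<p<2$) from Nualart, noting in a footnote that it follows from hypercontractivity when $p>2$ and from a duality argument when $1<p<2$; you instead re-derive it. For $p\geq 2$ your route through $L^2$ is exactly what the footnote has in mind. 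For $1<p<2$ your direct Ornstein--Uhlenbeck semigroup argument (choosing $t_\ast$ so that $T_{t_\ast}:L^p\to L^2$ is a contraction and reading off $\left\vert Z_k\right\vert_{L^2}\leq (p-1)^{-k/2}\left\vert Z\right\vert_{L^p}$ from orthogonality) is a clean alternative to the duality argument; both yield the same constant.
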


\begin{proof}
Only the second inequality requires proof. We need two well-known facts,
both found in \cite{nualart-1995} for instance. First, if $Z_{k}$ is a
random variable in\ the $k^{th}$ homogeneous Wiener chaos then 
\begin{equation}
\left\vert Z_{k}\right\vert _{L^{q}}\leq \left( \frac{q-1}{p-1}\right) ^{%
\frac{k}{2}}\left\vert Z_{k}\right\vert _{L^{p}}.  \label{HyperContrEstimate}
\end{equation}%
Secondly, the $L^{2}$-projection on the $k^{th}$ homogeneous chaos, denoted
by $J_{k}$, is a bounded operator from $L^{p}\rightarrow L^{p}$ for any $%
1<p<\infty ;$ more precisely\footnote{%
In fact, this is a simple consequence of (\ref{HyperContrEstimate}) when $%
p>2 $ and combined with a duality argument for $1<p<2$.}%
\begin{equation*}
\left\vert J_{k}Z\right\vert _{L^{p}}\leq \left\{ 
\begin{array}{c}
\left( p-1\right) ^{k/2}\left\vert Z\right\vert _{L^{p}}\,\,\,\text{if }%
p\geq 2 \\ 
\left( p-1\right) ^{-k/2}\left\vert Z\right\vert _{L^{p}}\,\,\,\text{if }p<2.%
\end{array}%
\right.
\end{equation*}%
From $Z=\sum_{k=0}^{n}J_{k}Z,$ we have $\left\vert Z\right\vert _{L^{q}}\leq
\sum_{k=0}^{n}\left\vert J_{k}Z\right\vert _{L^{q}}$ and hence 
\begin{eqnarray*}
\left\vert Z\right\vert _{L^{q}} &\leq &\sum_{k=0}^{n}\left( \frac{q-1}{p-1}%
\right) ^{\frac{k}{2}}\left\vert J_{k}Z\right\vert _{L^{p}} \\
&\leq &\left\vert Z\right\vert _{L^{p}}\sum_{k=0}^{n}\left\{ 
\begin{array}{c}
\left( q-1\right) ^{k/2}\,\,\,\,\,\,\,\,\,\,\,\,\,\,\,\,\,\text{if }p\geq 2
\\ 
\left( q-1\right) ^{k/2}\left( p-1\right) ^{-k}\,\,\,\text{if }p<2.%
\end{array}%
\right. \\
&\leq &\left\vert Z\right\vert _{L^{p}}\left( n+1\right) \left( q-1\right)
^{n/2}\max \left( 1,\left( p-1\right) ^{-n}\right) \text{.}
\end{eqnarray*}
\end{proof}

Here is a immediate, yet useful, application. Assume $Z,W$ are in the $%
n^{th} $ Wiener chaos. Then there exists $C=C\left( n\right) $ 
\begin{equation}
\left\vert WZ\right\vert _{L^{2}}\leq C\left\vert W\right\vert
_{L^{2}}\left\vert Z\right\vert _{L^{2}}.  \label{L2splitup}
\end{equation}%
(There is nothing special about $L^{2}$ here, but this is how we usually use
it.) We now discuss more involved corollaries.

\begin{corollary}
\label{ChaosAndGroup}Let $g$ be a random element of $G^{N}\left( \mathbb{R}%
^{d}\right) $ such that for all $1\leq n\leq N$ the projection $\pi
_{n}\left( g\right) $ is an element of the $n^{th}$ Wiener chaos. Let $%
\delta $ be a positive real. Then, the following statements 1-6 are
equivalent.\newline
(i) There exists a constant $C_{1}>0$ such that for all $n=1,...,N$ there
exists $q=q\left( n\right) \in \left( 1,\infty \right) :$ $\left\vert \pi
_{n}\left( g\right) \right\vert _{L^{q}}\leq C_{1}\delta ^{n};$\newline
(ii) There exists a constant $C_{2}>0$ such that for all $n=1,...,N$ and for
all $q\in \lbrack 1,\infty ):$ $\left\vert \pi _{n}\left( g\right)
\right\vert _{L^{q}}\leq C_{2}q^{\frac{n}{2}}\delta ^{n};$\newline
(iii) There exists a constant $C_{3}>0$ such that for all $n=1,...,N$ there
exists $q=q\left( n\right) \in \left( 1,\infty \right) :$ $\left\vert \pi
_{n}\left( \log \left( g\right) \right) \right\vert _{L^{q}}\leq C_{3}\delta
^{n};$\newline
(iv) There exists a constant $C_{4}>0$ such that for all $n=1,...,N$ and for
all $q\in \lbrack 1,\infty ):$ $\left\vert \pi _{n}\left( \log \left(
g\right) \right) \right\vert _{L^{q}}\leq C_{4}q^{\frac{n}{2}}\delta ^{n};$%
\newline
(v) There exists a constant $C_{5}>0$ and there exists $q\in \left( N,\infty
\right) :$ $\mathbb{E}\left( \left\Vert g\right\Vert ^{q}\right) ^{1/q}\leq
C_{5}\delta ;$\newline
(vi) There exists a constant $C_{6}>0$ such that for all $q\in ,$ $\mathbb{E}%
\left( \left\Vert g\right\Vert ^{q}\right) ^{1/q}\leq C_{6}q^{\frac{1}{2}%
}\delta .$\newline
When switching from $i^{th}$ to the $j^{th}$ statement, the constant $C_{j}$
depends only on $C_{i},N$ and $d$.
\end{corollary}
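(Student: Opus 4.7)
The plan is to organise the six statements into a short chain of implications, using three tools from the preceding material: the chaos $L^p$–$L^q$ equivalence of Lemma \ref{LpLqEquivalence}, the product estimate \eqref{L2splitup}, and the Carnot–Caratheodory norm equivalence $\frac{1}{K_N}\max_i|\pi_i(g)|^{1/i}\le\|g\|\le K_N\max_i|\pi_i(g)|^{1/i}$. A convenient cycle is
\[
(vi)\Rightarrow(v)\Rightarrow(i)\Leftrightarrow(ii)\Leftrightarrow(iv)\Leftrightarrow(iii)\quad\text{and}\quad(ii)\Rightarrow(vi).
\]
At each step the new constant will depend only on the previous constant, on $N$, and on $d$ (via $K_N$ and the chaos-constant in Lemma \ref{LpLqEquivalence}).

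First I would handle the two ``scaling'' equivalences (i)$\Leftrightarrow$(ii) and (iii)$\Leftrightarrow$(iv). These are immediate: the non-trivial direction (i)$\Rightarrow$(ii) (resp. (iii)$\Rightarrow$(iv)) follows by applying Lemma \ref{LpLqEquivalence} to $\pi_n(g)\in n^{\text{th}}$-chaos, moving from the single $L^{q(n)}$ bound down to $L^2$ and then up to arbitrary $L^q$, which produces exactly the $q^{n/2}$ growth. The converse direction is trivial by specialising $q$.

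Next I would tackle (ii)$\Leftrightarrow$(iv), which I expect to be the main technical obstacle. The idea is the tensor-algebra identities
\[
g=\exp(\log g)=\sum_{k=0}^{N}\frac{(\log g)^{\otimes k}}{k!},\qquad
\log g=\sum_{k=1}^{N}\frac{(-1)^{k-1}}{k}(g-e)^{\otimes k},
\]
truncated at level $N$. In each identity, $\pi_n$ of the left-hand side equals a finite sum, indexed by compositions $i_1+\cdots+i_k=n$ with $i_j\ge 1$, of tensor products $\pi_{i_1}(\cdot)\otimes\cdots\otimes\pi_{i_k}(\cdot)$. Such a product lies in the $n^{\text{th}}$ (non-homogeneous) chaos, so by iterating \eqref{L2splitup} one obtains, for a constant $C=C(n)$,
\[
\bigl\|\pi_{i_1}(g)\otimes\cdots\otimes\pi_{i_k}(g)\bigr\|_{L^2}\le C^{k}\prod_{j=1}^{k}\bigl\|\pi_{i_j}(g)\bigr\|_{L^2}.
\]
Assuming (ii), each factor is $\lesssim\delta^{i_j}$, so the full product is $\lesssim\delta^{n}$; summing over $k$ and over compositions gives (iii) at $q=2$, and then (iv) follows by the already-established (iii)$\Rightarrow$(iv). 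The reverse direction (iv)$\Rightarrow$(ii) is the symmetric argument using the $\exp$-expansion. The only bookkeeping concern is combinatorial—the number of compositions of $n$ is bounded by $2^{n-1}$—so all constants stay under control.

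Finally I would close the cycle through the Carnot–Caratheodory norm. For (ii)$\Rightarrow$(vi): from $\|g\|^q\le K_N^q\sum_{i=1}^{N}|\pi_i(g)|^{q/i}$ and $\|\pi_i(g)\|_{L^{q/i}}\le C_2(q/i)^{i/2}\delta^i$ (valid once $q\ge N$), taking $q^{\text{th}}$ roots and optimising in $i$ yields $\|g\|_{L^q}\lesssim q^{1/2}\delta$, with the small-$q$ case absorbed by monotonicity. The implication (vi)$\Rightarrow$(v) is trivial. For (v)$\Rightarrow$(i) I would use the upper Carnot–Caratheodory estimate in the form $|\pi_i(g)|\le K_N^i\|g\|^i$, so that $\|\pi_i(g)\|_{L^{q/i}}\le K_N^i\|g\|_{L^q}^{i}\le K_N^i C_5^i\delta^i$, where the exponent $q/i$ is $\ge 1$ because $q>N\ge i$—this is the sole reason for the hypothesis $q>N$ in (v). This closes the cycle and identifies all six statements.
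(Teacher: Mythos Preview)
Your proposal is correct and follows essentially the same route as the paper's proof: the same cycle of implications, the same use of Lemma~\ref{LpLqEquivalence} for (i)$\Leftrightarrow$(ii) and (iii)$\Leftrightarrow$(iv), the same $\exp$/$\log$ tensor expansions together with \eqref{L2splitup} for (ii)$\Leftrightarrow$(iv), and the same Carnot--Caratheodory norm equivalence for (ii)$\Rightarrow$(vi) and (v)$\Rightarrow$(i). One minor slip: in (v)$\Rightarrow$(i) you need $q/i>1$ (not merely $\geq 1$) to land in the range $(1,\infty)$ required by (i), but this is exactly what $q>N\geq i$ gives you.
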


\begin{remark}
The restrictions on $q$ in statements 1,3,5 comes from Lemma \ref%
{LpLqEquivalence} where equivalence of $L^{p}$- and $L^{q}$-norms (on the $%
n^{\text{th}}$ Wiener chaos) is shown only for $p,q>1$. In fact, this
equivalence holds true for all $0<p<q<\infty $ (and hence statements 1,3,5
can be formulated with $q\in (0,\infty )$). This follows from the work of C.
Borell \cite{borell-1978, borell-1984, borell-1984-casewestern} and is easy
to see if one accepts a results of Schreiber \cite{schreiber-1969} that
convergence in probability and in $L^{q}$ are equivalent on the $n^{\text{th}%
}$ Wiener chaos. Indeed, first note that for any $p>0$, $L^{p}$-convergence
implies convergence in probability and hence in $L^{q}$ so that the identity
map from $L^{p}\rightarrow L^{q}$ is continuous. Assume it is not bounded.
Then there exists a sequence of random variables $\left( Z^{n}\right) $ such
that $\left\vert Z^{n}\right\vert _{L^{q}}>n\left\vert Z^{n}\right\vert
_{L^{p}}$. But $W^{n}:=Z^{n}/\left\vert Z^{n}\right\vert _{L^{q}}$ satisfies 
$1/n>\left\vert W^{n}\right\vert _{L^{p}}$and hence converges to $0$ in $%
L^{p}$ which contradicts $\left\vert W^{n}\right\vert _{L^{q}}\equiv 1$.
\end{remark}

\begin{proof}
Clearly, (vi)$\Longrightarrow $(v), (iv)$\Longrightarrow $(iii), (ii)$%
\Longrightarrow $(i), and Lemma \ref{LpLqEquivalence} shows (iii)$%
\Longrightarrow $(iv), and (i)$\Longrightarrow $(ii). it is therefore enough
to prove (ii)$\Longrightarrow $(vi), (v)$\Longrightarrow $(i), and (ii)$%
\Longrightarrow $(iv).\newline
\underline{(ii)$\Longrightarrow $(vi):} By equivalence of homogeneous norm,
there exists a constant $C>0$ such that,%
\begin{equation*}
\left\Vert g\right\Vert \leq C\max_{n=1,...,N}\left\vert \pi _{n}\left(
g\right) \right\vert ^{1/n},
\end{equation*}%
so that, 
\begin{eqnarray*}
\mathbb{E}\left( \left\Vert g\right\Vert ^{q}\right) ^{1/q} &\leq &C\mathbb{E%
}\left( \max_{n=1,...,N}\left\vert \pi _{n}\left( g\right) \right\vert
^{q/n}\right) ^{1/q}\leq C\left( \sum_{n=1}^{N}\mathbb{E}\left( \left\vert
\pi _{n}\left( g\right) \right\vert ^{q/n}\right) \right) ^{1/q} \\
&\leq &C\left( \sum_{n=1}^{N}\mathbb{E}\left( \left\vert \pi _{n}\left(
g\right) \right\vert ^{q/n}\right) \right) ^{1/q}\leq C\left(
\sum_{n=1}^{N}C_{2}^{q/n}\left( q^{\frac{n}{2}}\delta ^{n}\right)
^{q/n}\right) ^{1/q} \\
&\leq &C_{6}q^{\frac{1}{2}}\delta .
\end{eqnarray*}%
\underline{(v)$\Longrightarrow $(i):} By equivalence of homogeneous norm,
there exists a constant $c>0$ such that,%
\begin{equation*}
\left\vert \pi _{n}\left( g\right) \right\vert ^{1/n}\leq c\left\Vert
g\right\Vert .
\end{equation*}%
Hence,%
\begin{equation*}
\mathbb{E}\left( \left\vert \pi _{n}\left( g\right) \right\vert
^{q_{0}/n}\right) ^{n/q_{0}}\leq c^{n}\mathbb{E}\left( \left\Vert
g\right\Vert ^{q_{0}}\right) ^{n/q_{0}}\leq \left\vert cC\right\vert
^{N}\delta ^{n}.
\end{equation*}%
\underline{(ii)$\Longrightarrow $(iv):} \ An easy consequence of (\ref%
{L2splitup}) and the formulas%
\begin{eqnarray*}
\pi _{n}\left( g\right) &=&\sum_{\substack{ k_{1},\ldots ,k_{l}  \\ %
\sum_{i}k_{i}=n}}a_{k_{1},\ldots ,k_{l}}\bigotimes_{i}\pi _{i}\left( \ln
g\right) , \\
\pi _{n}\left( \ln g\right) &=&\sum_{\substack{ k_{1},\ldots ,k_{l}  \\ %
\sum_{i}k_{i}=n}}b_{k_{1},\ldots ,k_{l}}\bigotimes_{i}\pi _{i}\left(
g\right) ,
\end{eqnarray*}%
where the real coefficients $a_{k_{1},\ldots ,k_{l}}$ and $b_{k_{1},\ldots
,k_{l}}$ can be explicitly computed from the power series definition of $\ln 
$ and $\exp .$
\end{proof}

\begin{proposition}
\label{PropWienerItoAndGRR}Let $\mathbf{X}$ be a continuous $G^{N}\left( 
\mathbb{R}^{d}\right) $-valued stochastic process. Assume that for all $s<t$
in $\left[ 0,1\right] $ and $n=1,...,N$, the projection $\pi _{n}\left( 
\mathbf{X}_{s,t}\right) $ is an element in the $n^{th\text{ }}$Wiener chaos
and that, for some constant $C$ and 1D control function $\omega $,%
\begin{equation}
\left\vert \pi _{n}\left( \ln \mathbf{X}_{s,t}\right) \right\vert
_{L^{2}}\leq C\omega \left( s,t\right) ^{\frac{n}{2\rho }}.
\label{AssumptionPInX}
\end{equation}%
Then exists a constant $C^{\prime }=C^{\prime }\left( \rho ,N\right) $ such
that for all $q\in \lbrack 1,\infty )$ 
\begin{equation}
\left\vert d\left( \mathbf{X}_{s},\mathbf{X}_{t}\right) \right\vert
_{L^{q}}\leq C^{\prime }\sqrt{q}\omega \left( s,t\right) ^{\frac{1}{2\rho }};
\label{Lqdxsxt}
\end{equation}

\begin{enumerate}
\item[(i)] If $p>2\rho $ then $\left\Vert \mathbf{X}\right\Vert _{p\text{$%
-var$;}\left[ 0,1\right] }$ has a Gauss tail i.e. there exists $\eta =\eta
\left( p,\rho ,N,K\right) >0$, with $\omega \left( \left[ 0,1\right]
^{2}\right) \leq K$, such that%
\begin{equation*}
\mathbb{E}\left( e^{\eta \left\Vert \mathbf{X}\right\Vert _{p\text{$-var$;}%
\left[ 0,1\right] }^{2}}\right) <\infty .
\end{equation*}%
In particular, $\mathbf{X}$ has a.s. sample paths of finite $p$-variation.

\item[(ii)] If $\omega \left( s,t\right) \leq K\left\vert t-s\right\vert ,$
then $\left\Vert \mathbf{X}\right\Vert _{p\text{$-var$}}$ above may be
replaced by $\left\Vert \mathbf{X}\right\Vert _{1/p-H\ddot{o}l}$ and $%
\mathbf{X}$ has a.s. sample paths of $1/p$-H\"{o}lder regularity.
\end{enumerate}
\end{proposition}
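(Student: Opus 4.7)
The plan is to prove the three claims in sequence, each using the previous. The cornerstone is that the hypothesis (\ref{AssumptionPInX}) brings us into the setting of Corollary \ref{ChaosAndGroup}: for fixed $s<t$, the random group element $g = \mathbf{X}_{s,t}$ has $\pi_n(g)$ in the $n^{\text{th}}$ Wiener chaos (and hence $\pi_n(\ln g)$ too, by the polynomial formulas relating the two that appear in the proof of the corollary), and the assumption is exactly statement (iii) of that corollary with $\delta = \omega(s,t)^{1/(2\rho)}$ and $q=2$. The implication (iii) $\Rightarrow$ (vi) then yields (\ref{Lqdxsxt}) at once, with the $\sqrt{q}$ dependence inherited from the Gaussian-type $L^q$ growth on chaos.

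Next I would prove the Hölder statement (ii). When $\omega(s,t) \leq K|t-s|$, estimate (\ref{Lqdxsxt}) specialises to
\begin{equation*}
\left\vert d(\mathbf{X}_s, \mathbf{X}_t)\right\vert_{L^q} \leq C'' \sqrt{q}\, K^{1/(2\rho)} |t-s|^{1/(2\rho)}.
\end{equation*}
I would apply a Garsia-Rodemich-Rumsey inequality in the metric space $(G^N(\mathbb{R}^d), d)$ with Young function $\Psi(x) = \exp(\lambda x^2) - 1$ and dominating function $p(r) = r^{1/p}$ (for any $p > 2\rho$). The $\sqrt{q}$-growth of the moments is precisely equivalent to a Gaussian tail for $d(\mathbf{X}_s, \mathbf{X}_t)/|t-s|^{1/(2\rho)}$, so for $\lambda$ sufficiently small (depending on $p$, $\rho$, $K$), Fubini bounds $\mathbb{E}(U)$, where $U$ is the GRR double integral. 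Feeding $U<\infty$ a.s. back into the pointwise GRR bound gives an a.s. $1/p$-Hölder modulus of continuity for $\mathbf{X}$ together with an exponential moment bound, i.e. the Gauss tail for $\|\mathbf{X}\|_{1/p-H\ddot{o}l}$.

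For the $p$-variation statement (i), I would use a time-change to reduce to case (ii). Since $\omega$ is a 1D control, $\tau(t) := \omega(0,t)$ is continuous and non-decreasing on $[0,1]$ with $\tau(1) \leq K$. Define $\tilde{\mathbf{X}}_u := \mathbf{X}_{\tau^{-1}(u)}$ on $[0,\tau(1)]$ (using a right-continuous pseudo-inverse; continuity of $\mathbf{X}$ and the vanishing of (\ref{AssumptionPInX}) on flat parts of $\tau$ ensure this is well-defined). Then $\omega(\tau^{-1}(u),\tau^{-1}(v)) = v - u$, so (\ref{Lqdxsxt}) transfers to a Hölder-type moment bound for $\tilde{\mathbf{X}}$ on $[0,\tau(1)]$. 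Step 2 produces a Gauss-tail estimate on $\|\tilde{\mathbf{X}}\|_{1/p-H\ddot{o}l}$, which dominates $\|\tilde{\mathbf{X}}\|_{p\text{-}var}$; and since $p$-variation is invariant under continuous non-decreasing time-change, $\|\tilde{\mathbf{X}}\|_{p\text{-}var} = \|\mathbf{X}\|_{p\text{-}var}$. This yields (i).

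The main obstacle is the GRR step of Step 2: the metric-space GRR is available and well-suited, but one must pick the pair $(\Psi,p)$ carefully so that the Gauss-tail assumption on increments converts into a bound on $\mathbb{E}(U)$ while still producing the target Hölder exponent $1/p$ after inversion. Everything else (the chaos input, the reduction by time-change, the domination of $p$-variation by Hölder after reparametrisation) is relatively clean once the GRR estimate is set up properly.
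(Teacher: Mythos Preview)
Your proposal is correct and follows essentially the same architecture as the paper: Corollary~\ref{ChaosAndGroup} for (\ref{Lqdxsxt}), then a Garsia--Rodemich--Rumsey argument combined with a time-change $\tau(t)=\omega(0,t)$ to reduce the general control case (i) to the H\"older-dominated case (ii). The paper simply packages the GRR step and the time-change together as a single appeal to Corollary~\ref{GRRBesovEmbLq} in Appendix~II.

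The one genuine difference is in how GRR is invoked. You use the exponential choice $\Psi(x)=e^{\lambda x^2}-1$ to obtain the Gauss tail in one shot; the paper instead uses the polynomial choice $\Psi(x)=x^q$ (this is what sits inside Corollary~\ref{GRRBesovEmbLq}), yielding $\bigl|\,\|\mathbf{X}\|_{p\text{-var}}\,\bigr|_{L^q}\leq C\sqrt{q}$ for every $q$, from which the Gauss tail follows by the standard equivalence between $\sqrt{q}$-growth of $L^q$-norms and sub-Gaussian tails. Both routes work; the paper's is marginally cleaner here because the polynomial GRR corollary is already stated and proved in the appendix, whereas your exponential GRR needs a small extra care with the logarithmic correction in the modulus (take $p(u)=u^{1/p'}$ with $2\rho<p'<p$ to absorb it). Also note a minor slip: superadditivity of $\omega$ only gives $\omega(\tau^{-1}(u),\tau^{-1}(v))\leq v-u$, not equality, but this is the inequality you need.
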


\begin{proof}
Equation (\ref{Lqdxsxt}) is a clear consequence of the corollary \ref%
{ChaosAndGroup}. The rest follows from Corollary \ref{GRRBesovEmbLq} in
Appendix II.
\end{proof}

The same argument, but using corollary \ref{BesovDistanceLq} in Appendix II
leads to:

\begin{proposition}
\label{PropWienerItoAndGRRcontinuity}Let $\mathbf{X,Y}$ be two continuous $%
G^{N}\left( \mathbb{R}^{d}\right) $-valued stochastic processes. Assume that
for all $s<t\ $in $\left[ 0,1\right] $ and $n=1,...,N$ the projection $\pi
_{n}\left( \mathbf{X}_{s,t}^{-1}\otimes \mathbf{Y}_{s,t}\right) $ is an
element in the $n^{th\text{ }}$Wiener chaos and that, for some $C>0$, $%
\varepsilon \in \lbrack 0,1)$ and 1D control function $\omega $,%
\begin{eqnarray}
\left\vert \pi _{n}\left( \ln \mathbf{X}_{s,t}\right) \right\vert
_{L^{2}},\left\vert \pi _{n}\left( \ln \mathbf{Y}_{s,t}\right) \right\vert
_{L^{2}} &\leq &C\omega \left( s,t\right) ^{\frac{n}{2\rho }},
\label{AssXom} \\
\left\vert \pi _{n}\left( \ln \left( \mathbf{X}_{s,t}^{-1}\otimes \mathbf{Y}%
_{s,t}\right) \right) \right\vert _{L^{2}} &\leq &C\varepsilon \omega \left(
s,t\right) ^{\frac{n}{2\rho }}  \label{AssXYom}
\end{eqnarray}%
Then for all $q\in \lbrack 1,\infty )$ there exists a constant $C^{\prime
}=C^{\prime }\left( \rho ,N,C\right) >0$ such that 
\begin{equation}
\left\Vert d\left( \mathbf{X}_{s,t},\mathbf{Y}_{s,t}\right) \right\Vert
_{L^{q}}\leq C^{\prime }\varepsilon ^{\frac{1}{N}}\sqrt{q}\omega \left(
s,t\right) ^{\frac{1}{2\rho }};  \label{Lqdxstyst}
\end{equation}

\begin{enumerate}
\item[(i)] If $p>2\rho $ and then there exist positive constants $\theta
=\theta (p,\rho ,N)$ and $C^{\prime \prime }=C^{\prime \prime }\left( p,\rho
,N,C,K\right) $ with $\omega \left( \left[ 0,1\right] ^{2}\right) \leq K$
such that%
\begin{equation*}
\left\vert d_{p\text{$-var$;}\left[ 0,1\right] }\left( \mathbf{X,Y}\right)
\right\vert _{L^{q}}\leq C^{\prime \prime }\varepsilon ^{\theta }\sqrt{q}.
\end{equation*}

\item[(ii)] If $\omega \left( s,t\right) \leq K\left\vert t-s\right\vert $,
then $d_{p\text{$-var$;}\left[ 0,1\right] }\left( \mathbf{X,Y}\right) $
above may be replaced by $d_{1/p-H\ddot{o}l}\left( \mathbf{X,Y}\right) $.
\end{enumerate}
\end{proposition}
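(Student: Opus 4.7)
The plan is to run the Wiener-chaos argument of Proposition \ref{PropWienerItoAndGRR}, but applied to the group-valued increment $g_{s,t}:=\mathbf{X}_{s,t}^{-1}\otimes \mathbf{Y}_{s,t}$, while carefully tracking the small parameter $\varepsilon$. Since $\pi _{n}(g_{s,t})$ and $\pi _{n}(\ln g_{s,t})$ are, by the definitions of the group product, inverse, $\exp $ and $\ln $, polynomial in the chaos-valued random variables $\pi _{k}(\mathbf{X}_{s,t})$, $\pi _{k}(\mathbf{Y}_{s,t})$ for $k\leq n$, they lie in the (non-homogeneous) $n$-th Wiener chaos, so Corollary \ref{ChaosAndGroup} is applicable to $g_{s,t}$.

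The first step is to transfer the $L^{2}$-estimate (\ref{AssXYom}) on $\pi _{n}(\ln g_{s,t})$ to an $L^{2}$-estimate on $\pi _{n}(g_{s,t})$ itself. Using the polynomial expansion
\[
\pi _{n}(g_{s,t})=\sum_{\substack{ k_{1},\ldots ,k_{l} \\ \sum_{i}k_{i}=n}}a_{k_{1},\ldots ,k_{l}}\bigotimes_{i}\pi _{k_{i}}(\ln g_{s,t})
\]
recorded at the end of Corollary \ref{ChaosAndGroup}, together with the chaos product estimate (\ref{L2splitup}) iterated $l$ times, each monomial is bounded in $L^{2}$ by a constant times $\varepsilon ^{l}\omega (s,t)^{n/(2\rho )}$. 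Since $l\geq 1$ and $\varepsilon <1$ we have $\varepsilon ^{l}\leq \varepsilon $, giving $\left\vert \pi _{n}(g_{s,t})\right\vert _{L^{2}}\leq C\,\varepsilon \,\omega (s,t)^{n/(2\rho )}$. Applying the (i)$\Rightarrow $(ii) implication of Corollary \ref{ChaosAndGroup} to $g_{s,t}$ upgrades this to
\[
\left\vert \pi _{n}(g_{s,t})\right\vert _{L^{q}}\leq C\,\varepsilon \,q^{n/2}\,\omega (s,t)^{n/(2\rho )}\qquad \text{for all }q\in \lbrack 1,\infty ).
\]

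The second step uses equivalence of homogeneous norms: $d(\mathbf{X}_{s,t},\mathbf{Y}_{s,t})=\Vert g_{s,t}\Vert \leq K_{N}\max_{n}\left\vert \pi _{n}(g_{s,t})\right\vert ^{1/n}$. Taking $L^{q}$-norms for $q\geq N$ (the case $q<N$ then follows from $L^{p}$-monotonicity on a probability space),
\[
\left\vert d(\mathbf{X}_{s,t},\mathbf{Y}_{s,t})\right\vert _{L^{q}}\leq K_{N}\sum_{n=1}^{N}\left\vert \pi _{n}(g_{s,t})\right\vert _{L^{q/n}}^{1/n}\leq C^{\prime }\,\sqrt{q}\,\omega (s,t)^{1/(2\rho )}\max_{1\leq n\leq N}\varepsilon ^{1/n}.
\]
For $\varepsilon \in \lbrack 0,1)$ the maximum on the right-hand side equals $\varepsilon ^{1/N}$, yielding (\ref{Lqdxstyst}). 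Parts (i) and (ii) then follow by feeding (\ref{Lqdxstyst}), together with the a priori $L^{q}$-bounds on the increments of $\mathbf{X}$ and $\mathbf{Y}$ coming from (\ref{AssXom}) and Proposition \ref{PropWienerItoAndGRR}, into the Besov-Garsia-Rodemich-Rumsey estimate of Corollary \ref{BesovDistanceLq}; an interpolation between the small-$\varepsilon $ bound and these a priori bounds produces the further reduced exponent $\theta $. The Hölder statement (ii) uses the stronger assumption $\omega (s,t)\leq K\left\vert t-s\right\vert $ and the Hölder form of the same Besov estimate.

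The main obstacle is the bookkeeping of the $\varepsilon $-factor through the transitions $\ln g_{s,t}\leftrightarrow g_{s,t}\leftrightarrow \Vert g_{s,t}\Vert $; the degradation from $\varepsilon $ to $\varepsilon ^{1/N}$ in (\ref{Lqdxstyst}), and then to $\varepsilon ^{\theta }$ after interpolation, is genuine and reflects the anisotropic scaling of the Carnot-Carathéodory norm on $G^{N}\left( \mathbb{R}^{d}\right) $ together with the nonlinear rearrangement needed to go from pointwise increment control to control of $d_{p\text{-var}}$.
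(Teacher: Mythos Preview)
Your proposal is correct and follows the same route as the paper, which merely points to Corollary \ref{ChaosAndGroup} for (\ref{Lqdxstyst}) and to Corollary \ref{BesovDistanceLq} for (i) and (ii). One minor simplification: rather than unpacking the $\exp$ expansion by hand, you can apply Corollary \ref{ChaosAndGroup} directly to $g_{s,t}$ with $\delta :=\varepsilon ^{1/N}\omega (s,t)^{1/(2\rho )}$, since $\varepsilon \leq \varepsilon ^{n/N}$ for $n\leq N$ and $\varepsilon \in [0,1)$ ensures hypothesis (iii) there; conclusion (vi) then yields (\ref{Lqdxstyst}) immediately.
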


\subsection{Uniform Estimates For Lifts of Piecewise\ Linear Gaussian
Processes}

We recall that all Gaussian processes under consideration are defined on $%
\left[ 0,1\right] $, centered and with continuous sample paths. The aim of
this section is to construct the lift of $X=\left( X^{1},...,X^{d}\right) $
for $X^{1},...X^{d}$ independent, provided that the covariance function for
each $X^{i}$ has finite $\rho $-variation for some $\rho \in \lbrack 1,2).$

The proof of the following lemma is left to the reader.

\begin{lemma}
Let $\left( X_{1},\ldots ,X_{d}\right) $ be a $d$-dimensional Gaussian
process, with covariance $R$ of finite $\rho $-variation controlled by $%
\omega $. Then, for every fixed $\alpha =\left( \alpha _{1},\ldots ,\alpha
_{d}\right) \in \mathbb{R}^{d}$, the covariance of 
\begin{equation*}
\alpha _{1}X_{1}+\ldots +\alpha _{d}X_{d}
\end{equation*}%
has finite $\rho $-variation controlled by $\omega $ times a constant
depending on $\alpha .$
\end{lemma}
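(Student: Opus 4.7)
\bigskip

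\noindent\textbf{Proof proposal.} The plan is to express $R_Y$, the covariance of the scalar process $Y_t := \sum_{i=1}^d \alpha_i X_i(t)$, as a bilinear combination of the entries of the matrix-valued covariance $R$, and then to transfer the $\rho$-variation bound for $R$ to $R_Y$ by the triangle inequality. No deep machinery is needed; the main point is to unwind the definition and keep track of the $\ell^1$-norm of $\alpha$.

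First I would write out, for the rectangular increment,
\begin{equation*}
R_Y\!\left(\begin{array}{c} s \\ t\end{array},\begin{array}{c} u \\ v\end{array}\right)
= \mathbb{E}\!\left[Y_{s,t}\,Y_{u,v}\right]
= \sum_{i,j=1}^d \alpha_i \alpha_j \, R_{ij}\!\left(\begin{array}{c} s \\ t\end{array},\begin{array}{c} u \\ v\end{array}\right),
\end{equation*}
where $R_{ij}(s,t)=\mathbb{E}(X^i_s X^j_t)$ denotes the $(i,j)$-entry of $R$. Because any reasonable norm on $\mathbb{R}^{d\times d}$ dominates its individual entries, the hypothesis that $R$ has its $\rho$-variation controlled by $\omega$ implies, for each pair $(i,j)$, the pointwise bound
\begin{equation*}
\left|R_{ij}\!\left(\begin{array}{c} s \\ t\end{array},\begin{array}{c} u \\ v\end{array}\right)\right|^{\rho} \leq C_d\,\omega([s,t]\times[u,v]),
\end{equation*}
for some dimension-dependent constant $C_d$.

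Next I would combine these two observations with the triangle inequality. Setting $|\alpha|_1 := \sum_i |\alpha_i|$, one gets
\begin{equation*}
\left|R_Y\!\left(\begin{array}{c} s \\ t\end{array},\begin{array}{c} u \\ v\end{array}\right)\right|
\leq \sum_{i,j} |\alpha_i\alpha_j| \left|R_{ij}\!\left(\begin{array}{c} s \\ t\end{array},\begin{array}{c} u \\ v\end{array}\right)\right|
\leq |\alpha|_1^{2}\, C_d^{1/\rho}\,\omega([s,t]\times[u,v])^{1/\rho}.
\end{equation*}
Raising to the $\rho$-th power yields
\begin{equation*}
\left|R_Y\!\left(\begin{array}{c} s \\ t\end{array},\begin{array}{c} u \\ v\end{array}\right)\right|^{\rho} \leq C(\alpha)\,\omega([s,t]\times[u,v]),
\end{equation*}
with $C(\alpha) := C_d\, |\alpha|_1^{2\rho}$. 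Since $C(\alpha)\,\omega$ is again a $2D$ control function (scalar multiples of controls are controls), the characterization of finite $\rho$-variation via $2D$ controls (Lemma on page around \emph{1subdivision}, item (i)$\Leftrightarrow$(ii)) shows that $R_Y$ has finite $\rho$-variation controlled by $C(\alpha)\,\omega$, as claimed.

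I do not foresee any genuine obstacle; the whole content is bilinearity of covariance and the scalar inequality above. The only matter worth being attentive to is the choice of norm on $\mathbb{R}^{d\times d}$ used implicitly when writing $|R(\cdot)|$ and the ensuing dimensional constant, but this only affects the size of $C(\alpha)$ and not the form of the statement.
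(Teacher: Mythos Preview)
Your argument is correct and is precisely the straightforward bilinearity-plus-triangle-inequality computation that the paper has in mind; indeed, the paper explicitly leaves this proof to the reader. The only cosmetic remark is that since the paper equips $\mathbb{R}^d\otimes\mathbb{R}^d$ with its canonical Euclidean structure, each entry satisfies $|R_{ij}|\le |R|$ and you may take $C_d=1$.
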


\begin{proposition}
\label{p-variationE(X2)}Let $\left( X,Y\right) $ be a $2$ dimensional
centered Gaussian process with covariance $R$ of finite $\rho $-variation
controlled by $\omega $. Then, for fixed $s<t\ $in $\left[ 0,1\right] $, the
function%
\begin{equation*}
\left( u,v\right) \in \left[ s,t\right] ^{2}\mapsto f\left( u,v\right) :=%
\mathbb{E}\left( X_{s,u}Y_{s,u}X_{s,v}Y_{s,v}\right)
\end{equation*}%
satisfies $f\left( s,\cdot \right) =f\left( \cdot ,s\right) =0$ and has
finite $\rho $-variation.\ More precisely, there exists a constant $C_{\ref%
{p-variationE(X2)}}=C_{\ref{p-variationE(X2)}}\left( \rho \right) $ such
that 
\begin{equation*}
\left\vert f\right\vert _{\rho \text{$-var$;}\left[ s,t\right] ^{2}}^{\rho
}\leq C_{\ref{p-variationE(X2)}}\omega \left( \left[ s,t\right] ^{2}\right)
^{2}.
\end{equation*}
\end{proposition}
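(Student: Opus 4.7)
The plan is to exploit the Gaussian structure via Wick's formula and reduce the bound to rectangular increments of the scalar entries of the covariance $R$. The vanishing $f(s,\cdot)=f(\cdot,s)=0$ is immediate from $X_{s,s}=Y_{s,s}=0$.

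I would set $A_u:=X_{s,u}Y_{s,u}$, so that $f(u,v)=\mathbb{E}(A_uA_v)$, and begin with the product-rule identity
\[
A_{u_2}-A_{u_1}=X_{u_1,u_2}Y_{s,u_2}+X_{s,u_1}Y_{u_1,u_2}
\]
(and an analogous one in $v$). The rectangular increment of $f$ over $[u_1,u_2]\times[v_1,v_2]$ then equals $\mathbb{E}[(X_{u_1,u_2}Y_{s,u_2}+X_{s,u_1}Y_{u_1,u_2})(X_{v_1,v_2}Y_{s,v_2}+X_{s,v_1}Y_{v_1,v_2})]$, which expands into $4$ expectations of products of four jointly Gaussian centered variables. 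Each such fourfold expectation contains exactly two ``short'' factors (increments over $[u_1,u_2]$ or $[v_1,v_2]$) and two ``long'' factors (increments starting at $s$). Applying Wick's formula to each of the four expectations yields $3$ pairings apiece, so altogether one obtains $12$ summands of the form $\mathbb{E}(P_1P_2)\mathbb{E}(P_3P_4)$.

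The crucial observation is that every pair-expectation $\mathbb{E}(P_iP_j)$ that arises this way is itself a rectangular increment of one of the four scalar covariances $R_{XX},R_{XY},R_{YX},R_{YY}$ over an explicit rectangle sitting inside $[s,t]^2$; for example $\mathbb{E}(X_{u_1,u_2}Y_{s,u_2})$ is the rectangular increment of $R_{XY}$ over $[u_1,u_2]\times[s,u_2]$, while $\mathbb{E}(Y_{s,u_2}Y_{s,v_2})$ is that of $R_{YY}$ over $[s,u_2]\times[s,v_2]$. Since the hypothesis on $R$ forces each scalar covariance to have $\rho$-variation controlled by $\omega$ (up to a constant coming from equivalence of norms on the space of $2\times 2$ covariance matrices), the $\rho$-th power of each pair-expectation is bounded by $\omega$ evaluated on the corresponding rectangle.

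For each of the $12$ summands the $\rho$-th power is therefore bounded by a product $\omega(R_1)\omega(R_2)$ of two such $\omega$-values. Using monotonicity of $\omega$, I would enlarge every ``long'' interval $[s,u_{i+1}]$ or $[s,v_{j+1}]$ to the full $[s,t]$. The resulting configurations reduce to two canonical shapes: either (a) one small rectangle $[u_i,u_{i+1}]\times[v_j,v_{j+1}]$ multiplied by the full $[s,t]^2$, which sums by 2D super-additivity to $\omega([s,t]^2)^2$; or (b) a product of two strips such as $[u_i,u_{i+1}]\times[s,t]$ and $[s,t]\times[v_j,v_{j+1}]$, where the double sum factorises into a product of two 1D super-additive sums, again yielding $\omega([s,t]^2)^2$. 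Combining via the elementary inequality $|a_1+\cdots+a_{12}|^{\rho}\leq 12^{\rho-1}\sum_k|a_k|^{\rho}$ produces the claimed estimate with a constant $C_{\ref{p-variationE(X2)}}$ depending only on $\rho$. The main obstacle is the enlargement/summation bookkeeping: a sum like $\sum_i\omega([u_i,u_{i+1}]\times[s,u_{i+1}])$ cannot be bounded directly by super-additivity since both coordinates vary with $i$, so monotonicity of $\omega$ must first be invoked to reduce to $[u_i,u_{i+1}]\times[s,t]$, where 1D super-additivity in the first coordinate applies. Matching each of the $12$ Wick pairings to one of the two shapes (a) or (b) is the most delicate step.
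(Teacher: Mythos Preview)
Your proposal is correct and follows essentially the same route as the paper: the same product-rule decomposition of $A_{u_2}-A_{u_1}$, expansion into four fourfold Gaussian moments, Wick's formula on each, and then enlargement of the ``long'' intervals to $[s,t]$ via monotonicity of $\omega$. The paper phrases the final step slightly differently---rather than summing over a dissection and invoking super-additivity as you do, it observes directly that the resulting expression (your shapes (a) and (b)) is itself a 2D control in the variables $[u,u']\times[v,v']$, which is an equivalent and marginally cleaner way to package the same bookkeeping.
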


\begin{proof}
We fix $u<u^{\prime },$ $v<v^{\prime },$ all in $\left[ s,t\right] .$ Using 
\begin{equation*}
X_{s,u^{\prime }}Y_{s,u^{\prime }}-X_{s,u}Y_{s,u}=X_{u,u^{\prime
}}Y_{s,u^{\prime }}+X_{s,u}Y_{u,u^{\prime }},
\end{equation*}%
we bound $\left\vert \mathbb{E}\left( \left( X_{s,u^{\prime }}Y_{s,u^{\prime
}}-X_{s,u}Y_{s,u}\right) \left( X_{s,v^{\prime }}Y_{s,v^{\prime
}}-X_{s,v}Y_{s,v}\right) \right) \right\vert $ by%
\begin{eqnarray*}
&&\left\vert \mathbb{E}\left( X_{u,u^{\prime }}Y_{s,u^{\prime
}}X_{v,v^{\prime }}Y_{s,v^{\prime }}\right) \right\vert +\left\vert \mathbb{E%
}\left( X_{s,u}Y_{u,u^{\prime }}X_{v,v^{\prime }}Y_{s,v^{\prime }}\right)
\right\vert \\
&&+\left\vert \mathbb{E}\left( X_{u,u^{\prime }}Y_{s,u^{\prime
}}X_{s,v}Y_{v,v^{\prime }}\right) \right\vert +\left\vert \mathbb{E}\left(
X_{s,u}Y_{u,u^{\prime }}X_{s,v}Y_{v,v^{\prime }}\right) \right\vert
\end{eqnarray*}%
To bound the second expression for example, we use a well-known identity for
the product of Gaussian random variables,%
\begin{eqnarray*}
\mathbb{E}\left( X_{s,u}Y_{u,u^{\prime }}X_{v,v^{\prime }}Y_{s,v^{\prime
}}\right) &=&\mathbb{E}\left( X_{s,u}Y_{u,u^{\prime }}\right) \mathbb{E}%
\left( X_{v,v^{\prime }}Y_{s,v^{\prime }}\right) \\
&&+\mathbb{E}\left( X_{s,u}X_{v,v^{\prime }}\right) \mathbb{E}\left(
Y_{u,u^{\prime }}Y_{s,v^{\prime }}\right) \\
&&+\mathbb{E}\left( X_{s,u}Y_{s,v^{\prime }}\right) \mathbb{E}\left(
X_{v,v^{\prime }}Y_{u,u^{\prime }}\right) ,
\end{eqnarray*}%
to obtain 
\begin{eqnarray*}
\frac{1}{C_{\rho }}\left\vert \mathbb{E}\left( X_{s,u}Y_{u,u^{\prime
}}X_{v,v^{\prime }}Y_{s,v^{\prime }}\right) \right\vert ^{\rho } &\leq
&\omega \left( \left[ s,u\right] \times \left[ u,u^{\prime }\right] \right)
\omega \left( \left[ v,v^{\prime }\right] \times \left[ s,v^{\prime }\right]
\right) \\
&&+\omega \left( \left[ s,u\right] \times \left[ v,v^{\prime }\right]
\right) \omega \left( \left[ u,u^{\prime }\right] \times \left[ s,v^{\prime }%
\right] \right) \\
&&+\omega \left( \left[ s,u\right] \times \left[ s,v^{\prime }\right]
\right) \omega \left( \left[ u,u^{\prime }\right] \times \left[ v,v^{\prime }%
\right] \right) \\
&\leq &\omega \left( \left[ s,t\right] \times \left[ u,u^{\prime }\right]
\right) \omega \left( \left[ v,v^{\prime }\right] \times \left[ s,t\right]
\right) \\
&&+\omega \left( \left[ s,t\right] \times \left[ v,v^{\prime }\right]
\right) \omega \left( \left[ u,u^{\prime }\right] \times \left[ s,t\right]
\right) \\
&&+\omega \left( \left[ s,t\right] \times \left[ s,t\right] \right) \omega
\left( \left[ u,u^{\prime }\right] \times \left[ v,v^{\prime }\right] \right)
\end{eqnarray*}%
Working similarly with all terms, we obtain that this last expression
controls the $\rho $-variation of $\left( u,v\right) \in \left[ s,t\right]
^{2}\rightarrow \mathbb{E}\left( X_{s,u}Y_{s,u}X_{s,v}Y_{s,v}\right) ,$ and
the bound on the $\rho $-variation on $\left[ s,t\right] ^{2}.$
\end{proof}

\begin{proposition}
Assume $X=\left( X^{1},\ldots ,X^{d}\right) $ is a centered continuous
Gaussian process with independent components with piecewise linear sample
paths. Let $\rho \in \lbrack 1,2)$ and assume that the covariance of $X$ is
of finite $\rho $-variation dominated by a 2D control $\omega $. \ Let $%
\mathbf{X}=S_{3}\left( X\right) $ denote the natural lift of $X$ to a $%
G^{3}\left( \mathbb{R}^{d}\right) $-valued process. There exists $C=C\left(
\rho \right) $ such that for all $s<t$ in $\left[ 0,1\right] $ and indices $%
i,j,k\in \left\{ 1,...d\right\} ,$%
\begin{eqnarray*}
\text{(i)}\,\ \mathbb{E}\left( \left\vert X_{s,t}^{i}\right\vert ^{2}\right)
&\leq &\omega \left( \left[ s,t\right] ^{2}\right) ^{1/\rho }\text{ for all }%
i\text{;} \\
\text{(ii) }\mathbb{E}\left( \left\vert \mathbf{X}_{s,t}^{i,j}\right\vert
^{2}\right) &\leq &C\omega \left( \left[ s,t\right] ^{2}\right) ^{2/\rho }%
\text{ for }i,j\text{ distinct;} \\
\text{(iii.1) }\mathbb{E}\left( \left\vert \mathbf{X}_{s,t}^{i,i,j}\right%
\vert ^{2}\right) &\leq &C\omega \left( \left[ s,t\right] ^{2}\right)
^{3/\rho }\text{ for }i,j\text{ distinct;} \\
\text{(iii.2) }\mathbb{E}\left( \left\vert \mathbf{X}_{s,t}^{i,j,k}\right%
\vert ^{2}\right) &\leq &C\omega \left( \left[ s,t\right] ^{2}\right)
^{3/\rho }\text{ for }i,j,k\text{ distinct.}
\end{eqnarray*}
\end{proposition}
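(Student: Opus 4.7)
The plan for the four estimates proceeds by writing each second moment $\mathbb{E}(|\mathbf{X}^{I}_{s,t}|^2)$ as an iterated 2D Young--Stieltjes integral against the covariance measures $dR_1,\ldots,dR_d$. This is legitimate because $X$ has piecewise linear (hence smooth) sample paths, so each iterated integral appearing in $\mathbf{X}=S_3(X)$ is a classical multiple integral; combined with independence of $X^1,\ldots,X^d$ and Fubini, the expectation factorises into covariance contributions. Part (i) is then immediate: $\mathbb{E}(|X^i_{s,t}|^2)=R_i\!\left(\begin{array}{c} s\\ t\end{array},\begin{array}{c} s\\ t\end{array}\right)\le |R_i|_{\rho\text{-var};[s,t]^2}\le \omega([s,t]^2)^{1/\rho}$, straight from the definition of an $\omega$-controlled $\rho$-variation.

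For Part (ii), smoothness gives $\mathbf{X}^{i,j}_{s,t}=\int_s^t X^i_{s,r}\,dX^j_r$, so $\mathbb{E}(|\mathbf{X}^{i,j}_{s,t}|^2)=\int_{[s,t]^2}\mathbb{E}(X^i_{s,u}X^i_{s,v})\,dR_j(u,v)$. The integrand $f(u,v):=\mathbb{E}(X^i_{s,u}X^i_{s,v})$ satisfies $f(s,\cdot)=f(\cdot,s)=0$, and its rectangular increments coincide with those of $R_i$, so $|f|_{\rho\text{-var};[s,t]^2}\le\omega([s,t]^2)^{1/\rho}$. Since $\rho<2$ gives $2/\rho>1$, the 2D Young theorem proved earlier (with $p=q=\rho$) applies and delivers the bound $C\omega([s,t]^2)^{2/\rho}$. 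Part (iii.1) is the same scheme one level higher: integration by parts on smooth paths yields $\mathbf{X}^{i,i,j}_{s,t}=\tfrac12\int_s^t (X^i_{s,r})^2\,dX^j_r$, hence $\mathbb{E}(|\mathbf{X}^{i,i,j}_{s,t}|^2)=\tfrac14\int_{[s,t]^2}\mathbb{E}[(X^i_{s,u})^2(X^i_{s,v})^2]\,dR_j(u,v)$, whose integrand vanishes on the axes and whose 2D $\rho$-variation is controlled by $C\omega([s,t]^2)^{2/\rho}$ by Proposition \ref{p-variationE(X2)} (with $Y=X^i$). A further application of the 2D Young theorem produces the claimed $\omega([s,t]^2)^{3/\rho}$ estimate.

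For Part (iii.2) write $\mathbf{X}^{i,j,k}_{s,t}=\int_s^t \mathbf{X}^{i,j}_{s,r}\,dX^k_r$. Independence of $X^k$ from $(X^i,X^j)$ gives $\mathbb{E}(|\mathbf{X}^{i,j,k}_{s,t}|^2)=\int_{[s,t]^2} G(u,v)\,dR_k(u,v)$ with $G(u,v):=\mathbb{E}(\mathbf{X}^{i,j}_{s,u}\mathbf{X}^{i,j}_{s,v})$, which vanishes on the axes, and the final integral is again handled by the 2D Young theorem. It therefore suffices to prove $|G|_{\rho\text{-var};[s,t]^2}\le C\omega([s,t]^2)^{2/\rho}$. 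By the Part~(ii) calculation, $G(u,v)=\int_{[s,u]\times[s,v]}\phi\,dR_j$ with $\phi(r,r'):=\mathbb{E}(X^i_{s,r}X^i_{s,r'})$, so the rectangular increment of $G$ over $[a,b]\times[c,d]\subset[s,t]^2$ equals $\int_{[a,b]\times[c,d]}\phi\,dR_j$. Since $\phi$ does not vanish on the boundary of a generic subrectangle, decompose $\phi(r,r')=\tilde\phi(r,r')+\phi(a,r')+\phi(r,c)-\phi(a,c)$, where $\tilde\phi(r,r'):=\phi\!\left(\begin{array}{c} a\\ r\end{array},\begin{array}{c} c\\ r'\end{array}\right)$ vanishes at $r=a$ and at $r'=c$. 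This splits the integral into four pieces: the integral of $\tilde\phi$ against $dR_j$, bounded by 2D Young by $C\omega([a,b]\times[c,d])^{2/\rho}$; a corner term $\phi(a,c)\cdot R_j\!\left(\begin{array}{c} a\\ b\end{array},\begin{array}{c} c\\ d\end{array}\right)$, controlled by Cauchy--Schwarz on $\phi(a,c)$ and the $\rho$-variation of $R_j$; and two 1D boundary integrals whose integrands ($r'\mapsto\phi(a,r')-\phi(a,c)$ and the analogous one in $r$) are increments of Cameron--Martin paths of $X^i$, hence controllable via 1D Young together with Proposition \ref{CM_pVar_embedding}. Taking $\rho$-th powers, summing over the subdivision, and combining superadditivity of $\omega$ with Cauchy--Schwarz to aggregate the mixed-exponent boundary contributions yields $|G|_{\rho\text{-var};[s,t]^2}^\rho\le C\omega([s,t]^2)^2$. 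One last 2D Young estimate against $dR_k$ then completes (iii.2).

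The hardest step is the $\rho$-variation bound on $G$ in Part (iii.2): the natural integrand $\phi$ representing $G$ fails to vanish on the boundary of a generic subrectangle, so the 2D Young theorem cannot be applied rectangle by rectangle. The four-term decomposition is natural but produces mixed-power boundary pieces with exponents like $\omega([v_j,v_{j+1}]^2)^{1/(2\rho)}$ coming from the Cameron--Martin embedding, and aggregating these cleanly into the $\omega([s,t]^2)^2$ bound via superadditivity and Cauchy--Schwarz is the delicate technical point that uses $\rho<2$ essentially.
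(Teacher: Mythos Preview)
Your argument is correct, and for (i), (ii) and (iii.1) it coincides with the paper's proof. For (iii.2) you reach the same destination by a different route, and it is worth seeing how the two compare.

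The paper controls the rectangular increment of $G(u,v)=\mathbb{E}(\mathbf{X}^{i,j}_{s,u}\mathbf{X}^{i,j}_{s,v})$ not through the integral representation of $G$ but via Chen's multiplicativity relation
\[
\mathbf{X}^{i,j}_{s,u_2}-\mathbf{X}^{i,j}_{s,u_1}=\mathbf{X}^{i,j}_{u_1,u_2}+X^{i}_{s,u_1}X^{j}_{u_1,u_2},
\]
which immediately splits $G\!\left(\begin{smallmatrix}u_1\\u_2\end{smallmatrix},\begin{smallmatrix}v_1\\v_2\end{smallmatrix}\right)$ into four pieces. If you unfold your four terms (the $\tilde\phi$-integral, the two boundary integrals and the corner term) you will find they are \emph{exactly} these four pieces; your $\int_{[a,b]\times[c,d]}\tilde\phi\,dR_j$ is $\mathbb{E}(\mathbf{X}^{i,j}_{u_1,u_2}\mathbf{X}^{i,j}_{v_1,v_2})$, your corner term is $\mathbb{E}(X^i_{s,u_1}X^i_{s,v_1})\mathbb{E}(X^j_{u_1,u_2}X^j_{v_1,v_2})$, and the two boundary integrals are the mixed terms $\mathbb{E}(\mathbf{X}^{i,j}_{u_1,u_2}X^i_{s,v_1}X^j_{v_1,v_2})$ and its symmetric counterpart. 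So the two decompositions are the same object seen from different angles.

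Where the approaches genuinely differ is in bounding the cross terms. The paper proves a preliminary ``Step~1'' estimate
\[
\bigl|\mathbb{E}(\mathbf{X}^{i,j}_{s,t}X^i_{s',t'}X^j_{t',u'})\bigr|\le C\,\omega([s,t]\times[s',t'])^{1/\rho}\,\omega([s,t]\times[t',u'])^{1/\rho}
\]
via a single 1D Young integral; each factor on the right is itself a 2D control, so the $\rho$-th power aggregates over any dissection by bare superadditivity. Your route through the Cameron--Martin embedding (Proposition~\ref{CM_pVar_embedding}) produces instead a bound containing the factor $\omega([s,a]^2)^{1/(2\rho)}\omega([c,d]^2)^{1/(2\rho)}$, which is not a 2D control in the rectangle variables and forces the Cauchy--Schwarz aggregation you describe. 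That aggregation does go through, but it is avoidable: the increment of $r'\mapsto\phi(a,r')$ over $[r_1',r_2']$ is precisely $R_i\!\left(\begin{smallmatrix}s\\a\end{smallmatrix},\begin{smallmatrix}r_1'\\r_2'\end{smallmatrix}\right)$, so directly $|\phi(a,\cdot)|_{\rho\text{-var};[c,d]}^{\rho}\le\omega([s,a]\times[c,d])$, which is the paper's Step~1 bound in disguise and removes the need for Cauchy--Schwarz entirely. In short: your proof is sound, the detour through Cameron--Martin is an unnecessary loss of sharpness, and the Chen-identity viewpoint makes the whole step essentially mechanical.
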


\begin{proof}
(i) is obvious. For (ii) fix $i\neq j$ and $s<t,\,s^{\prime }<t^{\prime }$.\
Then, using independence of $X^{i}$ and $X^{j}$,%
\begin{eqnarray*}
\mathbb{E}\left( \mathbf{X}_{s,t}^{i,j}\mathbf{X}_{s^{\prime },t^{\prime
}}^{i,j}\right) &=&\mathbb{E}\left( \int_{s}^{t}\int_{s^{\prime
}}^{t^{\prime }}X_{s,u}^{i}X_{s^{\prime },v}^{i}dX_{u}^{j}dX_{v}^{j}\right)
\\
&=&\int_{s}^{t}\int_{s^{\prime }}^{t^{\prime }}\mathbb{E}\left(
X_{s,u}^{i}X_{s^{\prime },v}^{i}\right) d\mathbb{E}\left(
X_{u}^{j}X_{v}^{j}\right) \text{ } \\
&=&\int_{s}^{t}\int_{s^{\prime }}^{t}\left[ R_{i}\left( u,v\right)
-R_{i}\left( s,v\right) -R_{i}\left( u,s^{\prime }\right) +R_{i}\left(
s,s^{\prime }\right) \right] dR_{j}\left( u,v\right) \\
&\leq &C\omega \left( \left[ s,t\right] \times \left[ s^{\prime },t^{\prime }%
\right] \right) ^{2/\rho }\text{ by Young 2D estimate.}
\end{eqnarray*}%
(ii) follows trivially from setting $s=s^{\prime },t=t^{\prime }$ (the
general result will be used in the level (iii) estimates, see step 2 below).
We break up the level (iii)\ estimates in a few steps, assuming $i\neq j$
throughout.\newline
\underline{Step 1}: For fixed $s<t,s^{\prime }<t^{\prime },t^{\prime
}<u^{\prime }$ we claim that 
\begin{equation*}
\mathbb{E}\left( \mathbf{X}_{s,t}^{i,j}X_{s^{\prime },t^{\prime
}}^{i}X_{t^{\prime },u^{\prime }}^{j}\right) \leq C\omega \left( \left[ s,t%
\right] \times \left[ s^{\prime },t^{\prime }\right] \right) ^{1/\rho
}\omega \left( \left[ s,t\right] \times \left[ t^{\prime },u^{\prime }\right]
\right) ^{1/\rho }.
\end{equation*}%
Indeed, with $d\mathbb{E}\left( X_{t^{\prime },u^{\prime
}}^{j}X_{u}^{j}\right) \equiv \mathbb{E}\left( X_{t^{\prime },u^{\prime
}}^{j}\dot{X}_{u}^{j}\right) du$ we have 
\begin{equation*}
\mathbb{E}\left( \mathbf{X}_{s,t}^{i,j}X_{s^{\prime },t^{\prime
}}^{i}X_{t^{\prime },u^{\prime }}^{j}\right) =\mathbb{E}\left(
\int_{s}^{t}X_{s,u}^{i}X_{s^{\prime },t^{\prime }}^{i}X_{t^{\prime
},u^{\prime }}^{j}dX_{u}^{j}\right) =\int_{u=s}^{t}\mathbb{E}\left(
X_{s,u}^{i}X_{s^{\prime },t^{\prime }}^{i}\right) d\mathbb{E}\left(
X_{t^{\prime },u^{\prime }}^{j}X_{u}^{j}\right) .
\end{equation*}%
Since the 1D $\rho $-variation of $u\mapsto \mathbb{E}\left(
X_{s,u}^{i}X_{s^{\prime },t^{\prime }}^{i}\right) $ is controlled by $\left(
u,v\right) \mapsto \omega \left( \left[ u,v\right] \times \left[ s^{\prime
},t^{\prime }\right] \right) ,$ and similarly for $u\mapsto \mathbb{E}\left(
X_{t^{\prime },u^{\prime }}^{j}X_{u}^{j}\right) $, the (classical 1D) Young
estimate gives%
\begin{equation*}
\left\vert \int_{u=s}^{t}\mathbb{E}\left( X_{s,u}^{i}X_{s^{\prime
},t^{\prime }}^{i}\right) d\mathbb{E}\left( X_{t^{\prime },u^{\prime
}}^{j}X_{u}^{j}\right) \right\vert \leq C\omega \left( \left[ s,t\right]
\times \left[ s^{\prime },t^{\prime }\right] \right) ^{1/\rho }\omega \left( %
\left[ s,t\right] \times \left[ t^{\prime },u^{\prime }\right] \right)
^{1/\rho }.
\end{equation*}%
\underline{Step 2:} For fixed $s<t,$ we claim that the 2D\ map $\left(
u,v\right) \in \left[ s,t\right] ^{2}\mapsto \mathbb{E}\left( \mathbf{X}%
_{s,u}^{i,j}\mathbf{X}_{s,v}^{i,j}\right) $ has finite $\rho $-variation
controlled by%
\begin{equation*}
\left[ u_{1},u_{2}\right] \times \left[ v_{1},v_{2}\right] \mapsto C\omega
\left( \left[ s,t\right] ^{2}\right) \omega \left( \left[ u_{1},u_{2}\right]
\times \left[ v_{1},v_{2}\right] \right) .
\end{equation*}%
Then, using the level (ii) estimate and step 1, for $u_{1}<u_{2},$ $%
v_{1}<v_{2}$ all in $\left[ s,t\right] $,%
\begin{multline*}
\mathbb{E}\left( \left( \mathbf{X}_{s,u_{2}}^{i,j}-\mathbf{X}%
_{s,u_{1}}^{i,j}\right) \left( \mathbf{X}_{s,v_{2}}^{i,j}-\mathbf{X}%
_{s,v_{1}}^{i,j}\right) \right) \\
\left. 
\begin{array}{l}
=\mathbb{E}\left( \left( \mathbf{X}%
_{u_{1},u_{2}}^{i,j}+X_{s,u_{1}}^{i}X_{u_{1},u_{2}}^{j}\right) \left( 
\mathbf{X}_{v_{1},v_{2}}^{i,j}+X_{s,v_{1}}^{i}X_{v_{1},v_{2}}^{j}\right)
\right) \\ 
=\mathbb{E}\left( \mathbf{X}_{u_{1},u_{2}}^{i,j}\mathbf{X}%
_{v_{1},v_{2}}^{i,j}\right) \\ 
\text{ \ }+\mathbb{E}\left( \mathbf{X}%
_{u_{1},u_{2}}^{i,j}X_{s,v_{1}}^{i}X_{v_{1},v_{2}}^{j}\right) \\ 
\text{ \ }+\mathbb{E}\left( X_{s,u_{1}}^{i}X_{u_{1},u_{2}}^{j}\mathbf{X}%
_{v_{1},v_{2}}^{i,j}\right) \\ 
\text{ \ }+\mathbb{E}\left( X_{s,u_{1}}^{i}X_{s,v_{1}}^{i}\right) \mathbb{E}%
\left( X_{u_{1},u_{2}}^{j}X_{v_{1},v_{2}}^{j}\right) \\ 
\leq \omega \left( \left[ u_{1},u_{2}\right] \times \left[ v_{1},v_{2}\right]
\right) ^{2/\rho } \\ 
\text{ \ }+\omega \left( \left[ u_{1},u_{2}\right] \times \left[ s,v_{1}%
\right] \right) ^{1/\rho }\omega \left( \left[ u_{1},u_{2}\right] \times %
\left[ v_{1},v_{2}\right] \right) ^{1/\rho } \\ 
\text{ \ }+\omega \left( \left[ s,u_{1}\right] \times \left[ v_{1},v_{2}%
\right] \right) ^{1/\rho }\omega \left( \left[ u_{1},u_{2}\right] \times %
\left[ v_{1},v_{2}\right] \right) ^{1/\rho } \\ 
\text{ \ }+\omega \left( \left[ s,u_{1}\right] \times \left[ s,v_{1}\right]
\right) ^{1/\rho }\omega \left( \left[ u_{1},u_{2}\right] \times \left[
v_{1},v_{2}\right] \right) ^{1/\rho } \\ 
\leq 4\left\{ \omega \left( \left[ s,t\right] ^{2}\right) \omega \left( %
\left[ u_{1},u_{2}\right] \times \left[ v_{1},v_{2}\right] \right) \right\}
^{1/\rho }.%
\end{array}%
\right.
\end{multline*}%
(Here we used that $\omega $ can be taken symmetric.) \newline
\underline{Step 3:} We now prove the level (iii) estimates. For $i,j,k$
distinct, we have%
\begin{equation*}
\mathbb{E}\left( \left\vert \int_{s}^{t}\mathbf{X}_{s,u}^{i,j}dX_{u}^{k}%
\right\vert ^{2}\right) =\int \int_{\left[ s,t\right] ^{2}}\mathbb{E}\left( 
\mathbf{X}_{s,u}^{i,j}\mathbf{X}_{s,v}^{i,j}\right) dR_{k}\left( u,v\right) .
\end{equation*}%
By Young's 2D estimate, combined with $\rho $-variation regularity of the
integrand established in step 2, 
\begin{equation*}
\mathbb{E}\left( \left\vert \int_{s}^{t}\mathbf{X}_{s,u}^{i,j}dX_{u}^{k}%
\right\vert ^{2}\right) \leq C\omega \left( \left[ s,t\right] ^{2}\right)
^{3/\rho }.
\end{equation*}%
Secondly, the estimate (iii.2) follows from%
\begin{equation*}
\mathbb{E}\left( \left\vert \int_{s}^{t}\left( X_{s,u}^{i}\right)
^{2}dX_{u}^{k}\right\vert ^{2}\right) =\int \int_{\left[ s,t\right] ^{2}}%
\mathbb{E}\left( \left( X_{s,u}^{i}\right) ^{2}\left( X_{s,v}^{i}\right)
^{2}\right) dR_{k}\left( u,v\right)
\end{equation*}%
and Young's 2D estimate, combined with $\rho $-variation regularity of the
integrand which follows as a special case of Proposition \ref%
{p-variationE(X2)} (the full generality will be used in the next section).
\end{proof}

\begin{corollary}
With $\mathbf{X},\rho ,\omega $ as in the last proposition, there exists $%
C=C\left( \rho ,d\right) $ such that for all $s<t$ in $\left[ 0,1\right] $
and $n=1,2,3,$%
\begin{equation*}
\mathbb{E}\left( \left\vert \pi _{n}\left( \ln \mathbf{X}_{s,t}\right)
\right\vert ^{2}\right) \leq C\omega \left( \left[ s,t\right] ^{2}\right)
^{n/\rho }.
\end{equation*}
\end{corollary}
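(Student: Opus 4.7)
The strategy is to reduce the claim about $\ln \mathbf{X}_{s,t}$ to the already-available estimates on $\pi_n(\mathbf{X}_{s,t})$ via the algebraic expansion of the logarithm, after first upgrading the preceding proposition so as to cover \emph{every} multi-index rather than only the (almost) distinct ones. The passage from products of tensor projections to an $L^{2}$-bound will rely on inequality (\ref{L2splitup}), itself a consequence of Lemma~\ref{LpLqEquivalence}.

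First I would expand $\ln \mathbf{X}_{s,t} = \sum_{k\geq 1}(-1)^{k+1}(\mathbf{X}_{s,t}-e)^{k}/k$, where the power is taken in the truncated tensor algebra, and project onto the $n$-th tensor level. The result is a finite polynomial whose summands have the form $\pi_{n_1}(\mathbf{X}_{s,t})\otimes\cdots\otimes \pi_{n_k}(\mathbf{X}_{s,t})$ with $n_1+\cdots+n_k=n$ and $n_i\geq 1$. Since each factor lies in the $n_i$-th Wiener chaos by Proposition~\ref{PropGroupIncrInWIC}, an iterated application of (\ref{L2splitup}) bounds the $L^{2}$-norm of every such product by a constant $C=C(n,d)$ times $\prod_i \|\pi_{n_i}(\mathbf{X}_{s,t})\|_{L^{2}}$. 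Thus it suffices to show componentwise bounds of the form $\|\pi_j(\mathbf{X}_{s,t})\|_{L^{2}}\leq C\,\omega([s,t]^{2})^{j/(2\rho)}$ for $j=1,2,3$; when these are multiplied the exponents add up to the desired $n/\rho$.

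For the componentwise bounds, the entries with sufficiently distinct indices are provided by parts (i)--(iii.2) of the preceding proposition. The remaining entries I would handle as follows. When all indices coincide, the piecewise-linear (hence bounded-variation) nature of $X$ gives the elementary identities $\mathbf{X}^{i,i}_{s,t}=(X^{i}_{s,t})^{2}/2$ and $\mathbf{X}^{i,i,i}_{s,t}=(X^{i}_{s,t})^{3}/6$, and the Gaussian moment identity $\mathbb{E}(Z^{2k}) = (2k-1)!!\,\mathbb{E}(Z^{2})^{k}$ applied to $Z=X^{i}_{s,t}$, combined with part (i) of the previous proposition, supplies the correct power of $\omega$. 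The only genuinely mixed case left at level three is $\mathbf{X}^{i,j,i}_{s,t}$ with $i\neq j$; for this I would invoke the shuffle identity $X^{i}_{s,t}\,\mathbf{X}^{i,j}_{s,t}=2\mathbf{X}^{i,i,j}_{s,t}+\mathbf{X}^{i,j,i}_{s,t}$, valid because $X$ is smooth, and control the product on the left-hand side by yet another application of (\ref{L2splitup}) to a degree-$1$ and a degree-$2$ chaos factor.

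The main obstacle, as I see it, is precisely this last reduction: for mixed-but-repeated triples the iterated integral does not admit a transparent Wick representation directly amenable to the 2D Young estimate, and the combination of the shuffle algebra of smooth signatures with the hypercontractive product bound (\ref{L2splitup}) is what unlocks it. Once the componentwise bounds on $\pi_j(\mathbf{X}_{s,t})$ are in hand, the final assembly via the $\ln$-expansion is just combinatorial bookkeeping and absorption of constants into $C=C(\rho,d)$.
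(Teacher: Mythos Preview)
Your argument is correct and complete up to one small omission: at level three you list $\mathbf{X}^{i,j,i}_{s,t}$ as ``the only genuinely mixed case left'', but $(iii.1)$ of the preceding proposition only covers the pattern $\mathbf{X}^{i,i,j}$ (first two indices equal), so $\mathbf{X}^{j,i,i}_{s,t}$ is also missing. This is harmless---another shuffle identity, $X^{i}_{s,t}\mathbf{X}^{j,i}_{s,t}=\mathbf{X}^{i,j,i}_{s,t}+2\mathbf{X}^{j,i,i}_{s,t}$, together with the bound you already have on $\mathbf{X}^{i,j,i}_{s,t}$ and one more application of~(\ref{L2splitup}), closes the gap immediately.

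Your route differs genuinely from the paper's. The paper does not first upgrade the previous proposition to cover \emph{all} multi-indices of $\pi_{3}(\mathbf{X}_{s,t})$; instead it invokes the explicit Hall-basis expansion of the log-signature computed in Appendix~III (Proposition~\ref{logSignatureStep3}), which already expresses $\pi_{3}(\ln\mathbf{X}_{s,t})$ as a linear combination of the four types $\mathbf{X}^{i,j,k}_{s,t}$, $\mathbf{X}^{i,i,j}_{s,t}$, $|X^{i}_{s,t}|^{2}X^{j}_{s,t}$, $X^{i}_{s,t}\mathbf{X}^{i,j}_{s,t}$ with distinct indices. In effect the shuffle simplifications you perform by hand have been carried out once and for all in that appendix, so the awkward entries $\mathbf{X}^{i,j,i}$ and $\mathbf{X}^{j,i,i}$ never appear. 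The trade-off: the paper's proof is shorter but relies on the bespoke step-$3$ calculation of Appendix~III, whereas your approach is self-contained and would generalise mechanically to higher levels (one only needs shuffle relations plus hypercontractivity, never an explicit Hall-basis formula).
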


\begin{proof}
For $n=1,2$ this is an immediate consequence of (i),(ii) of the preceding
proposition. From Appendix III, $\pi _{3}\left( \ln \mathbf{X}_{s,t}\right) $
expands with respect to the basis elements $\left[ e_{i},\left[ e_{j},e_{k}%
\right] \right] $ with coefficients of only four possible types 
\begin{equation*}
\mathbf{X}_{s,t}^{i,j,k},\,\mathbf{X}_{s,t}^{i,i,j},\,\left\vert \mathbf{X}%
_{s,t}^{i}\right\vert ^{2}\mathbf{X}_{s,t}^{j},\,\mathbf{X}_{s,t}^{i}\mathbf{%
X}_{s,t}^{i,j}\text{ \ \ (}i,j,k\text{ distinct).}
\end{equation*}%
The first two are directly handled with (iii.1) and (iii.2). For the last
two we use the estimate (\ref{L2splitup}) together with (i),(ii).
\end{proof}

An application of proposition \ref{PropWienerItoAndGRR} with 1D control $%
\left( s,t\right) \mapsto \omega \left( \lbrack s,t]^{2}\right) $ leads to:

\begin{corollary}
\label{FerniquePwlinear}With $\mathbf{X},\rho ,\omega $ as in the last
proposition\footnote{%
The optimal choice for $\omega $ is the $\rho $-variation of $R$ raised to
power $\rho $.}, there exists $C=C\left( \rho ,d\right) $ such that for all $%
q\in \lbrack 1,\infty )$%
\begin{equation*}
\left\vert d\left( \mathbf{X}_{s},\mathbf{X}_{t}\right) \right\vert
_{L^{q}\left( \mathbb{P}\right) }\leq C\sqrt{q}\omega \left( \left[ s,t%
\right] ^{2}\right) ^{\frac{1}{2\rho }}.
\end{equation*}%
If $p>2\rho $ then there exists $\eta =\eta \left( p,\rho ,K\right) >0$,
with $\omega \left( \left[ 0,1\right] ^{2}\right) \leq K,$ such that 
\begin{equation}
\mathbb{E}\left( \exp \left( \eta \left\Vert \mathbf{X}\right\Vert _{p\text{$%
-var$;}\left[ 0,1\right] }^{2}\right) \right) <\infty .  \label{FerniquePvar}
\end{equation}%
If $\omega \left( s,t\right) \leq K\left\vert t-s\right\vert ,$ then $%
\left\Vert \mathbf{X}\right\Vert _{p\text{$-var$}}$ above may be replaced by 
$\left\Vert \mathbf{X}\right\Vert _{1/p-H\ddot{o}l}$.
\end{corollary}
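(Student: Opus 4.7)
The plan is to verify that all hypotheses of Proposition \ref{PropWienerItoAndGRR} hold with the $1$D control $\tilde{\omega}(s,t):=\omega([s,t]^{2})$, and then invoke it. The work has essentially been done by the previous results; the corollary is a packaging exercise.

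First I would record that $\tilde{\omega}$ really is a $1$D control function. This is exactly the content of Remark \ref{Rmk2DcontrolFct}: super-additivity on $[s,t]\subset[t,u]$ (as a $1$D map) follows from super-additivity of $\omega$ on the diagonal squares, and continuity/zero-on-the-diagonal is automatic from the corresponding properties of $\omega$. Second, since each $X^{i}$ has piecewise linear (hence finite variation) sample paths, the hypothesis of Proposition \ref{PropGroupIncrInWIC} is satisfied, so for each $s<t$ and $n\in\{1,2,3\}$ the projection $\pi_{n}(\mathbf{X}_{s,t})$ lies in the $n^{\text{th}}$ (inhomogeneous) Wiener chaos. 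Third, the previous corollary gives exactly the bound
\[
\bigl|\pi_{n}(\ln \mathbf{X}_{s,t})\bigr|_{L^{2}}\le C\,\tilde{\omega}(s,t)^{n/(2\rho)},
\]
which is the assumption \eqref{AssumptionPInX} of Proposition \ref{PropWienerItoAndGRR}.

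With these three ingredients in place, the conclusions of Proposition \ref{PropWienerItoAndGRR} apply verbatim: estimate \eqref{Lqdxsxt} yields the $L^{q}$ bound $|d(\mathbf{X}_{s},\mathbf{X}_{t})|_{L^{q}}\le C\sqrt{q}\,\omega([s,t]^{2})^{1/(2\rho)}$ with $C=C(\rho,d)$ (the $d$ dependence enters through the constant in the previous corollary, which controlled the $\rho$-variation of the Lie-bracket components). Part (i) of that proposition, applied for any $p>2\rho$, gives the existence of $\eta=\eta(p,\rho,K)>0$ such that $\mathbb{E}[\exp(\eta\|\mathbf{X}\|_{p\text{-var};[0,1]}^{2})]<\infty$ whenever $\omega([0,1]^{2})\le K$, which is \eqref{FerniquePvar}.

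Finally, under the H\"older domination $\omega([s,t]^{2})\le K|t-s|$, the $1$D control $\tilde{\omega}$ is itself H\"older dominated, so part (ii) of Proposition \ref{PropWienerItoAndGRR} applies and allows the $p$-variation norm to be upgraded to the $1/p$-H\"older norm with the same Gauss-tail conclusion. The only mildly delicate point in the whole argument is ensuring consistency of notation for the $1$D versus $2$D control (i.e.\ that the hypothesis "$\omega(s,t)\le K|t-s|$" of the corollary statement is identified with "$\omega([s,t]^{2})\le K|t-s|$" from the hypotheses of the preceding proposition), and this is purely book-keeping; there is no genuine analytical obstacle to overcome here.
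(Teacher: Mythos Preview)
Your proof is correct and follows exactly the approach indicated in the paper, which simply states that the corollary is ``an application of proposition \ref{PropWienerItoAndGRR} with 1D control $(s,t)\mapsto\omega([s,t]^{2})$.'' You have merely spelled out the verification of hypotheses (that $\tilde\omega$ is a 1D control via Remark \ref{Rmk2DcontrolFct}, that $\pi_n(\mathbf{X}_{s,t})$ lies in the $n$th chaos via Proposition \ref{PropGroupIncrInWIC}, and that the previous corollary supplies \eqref{AssumptionPInX}) in more detail than the paper does.
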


\begin{remark}
Consider an $\mathbb{R}^{d}$-valued centered, Gaussian process $X$ with
independent components, all having covariance of finite $\rho $-variation
for some $\rho \in \lbrack 1,2)$ dominated by (for simplicity) a H\"{o}lder
dominated control. The above Fernique type estimate is plenty to see that
the family $\left\{ S_{3}\left( X^{n}\right) :n\right\} $ is tight in $C^{1/p%
\text{-H\"{o}l}}\left( \left[ 0,1\right] ,G^{3}\left( \mathbb{R}^{d}\right)
\right) $. We shall see that there is a unique limit point and, in fact,
show $L^{q}$-convergence in all $q\in \lbrack 1,\infty )$.
\end{remark}

\subsection{Continuity Estimates For Lifts of Piecewise Linear Gaussian
Processes}

\begin{proposition}
\label{Level123estimatesContEstPwLinGauss}\bigskip Let $\left( X,Y\right)
=\left( X^{1},Y^{1},\ldots ,X^{d},Y^{d}\right) $ be a centered continuous
Gaussian process with piecewise linear sample paths such that $\left(
X^{i},Y^{i}\right) $ is independent of $\left( X^{j},Y^{j}\right) $ when $%
i\neq j$. Let $\rho \in \lbrack 1,2)$ and $\omega $ a 2D control that
dominates the $\rho $-variation of the covariance of $\left( X,Y\right) $.
Assume $\rho ^{\prime }\in \left( \rho ,2\right) $ and $\omega \left( \left[
0,1\right] ^{2}\right) \leq K$. Then there exists $C_{\ref%
{Level123estimatesContEstPwLinGauss}}=C_{\ref%
{Level123estimatesContEstPwLinGauss}}\left( \rho ,\rho ^{\prime },K\right) $
such that for all $s<t$ in $\left[ 0,1\right] $ and indices $i,j,k\in
\left\{ 1,...d\right\} $%
\begin{eqnarray*}
\text{(i)}\,\ \mathbb{E}\left( \left\vert \mathbf{X}_{s,t}^{i}-\mathbf{Y}%
_{s,t}^{i}\right\vert ^{2}\right) &\leq &\left\vert R_{X-Y}\right\vert
_{\infty }^{1-\frac{\rho }{\rho ^{\prime }}}\omega \left( \left[ s,t\right]
^{2}\right) ^{1/\rho ^{\prime }}\text{ for all }i\text{;} \\
\text{(ii) }\mathbb{E}\left( \left\vert \mathbf{X}_{s,t}^{i,j}-\mathbf{Y}%
_{s,t}^{i,j}\right\vert ^{2}\right) &\leq &C_{\ref%
{Level123estimatesContEstPwLinGauss}}\left\vert R_{X-Y}\right\vert _{\infty
}^{1-\frac{\rho }{\rho ^{\prime }}}\omega \left( \left[ s,t\right]
^{2}\right) ^{2/\rho ^{\prime }}\text{ for }i,j\text{ distinct;} \\
\text{(iii.1) }\mathbb{E}\left( \left\vert \mathbf{X}_{s,t}^{i,i,j}-\mathbf{Y%
}_{s,t}^{i,i,j}\right\vert ^{2}\right) &\leq &C_{\ref%
{Level123estimatesContEstPwLinGauss}}\left\vert R_{X-Y}\right\vert _{\infty
}^{1-\frac{\rho }{\rho ^{\prime }}}\omega \left( \left[ s,t\right]
^{2}\right) ^{3/\rho ^{\prime }}\text{ for }i,j\text{ distinct;} \\
\text{(iii.2) }\mathbb{E}\left( \left\vert \mathbf{X}_{s,t}^{i,j,k}-\mathbf{Y%
}_{s,t}^{i,j,k}\right\vert ^{2}\right) &\leq &C_{\ref%
{Level123estimatesContEstPwLinGauss}}\left\vert R_{X-Y}\right\vert _{\infty
}^{1-\frac{\rho }{\rho ^{\prime }}}\omega \left( \left[ s,t\right]
^{2}\right) ^{3/\rho ^{\prime }}\text{ for }i,j,k\text{ distinct.}
\end{eqnarray*}
\end{proposition}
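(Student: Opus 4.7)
The plan is to mirror the proof of the preceding proposition term-by-term, inserting an interpolation step that trades $\rho$-variation regularity of $R_{X-Y}$ for a small factor $|R_{X-Y}|_\infty^{1-\rho/\rho'}$. The basic ingredient used throughout is the H\"older-type interpolation
\[
|f|_{\rho'\text{-var};[s,t]^2}^{\rho'} \leq |f|_{\infty;[s,t]^2}^{\rho'-\rho}\,|f|_{\rho\text{-var};[s,t]^2}^{\rho},
\]
valid for any 2D function $f$ of finite $\rho$-variation. Since $X-Y$ is a linear combination of components of $(X,Y)$, the lemma preceding Proposition \ref{p-variationE(X2)} shows that $R_{X-Y}$ has $\rho$-variation dominated by a constant times $\omega$, so that on any rectangle
\[
|R_{X-Y}|_{\rho'\text{-var};[s,t]\times[u,v]} \leq C\,|R_{X-Y}|_\infty^{1-\rho/\rho'}\,\omega([s,t]\times[u,v])^{1/\rho'}.
\]

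Estimate (i) follows at once: $\mathbb{E}((X^i_{s,t}-Y^i_{s,t})^2)$ equals the rectangular increment of $R_{X^i-Y^i}$ on $[s,t]^2$ and is bounded by combining the trivial $L^\infty$ bound with the $\rho$-variation bound and interpolating. For (ii), I split
\[
\mathbf{X}_{s,t}^{i,j}-\mathbf{Y}_{s,t}^{i,j} = \int_s^t(X^i_{s,u}-Y^i_{s,u})\,dX^j_u + \int_s^t Y^i_{s,u}\,d(X^j-Y^j)_u,
\]
square and take expectation. By the independence of $(X^i,Y^i)$ from $(X^j,Y^j)$ this yields a sum of 2D integrals whose integrand is a covariance involving $X-Y$ and whose integrator is a covariance of a single component (or vice versa). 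A 2D Young estimate with exponents $\rho'$ and $\rho$ (permissible since $1/\rho+1/\rho'>1$), together with the above interpolation for the $X-Y$ factor and the uniform $\rho$-variation bound for the other factor, gives (ii); the mismatch between $1/\rho+1/\rho'$ and the target $2/\rho'$ is absorbed via $\omega([s,t]^2)\leq K$.

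For (iii) I follow the three-step structure of the preceding proof, but applied to $\Delta^{i,j}_u := \mathbf{X}^{i,j}_{s,u}-\mathbf{Y}^{i,j}_{s,u}$ instead of $\mathbf{X}^{i,j}_{s,u}$. Step 1 adapts the mixed-term bound on $\mathbb{E}(\mathbf{X}_{s,t}^{i,j}X^i_{s',t'}X^j_{t',u'})$ to its $\Delta$-analogue via 1D Young, once more isolating one $R_{X-Y}$-type factor for interpolation. Step 2, the main technical obstacle, shows that $(u,v)\mapsto\mathbb{E}(\Delta^{i,j}_{s,u}\Delta^{i,j}_{s,v})$ has 2D $\rho'$-variation on $[s,t]^2$ controlled by $|R_{X-Y}|_\infty^{1-\rho/\rho'}\,\omega([s,t]^2)^{1/\rho'}\,\omega([u_1,u_2]\times[v_1,v_2])^{1/\rho'}$: expanding $\Delta^{i,j}_{u_1,u_2}$ via Chen's relation for both $\mathbf{X}$ and $\mathbf{Y}$ and applying Wick's formula exactly as in Proposition \ref{p-variationE(X2)} produces a sum of products of two covariances in which every summand contains at least one factor of the form $R_{X-Y,\cdot}$ or $R_{\cdot,X-Y}$; interpolating that factor to $\rho'$-variation yields the required bound. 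Step 3 then delivers (iii.2) via 2D Young integration against $dR_{X^k}$ (and against $dR_{X^k-Y^k}$ for the companion term in which the outer level differentiates $X^k-Y^k$). The bound (iii.1) follows analogously after writing $(X^i_{s,u})^2-(Y^i_{s,u})^2 = (X^i_{s,u}+Y^i_{s,u})(X^i_{s,u}-Y^i_{s,u})$ and invoking the continuity version of Proposition \ref{p-variationE(X2)} for the quartic expression.

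The principal difficulty is the combinatorial bookkeeping in Step 2 of (iii): one must verify that \emph{every} Wick term of the quartic expansion contains at least one $R_{X-Y}$-type factor (so that interpolation applies exactly once, producing the desired single power $|R_{X-Y}|_\infty^{1-\rho/\rho'}$), and then balance the exponents so that after summing the many terms the final power of $\omega([s,t]^2)$ is exactly $3/\rho'$, with any surplus absorbed into the constant via the global bound $\omega([0,1]^2)\leq K$.
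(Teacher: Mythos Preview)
Your argument for (i) and (ii) matches the paper's exactly (the paper uses $1/\rho'+1/\rho'>1$ rather than $1/\rho+1/\rho'>1$ in the Young step, but either works). For (iii.2) your plan is also essentially what the paper does: subtract/add $\int \mathbf{X}^{i,j}_{s,\cdot}\,dY^k$, handle the term with $d(X^k-Y^k)$ by combining Step~2 of the previous proposition (no smallness in the integrand) with interpolation of the integrator $R_{X^k-Y^k}$, and handle $\int \Delta^{i,j}\,dY^k$ via the further split of $\Delta^{i,j}$. The paper leaves these details to the reader; your ``Step~2 for $\Delta$'' is one correct way to fill them in.

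The genuine difference is in (iii.1). You propose to push the $R_{X-Y}$ factor through the Wick expansion of the quartic and interpolate that one covariance factor. This can be made to work, but the bookkeeping is heavy (and your remark that every Wick term carries \emph{exactly one} usable $R_{X-Y}$-type factor is not quite right: some carry cross-covariances like $\mathbb{E}[(X^i-Y^i)_{s,u}Y^i_{s,v}]$ which are only bounded by $\sqrt{|R_{X-Y}|_\infty}$, though these come in pairs). The paper avoids this entirely: it bounds the $\rho$-variation of $g(u,v)=\mathbb{E}\bigl[\{(X^i_{s,u})^2-(Y^i_{s,u})^2\}\{(X^i_{s,v})^2-(Y^i_{s,v})^2\}\bigr]$ over $[s,t]^2$ by $C\omega([s,t]^2)^2$ directly from Proposition~\ref{p-variationE(X2)} (no smallness), and bounds $|g|_\infty$ by writing $g(u,v)$ as the expectation of a product of four Gaussians $X^i_{s,\cdot}\pm Y^i_{s,\cdot}$, applying $L^4$--$L^2$ equivalence, and picking up a factor $|R_{X-Y}|_\infty$ from the two ``minus'' factors. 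One interpolation of the whole function $g$ between $|g|_\infty$ and $|g|_{\rho\text{-var}}$ then gives the $\rho'$-variation bound with the correct power of $|R_{X-Y}|_\infty$, and a final 2D Young estimate against $R_{Y^j}$ finishes. This ``bound the sup norm, bound the $\rho$-variation, interpolate once'' pattern is considerably cleaner than tracking Wick terms individually, and it is what your phrase ``continuity version of Proposition~\ref{p-variationE(X2)}'' should be made to mean.
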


\begin{proof}
We first remark that interpolation inequalities work for 2D variation just
as for 1D variation, more specifically,%
\begin{eqnarray*}
\left\vert R_{X-Y}\right\vert _{\rho ^{\prime }\text{$-var$};\left[ s,t%
\right] ^{2}} &\leq &\left\vert R_{X-Y}\right\vert _{\infty }^{1-\rho /\rho
^{\prime }}\left\vert R_{X-Y}\right\vert _{\rho \text{$-var$};\left[ s,t%
\right] ^{2}}^{\rho /\rho ^{\prime }} \\
&\leq &\left\vert R_{X-Y}\right\vert _{\infty }^{1-\rho /\rho ^{\prime
}}\omega \left( \left[ s,t\right] ^{2}\right) ^{1/\rho ^{\prime }},
\end{eqnarray*}%
and that the $\rho ^{\prime }$-variation of the covariance of $\left(
X,Y\right) $ is also controlled by $\omega $. The level (i) estimate is then
simply%
\begin{equation*}
\mathbb{E}\left( \left\vert \mathbf{X}_{s,t}^{i}-\mathbf{Y}%
_{s,t}^{i}\right\vert ^{2}\right) \leq \left\vert R_{X_{i}-Y_{i}}\right\vert
_{\rho ^{\prime }\text{$-var$};\left[ s,t\right] ^{2}}\leq \left\vert
R_{X-Y}\right\vert _{\infty }^{1-\rho /\rho ^{\prime }}\omega \left( \left[
s,t\right] ^{2}\right) ^{1/\rho ^{\prime }}\text{.}
\end{equation*}%
For the level (ii) estimate fix $i\neq j$. By the triangle inequality,%
\begin{eqnarray*}
\left\vert \mathbf{X}_{s,t}^{i,j}-\mathbf{Y}_{s,t}^{i,j}\right\vert _{L^{2}}
&\leq &\left\vert \mathbf{X}_{s,t}^{i,j}-\int_{s}^{t}X_{s,u}^{i}dY_{u}^{j}%
\right\vert _{L^{2}}+\left\vert \int_{s}^{t}X_{s,u}^{i}dY_{u}^{j}-\mathbf{Y}%
_{s,t}^{i,j}\right\vert _{L^{2}} \\
&\leq &\left\vert \int_{s}^{t}X_{s,u}^{i}d\left( X_{u}^{j}-Y_{u}^{j}\right)
\right\vert _{L^{2}}+\left\vert \int_{s}^{t}\left(
X_{s,u}^{i}-Y_{s,u}^{i}\right) dY_{u}^{j}\right\vert _{L^{2}},
\end{eqnarray*}%
Using independence of $\sigma \left( X^{i},Y^{i}\right) $ and $\sigma \left(
X^{j},Y^{j}\right) $, the variances of the Riemann-Stieltjes integrals which
appear in the line above, are expressed as 2D Young integrals involving the
respective covariances. Using 2D\ Young estimates with $1/\rho ^{\prime
}+1/\rho ^{\prime }>1$, we see that, with changing constants $c$,%
\begin{eqnarray*}
\left\vert \mathbf{X}_{s,t}^{i,j}-\mathbf{Y}_{s,t}^{i,j}\right\vert
_{L^{2}}^{2} &\leq &c\left\vert R_{X-Y}\right\vert _{\rho ^{\prime }\text{$%
-var$;}\left[ s,t\right] ^{2}}\omega \left( \left[ s,t\right] ^{2}\right)
^{1/\rho ^{\prime }} \\
&\leq &c\left\vert R_{X-Y}\right\vert _{\infty }^{1-\rho /\rho ^{\prime
}}\omega \left( \left[ s,t\right] ^{2}\right) ^{2/\rho ^{\prime }}.
\end{eqnarray*}%
We now turn to level (iii) estimates and keep $i\neq j$ fixed throughout. We
have%
\begin{eqnarray*}
\left\vert \mathbf{X}_{s,t}^{i,i,j}-\mathbf{Y}_{s,t}^{i,i,j}\right\vert
_{L^{2}}^{2} &\leq &2\left\vert \int_{s}^{t}\left( X_{s,u}^{i}\right)
^{2}d\left( X_{u}^{j}-Y_{u}^{j}\right) \right\vert _{L^{2}}^{2} \\
&&+2\left\vert \int_{s}^{t}\left\{ \left( X_{s,u}^{i}\right) ^{2}-\left(
Y_{s,u}^{i}\right) ^{2}\right\} dY_{u}^{j}\right\vert _{L^{2}}^{2}.
\end{eqnarray*}%
The variance of $\int_{s}^{t}\left( X_{s,u}^{i}\right) ^{2}d\left(
X_{u}^{j}-Y_{u}^{j}\right) $ can be written as 2D Young integral and by
Proposition \ref{p-variationE(X2)} and 2D Young estimates we obtain the bound%
\begin{equation*}
\left\vert \int_{s}^{t}\left( X_{s,u}^{i}\right) ^{2}d\left(
X_{u}^{j}-Y_{u}^{j}\right) \right\vert _{L^{2}}^{2}\leq c\left\vert
R_{X-Y}\right\vert _{\infty }^{1-\rho /\rho ^{\prime }}\omega \left(
s,t\right) ^{3/\rho ^{\prime }}.
\end{equation*}%
To deal with the other term, we first note that, from Proposition \ref%
{p-variationE(X2)}, the $\rho $-variation of%
\begin{equation*}
\left( u,v\right) \mapsto g\left( u,v\right) \equiv \mathbb{E}\left[ \left\{
\left( X_{s,u}^{i}\right) ^{2}-\left( Y_{s,u}^{i}\right) ^{2}\right\}
\left\{ \left( X_{s,v}^{i}\right) ^{2}-\left( Y_{s,v}^{i}\right)
^{2}\right\} \right]
\end{equation*}%
over $\left[ s,t\right] ^{2}$ is controlled by a constant times $\omega
\left( \left[ s,t\right] ^{2}\right) ^{2}$ while its supremum norm on $\left[
s,t\right] ^{2}$ is bounded by a constant times%
\begin{equation*}
\left\vert R_{X-Y}\right\vert _{\infty }\omega \left( \left[ s,t\right]
^{2}\right) ^{1/\rho }.
\end{equation*}%
To see the latter, it suffices to write $g\left( u,v\right) $ as expectation
of the product of the four factors $X_{s,u}^{i}\pm
Y_{s,u}^{i},X_{s,v}^{i}\pm Y_{s,v}^{i}$, bounded by the product of the
respective $L^{4}$-norms which are (everything is Gaussian) equivalent to
the respective $L^{2}$-norms. This leads to%
\begin{eqnarray*}
&&\left\vert \int_{s}^{t}\left\{ \left( X_{s,u}^{i}\right) ^{2}-\left(
Y_{s,u}^{i}\right) ^{2}\right\} dY_{u}^{j}\right\vert _{L^{2}}^{2} \\
&=&\int_{\left[ s,t\right] ^{2}}g\left( u,v\right) dR_{Y^{j}}\left(
u,v\right) \\
&\leq &c\left\vert g\right\vert _{\rho ^{\prime }\text{$-var$;}\left[ s,t%
\right] }\left\vert R_{Y^{j}}\right\vert _{\rho ^{\prime }\text{$-var$;}%
\left[ s,t\right] } \\
&\leq &c\left\vert g\right\vert _{\infty }^{1-\rho /\rho ^{\prime
}}\left\vert g\right\vert _{\rho \text{$-var$;}\left[ s,t\right] }^{\rho
/\rho ^{\prime }}\omega \left( \left[ s,t\right] ^{2}\right) ^{1/\rho
^{\prime }} \\
&\leq &c\left( \left\vert R_{X-Y}\right\vert _{\infty }\omega \left( \left[
s,t\right] ^{2}\right) ^{1/\rho }\right) ^{1-\rho /\rho ^{\prime }}\left(
\omega \left( \left[ s,t\right] ^{2}\right) ^{2}\right) ^{1/\rho ^{\prime
}}\omega \left( \left[ s,t\right] ^{2}\right) ^{1/\rho ^{\prime }} \\
&=&c\left\vert R_{X-Y}\right\vert _{\infty }^{1-\rho /\rho ^{\prime }}\omega
\left( \left[ s,t\right] ^{2}\right) ^{1/\rho +2/\rho ^{\prime }}
\end{eqnarray*}%
and it follows that%
\begin{equation*}
\left\vert \mathbf{X}_{s,t}^{i,i,j}-\mathbf{Y}_{s,t}^{i,i,j}\right\vert
_{L^{2}}^{2}\leq c\left\vert R_{X-Y}\right\vert _{\infty }^{1-\rho /\rho
^{\prime }}\omega \left( \left[ s,t\right] ^{2}\right) ^{1/\rho +2/\rho
^{\prime }}.
\end{equation*}%
It remains to prove (iii.2) and we fix distinct indices $i,j,k$. To see that%
\begin{equation*}
\mathbb{E}\left( \left\vert \mathbf{X}_{s,t}^{i,j,k}-\mathbf{Y}%
_{s,t}^{i,j,k}\right\vert ^{2}\right) \leq c\left\vert R_{X-Y}\right\vert
_{\infty }^{\left( \rho ^{\prime }-\rho \right) /\rho ^{\prime }}\omega
\left( \left[ s,t\right] ^{2}\right) ^{3/\rho ^{\prime }}
\end{equation*}%
we proceed as in the proof of (ii) and start by subtract/adding%
\begin{equation*}
\int_{\lbrack s,t]}\mathbf{X}_{s,\cdot }^{i,j}dY^{k}.
\end{equation*}%
After using the triangle inequality we are left with two terms. The first is
the variance of the 2D Young integral $\int $ $\mathbf{X}_{s,\cdot
}^{i,j}d\left( X-Y\right) ^{k}$ which is handled via Proposition \ref%
{p-variationE(X2)} and 2D Young estimates, exactly as earlier. The second
term is of form $\int \left( \mathbf{X}_{s,u}^{i,j}-\mathbf{Y}%
_{s,u}^{i,j}\right) dY_{u}^{k}$ and is handled by the split-up,%
\begin{equation*}
\mathbf{X}_{s,u}^{i,j}-\mathbf{Y}_{s,u}^{i,j}=\int_{s}^{u}X_{s,\cdot
}^{i}d\left( X^{j}-Y^{j}\right) +\int_{s}^{u}\left( X_{s,\cdot
}^{i}-Y_{s,\cdot }^{i}\right) dY^{j}.
\end{equation*}%
We leave the remaining details to the reader.
\end{proof}

\begin{corollary}
\label{MainCorollaryContinuityPwLinearGauss}With $\mathbf{X},\mathbf{Y},\rho
,\rho ^{\prime },\omega ,K$ as in the last proposition there exists $C_{\ref%
{MainCorollaryContinuityPwLinearGauss}}=C_{\ref%
{MainCorollaryContinuityPwLinearGauss}}\left( \rho ,\rho ^{\prime },K\right) 
$ and $\theta =\theta \left( \rho ,\rho ^{\prime }\right) >0$ such that for
all $s<t$ in $\left[ 0,1\right] $ and $n=1,2,3,$%
\begin{equation*}
\mathbb{E}\left( \left\vert \pi _{n}\left( \ln \left( \mathbf{X}%
_{s,t}^{-1}\otimes \mathbf{Y}_{s,t}\right) \right) \right\vert ^{2}\right)
\leq C_{\ref{MainCorollaryContinuityPwLinearGauss}}\left\vert
R_{X-Y}\right\vert _{\infty }^{\theta }\omega \left( \left[ s,t\right]
^{2}\right) ^{n/\rho ^{\prime }}.
\end{equation*}
\end{corollary}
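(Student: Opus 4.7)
The plan is to prove the estimate at each level $n\in\{1,2,3\}$ in strict parallel with the proof of the uniform analogue in the previous section, except that every monomial must now carry at least one "difference factor" of the form $\pi_k(\mathbf{Y}_{s,t})-\pi_k(\mathbf{X}_{s,t})$ in order to generate the $|R_{X-Y}|_\infty^\theta$ factor. Set $h:=\mathbf{X}_{s,t}^{-1}\otimes\mathbf{Y}_{s,t}$ and, motivated by Proposition \ref{Level123estimatesContEstPwLinGauss}, take $\theta:=1-\rho/\rho'>0$. A preliminary observation: $|R_{X-Y}|_\infty$ is a priori bounded by a constant depending on $K$ (via Cauchy--Schwarz and $\omega([0,1]^2)\le K$), so surplus powers of $|R_{X-Y}|_\infty^{\theta/2}$ may be absorbed into the constant; similarly, $\omega\le K$ means that an exponent $1/(2\rho)$ on $\omega$ may always be exchanged for $1/(2\rho')$ at the cost of a factor depending only on $K,\rho,\rho'$, since $1/\rho>1/\rho'$.

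For $n=1$ we have $\pi_1(\ln h)=\pi_1(h)=\mathbf{Y}_{s,t}^{(1)}-\mathbf{X}_{s,t}^{(1)}$ and the claim is exactly Proposition \ref{Level123estimatesContEstPwLinGauss}(i). For $n=2$ the series expansion of $\ln$ gives $\pi_2(\ln h)=\pi_2(h)-\tfrac{1}{2}\pi_1(h)^{\otimes 2}$, and the identity $\mathbf{X}_{s,t}\otimes\mathbf{X}_{s,t}^{-1}=1$ yields
\[
\pi_2(h)=\bigl(\pi_2(\mathbf{Y}_{s,t})-\pi_2(\mathbf{X}_{s,t})\bigr)-\pi_1(\mathbf{X}_{s,t})\otimes\pi_1(h),
\]
so every resulting monomial contains at least one difference factor. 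Applying the Wiener-chaos product bound (\ref{L2splitup}) (each factor lies in a fixed chaos, so the $L^2$-norm of the product is controlled by the product of the $L^2$-norms up to a dimensional constant), then bounding each difference factor via Proposition \ref{Level123estimatesContEstPwLinGauss}(i)-(ii) and each non-difference factor via Corollary \ref{FerniquePwlinear}, produces the required estimate after the exponent reshuffling described above.

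For $n=3$, the Lie algebra expansion recorded in Appendix III (and used in the corollary preceding Corollary \ref{FerniquePwlinear}) expresses $\pi_3(\ln h)$ as a linear combination of brackets $[e_i,[e_j,e_k]]$ whose coefficients are one of four explicit polynomial types in the coordinates of $h$. Expanding those coordinates via $\pi_m(h)=\sum_{i+j=m}\pi_i(\mathbf{X}_{s,t}^{-1})\otimes\pi_j(\mathbf{Y}_{s,t})$ together with the recursive formulae for $\pi_i(\mathbf{X}_{s,t}^{-1})$ in terms of $\pi_k(\mathbf{X}_{s,t})$, and regrouping, rewrites $\pi_3(\ln h)$ as a finite sum of monomials of total degree $\le 3$, each containing a difference factor at some level $k\le 3$. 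The $L^2$-norm of each monomial is then estimated by iterating (\ref{L2splitup}) and applying the appropriate line of Proposition \ref{Level123estimatesContEstPwLinGauss} to the difference factor, while Corollary \ref{FerniquePwlinear} handles the uniform factors.

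The only non-routine point is the level-3 bookkeeping: one must check, for each of the four bracket-coefficient types of $\pi_3(\ln h)$ and each monomial in its expansion, that the combined $\omega$-exponent totals at least $3/\rho'$ and that the power of $|R_{X-Y}|_\infty$ is strictly positive. The first is automatic from $\omega\le K$ and $1/\rho>1/\rho'$; the second is built in by construction, since every monomial carries at least one difference factor to which (i), (ii), (iii.1) or (iii.2) of Proposition \ref{Level123estimatesContEstPwLinGauss} applies, yielding a factor of at least $|R_{X-Y}|_\infty^{\theta/2}$. Summing the finitely many monomial bounds gives the stated estimate with $\theta=1-\rho/\rho'$ (or any smaller positive exponent if convenient for constant management).
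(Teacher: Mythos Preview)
Your approach is correct and close in spirit to the paper's, though organised differently. The paper does not Hall-expand $\pi_3(\ln h)$ in the coordinates of $h$ and then unwind those in terms of $\mathbf{X},\mathbf{Y}$; instead it first applies the Baker--Campbell--Hausdorff bound of Lemma~\ref{BCHlevel3normbound}, which estimates $|\pi_n(\ln(\mathbf{X}_{s,t}^{-1}\otimes\mathbf{Y}_{s,t}))|$ by $|\pi_n(\ln\mathbf{Y}_{s,t})-\pi_n(\ln\mathbf{X}_{s,t})|$ plus lower-order cross terms, each carrying \emph{exactly one} difference factor. It then expands $\pi_3(\ln\mathbf{Y}_{s,t})-\pi_3(\ln\mathbf{X}_{s,t})$ in the Hall basis (Proposition~\ref{logSignatureStep3}), handling the four coefficient types $\mathbf{Y}^{i,j,k}-\mathbf{X}^{i,j,k}$, $\mathbf{Y}^{i,i,j}-\mathbf{X}^{i,i,j}$, $|Y^i|^2Y^j-|X^i|^2X^j$, $Y^i\mathbf{Y}^{i,j}-X^i\mathbf{X}^{i,j}$ via (iii.1)/(iii.2) and, for the last two, the elementary $|bb'-aa'|\le |b||b'-a'|+|a'||b-a|$ together with (\ref{L2splitup}). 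The payoff of going through Lemma~\ref{BCHlevel3normbound} first is that every resulting monomial has precisely one difference factor, so $\theta=1-\rho/\rho'$ drops out with no surplus powers to manage.

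Your direct route also works but generates monomials such as $(h^i)^2h^j=(Y^i_{s,t}-X^i_{s,t})^2(Y^j_{s,t}-X^j_{s,t})$ carrying several difference factors. Your proposal to absorb the extra $|R_{X-Y}|_\infty$ powers into the constant relies on $|R_{X-Y}|_\infty$ being bounded in terms of $K$; but the $\rho$-variation hypothesis controls only the rectangular increments of $R_{(X,Y)}$, not its pointwise values, so this is not automatic without an additional normalisation such as $X_0=Y_0=0$. The clean fix --- which your last paragraph in effect also suggests --- is to apply the difference estimate of Proposition~\ref{Level123estimatesContEstPwLinGauss} to exactly one factor in each monomial and the uniform bound (from the proposition preceding Corollary~\ref{FerniquePwlinear}) to the rest; then every term contributes exactly $|R_{X-Y}|_\infty^{\theta/2}$ in $L^2$ and no a~priori bound on $|R_{X-Y}|_\infty$ is needed.
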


\begin{proof}
For $n=1$ this is a trivial consequence of (i) of the preceding proposition.
From Appendix III,%
\begin{equation*}
\left\vert \pi _{2}\left( \ln \left( \mathbf{X}_{s,t}^{-1}\otimes \mathbf{Y}%
_{s,t}\right) \right) \right\vert \leq \left\vert \pi _{2}\left( \ln \mathbf{%
Y}_{s,t}\right) -\pi _{2}\left( \ln \mathbf{X}_{s,t}\right) \right\vert +%
\frac{1}{2}\left\vert Y_{s,t}-X_{s,t}\right\vert .\left\vert
Y_{s,t}\right\vert
\end{equation*}%
which is readily handled by (i) and (ii) of the preceding proposition,
noting that thanks to Wiener-It\^{o} chaos integrability we can split up the 
$L^{2}$-norm of products, cf equation (\ref{L2splitup}),%
\begin{equation*}
\mathbb{E}\left[ \left\vert Y_{s,t}-X_{s,t}\right\vert ^{2}\left\vert
Y_{s,t}\right\vert ^{2}\right] \leq C\mathbb{E}\left( \left\vert
Y_{s,t}-X_{s,t}\right\vert ^{2}\right) \mathbb{E}\left( \left\vert
Y_{s,t}\right\vert ^{2}\right) .
\end{equation*}%
From Proposition \ref{BCHlevel3normbound} (appendix III)%
\begin{eqnarray*}
\left\vert \pi _{3}\left( \ln \left( \mathbf{X}_{s,t}^{-1}\otimes \mathbf{Y}%
_{s,t}\right) \right) \right\vert &\leq &\left\vert \pi _{3}\left( \ln 
\mathbf{Y}_{s,t}\right) -\pi _{3}\left( \ln \mathbf{X}_{s,t}\right)
\right\vert \\
&&+\frac{1}{2}\left\vert \pi _{2}\left( \ln \mathbf{Y}_{s,t}\right) -\pi
_{2}\left( \ln \mathbf{X}_{s,t}\right) \right\vert \left\vert
Y_{s,t}\right\vert \\
&&+\frac{1}{12}\left\vert Y_{s,t}-X_{s,t}\right\vert \left( \left\vert
X_{s,t}\right\vert ^{2}+\left\vert Y_{s,t}\right\vert ^{2}+6\left\vert \pi
_{2}\left( \ln \mathbf{X}_{s,t}\right) \right\vert \right) .
\end{eqnarray*}%
The terms which appear in the last two lines are handled by split up of $%
L^{2}$-norm as above, the term%
\begin{equation*}
\left\vert \pi _{3}\left( \ln \mathbf{Y}_{s,t}\right) -\pi _{3}\left( \ln 
\mathbf{X}_{s,t}\right) \right\vert
\end{equation*}%
expands with respect to the basis elements $\left[ e_{i},\left[ e_{j},e_{k}%
\right] \right] $ with coefficients of only four possible types, 
\begin{equation*}
\mathbf{Y}_{s,t}^{i,j,k}-\mathbf{X}_{s,t}^{i,j,k},\,\mathbf{Y}_{s,t}^{i,i,j}-%
\mathbf{X}_{s,t}^{i,i,j},\,\left\vert Y_{s,t}^{i}\right\vert
^{2}Y_{s,t}^{j}-\left\vert X_{s,t}^{i}\right\vert
^{2}X_{s,t}^{j},\,Y_{s,t}^{i}\mathbf{Y}_{s,t}^{i,j}-X_{s,t}^{i}\mathbf{X}%
_{s,t}^{i,j}\text{ \ }
\end{equation*}%
with $i,j,k$ distinct. The first two difference terms are handled precisely
with (iii.1) and (iii.2), the remaining terms are estimated by the split up
of $L^{2}$-norms combined with the elementary estimates of type%
\begin{equation*}
\left\vert bb^{\prime }-aa^{\prime }\right\vert \leq \left\vert b(b^{\prime
}-a^{\prime })+\left( b-a\right) a^{\prime }\right\vert \leq \left\vert
b\right\vert \left\vert b^{\prime }-a^{\prime }\right\vert +\left\vert
a^{\prime }\right\vert \left\vert b-a\right\vert
\end{equation*}%
and the estimates (i), (ii).
\end{proof}

The above estimates and Proposition \ref{PropWienerItoAndGRR} lead to the
following important corollary. (Note that $\omega $ can be taken as $%
\left\vert R_{\left( X,Y\right) }\right\vert _{\rho \text{$-var$;}\left[
\cdot ,\cdot \right] ,\left[ \cdot ,\cdot \right] }^{\rho }.$)

\begin{corollary}
\label{contOnDenseSpace}Under the above hypothesis, $\left\vert
R_{X-Y}\right\vert _{\infty }\leq 1,$ $p>2\rho $ and $\omega \left( \left[
0,1\right] ^{2}\right) $ bounded by $K$, there exists positive constants $%
\theta =\theta \left( p,\rho \right) >0$ and $C_{\ref{contOnDenseSpace}}=C_{%
\ref{contOnDenseSpace}}\left( p,\rho ,K\right) $ such that 
\begin{equation*}
\left\vert d_{p\text{$-var$}}\left( S_{3}\left( X\right) ,S_{3}\left(
Y\right) \right) \right\vert _{L^{q}}\leq C_{\ref{contOnDenseSpace}%
}\left\vert R_{X-Y}\right\vert _{\infty }^{\theta }\sqrt{q}.
\end{equation*}%
If $\omega \left( s,t\right) \leq K\left\vert t-s\right\vert ,$ then $d_{p%
\text{$-var$}}$ above may be replaced by $d_{1/p\text{$-H\ddot{o}l$}}.$
\end{corollary}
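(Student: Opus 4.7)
The strategy is to combine the $L^2$-estimates for the individual lifts with Corollary~\ref{MainCorollaryContinuityPwLinearGauss}, and feed them into Proposition~\ref{PropWienerItoAndGRRcontinuity}, which performs the Besov/Garsia-Rodemich-Rumsey-type passage from pointwise $L^2$-control to $L^q$-bounds in the $p$-variation metric.

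Choose $\rho'\in(\rho,p/2)$, possible because $p>2\rho$, and set $\tilde\omega(s,t):=\omega([s,t]^2)$, which by Remark~\ref{Rmk2DcontrolFct} is a continuous 1D control with $\tilde\omega(0,1)\le K$. The $L^2$-estimates for $\pi_n(\ln\mathbf{X}_{s,t})$ (established in the preceding subsection en route to Corollary~\ref{FerniquePwlinear}), applied to $X$ and $Y$ separately (the covariance of each is automatically controlled by $\omega$), yield, for $n=1,2,3$,
\[
|\pi_n(\ln\mathbf{X}_{s,t})|_{L^2},\;|\pi_n(\ln\mathbf{Y}_{s,t})|_{L^2}\le C\,\tilde\omega(s,t)^{n/(2\rho)}.
\]
Since $\tilde\omega\le K$ and $\rho'>\rho$, the exponent may be relaxed to $n/(2\rho')$ at the cost of a constant depending on $(K,\rho,\rho')$, giving hypothesis~(\ref{AssXom}) of Proposition~\ref{PropWienerItoAndGRRcontinuity} with $\rho'$ in place of $\rho$. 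Corollary~\ref{MainCorollaryContinuityPwLinearGauss} supplies the matching estimate for the difference: for some $\theta_0=\theta_0(\rho,\rho')>0$,
\[
|\pi_n(\ln(\mathbf{X}_{s,t}^{-1}\otimes\mathbf{Y}_{s,t}))|_{L^2}\le C\,|R_{X-Y}|_\infty^{\theta_0/2}\,\tilde\omega(s,t)^{n/(2\rho')}.
\]
Setting $\varepsilon:=|R_{X-Y}|_\infty^{\theta_0/2}\in[0,1]$, which uses the hypothesis $|R_{X-Y}|_\infty\le 1$, this is exactly hypothesis~(\ref{AssXYom}) with $\rho$ replaced by $\rho'$.

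Proposition~\ref{PropWienerItoAndGRRcontinuity}, invoked with $\rho'$ in place of $\rho$ and the 1D control $\tilde\omega$, then produces constants $\theta_1=\theta_1(p,\rho')>0$ and $C'=C'(p,\rho',K)$ with
\[
|d_{p\text{-var}}(S_3(X),S_3(Y))|_{L^q}\le C'\,\varepsilon^{\theta_1}\sqrt{q}=C'\,|R_{X-Y}|_\infty^{\theta_0\theta_1/2}\,\sqrt{q},
\]
the claimed bound with $\theta:=\theta_0\theta_1/2$. Under the additional assumption $\omega([s,t]^2)\le K(t-s)$, the 1D control $\tilde\omega$ is H\"older dominated, and part~(ii) of Proposition~\ref{PropWienerItoAndGRRcontinuity} delivers the same estimate with $d_{1/p\text{-H\"ol}}$ in place of $d_{p\text{-var}}$. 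The argument is essentially bookkeeping; the only point requiring care is the mismatch between the natural exponent $\rho$ (inherited from the $\rho$-variation of the covariance) and the strictly larger $\rho'$ demanded by Corollary~\ref{MainCorollaryContinuityPwLinearGauss}, which is absorbed into the constant via the uniform bound $\tilde\omega\le K$; the requirement $\varepsilon\le 1$ for Proposition~\ref{PropWienerItoAndGRRcontinuity} is ensured by the standing assumption $|R_{X-Y}|_\infty\le 1$.
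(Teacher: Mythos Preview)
Your proof is correct and follows exactly the route the paper intends: combine the $L^2$-estimates on $\pi_n(\ln\mathbf X_{s,t})$, $\pi_n(\ln\mathbf Y_{s,t})$ from the preceding subsection with Corollary~\ref{MainCorollaryContinuityPwLinearGauss}, and then invoke Proposition~\ref{PropWienerItoAndGRRcontinuity} with the intermediate exponent $\rho'\in(\rho,p/2)$ and the 1D control $\tilde\omega(s,t)=\omega([s,t]^2)$. The paper's own proof is a one-line reference to precisely these ingredients; you have simply spelled out the bookkeeping (the choice of $\rho'$, the exponent relaxation via $\tilde\omega\le K$, and the identification of $\varepsilon$), which is appropriate.
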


\subsection{Natural Lift of a Gaussian Process\label{GaussianRoughPaths}}

We are now able to prove the main theorems of this chapter.

\begin{theorem}[Construction of Lifted Gaussian Processes]
\label{ExistenceLiftGaussianProcess}Assume $X=\left( X^{1},\ldots
,X^{d}\right) $ is a centered continuous Gaussian process with independent
components. Let $\rho \in \lbrack 1,2)$ and assume the covariance of $X$ if
of finite $\rho $-variation dominated by a 2D control $\omega $.\newline
(i): (Existence) There exists a continuous $G^{3}\left( \mathbb{R}%
^{d}\right) $-valued process $\mathbf{X}$, such that a.e. realization is in $%
C_{0}^{0,p-var}\left( \left[ 0,1\right] ,G^{3}\left( \mathbb{R}^{d}\right)
\right) $ for $p>2\rho ,$ and hence a geometric $p$-rough path for $p\in
\left( 2\rho ,4\right) $, and which lifts the Gaussian process $X$ in the
sense $\pi _{1}\left( \mathbf{X}_{t}\right) =X_{t}-X_{0}$. If $\omega $ is H%
\"{o}lder dominated a.e. realization is in $C_{0}^{0,1/p-H\ddot{o}%
lder}\left( \left[ 0,1\right] ,G^{3}\left( \mathbb{R}^{d}\right) \right) $.
Finally, there exists $C_{\ref{ExistenceLiftGaussianProcess}}=C_{\ref%
{ExistenceLiftGaussianProcess}}\left( \rho \right) $ such that for all $s<t$
in $\left[ 0,1\right] $ and $q\in \lbrack 1,\infty ),$%
\begin{equation}
\left\vert d\left( \mathbf{X}_{s},\mathbf{X}_{t}\right) \right\vert
_{L^{q}}\leq C_{\ref{ExistenceLiftGaussianProcess}}\sqrt{q}\omega \left( %
\left[ s,t\right] ^{2}\right) ^{\frac{1}{2\rho }};
\label{LqEstimateThmLiftedGaussProcess}
\end{equation}%
and the random variables $\pi _{n}\left( \mathbf{X}_{s,t}\right) ,\pi
_{n}\left( \ln \mathbf{X}_{s,t}\right) ,\,n=1,2,3,$ are in the $n^{th}$\
(not necessarily homogenous) Wiener-It\^{o} chaos.\newline
(ii): (Fernique-estimates) Let $p>2\rho $ and $\omega \left( \left[ 0,1%
\right] ^{2}\right) \leq K$. Then there exists $\eta =\eta \left( p,\rho
,K\right) >0$, such that%
\begin{equation*}
\mathbb{E}\left( \exp \left( \eta \left\Vert \mathbf{X}\right\Vert _{p\text{$%
-var$,}\left[ 0,1\right] }^{2}\right) \right) <\infty .
\end{equation*}%
If $\omega \left( \left[ s,t\right] ^{2}\right) \leq K\left\vert
t-s\right\vert $ for all $s<t$ in $\left[ 0,1\right] $, then we can replace $%
\left\Vert \mathbf{X}\right\Vert _{p\text{$-var$,}\left[ 0,1\right] }$ by $%
\left\Vert \mathbf{X}\right\Vert _{1/p-H\ddot{o}l\text{;}\left[ 0,1\right] }$
above.\newline
(iii): (Uniqueness) The lift $\mathbf{X}$ is unique in the sense\footnote{%
We shall see in a later section that $\mathbf{X}$ is also the limit of
(lifted)\ Karhunen-Loeve type approximations of the Gaussian process $X$.}
that it is the $d_{p\text{$-var$}}$-limit in $L^{q}\left( \mathbb{P}\right) $%
, for any $q\in \lbrack 1,\infty )$, of any sequence $S_{3}\left(
X^{D}\right) $ with $\left\vert D\right\vert \rightarrow 0$. (As usual, $%
X^{D}$ denotes the piecewise linear approximation of $X$ based on a
dissection $D$ of $\left[ 0,1\right] \,$.)\newline
(iv): (Consistency) If $X$ has a.s. sample paths of finite $[1,2)$%
-variation, $\mathbf{X}$ coincides with the canonical lift obtained by
iterated Young-integration of $X$. If $\mathbf{\tilde{X}}=\left( 1,\pi
_{1}\left( \mathbf{X}\right) ,\pi _{2}\left( \mathbf{X}\right) \right) \in
C_{0}^{0,p-var}\left( \left[ 0,1\right] ,G^{2}\left( \mathbb{R}^{d}\right)
\right) $ a.s. for $p<3$ then $\mathbf{\tilde{X}}$ is a geometric $p$-rough
path and and $\mathbf{X}$ coincides with the Young-Lyons lift of $\mathbf{%
\tilde{X}}$.
\end{theorem}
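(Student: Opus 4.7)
The plan is to construct $\mathbf{X}$ as the $L^{q}$-limit, in the $d_{p\text{-var}}$ topology, of the lifted piecewise-linear approximations $S_{3}(X^{D_{n}})$ along any sequence of dissections $D_{n}$ with $\left\vert D_{n}\right\vert \to 0$, and then to read off every remaining statement from this approximating sequence.

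\emph{Cauchy property.} Fix $p\in(2\rho,4)$ and pick $\rho'\in(\rho,2)$ with $2\rho'<p$. By Corollary \ref{PropUnifRhoVarforRD} applied componentwise, the $\rho$-variation of the joint covariance of $(X,X^{D_{n}},X^{D_{m}})$ is controlled by a fixed multiple of $\omega$, uniformly in $n,m$. Since $R$ is continuous on the compact $[0,1]^{2}$, a uniform-continuity argument gives $|R_{X^{D_{n}}-X^{D_{m}}}|_{\infty}\to 0$ as $n,m\to\infty$. Feeding both inputs into Corollary \ref{contOnDenseSpace} applied to the pair $(X^{D_{n}},X^{D_{m}})$ yields
\begin{equation*}
\bigl\vert d_{p\text{-var}}\bigl(S_{3}(X^{D_{n}}),S_{3}(X^{D_{m}})\bigr)\bigr\vert_{L^{q}}\leq C\sqrt{q}\,|R_{X^{D_{n}}-X^{D_{m}}}|_{\infty}^{\theta}\longrightarrow 0,
\end{equation*}
so $S_{3}(X^{D_{n}})$ is Cauchy in $L^{q}(\mathbb{P})$ under $d_{p\text{-var}}$. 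Call the limit $\mathbf{X}$. Since each $S_{3}(X^{D_{n}})$ lies in the $d_{p\text{-var}}$-closed set $C_{0}^{0,p\text{-var}}([0,1],G^{3}(\mathbb{R}^{d}))$, so does $\mathbf{X}$; the H\"older-dominated case is identical, using the second statement of Corollary \ref{contOnDenseSpace}.

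\emph{Properties (i) and (ii).} Convergence in every $L^{q}$ yields, up to a subsequence, almost sure convergence, so $\pi_{1}(\mathbf{X}_{t})=\lim(X_{t}^{D_{n}}-X_{0}^{D_{n}})=X_{t}-X_{0}$. The $L^{q}$-estimate \eqref{LqEstimateThmLiftedGaussProcess} is the limit (by Fatou) of the analogous bound on $S_{3}(X^{D_{n}})$ furnished by Corollary \ref{FerniquePwlinear}. Each $\pi_{n}(S_{3}(X^{D_{n}})_{s,t})$ lies in the $n^{\text{th}}$ Wiener chaos by Proposition \ref{PropGroupIncrInWIC}, and this chaos is closed under $L^{q}$-convergence, so chaos membership transfers to $\mathbf{X}$ (and to $\pi_{n}(\ln\mathbf{X}_{s,t})$ via the polynomial identities in Corollary \ref{ChaosAndGroup}). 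The Fernique bound in (ii) then follows by feeding these ingredients into Proposition \ref{PropWienerItoAndGRR}.

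\emph{Uniqueness (iii) and consistency (iv).} If $\tilde{D}_{n}$ is another sequence with $|\tilde{D}_{n}|\to 0$, interleave it with $D_{n}$ and rerun the Cauchy argument on the merged sequence; both $S_{3}(X^{D_{n}})$ and $S_{3}(X^{\tilde{D}_{n}})$ are subsequences of a single Cauchy sequence, hence share the limit $\mathbf{X}$. For (iv), if $X$ has finite $q$-variation sample paths with $q\in[1,2)$, then $X^{D_{n}}\to X$ in $q'$-variation for any $q'>q$ by standard interpolation, and iterated Young integration is continuous in $q'$-variation; thus $S_{3}(X^{D_{n}})$ converges almost surely to the Young iterated-integral lift, which must coincide with $\mathbf{X}$ by uniqueness. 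For the second claim, the hypothesis $\mathbf{\tilde{X}}\in C_{0}^{0,p\text{-var}}([0,1],G^{2}(\mathbb{R}^{d}))$ with $p<3$ puts us in the regime where Lyons' extension map (a continuous map from $C_{0}^{0,p\text{-var}}(G^{2})$ to $C_{0}^{0,p\text{-var}}(G^{3})$ constructed by Young-type iterated integration) applies; applying it to $\pi_{\leq 2}(S_{3}(X^{D_{n}}))\to\mathbf{\tilde{X}}$ and using uniqueness of the extension identifies $\mathbf{X}$ with the Young--Lyons lift of $\mathbf{\tilde{X}}$.

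The main obstacle is the Cauchy step: once that is in hand, the remaining parts are clean transfers from the piecewise-linear level. The hard technical inputs it rests on, namely the uniform $\rho$-variation control of mixed covariances (Corollary \ref{PropUnifRhoVarforRD}) and the $L^{\infty}$-to-$d_{p\text{-var}}$ continuity estimate (Corollary \ref{contOnDenseSpace}), have already been established.
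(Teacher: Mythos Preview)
Your proof follows essentially the same route as the paper: build $\mathbf{X}$ as the $d_{p\text{-var}}$-Cauchy limit of $S_3(X^{D_n})$ via Corollary \ref{contOnDenseSpace}, then transfer regularity, chaos membership and Fernique estimates from the approximations. The uniqueness and consistency arguments are also the same in spirit (the paper uses a triangle inequality where you interleave sequences, and likewise appeals to continuity of the Young/Young--Lyons lift).

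There is, however, one point where your shortcut does not quite land. For the estimate \eqref{LqEstimateThmLiftedGaussProcess} you say it ``is the limit (by Fatou) of the analogous bound on $S_3(X^{D_n})$ furnished by Corollary \ref{FerniquePwlinear}.'' But Corollary \ref{FerniquePwlinear} gives
\[
\bigl\vert d\bigl(S_3(X^{D_n})_s,S_3(X^{D_n})_t\bigr)\bigr\vert_{L^q}\leq C\sqrt{q}\,\omega_n\bigl([s,t]^2\bigr)^{1/(2\rho)},
\]
where $\omega_n$ controls the $\rho$-variation of $R^{D_n}$. Corollary \ref{PropUnifRhoVarforRD} bounds $|R^{D_n}|_{\rho\text{-var};[s,t]^2}$ by a fixed multiple of $|R|_{\rho\text{-var};[s,t]^2}$ only for $s,t\in D_n$ (or in the H\"older-dominated case); for general $s,t$ you do \emph{not} have $\omega_n\leq C\omega$ uniformly in $n$, and one does not in general have $R^{D_n}\to R$ in $\rho$-variation. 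So Fatou does not immediately yield the bound with the original control $\omega$ and exponent $1/(2\rho)$. The paper handles this by passing to $\rho'>\rho$: part (ii) of Corollary \ref{PropUnifRhoVarforRD} gives $|R^{D_n}|_{\rho'\text{-var};[s,t]^2}\to|R|_{\rho'\text{-var};[s,t]^2}$, so one first obtains the estimate with exponent $1/(2\rho')$ and then sends $\rho'\downarrow\rho$ using Lemma \ref{LemmaRhoPrimeToRho}. You should insert this two-step limit in place of the Fatou appeal.
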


\begin{definition}
We call $\mathbf{X}$ natural lift (of the Gaussian process) $X$. A typical
realizations of $\mathbf{X}$ is called a Gaussian rough path.
\end{definition}

\begin{proof}
(Existence, Uniqueness)\ Let $\left( D_{n}\right) $ be a sequence of
dissections with mesh $\left\vert D_{n}\right\vert \rightarrow 0$. Clearly, $%
\left\vert R_{X^{D_{n}}-X^{D_{m}}}\right\vert _{\infty }\rightarrow 0$ and
from Corollary \ref{contOnDenseSpace} for every $p>2\rho $,%
\begin{equation*}
\left\vert d_{p\text{$-var$}}\left( S_{3}\left( X^{D_{n}}\right)
,S_{3}\left( X^{D_{m}}\right) \right) \right\vert _{L^{q}}\rightarrow 0\text{%
.}
\end{equation*}%
In particular, we see that\ $\left( S_{3}\left( X^{D_{n}}\right) \right) $
is Cauchy in probability as sequence of $C_{0}^{0,p\text{$-var$}}$-valued
random variables\footnote{%
A\ Cauchy criterion for convergence in probability of r.v.s with values in a
Polish space is an immediate generalization of the corresponding real-valued
case.} and so there exists $\mathbf{X}\in C_{0}^{0,p\text{$-var$}}\left( %
\left[ 0,1\right] ,G^{3}\left( \mathbb{R}^{d}\right) \right) $ so that $d_{p%
\text{$-var$}}\left( S_{3}\left( X^{D_{n}}\right) ,\mathbf{X}\right)
\rightarrow 0$ in probability and from the uniform estimates from Corollary %
\ref{FerniquePwlinear} also in $L^{q}$ for all $q\in \lbrack 1,\infty )$. \
If $\left( \tilde{D}_{n}\right) $ is another sequence of dissections with
mesh tending to zero, the same construction yields a limit, say $\mathbf{%
\tilde{X}}$. But%
\begin{eqnarray*}
d_{p\text{$-var$}}\left( \mathbf{X},\mathbf{\tilde{X}}\right)  &\leq &d_{p%
\text{$-var$}}\left( \mathbf{X},S_{3}\left( X^{D_{n}}\right) \right) +d_{p%
\text{$-var$}}\left( S_{3}\left( X^{D_{n}}\right) ,S_{3}\left( X^{\tilde{D}%
_{n}}\right) \right)  \\
&&+d_{p\text{$-var$}}\left( S_{3}\left( X^{\tilde{D}_{n}}\right) ,\mathbf{%
\tilde{X}}\right) 
\end{eqnarray*}%
and the right hand side converges to zero (in probability, say) as $%
n\rightarrow \infty $ which shows $\mathbf{X}=\mathbf{\tilde{X}}$ a.s. We
now show the estimate (\ref{LqEstimateThmLiftedGaussProcess}). To this end,
let $\omega ^{n}$ denote the 2D control given by $\left\vert
R_{X^{D_{n}}}\right\vert _{\rho ^{\prime }\text{$-var$;}\left[ \left[ \cdot
,\cdot \right] ,\left[ \cdot ,\cdot \right] \right] }^{\rho ^{\prime }}$ for 
$\rho ^{\prime }\in \left( \rho ,2\right) $. From Corollary \ref%
{FerniquePwlinear}%
\begin{equation*}
\left\vert d\left( S_{3}\left( X^{D_{n}}\right) _{s},S_{3}\left(
X^{D_{n}}\right) _{t}\right) \right\vert _{L^{q}}\leq C\sqrt{q}\omega
^{n}\left( \left[ s,t\right] ^{2}\right) ^{\frac{1}{2\rho ^{\prime }}}
\end{equation*}%
and after sending $n\rightarrow \infty $, followed by $\rho ^{\prime
}\downarrow \rho $ using Lemma \ref{LemmaRhoPrimeToRho}, we find%
\begin{equation*}
\left\vert d\left( \mathbf{X}_{s},\mathbf{X}_{t}\right) \right\vert
_{L^{q}}\leq C\sqrt{q}\omega \left( \left[ s,t\right] ^{2}\right) ^{\frac{1}{%
2\rho }}
\end{equation*}%
and (\ref{LqEstimateThmLiftedGaussProcess}) is proved. The statements on $%
\pi _{n}\left( \mathbf{X}_{s,t}\right) ,\pi _{n}\left( \ln \mathbf{X}%
_{s,t}\right) \in $ $n^{th\text{ }}$Wiener-It\^{o} chaos are are immediate
from Proposition \ref{PropGroupIncrInWIC} and closeness of the $n^{th}$
Wiener-It\^{o} chaos under convergence in $L^{q}$. We then see that one can
switch to equivalent estimates in terms of $\pi _{n}\left( \mathbf{X}%
_{s,t}\right) ,\pi _{n}\left( \ln \mathbf{X}_{s,t}\right) $ thanks to Recall
that Corollary \ref{ChaosAndGroup}), in particular for $n=1,2,3$ and all $s<t
$ in $\left[ 0,1\right] $,%
\begin{equation*}
\left\vert \pi _{n}\left( \ln \mathbf{X}_{s,t}\right) \right\vert
_{L^{2}}\leq c\omega \left( \left[ s,t\right] ^{2}\right) ^{\frac{n}{2\rho }%
}.
\end{equation*}%
(Regularity, Fernique)\ An immediate consequence of Proposition \ref%
{PropWienerItoAndGRR} applied with 1D control $\left( s,t\right) \mapsto
\omega \left( \left[ s,t\right] ^{2}\right) $.\newline
(Consistency) An immediate consequence of our construction and basic
continuity statements of the Young- resp. Young-Lyons lift, \cite{lyons-98,
lyons-qian-02}.
\end{proof}

\begin{theorem}
\label{ContLiftGaussianProcess}Let $\left( X,Y\right) =\left(
X^{1},Y^{1},\ldots ,X^{d},Y^{d}\right) $ be a centered continuous Gaussian
process such that $\left( X^{i},Y^{i}\right) $ is independent of $\left(
X^{j},Y^{j}\right) $ when $i\neq j$. Let $\rho \in \lbrack 1,2)$ and assume
the covariance of $\left( X,Y\right) $ is of finite $\rho $-variation
dominated by a 2D control $\omega $. Then, for every $p>2\rho $, there exist
positive constants $\theta =\theta \left( p,\rho \right) $ and $C_{\ref%
{ExistenceLiftGaussianProcess}}=C\left( p,\rho ,K\right) ,$ with$\ \omega
\left( \left[ 0,1\right] ^{2}\right) \leq K,$ such that for all $q\in
\lbrack 1,\infty ),$%
\begin{equation*}
\left\vert d_{p\text{$-var$}}\left( \mathbf{X},\mathbf{Y}\right) \right\vert
_{L^{q}}\leq C_{\ref{ExistenceLiftGaussianProcess}}\sqrt{q}\left\vert
R_{X-Y}\right\vert _{\infty }^{\theta }.
\end{equation*}%
If $\omega \left( \left[ s,t\right] ^{2}\right) \leq K\left\vert
t-s\right\vert $ for all $s<t$ in~$\left[ 0,1\right] $ we can replace $d_{p%
\text{$-var$}}$ by $d_{1/p-H\ddot{o}l}$ in the preceding line.
\end{theorem}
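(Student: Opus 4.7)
The plan is to deduce the result from the already-established piecewise-linear continuity estimate, Corollary~\ref{contOnDenseSpace}, by approximation and a limiting argument. I pick a sequence of dissections $D_n$ of $[0,1]$ with $|D_n|\to 0$ and consider the piecewise linear approximations $(X^{D_n},Y^{D_n})$ of $(X,Y)$ along $D_n$. These preserve the independence hypothesis across distinct coordinates and, by the joint-process version of Corollary~\ref{PropUnifRhoVarforRD}, their covariance has $\rho$-variation dominated by a universal constant $\kappa$ times $\omega$; in particular it is bounded by $\kappa K$ on $[0,1]^2$ uniformly in $n$. A direct computation---each coordinate of $X^{D_n}-Y^{D_n}$ is the piecewise linear interpolation of the corresponding coordinate of $X-Y$, hence a convex combination of its nodal values---yields the monotonicity bound $|R_{X^{D_n}-Y^{D_n}}|_\infty\leq |R_{X-Y}|_\infty$.

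With these uniform controls in hand, Corollary~\ref{contOnDenseSpace} applied to $(X^{D_n},Y^{D_n})$ produces positive constants $\theta=\theta(p,\rho)$ and $C=C(p,\rho,K)$ independent of $n$ such that
\[
\bigl|d_{p\text{-var}}(S_3(X^{D_n}),S_3(Y^{D_n}))\bigr|_{L^q}\leq C\sqrt{q}\,|R_{X-Y}|_\infty^\theta
\]
(the regime $|R_{X-Y}|_\infty\geq 1$, which is a priori bounded by a function of $K$, is absorbed by enlarging $C$ and shrinking $\theta$). Theorem~\ref{ExistenceLiftGaussianProcess}(iii) provides $S_3(X^{D_n})\to\mathbf{X}$ and $S_3(Y^{D_n})\to\mathbf{Y}$ in $L^q(d_{p\text{-var}})$ for every $p>2\rho$ and $q\in[1,\infty)$; combining the triangle inequality in $d_{p\text{-var}}$ with the $L^q$-triangle inequality then gives
\[
\bigl|d_{p\text{-var}}(\mathbf{X},\mathbf{Y})\bigr|_{L^q}\leq\liminf_{n\to\infty}\bigl|d_{p\text{-var}}(S_3(X^{D_n}),S_3(Y^{D_n}))\bigr|_{L^q}\leq C\sqrt{q}\,|R_{X-Y}|_\infty^\theta,
\]
which is the asserted estimate. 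The H\"older variant is obtained by the identical argument, invoking instead the $d_{1/p-H\ddot{o}l}$-clause of Corollary~\ref{contOnDenseSpace} (available because $\omega$ is H\"older dominated) together with the H\"older convergence clause of Theorem~\ref{ExistenceLiftGaussianProcess}(ii).

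The only genuinely delicate point is ensuring that the constant $C$ produced by Corollary~\ref{contOnDenseSpace} does not deteriorate with the dissection parameter $n$. This hinges entirely on the two uniform controls from the first paragraph: Corollary~\ref{PropUnifRhoVarforRD} for the $\rho$-variation of the linearized covariance, and the monotonicity of $|\cdot|_\infty$ of the discrepancy covariance under piecewise linear interpolation. Once these are secured, the remainder is a routine limit passage using only facts already stated in the excerpt.
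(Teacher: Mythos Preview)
Your argument is correct and follows essentially the same route as the paper. Both proofs approximate by piecewise-linear processes and pass to the limit; the only difference is the order of operations. The paper passes to the limit in the \emph{local} $L^2$ estimates of Corollary~\ref{MainCorollaryContinuityPwLinearGauss} (much as in the proof of Theorem~\ref{ExistenceLiftGaussianProcess}) and then applies the GRR/chaos machinery of Proposition~\ref{PropWienerItoAndGRRcontinuity} directly to $(\mathbf X,\mathbf Y)$, whereas you first package the local estimates into the global bound of Corollary~\ref{contOnDenseSpace} for the approximations and then pass to the limit in $L^q(d_{p\text{-var}})$. The ingredients are identical.

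Two minor points worth tightening. First, the ``joint-process version'' of Corollary~\ref{PropUnifRhoVarforRD} you invoke is not stated as such, but follows componentwise from the same proof (and the paper already uses this extension implicitly, cf.\ the last sentence of Corollary~\ref{PropUnifRhoVarforRD}). Second, your monotonicity claim $|R_{X^{D_n}-Y^{D_n}}|_\infty\le |R_{X-Y}|_\infty$ as written treats $|\cdot|_\infty$ as a pointwise supremum of covariance values; in the interpolation inequality of Proposition~\ref{Level123estimatesContEstPwLinGauss} the relevant quantity is the supremum of \emph{rectangular increments} of $R_{X-Y}$. The same convex-combination reasoning still gives a bound, but with a harmless universal constant (at most~$9$) rather than~$1$; this is absorbed into $C$. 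Also, the H\"older convergence you need in the last sentence is contained in parts~(i) and~(iii) of Theorem~\ref{ExistenceLiftGaussianProcess}, not~(ii).
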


\begin{proof}
Pick $\rho ^{\prime }$ such that $p>2\rho ^{\prime }>2\rho $ and, similarly
to the last proof, pass to the limit in Proposition \ref%
{MainCorollaryContinuityPwLinearGauss}. Conclude with Proposition \ref%
{MainCorollaryContinuityPwLinearGauss}.
\end{proof}

Related ideas lead to the following proposition which will be a useful tool
in the section on Karhunen-Loeve approximations. (It would also allow to
discuss lifts of non-centred Gaussian processes for which $t\mapsto \mathbb{E%
}\left( X_{t}\right) \in C^{\rho \text{$-var$}}\left( \left[ 0,1\right] ,%
\mathbb{R}^{d}\right) $, these cannot be handled by the rough path
translation operator. We shall not pursue this here.)

\begin{proposition}[Young-Wiener integral]
\label{fDX}Assume $X$ has covariance $R$ with finite $\rho $-variation. Let $%
f\in C^{q\text{$-var$}}\left( \left[ 0,1\right] ,\mathbb{R}\right) ,$ with $%
q^{-1}+\rho ^{-1}>1$. If $X^{n}$ is a sequence of Gaussian processes whose
covariances are uniformly of finite $p$-variation and such that $\left\vert
R_{X^{n}-X}\right\vert _{\infty }$ converges to $0,$ then in the supremum
topology, $t\rightarrow \int_{0}^{t}f_{u}dX_{u}^{n}$ converges in $L^{2}$.
We define this limit to be the integral%
\begin{equation*}
t\mapsto \int_{0}^{t}f_{u}dX_{u}.
\end{equation*}%
For all $s<t$ in $\left[ 0,1\right] ,$ we have the Young-Wiener isometry,%
\begin{equation}
\mathbb{E}\left( \left\vert \int_{s}^{t}f_{u}dX_{u}\right\vert ^{2}\right)
=\int_{\left[ s,t\right] ^{2}}f_{u}f_{v}dR\left( u,v\right) ,  \notag
\end{equation}%
and if $f\left( s\right) =0$ we have the Young-Wiener estimate%
\begin{equation}
\mathbb{E}\left( \left\vert \int_{s}^{t}f_{u}dX_{u}\right\vert ^{2}\right)
\leq C_{\rho ,q}\left\vert f\right\vert _{q\text{$-var$;}\left[ s,t\right]
}^{2}\left\vert R\right\vert _{\rho \text{$-var$;}\left[ s,t\right] }.
\label{fXL2estimate}
\end{equation}
\end{proposition}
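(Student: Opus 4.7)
The approach is to show that $I^n(t) := \int_0^t f_u \, dX_u^n$ is Cauchy in $L^2(\mathbb{P})$ uniformly in $t$, then define the limit as $\int_0^t f_u\, dX_u$, and finally deduce both the isometry and the estimate by passing to the limit. For each $n$, since $R_{X^n}$ has finite $\rho$-variation, the Young--Wiener isometry
\begin{equation*}
\mathbb{E}\left[\left(\int_s^t \varphi_u\, dX_u^n\right)^2\right] = \int_{[s,t]^2} \varphi(u)\varphi(v)\, dR_{X^n}(u,v)
\end{equation*}
holds for any $\varphi$ of finite $q$-variation: one checks it on step functions, where both sides reduce to elementary sums of Gaussian covariances, and extends by density using the 2D Young estimate to control the right-hand side.

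To establish uniform Cauchyness, split $f = f(0) + g$ with $g(0) = 0$. The contribution $f(0)(X^n_t - X^m_t - X^n_0 + X^m_0)$ has $L^2$-norm of order $|R_{X^n - X^m}|_\infty^{1/2}$. For the main term, apply the isometry to the Gaussian process $X^n - X^m$:
\begin{equation*}
\mathbb{E}\left[\left(\int_0^t g_u\, d(X^n - X^m)_u\right)^2\right] = \int_{[0,t]^2} g(u) g(v)\, dR_{X^n - X^m}(u,v).
\end{equation*}
By Example \ref{2Dvariation_of_FunctionTensorSquared}, $g \otimes g$ has 2D $q$-variation controlled by $|g|_{q\text{-var};[0,1]}^2$ and vanishes on $\{u=0\}\cup\{v=0\}$. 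Fixing $\rho' \in (\rho,2)$ with $q^{-1} + \rho'^{-1} > 1$, the 2D Young estimate bounds the above by $C \cdot |g|_{q\text{-var};[0,1]}^2 \cdot |R_{X^n - X^m}|_{\rho'\text{-var};[0,1]^2}$. The standard 2D interpolation
\begin{equation*}
|R_{X^n - X^m}|_{\rho'\text{-var};[0,1]^2} \leq |R_{X^n - X^m}|_\infty^{1 - \rho/\rho'} \cdot |R_{X^n - X^m}|_{\rho\text{-var};[0,1]^2}^{\rho/\rho'}
\end{equation*}
then yields the desired uniform decay, using the standing assumption that the $\rho$-variations of the $R_{X^n}$ are uniformly bounded and that $|R_{X^n - X^m}|_\infty \to 0$.

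Once the limit $\int_0^\cdot f_u\, dX_u$ is defined by uniform $L^2$-convergence, the Young--Wiener isometry is immediate by passage to the limit: the left-hand side converges in $L^2$, while the right-hand side $\int_{[s,t]^2} f(u) f(v)\, dR_{X^n}(u,v) \to \int_{[s,t]^2} f(u) f(v)\, dR(u,v)$ by applying the very same 2D Young estimate plus interpolation to $R_{X^n} - R$. The estimate (\ref{fXL2estimate}) is then a direct consequence of the isometry combined with the 2D Young estimate applied to $\int_{[s,t]^2} f(u) f(v)\, dR(u,v)$, using that $f \otimes f$ vanishes on $\{u=s\}\cup\{v=s\}$ once $f(s)=0$ together with $|f \otimes f|_{q\text{-var};[s,t]^2} \leq |f|_{q\text{-var};[s,t]}^2$. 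The main technical point to negotiate is precisely the justification that $L^\infty$ convergence of the integrator suffices for convergence of the 2D Young integral; this is exactly what forces both the intermediate exponent $\rho' > \rho$ and the uniform $\rho$-variation hypothesis on the $R_{X^n}$.
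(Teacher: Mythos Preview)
Your proposal is correct and follows essentially the same route as the paper: establish the isometry for an approximating sequence, use the 2D Young estimate on $f\otimes f$ to get the bound, and pass to the limit via the interpolation $|R|_{\rho'\text{-var}} \le |R|_\infty^{1-\rho/\rho'} |R|_{\rho\text{-var}}^{\rho/\rho'}$. The paper's own proof is extremely terse---it simply notes that for piecewise linear $X$ the isometry is obvious as a Riemann--Stieltjes identity, invokes the 2D Young estimate, and then says ``applying the same methodology developed in this chapter is enough''; your write-up makes explicit exactly what that methodology is. The only stylistic difference is that the paper fixes the integrand and approximates $X$ by piecewise linear processes (so the Stieltjes integral is classical), whereas you approximate $f$ by step functions to reduce to finite Gaussian sums; both reductions are equivalent and feed into the same interpolation-and-limit argument.
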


\begin{proof}
Proving (\ref{fXL2estimate}) for $X$ piecewise linear and applying the same
methodology developed in this chapter is enough. But for $X$ piecewise
linear, it is obvious that%
\begin{equation*}
\mathbb{E}\left( \left\vert \int_{s}^{t}f_{u}dX_{u}\right\vert ^{2}\right)
=\int_{\left[ s,t\right] ^{2}}f_{u}f_{v}dR\left( u,v\right) .
\end{equation*}%
Now, the $q$-variation of $\left( u,v\right) \rightarrow f_{u}f_{v}$ is of
course bounded by $\left\vert f\right\vert _{q-var}^{2},$ so applying Young
2D estimates, we are done.
\end{proof}

\begin{remark}
When $X$ is Brownian motion, $dR=\delta _{\left\{ s=t\right\} }$ and we
recover the usual It\^{o} isometry.
\end{remark}

\subsection{Almost Sure Convergence}

\begin{proposition}
\label{clearPieceWiseLinearConvLq}Let $X=\left( X_{1},\ldots ,X_{d}\right) $
be a centered continuous Gaussian process with independent components, and
assume that the covariance of $X$ is of finite $\rho $-variation dominated
by a 2D control $\omega ,$ for some $\rho <2.$ Then, if $D=\left(
t_{i}\right) $ is a subdivision of $\left[ 0,1\right] ,$ and $X^{D}$ be the
piecewise linear approximation of $X$. Then, if $p>2\rho ,$ there exist
positive constants $\theta =\theta \left( \rho ,p\right) $ and $C=C\left(
\rho ,p,K\right) $, with $\omega \left( \left[ 0,1\right] ^{2}\right) \leq K$%
, such that for all $q\in \lbrack 1,\infty )$,%
\begin{equation*}
\left\vert d_{p\text{$-var$}}\left( \mathbf{X},S_{3}\left( X^{D}\right)
\right) \right\vert _{L^{q}}\leq C\sqrt{q}\max_{i}\omega \left( \left[
t_{i},t_{i+1}\right] ^{2}\right) ^{\theta }.
\end{equation*}%
If $\omega \left( \left[ s,t\right] ^{2}\right) \leq K\left\vert
t-s\right\vert $ for all $s<t$ in $\left[ 0,1\right] $ we have 
\begin{equation*}
\left\vert d_{1/p-H\ddot{o}l}\left( \mathbf{X},S_{3}\left( X^{D}\right)
\right) \right\vert _{L^{q}}\leq C\sqrt{q}\max_{i}\left\vert
t_{i+1}-t_{i}\right\vert ^{\theta }.
\end{equation*}
\end{proposition}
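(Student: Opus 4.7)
The plan is to apply Theorem \ref{ContLiftGaussianProcess} to the jointly Gaussian pair $(X, X^D)$ and to quantify the cost in terms of the mesh via a sharp estimate on $\left|R_{X-X^D}\right|_\infty$. Since $X^{D,i}$ is a measurable function of $X^i$, the independence assumption $(X^i, X^{D,i}) \perp (X^j, X^{D,j})$ for $i \neq j$ follows from the independence of the components of $X$. By Corollary \ref{PropUnifRhoVarforRD} (in particular its final sentence, which extends the bounds to the joint covariance of $X$ and $X^D$), there is a 2D control $\tilde\omega \leq c_\rho\, \omega$ dominating the $\rho$-variation of $R_{(X,X^D)}$, uniformly in $D$, with $\tilde\omega\left([0,1]^2\right) \leq c_\rho K$.

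Next I would bound $\left|R_{X-X^D}\right|_\infty$ in terms of $M_D := \max_i \omega\left([t_i, t_{i+1}]^2\right)$. For $s \in [t_i, t_{i+1}]$, the identity
$$X_s - X_s^D \;=\; X_{t_i, s} \;-\; \frac{s - t_i}{t_{i+1} - t_i}\, X_{t_i, t_{i+1}}$$
combined with the elementary bound $\mathbb{E}\left|X_{u,v}\right|^2 \leq \omega\left([u,v]^2\right)^{1/\rho}$ (and monotonicity of $\omega$) yields, componentwise, $\left|X_s - X_s^D\right|_{L^2} \leq c\, M_D^{1/(2\rho)}$. Cauchy--Schwarz then gives $\left|R_{X-X^D}\right|_\infty \leq c\, M_D^{1/\rho}$.

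Now apply Theorem \ref{ContLiftGaussianProcess} with the control $\tilde\omega$: for some $\tilde\theta = \tilde\theta(p,\rho) > 0$ and $C = C(p,\rho,K)$,
$$\left|d_{p\text{-var}}\!\left(\mathbf{X}, S_3(X^D)\right)\right|_{L^q} \;\leq\; C\sqrt{q}\, \left|R_{X-X^D}\right|_\infty^{\tilde\theta} \;\leq\; C\sqrt{q}\, M_D^{\tilde\theta/\rho},$$
and the first claim follows by setting $\theta := \tilde\theta/\rho$. The H\"older version is identical: if $\omega\left([s,t]^2\right) \leq K|t-s|$ then $\tilde\omega$ inherits H\"older domination by Corollary \ref{PropUnifRhoVarforRD}(iii), so the $1/p$-H\"older part of Theorem \ref{ContLiftGaussianProcess} applies, and $M_D \leq K \max_i |t_{i+1} - t_i|$ gives the stated bound (absorbing $K^\theta$ into $C$). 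The main (mild) obstacle is verifying that the uniform 2D control bounds of Corollary \ref{PropUnifRhoVarforRD} genuinely extend to the cross-covariance $R_{X,X^D}$, which requires repeating that corollary's case analysis with one $X$-slot and one $X^D$-slot; the arguments go through unchanged, as the key identity expressing $X^D$-increments as convex combinations of $X$-increments across dissection intervals is the same.
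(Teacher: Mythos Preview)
Your proposal is correct and follows exactly the paper's approach: the paper's own proof is a one-line reference to Theorems \ref{ExistenceLiftGaussianProcess} and \ref{ContLiftGaussianProcess} together with the estimate $\left\vert R_{X-X^{D}}\right\vert_{\infty}\leq \max_{i}\omega\left(\left[t_{i},t_{i+1}\right]^{2}\right)^{1/\rho}$, and you have simply unpacked the details (independence of the pairs, the uniform control on $R_{(X,X^D)}$ via Corollary \ref{PropUnifRhoVarforRD}, and the explicit $L^2$ computation behind the sup-norm bound on $R_{X-X^D}$).
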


\begin{proof}
A simple corollary of Theorems \ref{ExistenceLiftGaussianProcess}, \ref%
{ContLiftGaussianProcess} combined with $\left\vert R_{X-X^{D}}\right\vert
_{\infty }\leq \max_{i}\omega \left( \left[ t_{i},t_{i+1}\right] ^{2}\right)
^{1/\rho }.$
\end{proof}

As a corollary, we obtain a.s. convergence of dyadic approximations in a H%
\"{o}lder situation. In view of Lemma \ref{QCboundedbyfBM} we have arrived
at a substantial generalization of the results in \cite{coutin-qian-02}.

\begin{corollary}
\label{almostSureBorelCantelli}Let $X,\omega ,\rho <2,p>2\rho $ as above and
assume $\omega \left( \left[ s,t\right] ^{2}\right) \leq K\left\vert
t-s\right\vert $ for all $s<t$ in $\left[ 0,1\right] $. If $X^{D_{n}}$
denote the dyadic piecewise linear approximation of $X$ based on $%
D_{n}=\left\{ \frac{k}{2^{n}},0\leq k\leq 2^{n}\right\} $ then there exist
positive constants $\theta =\theta \left( \rho ,p\right) $ and $C=C\left(
\rho ,p,K\right) $ so that for all $q\in \lbrack 1,\infty )$%
\begin{equation*}
\left\vert d_{1/p-H\ddot{o}l}\left( \mathbf{X},S_{3}\left( X^{D_{n}}\right)
\right) \right\vert _{L^{q}}\leq C\sqrt{q}2^{-\frac{n\theta }{\rho }}
\end{equation*}%
and as $n$ tends to infinity, $d_{1/p-H\ddot{o}l}\left( \mathbf{X}%
,S_{3}\left( X^{D_{n}}\right) \right) \rightarrow 0$ a.s. and in $L^{q}$.
\end{corollary}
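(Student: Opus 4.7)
The plan is to derive this corollary almost entirely from Proposition \ref{clearPieceWiseLinearConvLq}, with a Borel--Cantelli argument supplying the almost sure statement.

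First I would instantiate Proposition \ref{clearPieceWiseLinearConvLq} at the dyadic dissection $D=D_{n}=\{k/2^{n}:0\le k\le 2^{n}\}$. The hypothesis $\omega([s,t]^{2})\le K|t-s|$ puts us in the H\"older-dominated regime of that proposition, so for every $p>2\rho$ and every $q\in[1,\infty)$,
\begin{equation*}
\bigl|d_{1/p\text{-}H\ddot{o}l}\bigl(\mathbf{X},S_{3}(X^{D_{n}})\bigr)\bigr|_{L^{q}}\le C\sqrt{q}\,\max_{i}|t_{i+1}-t_{i}|^{\theta_{0}}=C\sqrt{q}\,2^{-n\theta_{0}},
\end{equation*}
where $\theta_{0}=\theta_{0}(\rho,p)>0$ and $C=C(\rho,p,K)$. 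The exponent $\theta/\rho$ appearing in the corollary is then obtained by relabelling: since $\theta_{0}$ is an arbitrary positive constant depending only on $\rho$ and $p$, we simply set $\theta:=\rho\theta_{0}$ (alternatively, one tracks through Proposition \ref{clearPieceWiseLinearConvLq} that the $\theta$-power really multiplies $|R_{X-X^{D}}|_{\infty}\le K^{1/\rho}2^{-n/\rho}$). This yields the first displayed inequality of the corollary.

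For the almost sure convergence, fix any $q_{0}\in[1,\infty)$ with $q_{0}\theta/\rho>1$ (e.g.\ $q_{0}=2\rho/\theta$). Markov's inequality together with the $L^{q_{0}}$-estimate just established gives, for any $\varepsilon>0$,
\begin{equation*}
\mathbb{P}\bigl(d_{1/p\text{-}H\ddot{o}l}(\mathbf{X},S_{3}(X^{D_{n}}))>\varepsilon\bigr)\le \varepsilon^{-q_{0}}\bigl(C\sqrt{q_{0}}\bigr)^{q_{0}}2^{-nq_{0}\theta/\rho},
\end{equation*}
so the series $\sum_{n}\mathbb{P}(d_{1/p\text{-}H\ddot{o}l}(\mathbf{X},S_{3}(X^{D_{n}}))>\varepsilon)$ converges. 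Borel--Cantelli then implies $d_{1/p\text{-}H\ddot{o}l}(\mathbf{X},S_{3}(X^{D_{n}}))\to 0$ almost surely. Convergence in every $L^{q}$ is immediate from the displayed bound (letting $n\to\infty$).

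There is no real obstacle here; everything essential is already packaged in Proposition \ref{clearPieceWiseLinearConvLq} and Theorem \ref{ContLiftGaussianProcess}. The only small point to verify is the match between the exponent $\theta$ of the corollary and the exponent produced by the proposition, which is a harmless relabelling since both are unspecified positive constants depending only on $(\rho,p)$.
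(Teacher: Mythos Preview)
Your proposal is correct and matches the paper's approach exactly: the paper's own proof simply observes that the $L^{q}$ bound is immediate from Proposition~\ref{clearPieceWiseLinearConvLq} and that almost sure convergence then follows by a standard Borel--Cantelli argument. Your remark about the exponent $\theta/\rho$ versus $\theta_{0}$ being a harmless relabelling is also accurate, and your alternative tracking via $|R_{X-X^{D}}|_{\infty}\le K^{1/\rho}2^{-n/\rho}$ is precisely how the proposition's proof absorbs the $1/\rho$ factor.
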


\begin{proof}
Only the a.s. convergence statement remains to be seen. But this is a
standard Borell-Cantelli argument.
\end{proof}

\section{Weak Approximations}

\subsection{Tightness}

\begin{proposition}
\label{tightnessPvarHold}Let $\left( X_{n}\right) $ be a sequence of
centered, $d$-dimensional, continuous Gaussian process with independent
components, and assume that the covariances of $X_{n}$ with finite $\rho \in
\lbrack 1,2)$-variation dominated by a 2D control $\omega $, uniformly in $n$%
. Let $p>2\rho $ and let $\mathbf{X}_{n}$ denote the natural lift of $X_{n}$
with a.e. sample path in $C_{0}^{0,p\text{$-var$}}\left( \left[ 0,1\right]
,G^{3}\left( \mathbb{R}^{d}\right) \right) $. Then the family $\left( 
\mathbb{P}_{\ast }\mathbf{X}_{n}\right) $, i.e. the laws of $\mathbf{X}_{n}$
viewed as Borel measures on the Polish space $C_{0}^{0,p\text{$-var$}}\left( %
\left[ 0,1\right] ,G^{3}\left( \mathbb{R}^{d}\right) \right) $, are tight$.$
If $\omega $ is H\"{o}lder dominated, then tightness holds in $C_{0}^{0,1/p-H%
\ddot{o}l}\left( \left[ 0,1\right] ,G^{3}\left( \mathbb{R}^{d}\right)
\right) $.
\end{proposition}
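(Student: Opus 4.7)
My plan is to invoke Prokhorov's theorem on the Polish target space, so it suffices to exhibit, for every $\varepsilon>0$, a single compact subset that carries mass at least $1-\varepsilon$ under every $\mathbb{P}_{*}\mathbf{X}_{n}$ simultaneously. The whole argument is a stronger-norm/compact-embedding pattern driven by the uniform Fernique estimate of Theorem \ref{ExistenceLiftGaussianProcess}(ii).

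First, I would fix an auxiliary exponent $p^{\prime}\in(2\rho,p)$ and apply Theorem \ref{ExistenceLiftGaussianProcess}(ii) to each $X_{n}$. Since the 2D control $\omega$ is the same for all $n$, the resulting constant $\eta=\eta(p^{\prime},\rho,K)$ is independent of $n$, yielding
$$\sup_{n}\mathbb{E}\bigl[\exp\bigl(\eta\left\Vert \mathbf{X}_{n}\right\Vert _{p^{\prime}\text{-var};[0,1]}^{2}\bigr)\bigr]<\infty,$$
with the analogous bound in $1/p^{\prime}$-H\"{o}lder norm when $\omega$ is H\"{o}lder dominated. A Chebyshev estimate then gives $\sup_{n}\mathbb{P}\bigl(\left\Vert \mathbf{X}_{n}\right\Vert _{p^{\prime}\text{-var}}>R\bigr)\to 0$ as $R\to\infty$.

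The key step is a compact-embedding statement: for every $R>0$, the set
$$K_{R}:=\left\{\mathbf{x}\in C_{0}^{0,p\text{-var}}\!\left([0,1],G^{3}(\mathbb{R}^{d})\right):\left\Vert \mathbf{x}\right\Vert _{p^{\prime}\text{-var}}\leq R\right\}$$
is relatively compact in $(C_{0}^{0,p\text{-var}},d_{p\text{-var}})$, and similarly in the H\"{o}lder setting. I would establish this in three sub-steps: (a) the $p^{\prime}$-variation bound forces $d(\mathbf{x}_{s},\mathbf{x}_{t})\leq\omega_{\mathbf{x}}([s,t])^{1/p^{\prime}}$ for a 1D control with $\omega_{\mathbf{x}}([0,1])\leq R^{p^{\prime}}$, so $K_{R}$ is $d_{\infty}$-equicontinuous and uniformly bounded; Arzel\`{a}-Ascoli applied to paths into the locally compact Lie group $G^{3}(\mathbb{R}^{d})$ extracts a $d_{\infty}$-convergent subsequence; (b) the interpolation inequality
$$d_{p\text{-var}}(\mathbf{x},\mathbf{y})\leq(2R)^{p^{\prime}/p}\,d_{\infty}(\mathbf{x},\mathbf{y})^{1-p^{\prime}/p}$$
(valid on $K_{R}$, proved via subadditivity of $\left\Vert \cdot\right\Vert _{p^{\prime}\text{-var};[s,t]}^{p^{\prime}}$ just as for real-valued paths) upgrades the $d_{\infty}$-convergence to $d_{p\text{-var}}$-convergence; (c) the limit inherits finite $p^{\prime}$-variation by lower semi-continuity, hence lies in the closure $C_{0}^{0,p\text{-var}}$ because its piecewise-linear interpolants converge to it in $d_{p\text{-var}}$ whenever $p>p^{\prime}$.

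Combining the two: given $\varepsilon>0$, choose $R$ from Step 1 so that $\mathbb{P}_{*}\mathbf{X}_{n}(K_{R}^{c})<\varepsilon$ for all $n$; then $\overline{K_{R}}$ is the desired compact set and Prokhorov concludes. The main obstacle I anticipate lies in Step 2: verifying that the interpolation inequality and the piecewise-linear approximation fact, both classical for real-valued paths, carry through under the Carnot-Carath\'{e}odory metric on the nonabelian step-$3$ nilpotent Lie group $G^{3}(\mathbb{R}^{d})$. These are by now standard in the rough-paths literature (cf.~\cite{friz-victoir-04-Note}) but constitute the non-trivial geometric input; once they are in hand, the remainder is a direct Fernique-plus-Prokhorov argument.
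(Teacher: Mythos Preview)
Your approach works in the H\"older case but has a genuine gap in the $p$-variation case: the $p'$-variation ball
$$K_R=\left\{\mathbf{x}:\left\Vert \mathbf{x}\right\Vert_{p'\text{-var}}\le R\right\}$$
is \emph{not} relatively compact in $C_0^{0,p\text{-var}}$. Your step~(a) claims that a uniform bound on the total control $\omega_{\mathbf x}([0,1])\le R^{p'}$ yields $d_\infty$-equicontinuity, but this is false because the controls $\omega_{\mathbf x}$ vary with $\mathbf x$. A simple real-valued counterexample: let $f_n$ be zero on $[0,1-1/n]$ and linear from $0$ to $1$ on $[1-1/n,1]$. Each $f_n$ has $1$-variation equal to $1$, yet the family is not equicontinuous (it converges pointwise to a discontinuous limit), so no subsequence converges uniformly and a fortiori not in any variation metric. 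Bounded $p'$-variation by itself carries no common modulus of continuity; this is precisely why a naive Arzel\`a--Ascoli argument cannot run.

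The paper repairs this by using the Garsia--Rodemich--Rumsey machinery of Appendix~II (Corollary~\ref{GRRBesovEmbLq}) to produce, for each $n$, a random variable $M_n$ with $\sup_n\mathbb{E}[\exp(\mu M_n^2)]<\infty$ and the \emph{pathwise} estimate
$$\left\Vert \mathbf{X}_{n}(s,t)\right\Vert^{p'}\le M_n\left\vert\omega([0,t]^2)-\omega([0,s]^2)\right\vert.$$
The point is that the right-hand side involves a \emph{fixed deterministic} increasing function $t\mapsto\omega([0,t]^2)$; the sets $K_R$ are then defined by this uniform modulus rather than by the $p'$-variation norm, and equicontinuity (hence Arzel\`a--Ascoli and interpolation) is immediate. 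Equivalently, after the common time-change $t\mapsto\omega([0,t]^2)$ one is back in the H\"older setting where your argument does go through. Your Fernique input from Theorem~\ref{ExistenceLiftGaussianProcess}(ii) is correct but too coarse for the variation case; you need the sharper increment-wise control that GRR delivers.
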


\begin{proof}
Let us fix $p^{\prime }\in \left( 2\rho ,p\right) $. Define $K_{R}$ to be
the relatively compact set in $C_{0}^{0,p\text{$-var$}}\left( \left[ 0,1%
\right] ,G^{3}\left( \mathbb{R}^{d}\right) \right) $, 
\begin{equation*}
\left\{ \mathbf{x:}\text{ for all }s<t\text{ in }\left[ 0,1\right]
:\left\Vert \mathbf{x}_{s,t}\right\Vert ^{p^{\prime }}\leq R\left\vert
\omega \left( \left[ 0,t\right] ^{2}\right) -\omega \left( \left[ 0,s\right]
^{2}\right) \right\vert \right\} .
\end{equation*}%
From the results of appendix II, there exists real random variables $M_{n}$
such that (i) for some $\mu $ small enough, $\sup_{n}E\left( \exp \left( \mu
M_{n}^{2}\right) \right) <\infty ,$ (ii) for all $n\geq 1,$ for all $s,t\in %
\left[ 0,1\right] $,%
\begin{equation*}
\left\Vert \mathbf{X}_{n}\left( s,t\right) \right\Vert ^{p^{\prime }}\leq
M_{n}\left\vert \omega \left( \left[ 0,t\right] ^{2}\right) -\omega \left( %
\left[ 0,s\right] ^{2}\right) \right\vert .
\end{equation*}%
Hence, there exists $c=c\left( \mu \right) >0$ such that $\sup_{n}P\left( 
\mathbf{X}_{n}\in K_{R}\right) \leq \exp \left( -cR^{2}\right) $ which shows
tightness in $C_{0}^{0,p\text{$-var$}}\left( \left[ 0,1\right] ,G^{3}\left( 
\mathbb{R}^{d}\right) \right) $. Similarly, for H\"{o}lder dominated $\omega 
$ we obtain tightness in $C_{0}^{0,1/p\text{-H\"{o}l}}\left( \left[ 0,1%
\right] ,G^{3}\left( \mathbb{R}^{d}\right) \right) $ from the relative
compactness of%
\begin{equation*}
\left\{ \mathbf{x:}\text{ for all }s<t\text{ in }\left[ 0,1\right]
:\left\Vert \mathbf{x}_{s,t}\right\Vert ^{p^{\prime }}\leq R\left\vert
t-s\right\vert \right\} \text{.}
\end{equation*}
\end{proof}

\subsection{Convergence}

\begin{theorem}
Let $\rho \in \lbrack 1,2)$. Let $X_{n},X_{\infty }$ be continuous Gaussian
process with covariance $R^{n},R^{\infty }$ of finite $\rho \in \lbrack 1,2)$%
-variation dominated uniformly in $n$ by a 2D control $\omega $, such that%
\begin{equation*}
R^{n}\rightarrow R^{\infty }\text{ \ \ \ pointwise on }\left[ 0,1\right] ^{2}%
\text{.}
\end{equation*}%
Let $\mathbf{X}_{n},\mathbf{X}_{\infty }$ denote the associated natural $%
G^{3}\left( \mathbb{R}^{d}\right) $-valued lifted processes. Then, for any $%
p>2\rho $, the processes $\mathbf{X}_{n}$ converge in distribution to $%
\mathbf{X}_{\infty }$ with respect to $p$-variation topology. If $\omega $
is H\"{o}lder dominated, then convergence holds with respect to $1/p$-H\"{o}%
lder topology.
\end{theorem}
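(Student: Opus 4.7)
The strategy is to combine the tightness already established in Proposition \ref{tightnessPvarHold} with an identification of the limit based on a uniform piecewise-linear approximation, and then invoke a standard triangle-inequality argument. Since the assumptions on $R^n$ are uniform in $n$ (same dominating control $\omega$, same $\rho$), Proposition \ref{tightnessPvarHold} furnishes tightness of $(\mathbb{P}_\ast \mathbf{X}_n)$ in $C_0^{0,p\text{-var}}([0,1],G^3(\mathbb{R}^d))$ (respectively in the $1/p$-Hölder space when $\omega$ is Hölder dominated). It then suffices to show that every weak limit point coincides with $\mathbb{P}_\ast \mathbf{X}_\infty$.

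The plan for identifying the limit rests on a three-term split. Fix a dissection $D=(t_i)$ of $[0,1]$ and write $X_n^D$ for the piecewise-linear approximation of $X_n$ based on $D$, with analogous notation for $X_\infty^D$. Applying Proposition \ref{clearPieceWiseLinearConvLq} (with the uniform dominating control $\omega$) yields, for $q\geq 1$ and some $\theta=\theta(\rho,p)>0$,
\begin{equation*}
\sup_{n\in\mathbb{N}\cup\{\infty\}}\bigl\|d_{p\text{-var}}(\mathbf{X}_n,S_3(X_n^D))\bigr\|_{L^q}\leq C\sqrt{q}\,\max_i \omega([t_i,t_{i+1}]^2)^{\theta},
\end{equation*}
with the right-hand side tending to $0$ as $|D|\to 0$ by continuity of $\omega$ on the diagonal. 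For a fixed $D$, on the other hand, the random path $X_n^D$ is determined by the finite-dimensional Gaussian vector $(X_n(t_i))_i$ whose covariance structure is given by the values $R^n(t_i,t_j)$; pointwise convergence $R^n\to R^\infty$ therefore gives convergence in distribution of these Gaussian vectors, and hence $X_n^D\to X_\infty^D$ in distribution in any reasonable topology on the finite-dimensional space of piecewise-linear paths with knots in $D$. Since the map $S_3$ is continuous from piecewise-linear paths (in $1$-variation, equivalently $C^1$) into the rough-path space $C_0^{0,p\text{-var}}$ (respectively $C_0^{0,1/p\text{-H\"ol}}$), we deduce
\begin{equation*}
S_3(X_n^D)\xrightarrow[n\to\infty]{\text{dist.}} S_3(X_\infty^D).
\end{equation*}

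To close the argument, given a bounded continuous functional $F$ on the rough-path space, write
\begin{equation*}
|\mathbb{E}F(\mathbf{X}_n)-\mathbb{E}F(\mathbf{X}_\infty)|\leq |\mathbb{E}F(\mathbf{X}_n)-\mathbb{E}F(S_3(X_n^D))| + |\mathbb{E}F(S_3(X_n^D))-\mathbb{E}F(S_3(X_\infty^D))| + |\mathbb{E}F(S_3(X_\infty^D))-\mathbb{E}F(\mathbf{X}_\infty)|.
\end{equation*}
Given $\varepsilon>0$, use the uniform $L^q$-estimate above together with boundedness of $F$ and a Markov/dominated-convergence argument to choose $|D|$ small enough that the first and third terms are below $\varepsilon$ uniformly in $n$; then let $n\to\infty$ to kill the middle term via the finite-dimensional convergence just established. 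Since $\varepsilon$ is arbitrary, $\mathbf{X}_n\to \mathbf{X}_\infty$ in distribution. The Hölder case is identical with $d_{p\text{-var}}$ replaced by $d_{1/p\text{-H\"ol}}$ throughout, invoking the Hölder version of Proposition \ref{clearPieceWiseLinearConvLq}.

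The main obstacle is ensuring that the triangle-inequality argument really works with bounded continuous (not Lipschitz) $F$, which is why the uniform-in-$n$ quantitative $L^q$ bound from Proposition \ref{clearPieceWiseLinearConvLq} is essential: it converts to uniform convergence in probability of $S_3(X_n^D)$ to $\mathbf{X}_n$ as $|D|\to 0$, so that the bounded-$F$ approximation error is small uniformly in $n$. A subtle but routine point is verifying that $S_3$ restricted to piecewise-linear paths with fixed knots is continuous into the rough-path topology — this reduces to a finite-dimensional statement about iterated integrals of smooth paths and presents no real difficulty.
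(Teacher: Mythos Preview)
Your proposal is correct and follows essentially the same route as the paper: tightness from Proposition~\ref{tightnessPvarHold}, a three-term split via piecewise-linear approximations, the outer terms controlled uniformly in $n$ by Proposition~\ref{clearPieceWiseLinearConvLq}, and the middle term handled by finite-dimensional convergence of the Gaussian vectors $(X_n(t_i))_{t_i\in D}$ together with continuity of $S_3$ on piecewise-linear paths. The only presentational difference is that the paper first invokes Prohorov to reduce the problem to weak convergence in the coarser $d_\infty$-topology and then tests against bounded \emph{uniformly} continuous $f$ (Portmanteau), which makes the $\varepsilon$--$\delta$ bookkeeping for the outer terms entirely explicit; this sidesteps the very concern you raise at the end about bounded merely continuous test functions.
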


\begin{proof}
By\ Prohorov's theorem \cite{billingsley-1968}, tightness already implies
existence of weak limits as measures on%
\begin{equation*}
C^{0,p\text{$-var$}}\left( \left[ 0,1\right] ,G^{3}\left( \mathbb{R}%
^{d}\right) \right) \text{\thinspace\ resp. }C^{0,1/p-H\ddot{o}l}\left( %
\left[ 0,1\right] ,G^{3}\left( \mathbb{R}^{d}\right) \right)
\end{equation*}%
and it will suffice to establish weak convergence on the space $E:=C\left( %
\left[ 0,1\right] ,G^{3}\left( \mathbb{R}^{d}\right) \right) $ with $%
d_{\infty }$-metric. By the Portmanteau theorem \cite{billingsley-1968}, it
suffices to show that for every $f:E\rightarrow \mathbb{R},\,\,$bounded and
uniformly continuous, 
\begin{equation}
\mathbb{E}f\left( \mathbf{X}_{n}\right) \rightarrow \mathbb{E}f\left( 
\mathbf{X}_{\infty }\right) .  \label{WeakConvAlongUnifContTestFcts}
\end{equation}%
To see this, fix $\varepsilon >0,$ and $\delta =\delta \left( \varepsilon
\right) >0$ such that $d_{\infty }\left( \mathbf{x},\mathbf{y}\right)
<\delta $ implies $\left\vert f\left( \mathbf{x}\right) -f\left( \mathbf{y}%
\right) \right\vert <\varepsilon .$ The estimates of Proposition \ref%
{clearPieceWiseLinearConvLq}) are more than enough to see that there exists
a dissection $D$, with small enough mesh, such that%
\begin{equation*}
\sup_{0\leq n\leq \infty }\mathbb{P}\left( d_{\infty }\left( \mathbf{X}%
_{n},S_{3}\left( X_{n}^{D}\right) \right) \geq \delta \right) <\varepsilon .
\end{equation*}%
Hence, 
\begin{multline*}
\sup_{0\leq n\leq \infty }\left\vert \mathbb{E}f\left( \mathbf{X}_{n}\right)
-\mathbb{E}f\left( S_{3}\left( X_{n}^{D}\right) \right) \right\vert \\
\left. 
\begin{array}{l}
\leq \sup_{0\leq n\leq \infty }\left\vert \mathbb{E}\left[ \left\vert
f\left( \mathbf{X}_{n}\right) -f\left( S_{3}\left( X_{n}^{D}\right) \right)
\right\vert ;\,d_{\infty }\left( \mathbf{X}_{n},S_{3}\left( X_{n}^{D}\right)
\right) \geq \delta \right] \right\vert \\ 
\text{ \ }+\sup_{0\leq n\leq \infty }\left\vert \mathbb{E}\left[ \left\vert
f\left( \mathbf{X}_{n}\right) -f\left( S_{3}\left( X_{n}^{D}\right) \right)
\right\vert ;\,d_{\infty }\left( \mathbf{X}_{n},S_{3}\left( X_{n}^{D}\right)
\right) <\delta \right] \right\vert \\ 
\leq 2\left\vert f\right\vert _{\infty }\sup_{0\leq n\leq \infty }\mathbb{P}%
\left( d_{\infty }\left( \mathbf{X}_{n},S_{3}\left( X_{n}^{D}\right) \right)
\geq \delta \right) +\varepsilon \\ 
\leq \left( 2\left\vert f\right\vert _{\infty }+1\right) \varepsilon .%
\end{array}%
\right.
\end{multline*}%
On the other hand, $R^{n}\rightarrow R$ pointwise gives convergence of the
finite-dimensional distributions and hence weak convergence of $\left(
X_{n}^{D}\left( t\right) \right) _{t\in D}$ to $\left( X_{\infty }^{D}\left(
t\right) \right) _{t\in D}$. The map $\left( X_{n}^{D}\left( t\right)
\right) _{t\in D}\mapsto f\left( S_{3}\left( X_{n}^{D}\right) \right) $ is
easily seen to be continuous and so, for $n\geq n_{0}\left( \varepsilon
\right) $ large enough, 
\begin{equation*}
\left\vert \mathbb{E}f\left( S_{3}\left( X_{n}^{D}\right) \right) -\mathbb{E}%
f\left( S_{3}\left( X_{\infty }^{D}\right) \right) \right\vert \leq
\varepsilon .
\end{equation*}%
The proof is then finished with the triangle inequality,%
\begin{eqnarray*}
\left\vert \mathbb{E}f\left( \mathbf{X}_{n}\right) -\mathbb{E}f\left( 
\mathbf{X}_{\infty }\right) \right\vert &\leq &\left\vert \mathbb{E}f\left( 
\mathbf{X}_{n}\right) -\mathbb{E}f\left( S_{3}\left( X_{n}^{D}\right)
\right) \right\vert \\
&&+\left\vert \mathbb{E}f\left( S_{3}\left( X_{\infty }^{D}\right) \right) -%
\mathbb{E}f\left( \mathbf{X}_{\infty }\right) \right\vert \\
&&+\left\vert \mathbb{E}f\left( S_{3}\left( X_{n}^{D}\right) \right) -%
\mathbb{E}f\left( S_{3}\left( X^{D}\right) \right) \right\vert \\
&\leq &\left( 2\left\vert f\right\vert _{\infty }+1\right) 2\varepsilon
+\varepsilon .
\end{eqnarray*}
\end{proof}

\begin{example}
Set $R\left( s,t\right) =\min \left( s,t\right) $. The covariance of
fractional Brownian Motion is given by%
\begin{equation*}
R^{H}\left( s,t\right) =\frac{1}{2}\left( s^{2H}+t^{2H}-\left\vert
t-s\right\vert ^{2H}\right) .
\end{equation*}%
Take a sequence $H_{n}\uparrow 1/2$. It is easy to see that $%
R^{H_{n}}\rightarrow R$ pointwise and from our discussion of fractional
Brownian Motion, for any $\rho >1$,%
\begin{equation*}
\lim \sup_{n\rightarrow \infty }\left\vert R^{H_{n}}\right\vert _{\rho \text{%
$-var$}}<\infty .
\end{equation*}%
It follows that RDE\ solutions driven by (multidimensional) fractional
Brownian Motion with Hurst parameter $H_{n}$ tend weakly to the usual
Stratonovich solution. More elementary, for $H_{n}\downarrow 1/2$ we see
that Young ODE solutions driven by $B^{H_{n}}$ tend weakly to a Stratonovich
solution.
\end{example}

\section{Appendix I: $L^{q}$-convergence for Rough Paths}

The following lemma is an elementary consequence of the definition of $%
\otimes $ and equivalence of homogenous norms.

\begin{lemma}
\label{ghg}Let $g,h\in G^{N}\left( \mathbb{R}^{d}\right) $. Then\bigskip\
there exists $C=C\left( N,d\right) $ such that%
\begin{equation*}
\left\Vert g^{-1}\otimes h\otimes g\right\Vert \leq C\max \left\{ \left\Vert
h\right\Vert ,\left\Vert h\right\Vert ^{1/N}\left\Vert g\right\Vert
^{1-1/N}\right\} 
\end{equation*}
\end{lemma}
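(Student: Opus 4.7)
My plan is to exploit the equivalence of continuous homogeneous norms on $G^{N}(\mathbb{R}^{d})$: there exists a constant $K_{N}$ such that
\[
\tfrac{1}{K_{N}}\max_{1\le i\le N}|\pi_{i}(x)|^{1/i}\le \|x\|\le K_{N}\max_{1\le i\le N}|\pi_{i}(x)|^{1/i}.
\]
So it suffices to control $|\pi_{i}(g^{-1}\otimes h\otimes g)|^{1/i}$ for each $i=1,\dots,N$ in terms of $\|g\|$ and $\|h\|$.

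The first step is to expand the tensor product. By the definition of $\otimes$ in $T^{N}(\mathbb{R}^{d})$,
\[
\pi_{i}(g^{-1}\otimes h\otimes g)=\sum_{j+k+l=i}\pi_{j}(g^{-1})\otimes \pi_{k}(h)\otimes \pi_{l}(g).
\]
Now the key algebraic observation: $\pi_{0}(h)=1$ because $h\in G^{N}(\mathbb{R}^{d})$ sits in the group, and the contributions with $k=0$ collectively reassemble into $\pi_{i}(g^{-1}\otimes g)=\pi_{i}(e)=0$ for $i\ge 1$. Hence only the terms with $k\ge 1$ survive. Using equivalence of homogeneous norms (applied to $g$, $g^{-1}$, and $h$) and recalling that $\|g^{-1}\|=\|g\|$, each surviving term satisfies $|\pi_{j}(g^{-1})\otimes\pi_{k}(h)\otimes\pi_{l}(g)|\le c\|g\|^{j+l}\|h\|^{k}=c\|g\|^{i-k}\|h\|^{k}$, and there are at most $O(i^{2})\le O(N^{2})$ such terms, yielding
\[
|\pi_{i}(g^{-1}\otimes h\otimes g)|^{1/i}\le C_{N}\max_{1\le k\le i}\|g\|^{(i-k)/i}\|h\|^{k/i}.
\]

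The remaining (and slightly delicate) step is purely arithmetic: show that for every $i\in\{1,\dots,N\}$ and every $k\in\{1,\dots,i\}$,
\[
\|h\|^{k/i}\|g\|^{1-k/i}\le \max\bigl\{\|h\|,\,\|h\|^{1/N}\|g\|^{1-1/N}\bigr\}.
\]
The point is that the exponent $\tau:=k/i$ lies in $[1/N,1]$, and the map $\tau\mapsto \|h\|^{\tau}\|g\|^{1-\tau}$ is log-affine in $\tau$, hence monotone: when $\|h\|\ge\|g\|$ it is nondecreasing and its maximum on $[1/N,1]$ is attained at $\tau=1$, giving $\|h\|$; when $\|h\|\le\|g\|$ it is nonincreasing and its maximum is attained at $\tau=1/N$, giving $\|h\|^{1/N}\|g\|^{1-1/N}$. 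Either way the claimed bound holds.

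Combining these estimates with the upper equivalence $\|g^{-1}\otimes h\otimes g\|\le K_{N}\max_{i}|\pi_{i}(\cdot)|^{1/i}$ produces the desired inequality with a constant depending only on $N$ and $d$. The only real obstacle is the monotonicity argument in the last step; everything else is mechanical bookkeeping driven by the cancellation $g^{-1}\otimes g=e$ and the homogeneity of $\|\cdot\|$.
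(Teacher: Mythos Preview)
Your proof is correct and follows precisely the route the paper indicates: the paper gives no detailed argument but simply states that the lemma is ``an elementary consequence of the definition of $\otimes$ and equivalence of homogenous norms,'' and your expansion of $\pi_i(g^{-1}\otimes h\otimes g)$ with the cancellation of the $k=0$ terms via $g^{-1}\otimes g=e$, followed by the log-affine bound over $\tau=k/i\in[1/N,1]$, is exactly the intended elementary computation.
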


Recall the notions of $d_{0}$ and $d_{\infty }$ as defined in section \ref%
{secNotations}.

\begin{proposition}[$d_{0}/d_{\infty }$ estimate]
On the path-space $C_{0}\left( \left[ 0,1\right] ,G^{N}\left( \mathbb{R}%
^{d}\right) \right) $ the distances $d_{\infty }$ and $d_{0}\equiv d_{0-H%
\ddot{o}l}$ are locally $1/N$-H\"{o}lder equivalent. More precisely, there
exists $C=C\left( N,d\right) $ such that%
\begin{equation*}
d_{\infty }\left( \mathbf{x},\mathbf{y}\right) \leq d_{0}\left( \mathbf{x},%
\mathbf{y}\right) \leq C\max \left\{ d_{\infty }\left( \mathbf{x},\mathbf{y}%
\right) ,d_{\infty }\left( \mathbf{x},\mathbf{y}\right) ^{1/N}\left(
\left\Vert \mathbf{x}\right\Vert _{\infty }+\left\Vert \mathbf{y}\right\Vert
_{\infty }\right) ^{1-1/N}\right\} 
\end{equation*}
\end{proposition}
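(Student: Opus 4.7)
The inequality $d_{\infty}(\mathbf{x},\mathbf{y})\leq d_{0}(\mathbf{x},\mathbf{y})$ is immediate: since $\mathbf{x}_{0}=\mathbf{y}_{0}=e$ by definition of the path-space, one has $\mathbf{x}_{0,t}=\mathbf{x}_{t}$ and $\mathbf{y}_{0,t}=\mathbf{y}_{t}$, so restricting to $s=0$ in the supremum defining $d_{0}$ and then taking the supremum over $t$ gives the bound.

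For the non-trivial direction, the plan is to rewrite $\mathbf{x}_{s,t}^{-1}\otimes\mathbf{y}_{s,t}$ as a conjugation in $G^{N}(\mathbb{R}^{d})$ and then apply Lemma~\ref{ghg}. Introduce
$$p_{u}:=\mathbf{x}_{u}^{-1}\otimes \mathbf{y}_{u},\qquad u\in[0,1],$$
so that $\mathbf{y}_{u}=\mathbf{x}_{u}\otimes p_{u}$ and $\|p_{u}\|=d(\mathbf{x}_{u},\mathbf{y}_{u})\leq \varepsilon$, where $\varepsilon:=d_{\infty}(\mathbf{x},\mathbf{y})$. A direct substitution yields the algebraic identity $\mathbf{y}_{s,t}=p_{s}^{-1}\otimes\mathbf{x}_{s,t}\otimes p_{t}$, and hence
$$\mathbf{x}_{s,t}^{-1}\otimes\mathbf{y}_{s,t}=\bigl(\mathbf{x}_{s,t}^{-1}\otimes p_{s}^{-1}\otimes \mathbf{x}_{s,t}\bigr)\otimes p_{t}.$$

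Next I apply Lemma~\ref{ghg} to the conjugation with $g=\mathbf{x}_{s,t}$ and $h=p_{s}^{-1}$. Combining $\|p_{s}^{-1}\|=\|p_{s}\|\leq \varepsilon$ with the subadditivity bound $\|\mathbf{x}_{s,t}\|\leq\|\mathbf{x}_{s}\|+\|\mathbf{x}_{t}\|\leq 2\|\mathbf{x}\|_{\infty}$ gives
$$\bigl\|\mathbf{x}_{s,t}^{-1}\otimes p_{s}^{-1}\otimes \mathbf{x}_{s,t}\bigr\|\leq C\max\bigl\{\varepsilon,\,\varepsilon^{1/N}\|\mathbf{x}\|_{\infty}^{1-1/N}\bigr\}$$
with $C=C(N,d)$. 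Adding $\|p_{t}\|\leq \varepsilon$ via the triangle inequality for the Carnot--Carath\'{e}odory norm, bounding $\|\mathbf{x}\|_{\infty}$ by $\|\mathbf{x}\|_{\infty}+\|\mathbf{y}\|_{\infty}$, and taking the supremum over $0\leq s<t\leq 1$ then produces the claimed estimate. There is no real obstacle in the argument; it is a one-line algebraic rewriting followed by Lemma~\ref{ghg}. The only point to note is that one must conjugate by the \emph{increment} $\mathbf{x}_{s,t}$, which is what makes the factorization $\mathbf{y}_{s,t}=p_{s}^{-1}\otimes \mathbf{x}_{s,t}\otimes p_{t}$ work cleanly, and that the choice $p_{u}=\mathbf{x}_{u}^{-1}\otimes\mathbf{y}_{u}$ (rather than $\mathbf{x}_{u}\otimes\mathbf{y}_{u}^{-1}$) keeps $\|p_{u}\|$ equal to $d(\mathbf{x}_{u},\mathbf{y}_{u})$ and thus directly controlled by $d_{\infty}$, avoiding an auxiliary conjugation step.
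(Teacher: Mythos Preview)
Your proof is correct and follows essentially the same route as the paper: both factor $\mathbf{x}_{s,t}^{-1}\otimes\mathbf{y}_{s,t}$ as a conjugate of $p_{s}^{-1}=\mathbf{y}_{s}^{-1}\otimes\mathbf{x}_{s}$ by the increment $\mathbf{x}_{s,t}$, times the pointwise discrepancy $p_{t}=\mathbf{x}_{t}^{-1}\otimes\mathbf{y}_{t}$, and then invoke Lemma~\ref{ghg}. Your write-up is in fact slightly cleaner, since the paper expresses the second factor as a (trivial) self-conjugation before applying the lemma, whereas you bound $\|p_{t}\|\leq d_{\infty}(\mathbf{x},\mathbf{y})$ directly.
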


\begin{proof}
Only the second inequality requires a proof. We write $gh$ instead of $%
g\otimes h$. For any $s<t$ in $\left[ 0,1\right] $,%
\begin{equation*}
\mathbf{x}_{st}^{-1}\mathbf{y}_{s,t}=\mathbf{x}_{st}^{-1}\mathbf{y}_{s}^{-1}%
\mathbf{x}_{s}\mathbf{x}_{st}\mathbf{x}_{t}^{-1}\mathbf{y}_{t}\mathbf{x}%
_{t}^{-1}\mathbf{y}_{t}\mathbf{y}_{t}^{-1}\mathbf{x}_{t}.
\end{equation*}%
By sub-additivity,%
\begin{eqnarray*}
\left\Vert \mathbf{x}_{st}^{-1}\mathbf{y}_{s,t}\right\Vert  &\leq
&\left\Vert \mathbf{x}_{st}^{-1}\mathbf{y}_{s}^{-1}\mathbf{x}_{s}\mathbf{x}%
_{st}\right\Vert +\left\Vert \mathbf{x}_{t}^{-1}\mathbf{y}_{t}\mathbf{x}%
_{t}^{-1}\mathbf{y}_{t}\mathbf{y}_{t}^{-1}\mathbf{x}_{t}\right\Vert  \\
&=&\left\Vert v^{-1}\mathbf{y}_{s}^{-1}\mathbf{x}_{s}v\right\Vert
+\left\Vert w^{-1}\mathbf{x}_{t}^{-1}\mathbf{y}_{t}w\right\Vert 
\end{eqnarray*}%
with $v=\mathbf{x}_{st}$ and $w=\mathbf{y}_{t}^{-1}\mathbf{x}_{t}$. Note that%
\begin{equation*}
\left\Vert \mathbf{y}_{t}^{-1}\mathbf{x}_{t}\right\Vert =\left\Vert \mathbf{x%
}_{t}^{-1}\mathbf{y}_{t}\right\Vert =d\left( \mathbf{x}_{t},\mathbf{y}%
_{t}\right) 
\end{equation*}%
and $\left\Vert v\right\Vert ,\left\Vert w\right\Vert \leq \left\Vert 
\mathbf{x}\right\Vert _{\infty }+\left\Vert \mathbf{y}\right\Vert _{\infty }$%
. The conclusion now follows from Lemma \ref{ghg}.
\end{proof}

We recall the following simple interpolation result \cite%
{friz-victoir-04-Note}.

\begin{lemma}
\label{interpolation}For $0\,\leq \alpha ^{\prime }<\alpha \leq 1$ there
exists a constant $C=C\left( \alpha ,\alpha ^{\prime }\right) $ such that%
\begin{equation*}
d_{\alpha ^{\prime }-H\ddot{o}l}\left( \mathbf{x},\mathbf{y}\right) \leq
C\left( \left\Vert \mathbf{x}\right\Vert _{\alpha -H\ddot{o}l}\vee
\left\Vert \mathbf{y}\right\Vert _{\alpha -H\ddot{o}l}\right) ^{\alpha
^{\prime }/\alpha }d_{0}\left( \mathbf{x},\mathbf{y}\right) ^{1-\alpha
^{\prime }/\alpha }.
\end{equation*}%
Similarly, for $1\leq p<p^{\prime }<\infty $ there exists $C=C\left(
p,p^{\prime }\right) $ such that%
\begin{equation*}
d_{p^{\prime }\text{$-var$}}\left( \mathbf{x},\mathbf{y}\right) \leq C\left(
\left\Vert \mathbf{x}\right\Vert _{p\text{$-var$}}\vee \left\Vert \mathbf{y}%
\right\Vert _{p\text{$-var$}}\right) ^{p/p^{\prime }}d_{0}\left( \mathbf{x},%
\mathbf{y}\right) ^{1-p/p^{\prime }}.
\end{equation*}
\end{lemma}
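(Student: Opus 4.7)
The plan is to prove both interpolation inequalities by exploiting the fact that for each rectangle $(s,t)$ we have two bounds available on $d(\mathbf{x}_{s,t},\mathbf{y}_{s,t})$: the ``cheap'' one from $d_0$, and a ``Hölder / $p$-variation'' one coming from the triangle inequality $d(\mathbf{x}_{s,t},\mathbf{y}_{s,t})\leq \|\mathbf{x}_{s,t}\|+\|\mathbf{y}_{s,t}\|$ (which is valid since the Carnot-Carathéodory metric is a genuine left-invariant metric on $G^{N}(\mathbb{R}^d)$). Combining them by splitting $d=d^{\theta}\cdot d^{1-\theta}$ for an appropriate exponent $\theta$ is the essence of the argument.

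For the first (Hölder) inequality, write $M=\|\mathbf{x}\|_{\alpha\text{-H\"ol}}\vee\|\mathbf{y}\|_{\alpha\text{-H\"ol}}$ and set $\theta=\alpha'/\alpha\in[0,1)$. For any $0\le s<t\le 1$ I would bound
\begin{equation*}
d(\mathbf{x}_{s,t},\mathbf{y}_{s,t}) \leq d(\mathbf{x}_{s,t},\mathbf{y}_{s,t})^{\theta}\cdot d(\mathbf{x}_{s,t},\mathbf{y}_{s,t})^{1-\theta} \leq (2M|t-s|^{\alpha})^{\theta}\, d_{0}(\mathbf{x},\mathbf{y})^{1-\theta},
\end{equation*}
using the Hölder bound on the first factor and $d_0$ on the second. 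Dividing by $|t-s|^{\alpha'}=|t-s|^{\alpha\theta}$ and taking the supremum in $s,t$ yields the claim with $C=2^{\theta}\le 2$.

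For the $p$-variation statement, write $M=\|\mathbf{x}\|_{p\text{-var}}\vee\|\mathbf{y}\|_{p\text{-var}}$ and, given a dissection $D=(t_i)$ of $[0,1]$, split
\begin{equation*}
d(\mathbf{x}_{t_i,t_{i+1}},\mathbf{y}_{t_i,t_{i+1}})^{p'} \leq d(\mathbf{x}_{t_i,t_{i+1}},\mathbf{y}_{t_i,t_{i+1}})^{p}\cdot d_{0}(\mathbf{x},\mathbf{y})^{p'-p}.
\end{equation*}
Summing over $i$, using the triangle inequality $d\le \|\mathbf{x}_{t_i,t_{i+1}}\|+\|\mathbf{y}_{t_i,t_{i+1}}\|$ together with the elementary $(a+b)^p\le 2^{p-1}(a^p+b^p)$, and the definition of $p$-variation gives
\begin{equation*}
\sum_i d(\mathbf{x}_{t_i,t_{i+1}},\mathbf{y}_{t_i,t_{i+1}})^{p'} \leq 2^{p}\,M^{p}\,d_{0}(\mathbf{x},\mathbf{y})^{p'-p}.
\end{equation*}
Taking the supremum over $D$ and then the $1/p'$-th root produces the interpolation inequality with $C=2^{p/p'}\le 2$.

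There is no real obstacle here: the only point requiring mild care is making sure that the triangle inequality is applied for the genuine metric $d$ (so that we can legitimately dominate $d(\mathbf{x}_{s,t},\mathbf{y}_{s,t})$ by $\|\mathbf{x}_{s,t}\|+\|\mathbf{y}_{s,t}\|$), and that in the $p$-variation estimate we choose the split of exponents so that the leftover factor pulls out uniformly as $d_0^{p'-p}$ before summing. Both facts are standard, so the proof is essentially a one-line Hölder-type split in each case.
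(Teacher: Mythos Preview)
Your argument is correct. The paper itself does not supply a proof of this lemma; it merely recalls the result from \cite{friz-victoir-04-Note}. Your splitting $d = d^{\theta}\cdot d^{1-\theta}$, bounding one factor via the triangle inequality $d(\mathbf{x}_{s,t},\mathbf{y}_{s,t})\le \|\mathbf{x}_{s,t}\|+\|\mathbf{y}_{s,t}\|$ and the other via $d_{0}$, is exactly the standard interpolation mechanism one expects here, and the explicit constants $C=2^{\alpha'/\alpha}$ resp.\ $C=2^{p/p'}$ you obtain are fine.
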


\begin{corollary}[$L^{q}$-convergence in rough path metrics]
Let $\mathbf{X}^{n},\mathbf{X}^{\infty }$ be continuous $G^{N}\left( \mathbb{%
R}^{d}\right) $-valued process defined on $\left[ 0,1\right] $. Let $q\in
\lbrack 1,\infty )$ and assume that for some $\alpha \in (0,1],$ (resp. $%
p\geq 1$),%
\begin{equation}
\sup_{1\leq n\leq \infty }\mathbb{E}\left( \left\Vert \mathbf{X}%
^{n}\right\Vert _{\alpha -H\ddot{o}l}^{q}\right) <\infty \text{ (resp. \ }%
\sup_{1\leq n\leq \infty }\mathbb{E}\left( \left\Vert \mathbf{X}%
^{n}\right\Vert _{p\text{$-var$}}^{q}\right) <\infty \text{ )}
\label{UniformHoelderPvarBounds}
\end{equation}%
and that we have uniform convergence in $L^{q}\left( \mathbb{P}\right) $
i.e. 
\begin{equation}
d_{\infty }\left( \mathbf{X}^{n},\mathbf{X}^{\infty }\right) \rightarrow 0%
\text{ in }L^{q}\left( \mathbb{P}\right) .  \label{dInfinityLqConvAss}
\end{equation}%
Then $d_{\alpha ^{\prime }-H\ddot{o}l}\left( \mathbf{X}^{n},\mathbf{X}%
^{\infty }\right) $ for $\alpha ^{\prime }<\alpha $, (resp. $d_{p^{\prime }%
\text{$-var$}}\left( \mathbf{X}^{n},\mathbf{X}^{\infty }\right) $ and $%
p^{\prime }>p$), converges to zero in $L^{q}\left( \mathbb{P}\right) .$
\end{corollary}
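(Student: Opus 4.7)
The plan is to combine the interpolation inequality of Lemma \ref{interpolation} with the $d_{0}/d_{\infty}$ estimate stated just above, and to close everything with a term-by-term H\"{o}lder inequality in $L^{q}(\mathbb{P})$. Set
\[
M_{n}:=\left\Vert \mathbf{X}^{n}\right\Vert _{\alpha -H\ddot{o}l}\vee \left\Vert \mathbf{X}^{\infty }\right\Vert _{\alpha -H\ddot{o}l},\qquad \theta :=\alpha ^{\prime }/\alpha \in [0,1).
\]
By (\ref{UniformHoelderPvarBounds}) we have $\sup _{n}\left\Vert M_{n}\right\Vert _{L^{q}}<\infty $. Since the paths start at $e$, the quantity $\left\Vert \mathbf{x}\right\Vert _{\infty }$ appearing in the $d_{0}/d_{\infty }$ proposition is controlled by the $\alpha $-H\"{o}lder norm, so $\left\Vert \mathbf{X}^{n}\right\Vert _{\infty }+\left\Vert \mathbf{X}^{\infty }\right\Vert _{\infty }$ is dominated by a constant multiple of $M_{n}$.

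First, apply Lemma \ref{interpolation} to obtain
\[
d_{\alpha ^{\prime }-H\ddot{o}l}(\mathbf{X}^{n},\mathbf{X}^{\infty })\leq C\,M_{n}^{\theta }\,d_{0}(\mathbf{X}^{n},\mathbf{X}^{\infty })^{1-\theta }.
\]
Writing $D_{n}:=d_{\infty }(\mathbf{X}^{n},\mathbf{X}^{\infty })$, the $d_{0}/d_{\infty }$ estimate then yields
\[
d_{0}(\mathbf{X}^{n},\mathbf{X}^{\infty })\leq C\,\max \bigl\{D_{n},\;D_{n}^{1/N}M_{n}^{1-1/N}\bigr\}.
\]
Chaining produces a bound of the form
\[
d_{\alpha ^{\prime }-H\ddot{o}l}(\mathbf{X}^{n},\mathbf{X}^{\infty })\leq C\Bigl(M_{n}^{\theta }D_{n}^{1-\theta }\;+\;M_{n}^{1-(1-\theta )/N}\,D_{n}^{(1-\theta )/N}\Bigr).
\]
The key observation is that in each summand the exponents of $M_{n}$ and $D_{n}$ sum to exactly $1$, with the $D_{n}$-exponent strictly positive.

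The final step is a H\"{o}lder split in $L^{q}(\mathbb{P})$ applied to each summand: for $a+b=1$ with $a,b\in [0,1]$, the conjugate pair $(1/a,1/b)$ gives
\[
\left\Vert M_{n}^{a}D_{n}^{b}\right\Vert _{L^{q}}\leq \left\Vert M_{n}\right\Vert _{L^{q}}^{a}\,\left\Vert D_{n}\right\Vert _{L^{q}}^{b}.
\]
The first factor is uniformly bounded in $n$ by (\ref{UniformHoelderPvarBounds}), while the second tends to zero by (\ref{dInfinityLqConvAss}); hence $d_{\alpha ^{\prime }-H\ddot{o}l}(\mathbf{X}^{n},\mathbf{X}^{\infty })\to 0$ in $L^{q}(\mathbb{P})$. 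The $p$-variation case is identical, invoking the second half of Lemma \ref{interpolation} with $\theta =p/p^{\prime }\in [0,1)$.

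The only (minor) obstacle is the exponent bookkeeping that guarantees that chaining interpolation with the $d_{0}/d_{\infty}$ estimate produces products $M_{n}^{a}D_{n}^{b}$ with $a+b=1$ and $b>0$: this is precisely the matching that lets H\"{o}lder's inequality close at the $L^{q}$ level without requiring any moment of $M_{n}$ beyond the $L^{q}$ bound given, so that no uniform integrability argument is ever invoked.
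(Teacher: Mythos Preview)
Your proof is correct and follows essentially the same approach as the paper: both combine the $d_{0}/d_{\infty}$ estimate, the interpolation Lemma \ref{interpolation}, and H\"older's inequality in $L^{q}$. The only difference is organizational---the paper first establishes $d_{0}(\mathbf{X}^{n},\mathbf{X}^{\infty})\to 0$ in $L^{q}$ as an intermediate step and then interpolates, whereas you chain the two deterministic estimates first and apply a single H\"older split at the end; the exponent bookkeeping ($a+b=1$) is identical in both arguments.
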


\begin{proof}
From the $d_{0}/d_{\infty }$ estimate there exists $c_{1}>0$ such that%
\begin{equation*}
\frac{1}{c_{1}}d_{0}\left( \mathbf{X}^{n},\mathbf{X}^{\infty }\right) \leq
d_{\infty }\left( \mathbf{X}^{n},\mathbf{X}^{\infty }\right) +d_{\infty
}\left( \mathbf{X}^{n},\mathbf{X}^{\infty }\right) ^{1/N}\left( \left\Vert 
\mathbf{X}^{n}\right\Vert _{\infty }+\left\Vert \mathbf{X}^{\infty
}\right\Vert _{\infty }\right) ^{1-1/N}
\end{equation*}%
and so%
\begin{eqnarray*}
\frac{1}{c_{1}}\mathbb{E}\left( d_{0}\left( \mathbf{X}^{n},\mathbf{X}%
^{\infty }\right) ^{q}\right) ^{1/q} &\leq &\mathbb{E}\left( d_{\infty
}\left( \mathbf{X}^{n},\mathbf{X}^{\infty }\right) ^{q}\right) ^{1/q} \\
&&+\mathbb{E}\left[ d_{\infty }\left( \mathbf{X}^{n},\mathbf{X}^{\infty
}\right) ^{q/N}\left( \left\Vert \mathbf{X}^{n}\right\Vert _{\infty
}+\left\Vert \mathbf{X}^{\infty }\right\Vert _{\infty }\right) ^{q\left(
1-1/N\right) }\right] ^{1/q}.
\end{eqnarray*}%
By H\"{o}lder's inequality, 
\begin{equation*}
\mathbb{E}\left[ d_{\infty }\left( \mathbf{X}^{n},\mathbf{X}\right)
^{q/N}\left\Vert \mathbf{X}^{n}\right\Vert _{\infty }{}^{q\left(
1-1/N\right) }\right] \leq \mathbb{E}\left[ d_{\infty }\left( \mathbf{X}^{n},%
\mathbf{X}\right) ^{q}\right] ^{\frac{1}{N}}\mathbb{E}\left[ \left\Vert 
\mathbf{X}^{n}\right\Vert _{\infty }{}^{q}\right] ^{\left( 1-\frac{1}{N}%
\right) }.
\end{equation*}%
Since $\left\Vert \cdot \right\Vert _{\infty }$ is dominated by H\"{o}lder-
and variation norms, assumption (\ref{UniformHoelderPvarBounds}) is
plentiful to bound $\mathbb{E}\left( \left\Vert \mathbf{X}^{n}\right\Vert
_{\infty }{}^{q}\right) $ uniformly in $n$. We thus obtain convergence of $%
d_{0}\left( \mathbf{X}^{n},\mathbf{X}^{\infty }\right) $ to $0$ in $L^{q}.$
An almost identical application of H\"{o}lder's inequality, now using Lemma %
\ref{interpolation} instead of the $d_{0}/d_{\infty }$ estimate, shows that $%
d_{\alpha ^{\prime }-H\ddot{o}l}\left( \mathbf{X}^{n},\mathbf{X}^{\infty
}\right) $, resp. $d_{p^{\prime }\text{$-var$}}\left( \mathbf{X}^{n},\mathbf{%
X}^{\infty }\right) $, converges to zero in $L^{q}\left( \mathbb{P}\right) .$
\end{proof}

The assumption (\ref{dInfinityLqConvAss}) can often be weakened to pointwise
convergence.

\begin{corollary}
\label{pointwiseHolderConvergenceLemma}Let $\mathbf{X}^{n},\mathbf{X}%
^{\infty }$ be continuous $G^{N}\left( \mathbb{R}^{d}\right) $-valued
process defined on $\left[ 0,1\right] $. Let $q\in \lbrack 1,\infty )$ and
assume that we have pointwise convergence in $L^{q}\left( \mathbb{P}\right) $
i.e. for all $t\in \left[ 0,1\right] $,%
\begin{equation}
d\left( \mathbf{X}_{t}^{n},\mathbf{X}_{t}^{\infty }\right) \rightarrow 0%
\text{ in }L^{q}\left( \mathbb{P}\right) \text{ as }n\rightarrow \infty ;
\label{ptwiseLq0Convergenc}
\end{equation}%
and uniform H\"{o}lder bounds, i.e. 
\begin{equation*}
\sup_{1\leq n\leq \infty }\mathbb{E}\left( \left\Vert \mathbf{X}%
^{n}\right\Vert _{\alpha -H\ddot{o}l}^{q}\right) <\infty
\end{equation*}%
then for $\alpha ^{\prime }<\alpha ,$ 
\begin{equation*}
d_{\alpha ^{\prime }-H\ddot{o}l}\left( \mathbf{X}^{n},\mathbf{X}^{\infty
}\right) \rightarrow 0\text{ in }L^{q}\left( \mathbb{P}\right) .
\end{equation*}
\end{corollary}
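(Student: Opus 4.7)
The plan is to reduce to the preceding $L^q$-convergence corollary, whose hypotheses are (i) the uniform H\"older bound (\ref{UniformHoelderPvarBounds}), which is already assumed here, and (ii) uniform $L^q$-convergence $d_\infty(\mathbf{X}^n,\mathbf{X}^\infty)\to 0$, which is \emph{not} assumed. Thus the entire task reduces to upgrading the pointwise $L^q$-convergence (\ref{ptwiseLq0Convergenc}) to $d_\infty$-convergence in $L^q$. The bridge will be a standard equicontinuity argument powered by the uniform H\"older bound.

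The key step will be a finite-mesh argument. Fix $\varepsilon>0$ and a dissection $D=\{0=t_0<\cdots<t_K=1\}$ of $[0,1]$ with mesh $|D|$ to be chosen. For each $t\in[0,1]$ let $t^*=t^*(t)\in D$ denote a nearest grid point. The triangle inequality for the Carnot--Carath\'eodory metric yields
\begin{equation*}
d(\mathbf{X}_t^n,\mathbf{X}_t^\infty)
\leq d(\mathbf{X}_t^n,\mathbf{X}_{t^*}^n)
+d(\mathbf{X}_{t^*}^n,\mathbf{X}_{t^*}^\infty)
+d(\mathbf{X}_{t^*}^\infty,\mathbf{X}_t^\infty),
\end{equation*}
in which the first and third terms are bounded by $|D|^\alpha\|\mathbf{X}^n\|_{\alpha-H\ddot{o}l}$ and $|D|^\alpha\|\mathbf{X}^\infty\|_{\alpha-H\ddot{o}l}$ respectively. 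Taking the supremum over $t$ and then the $L^q(\mathbb{P})$-norm gives
\begin{equation*}
\|d_\infty(\mathbf{X}^n,\mathbf{X}^\infty)\|_{L^q}
\leq 2|D|^\alpha \sup_{1\leq m\leq\infty}\mathbb{E}\bigl(\|\mathbf{X}^m\|_{\alpha-H\ddot{o}l}^q\bigr)^{1/q}
+\Bigl\|\max_{0\leq i\leq K} d(\mathbf{X}_{t_i}^n,\mathbf{X}_{t_i}^\infty)\Bigr\|_{L^q}.
\end{equation*}
Given $\varepsilon$, I first choose $|D|$ small enough (uniformly in $n$, by the H\"older hypothesis) that the first term is at most $\varepsilon/2$; then for this fixed $D$, I send $n\to\infty$ so that the second term tends to $0$, since the maximum is over a \emph{finite} family of random variables each of which tends to $0$ in $L^q$ by (\ref{ptwiseLq0Convergenc}), and the bound $\max_i|Z_i|^q\leq\sum_i|Z_i|^q$ converts this into $L^q$-convergence of the max.

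Once $\|d_\infty(\mathbf{X}^n,\mathbf{X}^\infty)\|_{L^q}\to 0$ is established, the preceding corollary applies verbatim and delivers $\|d_{\alpha'-H\ddot{o}l}(\mathbf{X}^n,\mathbf{X}^\infty)\|_{L^q}\to 0$ for every $\alpha'<\alpha$. No serious obstacle is expected: the uniform H\"older bound does exactly the book-keeping needed to exchange pointwise-in-$t$ for uniform-in-$t$ convergence, while the heavier lifting (interpolation between $d_0$ and H\"older distances via the $d_0/d_\infty$ estimate) has already been absorbed into the preceding corollary.
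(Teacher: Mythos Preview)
Your proposal is correct and follows essentially the same route as the paper: both reduce to the preceding corollary by upgrading pointwise $L^q$-convergence to $d_\infty$-convergence in $L^q$ via a finite-mesh triangle-inequality argument, controlling the off-grid error with the uniform H\"older bound and the on-grid maximum via $\max_i|Z_i|^q\leq\sum_i|Z_i|^q$. The only differences are cosmetic (the paper uses the uniform grid $\{i/m\}$ and raises to the $q$th power before taking expectations, while you work with a general dissection and the $L^q$-triangle inequality).
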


\begin{proof}
From the previous corollary, we only need to show $d_{\infty }$-convergence
in $L^{q}$. For any integer $m,$ 
\begin{eqnarray*}
2^{1-q}\mathbb{E}\left[ d_{\infty }\left( \mathbf{X}^{n},\mathbf{X}^{\infty
}\right) ^{q}\right] &\leq &\mathbb{E}\left[ \sup_{i=1,...,m}d\left( \mathbf{%
X}_{\frac{i}{m}}^{n},\mathbf{X}_{\frac{i}{m}}^{\infty }\right) ^{q}\right] +%
\mathbb{E}\left[ \sup_{\left\vert t-s\right\vert <\frac{1}{m}}\left(
\left\Vert \mathbf{X}_{s,t}^{n}\right\Vert ^{q}+\left\Vert \mathbf{X}%
_{s,t}^{\infty }\right\Vert ^{q}\right) \right] \\
&\leq &\sum_{i=1}^{m}\mathbb{E}\left( d\left( \mathbf{X}_{\frac{i}{m}}^{n},%
\mathbf{X}_{\frac{i}{m}}^{\infty }\right) ^{q}\right) +\left( \frac{1}{m}%
\right) ^{\alpha q}\times 2\sup_{1\leq n\leq \infty }\mathbb{E}\left[
\left\Vert \mathbf{X}^{n}\right\Vert _{\alpha -H\ddot{o}l}^{q}\right] .
\end{eqnarray*}%
By first choosing $m$ large enough, followed by choosing $n$ large enough we
see that $d_{\infty }\left( \mathbf{X}^{n},\mathbf{X}^{\infty }\right)
\rightarrow 0$ in $L^{q}$ as required.
\end{proof}

\begin{corollary}
\label{pointwisePvarConvergenceLemma}Let $\mathbf{X}^{n},\mathbf{X}^{\infty
} $ be continuous $G^{N}\left( \mathbb{R}^{d}\right) $-valued process
defined on $\left[ 0,1\right] $. Let $q\in \lbrack 1,\infty )$ and assume
that we have pointwise convergence in $L^{q}\left( \mathbb{P}\right) $ i.e.
for all $t\in \left[ 0,1\right] $,%
\begin{equation*}
d\left( \mathbf{X}_{t}^{n},\mathbf{X}_{t}^{\infty }\right) \rightarrow 0%
\text{ in }L^{q}\left( \mathbb{P}\right) \text{ as }n\rightarrow \infty ;
\end{equation*}%
uniform $p$-variation bounds,%
\begin{equation}
\sup_{1\leq n\leq \infty }\mathbb{E}\left( \left\Vert \mathbf{X}%
^{n}\right\Vert _{p\text{$-var$}}^{q}\right) <\infty
\label{pointwisePvarConvergenceLemma_Ass2}
\end{equation}%
\newline
\underline{and} a tightness condition%
\begin{equation}
\lim_{\varepsilon \rightarrow 0}\sup_{n}\mathbb{E}\left( \left\vert
osc\left( \mathbf{X}^{n},\varepsilon \right) \right\vert ^{q}\right) =0.
\label{pointwisePvarConvergenceLemma_Ass3}
\end{equation}%
where $osc\left( \mathbf{X},\varepsilon \right) \equiv \sup_{\left\vert
t-s\right\vert <\varepsilon }\left\Vert \mathbf{X}_{s,t}\right\Vert $, then 
\begin{equation*}
d_{p\text{$-var$}}\left( \mathbf{X}^{n},\mathbf{X}^{\infty }\right)
\rightarrow 0\text{ in }L^{q}\left( \mathbb{P}\right) .
\end{equation*}%
Conditions (\ref{pointwisePvarConvergenceLemma_Ass2}) and (\ref%
{pointwisePvarConvergenceLemma_Ass3}) are implied by a Kolmogorov type
tightness criterion: there exists a 1D control function $\omega $ and a real
number $\theta \geq \frac{1}{2q}+\frac{1}{p}$ such that for all $s<t$ in $%
\left[ 0,1\right] ,$ 
\begin{equation}
\sup_{1\leq n\leq \infty }\mathbb{E}\left( \left\vert d\left( \mathbf{X}%
_{s}^{n},\mathbf{X}_{t}^{n}\right) \right\vert ^{q}\right) ^{1/q}\leq \omega
\left( s,t\right) ^{\theta }.
\label{pointwisePvarConvergenceLemma_AssKolmTight}
\end{equation}
\end{corollary}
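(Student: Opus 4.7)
The proof will follow the blueprint of the preceding Corollary~\ref{pointwiseHolderConvergenceLemma}. The core idea is to first upgrade pointwise $L^{q}$-convergence to $d_{\infty}$-convergence in $L^{q}$ (now using the oscillation tightness in place of the H\"older modulus), then interpolate against the uniform $p$-variation bound to obtain convergence in variation norm.

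For the first step, fix $m\in\mathbb{N}$ and use the deterministic dissection $\{i/m\}$ to bound
\begin{equation*}
d_{\infty}(\mathbf{X}^{n},\mathbf{X}^{\infty}) \leq \max_{0\leq i\leq m} d(\mathbf{X}^{n}_{i/m},\mathbf{X}^{\infty}_{i/m}) + \mathrm{osc}(\mathbf{X}^{n},1/m) + \mathrm{osc}(\mathbf{X}^{\infty},1/m).
\end{equation*}
Raising to the $q$-th power, taking expectations, and union-bounding the maximum by the finite sum of its $L^{q}$-norms, the right-hand side splits into a finite sum of pointwise differences plus $2\sup_{k}\|\mathrm{osc}(\mathbf{X}^{k},1/m)\|_{L^{q}}^{q}$. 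Given $\varepsilon>0$, assumption~(\ref{pointwisePvarConvergenceLemma_Ass3}) allows us to pick $m$ large enough that the oscillation term is below $\varepsilon$ uniformly in $n$; the pointwise $L^{q}$-convergence then drives the finite sum to zero as $n\to\infty$.

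For the second step, first apply the $d_{0}/d_{\infty}$ comparison proposition to convert $d_{\infty}\to 0$ in $L^{q}$ into $d_{0}\to 0$ in $L^{q}$: the extra multiplicative factor $(\|\mathbf{X}^{n}\|_{\infty}+\|\mathbf{X}^{\infty}\|_{\infty})^{1-1/N}$ is controlled by H\"older's inequality and the uniform $p$-variation (hence $L^{\infty}$) bound from~(\ref{pointwisePvarConvergenceLemma_Ass2}), exactly as in the proof of the $L^{q}$-convergence corollary. Then the variation version of Lemma~\ref{interpolation} gives, for any $p' > p$,
\begin{equation*}
d_{p'\text{-var}}(\mathbf{X}^{n},\mathbf{X}^{\infty}) \leq C\bigl(\|\mathbf{X}^{n}\|_{p\text{-var}}\vee\|\mathbf{X}^{\infty}\|_{p\text{-var}}\bigr)^{p/p'}\, d_{0}(\mathbf{X}^{n},\mathbf{X}^{\infty})^{1-p/p'},
\end{equation*}
and a final H\"older split in the probability space, pairing the uniformly $L^{q}$-bounded $p$-variation factor against the $L^{q}$-vanishing $d_{0}$ factor, delivers $L^{q}$-convergence of the variation distance (with the customary latitude of taking $p'$ arbitrarily close to $p$ to match the statement).

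Finally, to verify that the Kolmogorov-type moment bound (\ref{pointwisePvarConvergenceLemma_AssKolmTight}) implies both~(\ref{pointwisePvarConvergenceLemma_Ass2}) and~(\ref{pointwisePvarConvergenceLemma_Ass3}), I will invoke the Garsia--Rodemich--Rumsey / Besov embedding machinery of Appendix~II (the same ingredients that underlie Proposition~\ref{PropWienerItoAndGRR}): the exponent threshold $\theta\geq 1/(2q)+1/p$ is precisely the Besov scaling at which an integral of $\mathbb{E}\,d(\mathbf{X}^{n}_{s},\mathbf{X}^{n}_{t})^{q}$ against $\omega$ controls $\|\mathbf{X}^{n}\|_{p\text{-var}}$ in $L^{q}$ uniformly in $n$; restricting that integral to the diagonal strip $\{|t-s|<\varepsilon\}$ yields the oscillation tightness as $\varepsilon\to 0$. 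The main obstacle will be the bookkeeping in the second step, namely verifying that the conjugate H\"older exponents pair $\|\mathbf{X}^{n}\|_{p\text{-var}}$ (available only in $L^{q}$) against a power of $d_{0}$ whose $L^{q'}$-convergence we actually control, while the Besov deduction in the last step is mechanical once the exponent count is matched to the Appendix~II embedding.
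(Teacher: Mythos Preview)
Your proposal is correct and follows essentially the same route as the paper. The paper's proof is a two-line invocation of the earlier $L^{q}$-convergence corollary (which packages exactly your $d_{\infty}\to d_{0}\to d_{p'\text{-var}}$ chain via the $d_{0}/d_{\infty}$ estimate and the interpolation Lemma~\ref{interpolation}) together with the same oscillation inequality
\[
d_{\infty}(\mathbf{X}^{n},\mathbf{X}^{\infty}) \leq \mathrm{osc}(\mathbf{X}^{n},1/m)+\mathrm{osc}(\mathbf{X}^{\infty},1/m)+\sup_{i\leq m}d(\mathbf{X}^{n}_{i/m},\mathbf{X}^{\infty}_{i/m}),
\]
and then cites Corollary~\ref{GRRBesovEmbLq} for the Kolmogorov implication, with the remark that the threshold on $\theta$ comes from $q_{0}=\tfrac{1}{2}(1/r-1/p)^{-1}$. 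Your observation that interpolation only delivers $d_{p'\text{-var}}$ for $p'>p$ is accurate and matches the paper's own machinery; the statement's use of $d_{p\text{-var}}$ should be read with that latitude.
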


\begin{proof}
From our criterion for $L^{q}$-convergence in rough path metrics, we only
need to show $d_{\infty }$-convergence in $L^{q},$ which is an obvious
consequence of the inequality%
\begin{equation*}
d_{\infty }\left( \mathbf{X}^{n},\mathbf{X}^{\infty }\right) \leq osc\left( 
\mathbf{X}^{n},1/m\right) +osc\left( \mathbf{X},1/m\right)
+\sup_{i=1,...,m}d\left( \mathbf{X}_{\frac{i}{m}}^{n},\mathbf{X}_{\frac{i}{m}%
}^{\infty }\right) .
\end{equation*}%
Finally, the assumption (\ref{pointwisePvarConvergenceLemma_AssKolmTight})
implies (\ref{pointwisePvarConvergenceLemma_Ass2}) and (\ref%
{pointwisePvarConvergenceLemma_Ass3}) as an application of Corollary \ref%
{GRRBesovEmbLq}. (The bound on $\theta $ comes from $q_{0}=\left(
1/r-1/p\right) ^{-1}/2.$)
\end{proof}

\begin{remark}
One cannot get rid of the tightness condition. Consider $f_{n}\left(
t\right) =0$ on $\left[ 0,1/n\right] $ and a triangle peak of height on $%
\left[ 1/n,1\right] $. Clearly, $f_{n}\left( t\right) \rightarrow 0$ a.s.
(and hence in measure) and the $1$-variation of $\left\{ f_{n}\right\} $ is
uniformly bounded. Yet, $f_{n}\nrightarrow 0$ in any variation topology
which is stronger than the uniform topology.
\end{remark}

\section{Appendix II: Garsia Rodemich Rumsey}

Similarly to the last appendix, but more quantitatively, we aim for
conditions under which $G^{N}\left( \mathbb{R}^{d}\right) $-valued processes
are close in H\"{o}lder- (resp. variation-)/$L^{q}\left( \mathbb{P}\right) $
sense. When possible, we formulate regularity results in the more general
setting of paths (or processes) with values in a Polish space $\left(
E,d\right) $.

\begin{theorem}[Garsia Rodemich Rumsey]
\label{GRR}Let $\Psi $ and $p$ be continuous strictly increasing functions
on $[0,\infty )$ with $p(0)=\Psi (0)=0$ and $\Psi (x)\rightarrow \infty $ as 
$x\rightarrow \infty $. Given $f\in C\left( [0,1],E\right) $, if 
\begin{equation}
\int_{0}^{1}\int_{0}^{1}\Psi \left( \frac{d\left( f_{s},f_{t}\right) }{%
p(\left\vert t-s\right\vert )}\right) dsdt\leq F,  \label{GRR_inequ}
\end{equation}%
then for $0\leq s<t\leq 1,$%
\begin{equation*}
d\left( f_{s},f_{t}\right) \leq 8\int_{0}^{t-s}\Psi ^{-1}\left( \frac{4F}{%
u^{2}}\right) dp(u).
\end{equation*}%
In particular, if $osc\left( f,\delta \right) \equiv \sup_{\left\vert
t-s\right\vert \leq \delta }d\left( f_{s},f_{t}\right) $ denotes the modulus
of continuity of $f$, we have%
\begin{equation*}
osc\left( f,\delta \right) \leq 8\int_{0}^{\delta }\Psi ^{-1}\left( \frac{4F%
}{u^{2}}\right) dp(u).
\end{equation*}
\end{theorem}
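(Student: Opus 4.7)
The plan is the classical chaining argument of Garsia, Rodemich, and Rumsey. Setting
\begin{equation*}
I(u) := \int_0^1 \Psi\!\left(\frac{d(f_u, f_v)}{p(|u-v|)}\right) dv,
\end{equation*}
Fubini gives $\int_0^1 I(u)\, du \leq F$, so Markov's inequality applied on any subinterval $J \subset [0,1]$ produces points where $I$ does not exceed a constant multiple of $F/|J|$. This is the engine driving the whole argument.

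Fix $0 \leq s < t \leq 1$ and put $d_0 = t-s$. I would build a sequence $t_0, t_1, t_2, \ldots$ with $t_n \downarrow s$ and $d_n := t_n - s \leq d_0/2^n$, each $t_{n+1}$ chosen from the sub-interval $[s, s + d_n/2]$ to satisfy \emph{two} Markov-type conditions simultaneously: (a) $I(t_{n+1}) \leq 4F/d_n$, coming from Markov applied to $\int_s^{s+d_n/2} I\, dv \leq F$; and (b) $\Psi(d(f_{t_n},f_{t_{n+1}})/p(t_n-t_{n+1})) \leq 4 I(t_n)/d_n$, coming from Markov applied to the integrand of $I(t_n)$ restricted to the same sub-interval. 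Each bad set has measure at most $d_n/4$, so a valid $t_{n+1}$ exists. Using the previous step, $I(t_n) \leq 8F/d_n$, and inverting $\Psi$ together with monotonicity of $p$ yields
\begin{equation*}
d(f_{t_n}, f_{t_{n+1}}) \leq p(d_n)\, \Psi^{-1}\!\left(\frac{32F}{d_n^2}\right).
\end{equation*}
A symmetric device handles the starting distance $d(f_t, f_{t_0})$ (pick $t_0$ close to $t$ with $I(t_0)$ controlled, and bound $d(f_t, f_{t_0})$ by the same mechanism).

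Continuity of $f$ gives $f_{t_n} \to f_s$, so the triangle inequality yields $d(f_s, f_t) \leq \sum_{n \geq 0} d(f_{t_n}, f_{t_{n+1}})$. Since $u \mapsto \Psi^{-1}(4F/u^2)$ is decreasing and $d_{n+1} = d_n/2$, each summand is at most a constant multiple of $\int_{d_{n+1}}^{d_n} \Psi^{-1}(4F/u^2)\, dp(u)$; telescoping then produces the stated integral, and the oscillation bound follows by taking suprema over $|t-s| \leq \delta$. The main obstacle is purely bookkeeping: balancing the two Markov thresholds and the choice of dyadic shrinkage so that no constant is wasted, in order to recover the precise numerical constant $8$ and argument $4F/u^2$ appearing in the theorem. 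The construction itself and the Markov-inequality step are otherwise routine.
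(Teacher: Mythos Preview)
The paper does not prove this theorem; it is stated as the classical Garsia--Rodemich--Rumsey inequality and used as a black box in the subsequent corollaries. There is therefore no ``paper's own proof'' to compare against.

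Your proposal is the standard chaining argument due to Garsia, Rodemich and Rumsey, and the strategy is correct. One small point worth flagging in your bookkeeping: with the thresholds $4F/d_n$ and $4I(t_n)/d_n$ over the interval $[s,s+d_n/2]$ of length $d_n/2$, each Markov bad set has measure at most $d_n/4$, so their union can have measure up to $d_n/2$, i.e.\ the full interval. The usual remedies are either to run the argument with $F$ replaced by any $F'>F$ and pass to the limit at the end, or to choose the shrinkage ratio and thresholds slightly asymmetrically so the two bad sets together occupy strictly less than the interval. This is exactly the kind of balancing you already identify as the remaining obstacle, and once handled the telescoping step you describe yields the stated bound with constants $8$ and $4F/u^2$.
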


\begin{corollary}
\label{GRRBesovEmb}Let $r\geq 1$ and $\alpha \in \lbrack 0,1/r)$. Then, for
any fixed $q\geq q_{0}\left( r,\alpha \right) $, 
\begin{equation*}
\text{ }\int_{0}^{1}\int_{0}^{1}\frac{d\left( f_{s},f_{t}\right) ^{q}}{%
\left\vert t-s\right\vert ^{q/r}}dsdt\leq M^{q},
\end{equation*}%
implies the existence of $C=C\left( r,\alpha \right) $ such that $osc\left(
f,\delta \right) \leq C\delta ^{\alpha }M$ and%
\begin{equation*}
\left\Vert f\right\Vert _{\alpha -H\ddot{o}l\text{;}\left[ 0,1\right] }\leq
CM.
\end{equation*}
\end{corollary}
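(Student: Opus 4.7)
The plan is to apply the Garsia--Rodemich--Rumsey inequality (Theorem~\ref{GRR}) directly, with the natural choice of gauge functions
\[
\Psi(x) = x^{q}, \qquad p(u) = u^{1/r},
\]
so that $\Psi^{-1}(y) = y^{1/q}$. With these choices, the hypothesis of GRR reads exactly
\[
\int_{0}^{1}\!\!\int_{0}^{1} \Psi\!\left(\frac{d(f_{s},f_{t})}{p(|t-s|)}\right) ds\, dt = \int_{0}^{1}\!\!\int_{0}^{1} \frac{d(f_{s},f_{t})^{q}}{|t-s|^{q/r}}\, ds\, dt \leq M^{q},
\]
so we may take $F = M^{q}$ in the statement of Theorem~\ref{GRR}.

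Next, I would carry out the resulting one-variable integral on the right-hand side of GRR. Using $dp(u) = \tfrac{1}{r}u^{1/r - 1}\,du$, one computes
\[
8\int_{0}^{\delta} \Psi^{-1}\!\left(\frac{4F}{u^{2}}\right) dp(u) = \frac{8\cdot 4^{1/q}}{r}\, M \int_{0}^{\delta} u^{1/r - 2/q - 1}\, du = \frac{8 \cdot 4^{1/q}}{r\,(1/r - 2/q)}\, M\, \delta^{1/r - 2/q},
\]
which is finite precisely when $1/r - 2/q > 0$, i.e.\ $q > 2r$. The exponent $1/r - 2/q$ is the H\"older exponent GRR delivers for free.

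The final step is to choose the threshold $q_{0}(r,\alpha)$. Since we want H\"older regularity of exponent at least $\alpha$, it suffices to enforce $1/r - 2/q \geq \alpha$, i.e.\
\[
q \geq q_{0}(r,\alpha) := \frac{2}{1/r - \alpha},
\]
which is a finite positive number because $\alpha \in [0,1/r)$. For $q \geq q_{0}$ and any $s<t$ in $[0,1]$,
\[
d(f_{s},f_{t}) \leq C(r,\alpha,q)\, M\, |t-s|^{1/r - 2/q} \leq C(r,\alpha,q)\, M\, |t-s|^{\alpha},
\]
where in the last inequality we used $|t-s| \leq 1$ and $1/r - 2/q \geq \alpha$. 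Taking the supremum over $0 \leq s < t \leq 1$ gives the claimed $\|f\|_{\alpha\text{-H\"ol};[0,1]} \leq CM$, and restricting to $|t-s| \leq \delta$ yields $\mathrm{osc}(f,\delta) \leq C\delta^{\alpha}M$.

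There is no real obstacle here: the argument is a bookkeeping exercise with GRR. The only point requiring minor care is tracking the admissible range of $q$ so that the gamma-type integral converges and simultaneously yields H\"older exponent at least $\alpha$; this is precisely encoded by the threshold $q_{0}(r,\alpha)=2/(1/r-\alpha)$.
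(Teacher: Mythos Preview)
Your proof is correct and follows essentially the same route as the paper: apply Theorem~\ref{GRR} with $\Psi(x)=x^{q}$, $p(u)=u^{1/r}$, $F=M^{q}$, evaluate the resulting integral, and choose $q_{0}$ so that the exponent $1/r-2/q$ dominates $\alpha$. The only cosmetic point is that the statement promises $C=C(r,\alpha)$ independent of $q$, whereas you wrote $C(r,\alpha,q)$; this is immediately fixed since for $q\geq q_{0}$ the factor $\tfrac{4^{1/q}}{r(1/r-2/q)}$ is maximised at $q=q_{0}$ (and for the edge case $\alpha=0$ one needs $q_{0}$ strictly larger than $2r$, e.g.\ $q_{0}=4r$, exactly as the paper notes in its parenthetical remark).
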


\begin{proof}
From Garsia-Rodemich-Rumsey with $\Psi \left( x\right) =x^{q},$ $p\left(
u\right) =u^{1/r}$ \ and $F=M^{q}$ it follows that%
\begin{eqnarray*}
d\left( f_{s},f_{t}\right) &\leq &8\left( 4F\right)
^{1/q}\int_{0}^{t-s}u^{-2/q+1/r-1}du \\
&=&\frac{8\left( 4F\right) ^{1/q}}{1/r-2/q}\left\vert t-s\right\vert
^{1/r-2/q} \\
&\leq &\frac{32M}{1/2r}\left\vert t-s\right\vert ^{\alpha }
\end{eqnarray*}%
provided $q$ is large enough so that $0\leq \alpha <1/r-2/q$ and $%
1/r-2/q>1/(2r)$. Both statements follow. (One can take $q_{0}=\left(
1/r-\alpha \right) ^{-1}/2\vee 4r$ and $C=64/r$. Alternatively, at least if $%
\alpha >0$, one can take $q_{0}=\left( 1/r-\alpha \right) ^{-1}/2$ and $%
C=32/\alpha $.)
\end{proof}

\begin{corollary}
\label{GRRBesovEmbLq}Let $\omega $ be 1D control function and $X$ a
continuous $\left( E,d\right) $-valued stochastic process defined on $\left[
0,1\right] $. Assume $r\geq 1$ and $1/p\in \lbrack 0,1/r)$. Then, for any
fixed $q\geq q_{0}\left( r,p\right) $ (one can take $q_{0}=\left(
1/r-1/p\right) ^{-1}/2$) 
\begin{equation*}
\left\vert d\left( X_{s},X_{t}\right) \right\vert _{L^{q}\left( \mathbb{P}%
\right) }\leq M\omega \left( s,t\right) ^{1/r}\text{ for all }s,t\in \left[
0,1\right]
\end{equation*}%
implies $osc\left( X,\delta \right) \rightarrow 0$ in $L^{q}\left( \mathbb{P}%
\right) $ as $\delta \rightarrow 0$ and there exists $C=C\left( r,p\right) $
such that 
\begin{equation*}
\left\vert \left\Vert X\right\Vert _{p-var\text{;}\left[ 0,1\right]
}\right\vert _{L^{q}\left( \mathbb{P}\right) }\leq CM.\text{ }
\end{equation*}%
If $\omega \left( s,t\right) \leq t-s$ for all $s,t\in \left[ 0,1\right] $
then $\left\Vert X\right\Vert _{p-var\text{;}\left[ 0,1\right] }$ above can
be replaced by $\left\Vert X\right\Vert _{1/p-H\ddot{o}l\text{;}\left[ 0,1%
\right] }.$
\end{corollary}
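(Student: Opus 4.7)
The plan is to reduce to the H\"older-type hypothesis by a deterministic time-change, use Fubini to upgrade the two-parameter moment bound to integrability of a Besov-type energy, and then invoke the pathwise result (Corollary \ref{GRRBesovEmb}). The only real work is arranging the reduction correctly; everything else is bookkeeping.

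Step 1 (reparametrization). Let $K := \omega(0,1)$ and, assuming $K>0$, choose a continuous nondecreasing $\tau:[0,1]\to[0,1]$ with $\omega(0,\tau(t))=tK$, possible since $t\mapsto\omega(0,t)$ is continuous with $\omega(0,0)=0$. Set $Y_t := X_{\tau(t)}$. Super-additivity of $\omega$ gives $\omega(\tau(s),\tau(t)) \leq \omega(0,\tau(t))-\omega(0,\tau(s)) = K(t-s)$, so
\begin{equation*}
\left\vert d(Y_s,Y_t)\right\vert_{L^q} \leq M\,\omega(\tau(s),\tau(t))^{1/r} \leq MK^{1/r}|t-s|^{1/r}.
\end{equation*}
Thus writing $M' := MK^{1/r}$, the process $Y$ satisfies a H\"older-type hypothesis with control $|t-s|$.

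Step 2 (Fubini and pathwise GRR). By Fubini and the bound from Step 1,
\begin{equation*}
\mathbb{E}\int_{[0,1]^2}\frac{d(Y_s,Y_t)^q}{|t-s|^{q/r}}\,ds\,dt \leq (M')^q,
\end{equation*}
so the random variable $F:=\int_{[0,1]^2}d(Y_s,Y_t)^q|t-s|^{-q/r}ds\,dt$ is a.s.\ finite with $\mathbb{E}[F]^{1/q}\leq M'$. For $q\geq q_0(r,1/p)$ we apply Corollary \ref{GRRBesovEmb} pathwise to each realization of $Y$, with $\alpha=1/p$, to obtain an a.s.\ bound
\begin{equation*}
\left\Vert Y\right\Vert_{1/p\text{-H\"ol};[0,1]} \leq C(r,p)\,F^{1/q}.
\end{equation*}
Taking $L^q(\mathbb{P})$-norms and using the Fubini estimate gives $\left\vert \left\Vert Y\right\Vert_{1/p\text{-H\"ol}}\right\vert_{L^q}\leq C(r,p)M'$.

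Step 3 (transfer back, H\"older case, oscillation). Since $p$-variation is invariant under continuous nondecreasing reparametrization, $\|X\|_{p\text{-var};[0,1]} = \|Y\|_{p\text{-var};[0,1]} \leq \|Y\|_{1/p\text{-H\"ol};[0,1]}$ on $[0,1]$, which yields $\left\vert\|X\|_{p\text{-var}}\right\vert_{L^q} \leq CM$ (absorbing $K^{1/r}$ into the constant). If $\omega(s,t)\leq t-s$, the time-change is unnecessary — take $Y=X$ — and the same argument delivers the claimed $1/p$-H\"older bound on $X$ itself. For the oscillation statement, continuity of $X$ gives $\mathrm{osc}(X,\delta)\to 0$ pathwise as $\delta\downarrow 0$, and the pathwise majorant $\mathrm{osc}(X,\delta)\leq \|X\|_{p\text{-var};[0,1]}$ (which lies in $L^q$ by the preceding step) allows dominated convergence to conclude $\left\vert\mathrm{osc}(X,\delta)\right\vert_{L^q}\to 0$.

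The main obstacle — really the only non-routine point — is making the time-change Step 1 work: one needs $\tau$ to be well-defined and the super-additivity of $\omega$ to translate cleanly into the H\"older-type bound on $Y$. Once $Y$ is on equal footing with the hypothesis of Corollary \ref{GRRBesovEmb}, the remainder is Fubini plus a pathwise application of the deterministic result.
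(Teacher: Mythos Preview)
Your proof is correct and follows essentially the same route as the paper: reduce the general control to the H\"older case by a deterministic time-change based on $t\mapsto\omega(0,t)$, then apply Fubini together with the pathwise result (Corollary \ref{GRRBesovEmb}), and finally use invariance of $p$-variation under reparametrization to transfer the bound back to $X$. Your dominated-convergence argument for $osc(X,\delta)\to 0$ in $L^q$ is a minor (and arguably cleaner) variant of the paper's, which instead transfers the oscillation bound from $Y$ to $X$ via uniform continuity of $\omega$.
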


\begin{proof}
We first consider the case of H\"{o}lder dominated control $\omega \left(
s,t\right) \leq t-s$. From the preceding corollary%
\begin{equation*}
\left\Vert X\right\Vert _{\alpha -H\ddot{o}l\text{;}\left[ 0,1\right]
}^{q}\leq C^{q}\int_{0}^{1}\int_{0}^{1}\frac{d\left( X_{s},X_{t}\right) ^{q}%
}{\left\vert t-s\right\vert ^{q/r}}dsdt
\end{equation*}%
and taking expectations gives%
\begin{equation*}
\mathbb{E}\left( \left\Vert X\right\Vert _{\alpha -H\ddot{o}l\text{;}\left[
0,1\right] }^{q}\right) \leq C^{q}\int_{0}^{1}\int_{0}^{1}\frac{\mathbb{E}%
\left( d\left( X_{s},X_{t}\right) ^{q}\right) }{\left\vert t-s\right\vert
^{q/r}}dsdt\leq \left( CM\right) ^{q}
\end{equation*}%
which shows $\left\vert \left\Vert X\right\Vert _{p-H\ddot{o}l\text{;}\left[
0,1\right] }\right\vert _{L^{q}\left( \mathbb{P}\right) }\leq CM$. The
statement on $osc\left( X,\delta \right) $ obvious. We now discuss a general
control $\omega $. At the price of replacing $M$ by $M\omega \left(
0,1\right) ^{1/r},$ we assume $\omega \left( 0,1\right) =1.$ The function $%
\omega \left( t\right) :=\omega \left( 0,t\right) $ maps $\left[ 0,1\right] $
continuously and increasingly onto $\left[ 0,1\right] $ and there exists a
continuous process $Y$ such that $Y_{\omega \left( t\right) }=X_{t}$ for all 
$t\in \left[ 0,1\right] $. We then have, for all $s,t\in \left[ 0,1\right] ,$
\begin{equation*}
\mathbb{E}\left( d\left( Y_{s},Y_{t}\right) ^{q}\right) ^{1/q}\leq
M\left\vert t-s\right\vert ^{1/r}.
\end{equation*}%
By the H\"{o}lder case just discussed, $osc\left( Y,\delta \right)
\rightarrow 0$ (in $L^{q}$) and so $osc\left( X,\delta \right) \rightarrow 0$
in $L^{q}$ by (uniform) continuity of $\omega $. The H\"{o}lder case also
takes care of the $L^{q}$-bound of $\left\Vert X\right\Vert _{p-var\text{;}%
\left[ 0,1\right] }$, it suffices to note 
\begin{equation*}
\left\Vert X\right\Vert _{p-var\text{;}\left[ 0,1\right] }=\left\Vert
Y\right\Vert _{p-var\text{;}\left[ 0,1\right] }\leq \left\Vert Y\right\Vert
_{\frac{1}{p}\text{$-H\ddot{o}l$;}\left[ 0,1\right] }.
\end{equation*}
\end{proof}

We now consider paths with values in $G^{N}\left( \mathbb{R}^{d}\right) $
for which we can of increments, $x_{s,t}\equiv x_{s}^{-1}\otimes x_{t}$, and
thus of H\"{o}lder- and variation distance.

\begin{corollary}
\label{BesovDistance}Let $r\geq 1$ and $\alpha \in \lbrack 0,1/r)$. Then,
for any $q\geq q_{0}\left( r,\alpha \right) $ and $M>0,\delta \in \left(
0,1\right) ,$ 
\begin{eqnarray*}
\int_{0}^{1}\int_{0}^{1}\frac{d\left( x_{s},x_{t}\right) ^{q}}{\left\vert
t-s\right\vert ^{q/r}}dsdt, &\leq &M^{q}, \\
\int_{0}^{1}\int_{0}^{1}\frac{d\left( y_{s},y_{t}\right) ^{q}}{\left\vert
t-s\right\vert ^{q/r}}dsdt &\leq &M^{q}, \\
\int_{0}^{1}\int_{0}^{1}\frac{d\left( x_{s,t},y_{s,t}\right) ^{q}}{%
\left\vert t-s\right\vert ^{q/r}}dsdt &\leq &\left( \delta M\right) ^{q},
\end{eqnarray*}%
implies the existence of $C=C\left( r;N,d\right) ,\theta =\theta \left(
r,\alpha ;N\right) >0$ such that 
\begin{equation*}
d_{\alpha \text{$-H\ddot{o}l$;}\left[ 0,1\right] }\left( x,y\right) \leq
C\delta ^{\theta }M.
\end{equation*}
\end{corollary}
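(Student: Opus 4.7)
The strategy is to reduce to Corollary \ref{GRRBesovEmb} applied to an auxiliary path, then interpolate. Throughout I assume $x_0 = y_0 = e$ (as is standard for $C_0$-spaces). Fix an intermediate H\"older exponent $\alpha' \in (\alpha, 1/r)$ and choose $q \geq q_0(r, \alpha')$; the first two integral hypotheses, combined with Corollary \ref{GRRBesovEmb}, give $\|x\|_{\alpha'\text{-H\"ol}} \vee \|y\|_{\alpha'\text{-H\"ol}} \leq CM$, and in particular $\|x\|_\infty \vee \|y\|_\infty \leq CM$.

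Next introduce the auxiliary path $z_t := y_t \otimes x_t^{-1}$, which also starts at $e$. A direct computation using $x_s = x_t \otimes x_{s,t}^{-1}$ yields
\begin{equation*}
z_s^{-1} \otimes z_t = x_s \otimes y_{s,t} \otimes x_t^{-1} = x_t \otimes \bigl(x_{s,t}^{-1} \otimes y_{s,t}\bigr) \otimes x_t^{-1},
\end{equation*}
so Lemma \ref{ghg} applied with $h = x_{s,t}^{-1} \otimes y_{s,t}$ and $g = x_t^{-1}$ gives
\begin{equation*}
d(z_s,z_t) \leq C_N\, M^{1-1/N}\, d(x_{s,t},y_{s,t})^{1/N}
\end{equation*}
in the regime $d(x_{s,t},y_{s,t}) \leq M$ (which follows from Step 1 since $\|x_{s,t}\|, \|y_{s,t}\| \leq CM |t-s|^{\alpha'}$). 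Raising to power $qN$ and using the third integral hypothesis,
\begin{equation*}
\int_0^1\!\!\int_0^1 \frac{d(z_s,z_t)^{qN}}{|t-s|^{qN/(rN)}}\,ds\,dt \leq C^{qN} M^{q(N-1)} (\delta M)^q = (CM\delta^{1/N})^{qN}.
\end{equation*}
Taking $q$ large enough and applying Corollary \ref{GRRBesovEmb} to $z$ with parameters $(\tilde q, \tilde r) = (qN, rN)$ yields $\|z\|_\infty \leq \|z\|_{\tilde\alpha\text{-H\"ol}} \leq CM\delta^{1/N}$ for some $\tilde\alpha \in (0, 1/(rN))$.

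For the final step, since $x_t^{-1} \otimes y_t = x_t^{-1} \otimes z_t \otimes x_t$, another application of Lemma \ref{ghg} converts the bound on $\|z\|_\infty$ into $d_\infty(x,y) \leq CM\delta^{1/N^2}$. The $d_0/d_\infty$-estimate of Appendix I upgrades this to $d_0(x,y) \leq CM\delta^{1/N^3}$, and Lemma \ref{interpolation} finally yields
\begin{equation*}
d_{\alpha\text{-H\"ol}}(x,y) \leq C\bigl(\|x\|_{\alpha'\text{-H\"ol}} \vee \|y\|_{\alpha'\text{-H\"ol}}\bigr)^{\alpha/\alpha'} d_0(x,y)^{1-\alpha/\alpha'} \leq CM\delta^\theta,
\end{equation*}
with $\theta := (1-\alpha/\alpha')/N^3 > 0$.

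The main obstacle is the clever choice of auxiliary path: $z$ must be chosen so that $z_s^{-1} \otimes z_t$ is literally a single conjugate of $x_{s,t}^{-1} \otimes y_{s,t}$, for only then does Lemma \ref{ghg} control the increment of $z$ solely by $d(x_{s,t}, y_{s,t})$ and a power of $\|x\|_\infty$. The more symmetric-looking choice $z_t = x_t^{-1} \otimes y_t$ makes $z_s^{-1} \otimes z_t$ a commutator-type product that does not reduce to a single conjugation, so the less symmetric $z_t = y_t \otimes x_t^{-1}$ is essential. Non-commutativity of $G^N(\mathbb{R}^d)$ is also what forces the cumulative factor $N^{-3}$ in $\theta$; this is suboptimal but sufficient for the positivity claim.
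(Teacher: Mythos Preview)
Your proof is correct and follows essentially the same route as the paper: define $z_t = y_t \otimes x_t^{-1}$, use Lemma~\ref{ghg} to control $z_{s,t}$ in terms of $d(x_{s,t},y_{s,t})$, apply Corollary~\ref{GRRBesovEmb} to $z$, conjugate back, and interpolate. The only difference is a small detour: you pass from $\|z\|_\infty$ to $d_\infty(x,y)$ via the pointwise identity $x_t^{-1}\otimes y_t = x_t^{-1}\otimes z_t\otimes x_t$ and then invoke the $d_0/d_\infty$-estimate, whereas the paper observes the \emph{increment-level} identity $x_{s,t}^{-1}\otimes y_{s,t} = x_t^{-1}\otimes z_{s,t}\otimes x_t$, which together with the bound on $\|z\|_{0}$ gives $d_0(x,y)\leq C\delta^{1/N^2}M$ directly---hence $\theta$ of order $1/N^2$ rather than your $1/N^3$.
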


\begin{proof}
We first note that with $\alpha ^{\prime }=\left( \alpha +1/r\right) /2$ and
assuming $q\geq q_{0}\left( r,\alpha \right) $ large enough, Corollary \ref%
{GRRBesovEmb} implies%
\begin{equation}
\left\Vert x\right\Vert _{0;\left[ 0,1\right] }\leq \left\Vert x\right\Vert
_{\alpha -H\ddot{o}l;\left[ 0,1\right] }\leq \left\Vert x\right\Vert
_{\alpha ^{\prime }-H\ddot{o}l;\left[ 0,1\right] }\leq c_{0}M
\label{BesovDistProofEquOne}
\end{equation}%
and the same estimate holds for $y$. Let us define $z_{t}=y_{t}\otimes
x_{t}^{-1}$. Since $z_{s,t}\equiv z_{s}^{-1}\otimes z_{t}=x_{t}\otimes
\left( x_{s,t}^{-1}\otimes y_{s,t}\right) \otimes x_{t}^{-1}$ Lemma \ref{ghg}
gives%
\begin{equation*}
\frac{1}{c_{1}}\left\Vert z_{s,t}\right\Vert \leq d\left(
x_{s,t},y_{s,t}\right) \vee d\left( x_{s,t},y_{s,t}\right) ^{\frac{1}{N}%
}\left\Vert x_{t}\right\Vert ^{1-\frac{1}{N}}.
\end{equation*}%
Dividing by $\left\vert t-s\right\vert ^{1/rN}$ and raising everything to
power $q$ yields%
\begin{equation*}
\left( \frac{1}{c_{1}}\frac{\left\Vert z_{s,t}\right\Vert }{\left\vert
t-s\right\vert ^{\frac{1}{rN}}}\right) ^{q}\leq \left( \frac{d\left(
x_{s,t},y_{s,t}\right) }{\left\vert t-s\right\vert ^{\frac{1}{r}}}\right)
^{q}\vee \left( \frac{d\left( x_{s,t},y_{s,t}\right) ^{q}}{\left\vert
t-s\right\vert ^{\frac{q}{r}}}\right) ^{\frac{1}{N}}\left\Vert
x_{t}\right\Vert ^{q\left( 1-\frac{1}{N}\right) }
\end{equation*}%
and after integration over $\left( \,s,t\right) \in \left[ 0,1\right] ^{2}$,
using H\"{o}lder's inequality on the last term, we arrive at%
\begin{eqnarray*}
&&\left( \frac{1}{c_{1}}\right) ^{q}\int_{0}^{1}\int_{0}^{1}\left( \frac{%
\left\Vert z_{s,t}\right\Vert }{\left\vert t-s\right\vert ^{\frac{1}{rN}}}%
\right) ^{q}dsdt \\
&\leq &\int_{0}^{1}\int_{0}^{1}\left( \frac{d\left( x_{s,t},y_{s,t}\right) }{%
\left\vert t-s\right\vert ^{\frac{1}{r}}}\right) ^{q}dsdt \\
&&+\left( \int_{0}^{1}\int_{0}^{1}\left( \frac{d\left(
x_{s,t},y_{s,t}\right) ^{q}}{\left\vert t-s\right\vert ^{\frac{q}{r}}}%
\right) dsdt\right) ^{\frac{1}{N}}\left( \int_{0}^{1}\int_{0}^{1}\left\Vert
x_{t}\right\Vert ^{q}dsdt\right) ^{1-\frac{1}{N}} \\
&\leq &\left( \delta M\right) ^{q}+\left( \delta M\right) ^{q/N}\left(
\left\Vert x\right\Vert _{0-H\ddot{o}l;\left[ 0,1\right] }\right) ^{q\left(
1-\frac{1}{N}\right) } \\
&\leq &\left( \delta M\right) ^{q}+\left( \delta M\right) ^{q/N}\left(
c_{0}M\right) ^{q\left( 1-\frac{1}{N}\right) }\text{ \ \ \ by (\ref%
{BesovDistProofEquOne})} \\
&\leq &(c_{2}\delta ^{\frac{1}{N}}M)^{q}.
\end{eqnarray*}%
We can then apply Corollary \ref{GRRBesovEmb} to $z$ (with $M$ replaced by $%
c_{1}c_{2}\delta ^{\frac{1}{N}}M$) to see that $\left\Vert z\right\Vert _{0;%
\left[ 0,1\right] }\leq c_{3}\delta ^{\frac{1}{N}}M$. On the other hand, $%
d\left( x_{s,t},y_{s,t}\right) =\left\Vert x_{t}^{-1}\otimes z_{s,t}\otimes
x_{t}\right\Vert $ and Lemma \ref{ghg} implies, again using (\ref%
{BesovDistProofEquOne}), 
\begin{equation*}
d_{0}\left( x,y\right) \leq c_{1}\max \left\{ \left\Vert z\right\Vert _{0-H%
\ddot{o}l},\left\Vert z\right\Vert _{0-H\ddot{o}l}^{\frac{1}{N}}.\left\Vert
x\right\Vert _{0-H\ddot{o}l}^{1-\frac{1}{N}}\right\} \leq c_{4}\delta ^{%
\frac{1}{N^{2}}}M.
\end{equation*}%
We now use interpolation, Lemma \ref{interpolation}, with H\"{o}lder
exponents \thinspace $\alpha <\alpha ^{\prime }$. For $c_{5}=c_{5}\left(
\alpha ,r\right) $ and again using (\ref{BesovDistProofEquOne}) we have 
\begin{eqnarray*}
d_{a-H\ddot{o}l}\left( x,y\right) &\leq &c_{5}\left( \left\Vert x\right\Vert
_{\alpha ^{\prime }-H\ddot{o}l}\vee \left\Vert y\right\Vert _{\alpha
^{\prime }-H\ddot{o}l}\right) ^{\frac{\alpha }{\alpha ^{\prime }}%
}d_{0}\left( x,y\right) ^{1-\frac{\alpha }{\alpha ^{\prime }}} \\
&\leq &c_{5}\left( c_{0}M\right) ^{\frac{\alpha }{\alpha ^{\prime }}}\left(
c_{4}\delta ^{\frac{1}{N^{2}}}M\right) ^{1-\frac{\alpha }{\alpha ^{\prime }}}
\\
&=&c_{6}M\delta ^{\theta }\text{ \ with }\theta =\theta \left( \alpha
,r,N\right) :=\frac{\alpha ^{\prime }-\alpha }{\alpha ^{\prime }N^{2}}.
\end{eqnarray*}%
The proof is finished.
\end{proof}

\begin{corollary}
\label{BesovDistanceLq}Assume that $X,Y$ are continuous $G^{N}\left( \mathbb{%
R}^{d}\right) $-valued processes defined on $\left[ 0,1\right] $. Assume $%
r\geq 1$ and $1/p\in \lbrack 0,1/r)$. Then, for any fixed $q\geq q_{0}\left(
r,p\right) \,$and $M,\delta \in \left( 0,1\right) ,$ 
\begin{eqnarray*}
\mathbb{E}\left( d\left( X_{s},X_{t}\right) ^{q}\right) &\leq &\left(
M\omega \left( s,t\right) ^{1/r}\right) ^{q} \\
\mathbb{E}\left( d\left( Y_{s},Y_{t}\right) ^{q}\right) &\leq &\left(
M\omega \left( s,t\right) ^{1/r}\right) ^{q} \\
\mathbb{E}\left( d\left( X_{s,t},Y_{s,t}\right) ^{q}\right) &\leq &\left(
\delta M\omega \left( s,t\right) ^{1/r}\right) ^{q}
\end{eqnarray*}%
implies the existence of $C=C\left( r;N,d\right) ,\theta =\theta \left(
r,p;N\right) >0$ such that%
\begin{equation*}
\left\vert d_{p-var\text{;}\left[ 0,1\right] }\left( X,Y\right) \right\vert
_{L^{q}\left( \mathbb{P}\right) }\leq C\delta ^{\theta }M.
\end{equation*}%
If $\omega \left( s,t\right) \leq t-s$ for all $s,t\in \left[ 0,1\right] $
then $d_{p-var\text{;}\left[ 0,1\right] }$ above can be replaced by $d_{1/p%
\text{$-H\ddot{o}l$;}\left[ 0,1\right] }.$
\end{corollary}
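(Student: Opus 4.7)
The plan is to derive this $L^q$ statement from the pathwise Corollary \ref{BesovDistance} via a Fubini argument, paralleling the way Corollary \ref{GRRBesovEmbLq} follows from Corollary \ref{GRRBesovEmb}. First I would treat the H\"older-dominated case $\omega(s,t)\leq t-s$, and then reduce the general $\omega$ to it via the time change $t\mapsto \omega(0,t)$, exactly as in the proof of Corollary \ref{GRRBesovEmbLq} (after the usual normalisation $\omega(0,1)=1$, obtained by absorbing a factor into $M$).

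In the H\"older case, introduce the three nonnegative random variables
\[
U:=\int_0^1\!\!\int_0^1\frac{d(X_s,X_t)^q}{|t-s|^{q/r}}\,ds\,dt,\ \ V:=\int_0^1\!\!\int_0^1\frac{d(Y_s,Y_t)^q}{|t-s|^{q/r}}\,ds\,dt,\ \ W:=\int_0^1\!\!\int_0^1\frac{d(X_{s,t},Y_{s,t})^q}{|t-s|^{q/r}}\,ds\,dt.
\]
Fubini and the pointwise hypotheses give $\mathbb{E}[U],\mathbb{E}[V]\leq M^q$ and $\mathbb{E}[W]\leq (\delta M)^q$. Now apply Corollary \ref{BesovDistance} pathwise with $\alpha=1/p<1/r$: with the pathwise choice $M':=(U\vee V\vee W)^{1/q}$ and $\delta':=W^{1/q}/M'\in[0,1]$, the three GRR-integrals for the fixed sample path are, by construction, $\leq (M')^q$, $\leq (M')^q$ and $\leq (\delta'M')^q$ respectively. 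The conclusion of Corollary \ref{BesovDistance} (extended to $\delta'=0$ by continuity, which covers the trivial event $\{W=0\}$) yields the pointwise inequality
\[
d_{1/p-H\ddot{o}l}(X,Y)^q \;\leq\; C^q\, W^{\theta}\,(U\vee V\vee W)^{1-\theta}\;\leq\; C^q\, W^{\theta}\,(U+V+W)^{1-\theta},
\]
where $\theta=\theta(r,p,N)>0$ is the exponent of Corollary \ref{BesovDistance}.

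Taking expectations and applying H\"older's inequality with conjugate exponents $(1/\theta,1/(1-\theta))$ gives
\[
\mathbb{E}\bigl[d_{1/p-H\ddot{o}l}(X,Y)^q\bigr]\;\leq\; C^q\,\mathbb{E}[W]^{\theta}\,\mathbb{E}[U+V+W]^{1-\theta}\;\leq\; C^q\,(\delta M)^{q\theta}\,(3M^q)^{1-\theta}\;\leq\; C'\,\delta^{q\theta}M^q,
\]
so $\|d_{1/p-H\ddot{o}l}(X,Y)\|_{L^q}\leq C''\delta^{\theta}M$, which is the H\"older case. The only step that requires genuine care is the pathwise invocation of Corollary \ref{BesovDistance} with a \emph{random} parameter $\delta'$: that corollary is stated for $\delta\in(0,1)$, so one must check that the choice $\delta':=W^{1/q}/(U\vee V\vee W)^{1/q}$ lies in $[0,1]$ on each sample (immediate from $W\leq U\vee V\vee W$), handle $\{W=0\}$ by a trivial limiting argument, and verify measurability of $M'$ and $\delta'$; all of this is routine. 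The time-change reduction from a general 1D control $\omega$ to the H\"older-dominated situation, yielding the $p$-variation statement via $d_{p\text{-var}}(X,Y)=d_{p\text{-var}}(\tilde X,\tilde Y)\leq \|\tilde X-\tilde Y\|_{1/p\text{-H\"ol}}$ for the reparametrised processes, follows the pattern of Corollary \ref{GRRBesovEmbLq} verbatim.
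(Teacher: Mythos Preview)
Your proof is correct and follows the same overall architecture as the paper: reduce to the H\"older-dominated case by the time change $t\mapsto\omega(0,t)$, apply Corollary~\ref{BesovDistance} pathwise to the random GRR integrals $U,V,W$, and take expectations.

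The only genuine difference is in how the pathwise corollary is invoked. You choose a \emph{random} pair $(M',\delta')=\bigl((U\vee V\vee W)^{1/q},\,(W/(U\vee V\vee W))^{1/q}\bigr)$, obtain $d_{1/p\text{-H\"ol}}(X,Y)^q\leq C^q\,W^{\theta}(U\vee V\vee W)^{1-\theta}$, and then need H\"older's inequality to pass to expectations. The paper instead keeps the \emph{deterministic} $\delta$ from the hypotheses and only lets the size parameter be random: with $M':=(U+V+\delta^{-q}W)^{1/q}$ one has $U,V\leq (M')^q$ and $W\leq(\delta M')^q$ trivially, so Corollary~\ref{BesovDistance} gives directly
\[
d_{1/p\text{-H\"ol}}(X,Y)^q\leq (C\delta^{\theta})^q\bigl(U+V+\delta^{-q}W\bigr),
\]
and a \emph{linear} expectation yields $\mathbb{E}[d_{1/p\text{-H\"ol}}(X,Y)^q]\leq (C\delta^{\theta})^q\cdot 3M^q$. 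This avoids the H\"older step and the bookkeeping about $\delta'\in[0,1]$ and the event $\{W=0\}$. Both routes give the same bound; the paper's is a touch cleaner, while yours makes the interpolation structure $W^{\theta}(\cdot)^{1-\theta}$ more explicit.
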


\begin{proof}
By a (deterministic) time-change argument, exactly as in the proof of
Corollary \ref{GRRBesovEmbLq}, we may assume $\omega \left( s,t\right) =t-s$%
. From Corollary \ref{BesovDistance} there exists $c_{1}=c_{1}\left(
r;N,d\right) ,\theta =\theta \left( r,p;N\right) $ such that for $q\geq
q_{0}\left( r,p\right) $ large enough 
\begin{eqnarray*}
\left( \frac{1}{c_{1}\delta ^{\theta }}d_{1/p\text{$-H\ddot{o}l$;}\left[ 0,1%
\right] }\left( X,Y\right) \right) ^{q} &\leq &\int_{0}^{1}\int_{0}^{1}\frac{%
d\left( x_{s},x_{t}\right) ^{q}}{\left\vert t-s\right\vert ^{q/r}}%
dsdt+\int_{0}^{1}\int_{0}^{1}\frac{d\left( y_{s},y_{t}\right) ^{q}}{%
\left\vert t-s\right\vert ^{q/r}}dsdt \\
&&+\left( \frac{1}{\delta ^{q}}\int_{0}^{1}\int_{0}^{1}\frac{d\left(
x_{s,t},y_{s,t}\right) ^{q}}{\left\vert t-s\right\vert ^{q/r}}dsdt\right) .
\end{eqnarray*}%
After taking expectations we see that $\left( c_{1}\delta ^{\theta }\right)
^{-q}\mathbb{E}\left( d_{1/p\text{$-H\ddot{o}l$;}\left[ 0,1\right] }\left(
X,Y\right) ^{q}\right) \leq 3M^{q}$ and the proof is easily finished. (One
can take $C=3c_{1}$).
\end{proof}

\section{Appendix III: Step-3 Lie Algebra}

As usual, $e_{1},...,e_{d}$ denotes the standard basis in $\mathbb{R}^{d}$.
A vector space basis of the Lie algebra $g_{2}\left( \mathbb{R}^{d}\right) $
is given by%
\begin{equation*}
\left\{ \left( e_{i}\right) ,\left( \left[ e_{i},e_{j}\right] \right)
_{i<j}\right\}
\end{equation*}%
and if $x:\left[ s,t\right] \rightarrow \mathbb{R}^{d}$ be a smooth path
with signature $S\left( x\right) _{s,t}=$ $\mathbf{x}_{s,t}$ then its
log-signature satisfies, trivially,%
\begin{equation*}
\pi _{1}\left( \ln \mathbf{x}_{s,t}\right) =\sum_{i}x_{s,t}^{i}e_{i}\in 
\mathbb{R}^{d}
\end{equation*}%
and%
\begin{equation*}
\pi _{2}\left( \ln \mathbf{x}_{s,t}\right) =\frac{1}{2}\sum_{i<j}\left( 
\mathbf{x}_{s,t}^{i,j}-\mathbf{x}_{s,t}^{j,i}\right) \left[ e_{i},e_{j}%
\right] \in so\left( d\right) .
\end{equation*}%
We aim for a similar understanding of $g_{3}\left( \mathbb{R}^{d}\right) $.
We leave the following simple technical lemma to the reader:

\begin{lemma}[Step-3\ Hall expansion]
A vector space basis of the Lie algebra $g_{3}\left( \mathbb{R}^{d}\right) $
is given by%
\begin{equation*}
\left\{ \left( e_{i}\right) ,\left( \left[ e_{i},e_{j}\right] \right)
_{i<j},\left( \left[ e_{i},\left[ e_{j},e_{k}\right] \right] \right) _{j\leq
i<k\text{ or }j<k\leq i}\right\}
\end{equation*}%
for $i,j,k\in \left\{ 1,...,d\right\} $, known as Philip-Hall Lie basis. For
any $3$-tensor $\alpha $, the following identity holds: 
\begin{eqnarray*}
\sum_{i,j,k}\alpha _{i,j,k}\left[ e_{i},\left[ e_{j},e_{k}\right] \right]
&=&\sum_{\substack{ j<i<k  \\ \text{or}  \\ j<k<i}}\left( \alpha
_{i,j,k}-\alpha _{i,k,j}+\alpha _{j,i,k}-\alpha _{j,k,i}\right) \left[ e_{i},%
\left[ e_{j},e_{k}\right] \right] \\
&&+\sum_{i\neq j}\left( \alpha _{i,i,j}-\alpha _{i,j,i}\right) \left[ e_{i},%
\left[ e_{i},e_{j}\right] \right] .
\end{eqnarray*}
\end{lemma}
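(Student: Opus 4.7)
The plan is to verify the two claims---the basis property and the explicit expansion formula---in turn.

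For the basis property, the proposed family is precisely the Philip--Hall basis of $g_{3}\left(\mathbb{R}^{d}\right)$ with respect to the standard ordering $e_{1}<\cdots<e_{d}$ on generators, so it suffices to quote Hall's theorem on free Lie algebras (see e.g.\ Reutenauer, \emph{Free Lie Algebras}). Alternatively one can check dimensions via Witt's formula, which gives $\dim g_{3}^{\left(n\right)}=d,\binom{d}{2},d\left(d^{2}-1\right)/3$ for $n=1,2,3$, exactly matching the cardinality of the proposed family after a direct count of the index conditions. Combined with the explicit expansion to be proved (which exhibits every $[e_{i},[e_{j},e_{k}]]$ as a linear combination of the proposed family), this establishes the basis property.

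For the identity, the strategy is to reduce to the Hall basis by successive applications of antisymmetry and the Jacobi identity. First, terms with $j=k$ vanish and pairing $(i,j,k)$ with $(i,k,j)$ yields
\begin{equation*}
\sum_{i,j,k}\alpha_{i,j,k}\left[e_{i},\left[e_{j},e_{k}\right]\right]=\sum_{i}\sum_{j<k}\left(\alpha_{i,j,k}-\alpha_{i,k,j}\right)\left[e_{i},\left[e_{j},e_{k}\right]\right].
\end{equation*}
Next split the inner sum by the position of $i$ relative to $j<k$ into three disjoint regimes: (I) $j\leq i<k$ and (II) $j<k\leq i$, both landing directly in the Hall basis; and (III) $i<j<k$, which does not. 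In regime (III), apply Jacobi in the form
\begin{equation*}
\left[e_{i},\left[e_{j},e_{k}\right]\right]=\left[e_{j},\left[e_{i},e_{k}\right]\right]-\left[e_{k},\left[e_{i},e_{j}\right]\right],
\end{equation*}
and observe that for $i<j<k$ the first bracket lies in Case 1 of the Hall basis (outer $j$, inner $i<k$ with $i\leq j<k$) and the second in Case 2 (outer $k$, inner $i<j$ with $i<j\leq k$); so both are Hall basis elements.

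It remains to collect the coefficient of each Hall basis element. For $[e_{a},[e_{b},e_{c}]]$ with $b<c$ and either $b<a<c$ (Case 1 proper) or $b<c<a$ (Case 2 proper), two contributions arise: a direct one equal to $\alpha_{a,b,c}-\alpha_{a,c,b}$ from regime (I) or (II) taking $(i,j,k)=(a,b,c)$; and an indirect one from regime (III), arising in Case 1 proper from the triple $(i,j,k)=(b,a,c)$ (producing $+[e_{a},[e_{b},e_{c}]]$ with coefficient $\alpha_{b,a,c}-\alpha_{b,c,a}$) and in Case 2 proper from $(i,j,k)=(b,c,a)$ (producing $-[e_{a},[e_{b},e_{c}]]$ with coefficient $-\left(\alpha_{b,c,a}-\alpha_{b,a,c}\right)=\alpha_{b,a,c}-\alpha_{b,c,a}$). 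Relabelling $(a,b,c)\to(i,j,k)$ yields the coefficient $\alpha_{i,j,k}-\alpha_{i,k,j}+\alpha_{j,i,k}-\alpha_{j,k,i}$ of the first sum. The boundary cases $a=b$ or $a=c$ receive only the direct contribution, and after rewriting $[e_{i},[e_{j},e_{i}]]=-[e_{i},[e_{i},e_{j}]]$ they combine uniformly into the second sum over $i\neq j$. The main obstacle is a purely combinatorial one: identifying which of the two Jacobi-produced terms from a regime-(III) triple lands on which Hall basis element, tracking the compounded signs from antisymmetry and Jacobi, and verifying that the direct and indirect contributions conspire to produce the same symmetric-looking expression in both Case 1 proper and Case 2 proper.
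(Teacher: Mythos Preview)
Your argument is correct: the reduction via antisymmetry to $j<k$, the three-regime split on the position of $i$, the Jacobi rewrite in regime (III), and the coefficient bookkeeping (including the boundary cases $a=b$ and $a=c$) all check out. The paper itself gives no proof of this lemma---it is explicitly ``left to the reader''---so there is nothing to compare against; your direct verification is precisely the routine computation the authors had in mind.
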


\begin{proposition}
\label{logSignatureStep3}Let $x:\left[ s,t\right] \rightarrow \mathbb{R}^{d}$
be a smooth path with lift $S\left( x\right) =$ $\mathbf{x}$. Then its
log-signature projected to the third level, $\pi _{3}\left( \ln \mathbf{x}%
_{s,t}\right) $, expands to in the Hall-basis as follows. 
\begin{eqnarray*}
\pi _{3}\left( \ln \mathbf{x}_{s,t}\right) &=&\frac{1}{6}\sum_{\substack{ %
j<i<k  \\ \text{or}  \\ j<k<i}}\left( \mathbf{x}_{s,t}^{i,j,k}+\mathbf{x}%
_{s,t}^{j,i,k}-2\mathbf{x}_{s,t}^{i,k,j}+\mathbf{x}_{s,t}^{k,i,j}-2\mathbf{x}%
_{s,t}^{j,k,i}+\mathbf{x}_{s,t}^{k,j,i}\right) \left[ e_{i},\left[
e_{j},e_{k}\right] \right] \\
&&+\sum_{i\neq j}\left\{ \mathbf{x}_{s,t}^{i,i,j}+\frac{1}{12}\left\vert
x_{s,t}^{i}\right\vert ^{2}x_{s,t}^{j}-\frac{1}{2}x_{s,t}^{i}\mathbf{x}%
_{s,t}^{i,j}\right\} \left[ e_{i},\left[ e_{i},e_{j}\right] \right]
\end{eqnarray*}%
\bigskip This identity remains valid for (weak) geometric rough paths.
\end{proposition}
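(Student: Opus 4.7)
The plan is to combine four ingredients: the power series for $\ln$ truncated at tensor degree three, the Dynkin--Specht--Wever idempotent on $g_{3}(\mathbb{R}^{d})$, the step-$3$ Hall expansion lemma just stated, and the shuffle identities valid on $G^{3}(\mathbb{R}^{d})$. First I would use $\ln(1+A)=A-\tfrac{1}{2}A^{2}+\tfrac{1}{3}A^{3}-\cdots$ and truncate at tensor degree three, obtaining
\begin{equation*}
\pi_{3}(\ln \mathbf{x}_{s,t}) = \pi_{3}(\mathbf{x}_{s,t}) - \tfrac{1}{2}\bigl(\pi_{1}(\mathbf{x}_{s,t})\otimes \pi_{2}(\mathbf{x}_{s,t}) + \pi_{2}(\mathbf{x}_{s,t})\otimes \pi_{1}(\mathbf{x}_{s,t})\bigr) + \tfrac{1}{3}\pi_{1}(\mathbf{x}_{s,t})^{\otimes 3},
\end{equation*}
or equivalently $\pi_{3}(\ln \mathbf{x}_{s,t})=\sum_{i,j,k}\gamma_{i,j,k}\,e_{i}\otimes e_{j}\otimes e_{k}$ with
\begin{equation*}
\gamma_{i,j,k}=\mathbf{x}^{i,j,k}_{s,t}-\tfrac{1}{2}\bigl(x^{i}_{s,t}\mathbf{x}^{j,k}_{s,t}+\mathbf{x}^{i,j}_{s,t}x^{k}_{s,t}\bigr)+\tfrac{1}{3}x^{i}_{s,t}x^{j}_{s,t}x^{k}_{s,t}.
\end{equation*}

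Since $\pi_{3}(\ln \mathbf{x}_{s,t})$ lies in the step-$3$ free Lie algebra $g_{3}(\mathbb{R}^{d})$, the Dynkin--Specht--Wever theorem gives
\begin{equation*}
\pi_{3}(\ln \mathbf{x}_{s,t}) = \tfrac{1}{3}\sum_{i,j,k}\gamma_{i,j,k}\,[e_{i},[e_{j},e_{k}]],
\end{equation*}
and feeding $\alpha_{i,j,k}=\tfrac{1}{3}\gamma_{i,j,k}$ into the Hall expansion lemma produces the two sums of the statement, with coefficients $\tfrac{1}{3}(\gamma_{i,j,k}-\gamma_{i,k,j}+\gamma_{j,i,k}-\gamma_{j,k,i})$ on $[e_{i},[e_{j},e_{k}]]$ in the distinct-index sum and $\tfrac{1}{3}(\gamma_{i,i,j}-\gamma_{i,j,i})$ on $[e_{i},[e_{i},e_{j}]]$ in the repeated-index sum. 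The remaining work is an algebraic simplification using the shuffle identities valid on $G^{3}(\mathbb{R}^{d})$,
\begin{equation*}
x^{a}\mathbf{x}^{b,c}=\mathbf{x}^{a,b,c}+\mathbf{x}^{b,a,c}+\mathbf{x}^{b,c,a}, \qquad \mathbf{x}^{a,a}=\tfrac{1}{2}(x^{a})^{2}.
\end{equation*}
For distinct $i,j,k$ the cubic piece $\tfrac{1}{3}x^{i}x^{j}x^{k}$ cancels by antisymmetrisation, and the remaining mixed terms collapse via shuffle to the asserted $\tfrac{1}{6}(\mathbf{x}^{i,j,k}+\mathbf{x}^{j,i,k}-2\mathbf{x}^{i,k,j}+\mathbf{x}^{k,i,j}-2\mathbf{x}^{j,k,i}+\mathbf{x}^{k,j,i})$. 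For the repeated-index case, using $x^{i}\mathbf{x}^{j,i}=\mathbf{x}^{i,j,i}+2\mathbf{x}^{j,i,i}$ and $(x^{i})^{2}x^{j}=2(\mathbf{x}^{i,i,j}+\mathbf{x}^{i,j,i}+\mathbf{x}^{j,i,i})$ to eliminate $\mathbf{x}^{i,j,i}$ and $\mathbf{x}^{j,i,i}$ reduces $\tfrac{1}{3}(\gamma_{i,i,j}-\gamma_{i,j,i})$ to $\mathbf{x}^{i,i,j}+\tfrac{1}{12}|x^{i}|^{2}x^{j}-\tfrac{1}{2}x^{i}\mathbf{x}^{i,j}$.

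The extension from smooth paths to weak geometric rough paths is then immediate: both sides of the identity are fixed polynomial expressions in $x^{i},\mathbf{x}^{i,j},\mathbf{x}^{i,j,k}$, and the shuffle identities above are equivalent to group-likeness in $T^{3}(\mathbb{R}^{d})$ and therefore hold on all of $G^{3}(\mathbb{R}^{d})$ by the very definition of the group. The only real obstacle is avoiding algebraic slips in the shuffle reduction: the compact form of the final formula depends on trading the triple integrals $\mathbf{x}^{i,j,i}$ and $\mathbf{x}^{j,i,i}$ for the product $x^{i}\mathbf{x}^{i,j}$ at exactly the right moment, since without that substitution one obtains an equivalent but noticeably uglier expression.
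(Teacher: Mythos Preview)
Your proof is correct and takes a genuinely different route from the paper. The paper works analytically: it derives the Chen--BCH ordinary differential equation $d\mathbf{z}_{3}=\tfrac{1}{2}[\mathbf{z}_{2},dx]+\tfrac{1}{12}[\mathbf{z}_{1},[\mathbf{z}_{1},dx]]$ for $\mathbf{z}_{i}=\pi_{i}(\ln S(x))$, integrates it to obtain $\pi_{3}(\ln\mathbf{x})=\tfrac{1}{12}\sum(-3\mathbf{x}^{j,k,i}+\mathbf{x}^{i,j,k}+\mathbf{x}^{j,i,k})[e_{i},[e_{j},e_{k}]]$ directly as iterated Lie brackets, and then applies the Hall lemma; the repeated-index simplification $\tfrac{1}{6}(\mathbf{x}^{i,i,j}-2\mathbf{x}^{i,j,i}+\mathbf{x}^{j,i,i})=\mathbf{x}^{i,i,j}+\tfrac{1}{12}|x^{i}|^{2}x^{j}-\tfrac{1}{2}x^{i}\mathbf{x}^{i,j}$ is carried out via integration by parts on the actual iterated integrals. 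You instead stay entirely in the tensor algebra: expand $\ln$ as a power series, invoke Dynkin--Specht--Wever to pass from tensor coordinates $\gamma_{i,j,k}$ to nested brackets, and do all simplification via shuffle relations. Your approach is purely algebraic and makes the passage to weak geometric rough paths cleaner, since shuffle identities are exactly the defining relations of $G^{3}(\mathbb{R}^{d})$, whereas the paper appeals to pointwise approximation by smooth paths. The paper's ODE route, on the other hand, scales more transparently to higher tensor levels (Chen's recursion) and avoids the need to quote Dynkin--Specht--Wever.
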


\begin{proof}
Without loss of generalities $x:\left[ 0,1\right] \rightarrow \mathbb{R}^{d}$
and $x\left( 0\right) =0$. The signature of concatenated paths is given by
the group product in the free group. Specializing to the step-$3$ group
(viewed as subset of the enveloping tensor algebra), the smooth path $%
x=\left. x\right\vert _{\left[ 0,t+dt\right] }$ is the concatenation of $%
\left. x\right\vert _{\left[ 0,t\right] }$ and $\left. x\right\vert _{\left[
t,t+dt\right] }$. We have $S\left( x_{t+dt}\right) =S\left( x_{t}\right)
\otimes \exp \left( dx_{t}\right) $ and by sending $dt\rightarrow 0$ it is
easy to (re-)derive the usual control ODE for lifted paths%
\begin{equation*}
d\mathbf{x}_{t}=U_{i}\left( \mathbf{x}\right) dx^{i}
\end{equation*}%
where $\mathbf{x}_{t}=S\left( x\right) _{t}$ and $U_{i}\left( \mathbf{x}%
\right) =\mathbf{x}\otimes e_{i}$. To understand the evolution in the step-$%
3 $ Lie algebra we write%
\begin{equation*}
\mathbf{z}_{i}\left( t\right) =\pi _{i}\left( \ln S\left( x\right)
_{t}\right) \text{, }i=1,2,3
\end{equation*}%
and using the Baker Campbell Hausdorff formula, we obtain \footnote{%
A recursion formula for $\mathbf{z}_{n}$ appears in Chen's seminal work \cite%
{chen-57}.},%
\begin{eqnarray*}
d\mathbf{z}_{1}\left( t\right) &=&dx_{t} \\
d\mathbf{z}_{2}\left( t\right) &=&\frac{1}{2}\left[ \mathbf{z}_{1}\left(
t\right) ,dx_{t}\right] , \\
d\mathbf{z}_{3}\left( t\right) &=&\frac{1}{2}\left[ \mathbf{z}_{2}\left(
t\right) ,dx_{t}\right] +\frac{1}{12}\left[ \mathbf{z}_{1}\left( t\right) ,%
\left[ \mathbf{z}_{1}\left( t\right) ,dx_{t}\right] \right] ,
\end{eqnarray*}%
which integrates iteratively to 
\begin{eqnarray*}
\mathbf{z}_{1}\left( t\right) &=&x_{t} \\
\mathbf{z}_{2}\left( t\right) &=&\frac{1}{2}\int_{0<u<v<t}\left[
dx_{u},dx_{v}\right] \\
\mathbf{z}_{3}\left( t\right) &=&\frac{1}{4}\int_{0<u<v<w<t}\left[ \left[
dx_{u},dx_{v}\right] ,dx_{w}\right] +\frac{1}{12}\int_{0<u<t}\left[ x_{u},%
\left[ x_{u},dx_{u}\right] \right] .
\end{eqnarray*}%
In particular, the log-signature of $x$ projected to the third level is
precisely $\mathbf{z}_{3}\left( 1\right) $ and given by%
\begin{eqnarray*}
&&\frac{1}{4}\int_{0<u<v<w<t}\left[ \left[ dx_{u},dx_{v}\right] ,dx_{w}%
\right] +\frac{1}{12}\int_{0<u<t}\left[ x_{u},\left[ x_{u},dx_{u}\right] %
\right] \\
&=&\frac{1}{4}\sum_{i,j,k}\mathbf{x}^{i,j,k}\left[ \left[ e_{i},e_{j}\right]
,e_{k}\right] +\frac{1}{12}\sum_{i,j,k}\left( \mathbf{x}^{i,j,k}+\mathbf{x}%
^{j,i,k}\right) \left[ e_{i},\left[ e_{j},e_{k}\right] \right] \\
&=&\frac{1}{12}\sum_{i,j,k}\left( -3\mathbf{x}^{j,k,i}+\mathbf{x}^{i,j,k}+%
\mathbf{x}^{j,i,k}\right) \left[ e_{i},\left[ e_{j},e_{k}\right] \right] .
\end{eqnarray*}%
Using the\bigskip\ step-$3$ Hall expansion lemma, a few lines of
computations give%
\begin{eqnarray*}
6\mathbf{z}_{3}\left( 1\right) &=&\sum_{\substack{ j<i<k  \\ or  \\ j<k<i}}%
\left( \mathbf{x}_{t}^{i,j,k}+\mathbf{x}_{t}^{j,i,k}-2\mathbf{x}_{t}^{i,k,j}+%
\mathbf{x}_{t}^{k,i,j}-2\mathbf{x}_{t}^{j,k,i}+\mathbf{x}_{t}^{k,j,i}\right) %
\left[ e_{i},\left[ e_{j},e_{k}\right] \right] \\
&&+\sum_{i\neq j}\left( -2\mathbf{x}_{t}^{i,j,i}+\mathbf{x}_{t}^{i,i,j}+%
\mathbf{x}_{t}^{j,i,i}\right) \left[ e_{i},\left[ e_{i},e_{j}\right] \right]
,
\end{eqnarray*}%
When $x$ is defined on $\left[ s,t\right] $ the last expression is, of
course,%
\begin{equation*}
\frac{1}{6}\left( \mathbf{x}_{s,t}^{i,i,j}-2\mathbf{x}_{s,t}^{i,j,i}+\mathbf{%
x}_{s,t}^{j,i,i}\right)
\end{equation*}%
and can be simplified with some calculus. We have 
\begin{eqnarray*}
\mathbf{x}_{s,t}^{i,i,j} &=&\frac{1}{2}\int_{s}^{t}\left\vert
x_{s,u}^{i}\right\vert ^{2}dx_{u}^{j} \\
\mathbf{x}_{s,t}^{i,j,i}+\mathbf{x}_{s,t}^{j,i,i}
&=&\int_{s<u<t}x_{s,u}^{i}x_{s,u}^{j}dx_{u}^{i} \\
&=&\frac{1}{2}\left\vert x_{s,t}^{i}\right\vert ^{2}x_{s,t}^{j}-\frac{1}{2}%
\int_{s<u<t}\left\vert x_{s,u}^{i}\right\vert ^{2}dx_{s,u}^{j}\text{ (by
integration by part),} \\
\mathbf{x}_{s,t}^{j,i,i}
&=&\int_{s<u_{1}<u_{2}<u_{3}<t}dx_{u_{1}}^{j}dx_{u_{2}}^{i}dx_{u_{3}}^{i} \\
&=&\frac{1}{2}\int_{s<u<t}\left\vert x_{u,t}^{i}\right\vert ^{2}dx_{u}^{j} \\
&=&\frac{1}{2}\left\vert x_{s,t}^{i}\right\vert
^{2}x_{s,t}^{j}-x_{s,t}^{i}\int_{s<u<t}x_{s,u}^{i}dx_{s,u}^{j}+\frac{1}{2}%
\int_{s}^{t}\left\vert x_{s,u}^{i}\right\vert ^{2}dx_{u}^{j}.
\end{eqnarray*}%
and therefore 
\begin{eqnarray*}
\mathbf{x}_{s,t}^{i,i,j}-2\mathbf{x}_{s,t}^{i,j,i}+\mathbf{x}_{s,t}^{j,i,i}
&=&\mathbf{x}_{s,t}^{i,i,j}-2\left( \mathbf{x}_{s,t}^{i,j,i}+\mathbf{x}%
_{s,t}^{j,i,i}\right) +3\mathbf{x}_{s,t}^{j,i,i} \\
&=&3\int_{s}^{t}\left\vert x_{s,u}^{i}\right\vert ^{2}dx_{u}^{j}+\frac{1}{2}%
\left\vert x_{s,t}^{i}\right\vert ^{2}x_{s,t}^{j}-3x_{s,t}^{i}\mathbf{x}%
_{s,t}^{i,j}.
\end{eqnarray*}%
For the final statement, it suffices to remark that a (weak) geometric rough
path is, in particular, a pointwise limit of smooth paths.
\end{proof}

An obvious application of the Baker Campbell Hausdorff formula gives

\begin{lemma}
\label{BCHlevel3normbound}Let $a,b$ be two elements of the Lie algebra $%
g_{3}\left( \mathbb{R}^{d}\right) $ and write $a^{i}=\pi _{i}\left( a\right)
,$ $b_{i}=\pi _{i}\left( b\right) $. Then there exists $C=C\left( d\right) $
such that%
\begin{eqnarray*}
\left\vert \pi _{2}\left( \ln \left( e^{-a}\otimes e^{b}\right) \right)
\right\vert &\leq &\left\vert b^{2}-a^{2}\right\vert +\frac{1}{2}\left\vert
b^{1}-a^{1}\right\vert \left\vert b^{1}\right\vert \\
\left\vert \pi _{3}\left( \ln \left( e^{-a}\otimes e^{b}\right) \right)
\right\vert &\leq &\left\vert b^{3}-a^{3}\right\vert +\frac{1}{2}\left\vert
b^{2}-a^{2}\right\vert \left\vert b^{1}\right\vert +\left\vert
b^{1}-a^{1}\right\vert \left( \frac{1}{2}\left\vert b^{2}\right\vert +\frac{1%
}{12}\left\vert a^{1}\right\vert ^{2}+\frac{1}{12}\left\vert
b^{1}\right\vert ^{2}\right) \text{.}
\end{eqnarray*}
\end{lemma}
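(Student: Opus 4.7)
The plan is to derive both inequalities from the Baker--Campbell--Hausdorff (BCH) formula, truncated to the step-3 free nilpotent Lie algebra, followed by level-wise projection and a handful of algebraic identities of the form $[x,x]=0$ to bring out difference factors $b^i - a^i$.

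First, since all iterated brackets of length $\geq 4$ vanish in $g^{3}(\mathbb{R}^{d})$, BCH collapses to
\begin{equation*}
\ln(e^{-a}\otimes e^{b}) \;=\; (b-a) \;-\; \tfrac{1}{2}[a,b] \;+\; \tfrac{1}{12}[a,[a,b]] \;+\; \tfrac{1}{12}[b,[a,b]].
\end{equation*}
Decomposing $a=a^{1}+a^{2}+a^{3}$, $b=b^{1}+b^{2}+b^{3}$ and using that the bracket raises the grading, the projections read off as
\begin{equation*}
\pi_{2}\bigl(\ln(e^{-a}\otimes e^{b})\bigr) = (b^{2}-a^{2}) - \tfrac{1}{2}[a^{1},b^{1}],
\end{equation*}
\begin{equation*}
\pi_{3}\bigl(\ln(e^{-a}\otimes e^{b})\bigr) = (b^{3}-a^{3}) - \tfrac{1}{2}\bigl([a^{1},b^{2}]+[a^{2},b^{1}]\bigr) + \tfrac{1}{12}[a^{1},[a^{1},b^{1}]] + \tfrac{1}{12}[b^{1},[a^{1},b^{1}]].
\end{equation*}

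Next, I extract the difference factors using $[u,u]=0$. For the level-2 bound, the identity $[a^{1},b^{1}]=[a^{1}-b^{1},b^{1}]$ and the triangle inequality (with the universal bound $|[u,v]|\leq |u||v|$ on the Hilbert--Schmidt tensor norm, constants absorbed into $C(d)$) yields $\tfrac{1}{2}|[a^{1},b^{1}]|\leq \tfrac{1}{2}|b^{1}-a^{1}||b^{1}|$. For the first bracket term at level 3, the same trick gives $[a^{1},b^{2}]+[a^{2},b^{1}]=[a^{1}-b^{1},b^{2}]+[a^{2}-b^{2},b^{1}]$, which accounts for the $\tfrac{1}{2}|b^{2}-a^{2}||b^{1}|$ summand and contributes $\tfrac{1}{2}|b^{1}-a^{1}||b^{2}|$ to the last parenthesis of the stated estimate.

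The only slightly involved step is the double-bracket pair $[a^{1},[a^{1},b^{1}]]+[b^{1},[a^{1},b^{1}]]$. Setting $c=a^{1}-b^{1}$ and substituting $a^{1}=b^{1}+c$ inside both triple products, a short expansion (all terms not containing $c$ cancel by antisymmetry) gives a decomposition of the form
\begin{equation*}
[a^{1},[a^{1},b^{1}]] + [b^{1},[a^{1},b^{1}]] = -2[b^{1},[b^{1},a^{1}-b^{1}]] + [a^{1}-b^{1},[a^{1}-b^{1},b^{1}]],
\end{equation*}
after which the triangle inequality produces terms of order $|b^{1}-a^{1}|\cdot|b^{1}|^{2}$ and $|b^{1}-a^{1}|^{2}\cdot|b^{1}|\leq |b^{1}-a^{1}|(|a^{1}|^{2}+|b^{1}|^{2})$. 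This yields the $\tfrac{1}{12}|b^{1}-a^{1}|(|a^{1}|^{2}+|b^{1}|^{2})$ contribution. This bookkeeping step is the only mild obstacle; everything else is a direct chain of triangle inequalities on the Euclidean tensor norm, with the universal constants absorbed into $C(d)$.
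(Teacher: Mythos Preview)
Your proof is correct and follows exactly the approach the paper intends: the paper's own ``proof'' is just the one-line remark ``An obvious application of the Baker Campbell Hausdorff formula gives'', so you have supplied precisely the omitted details (BCH truncated at step~3, level-wise projection, then extracting $b^{i}-a^{i}$ via $[u,u]=0$).

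One small simplification for the step you flagged as the ``only mild obstacle'': instead of substituting $a^{1}=b^{1}+c$, note that $[a^{1},b^{1}]=[a^{1},b^{1}-a^{1}]$ and $[a^{1},b^{1}]=-[b^{1},a^{1}-b^{1}]$, so
\[
[a^{1},[a^{1},b^{1}]]+[b^{1},[a^{1},b^{1}]]
=[a^{1},[a^{1},b^{1}-a^{1}]]-[b^{1},[b^{1},a^{1}-b^{1}]]
=-[a^{1},[a^{1},c]]-[b^{1},[b^{1},c]],
\]
which yields directly $\bigl|\,\cdot\,\bigr|\le |c|\bigl(|a^{1}|^{2}+|b^{1}|^{2}\bigr)$ and hence the stated $\tfrac{1}{12}$-term without any further manipulation or appeal to $C(d)$ at this step. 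Your decomposition is also correct but produces an extra constant factor that then has to be absorbed.
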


\bibliographystyle{plain}
\bibliography{roughpaths}

\bigskip

\end{document}